\numberwithin{equation}{section}
\title{
\normalsize
\textbf{{
ON CABLE-GRAPH PERCOLATION BETWEEN DIMENSIONS 2 AND 3
}}}
\author{}
\date{}
\newcommand{\N}{\mathbb N}
\newcommand{\1}{\mathbbm 1}
\newcommand{\Z}{\mathbb Z}
\newcommand{\R}{\mathbb R}
\renewcommand{\P}{\mathbb{P}}
\newcommand{\E}{\mathbb{E}}
\renewcommand{\epsilon}{\varepsilon}
\newcommand{\capacity}{\text{cap}}
\newcommand{\lO}{\mathcal{O}}
\newtheorem{theorem}{Theorem}[section]
\newtheorem{lemma}[theorem]{Lemma}
\newtheorem{proposition}[theorem]{Proposition}
\newtheorem{corollary}[theorem]{Corollary}
\theoremstyle{definition}
\newtheorem{remark}[theorem]{Remark}
\newcommand{\floor}[1]{\left\lfloor{#1}\right\rfloor}
\newcommand{\ceil}[1]{\left\lceil{#1}\right\rceil}
\newcommand{\cref}[1]{\text{\upshape\ref{#1}}}
\newcommand{\WZ}[1]{{\red{[\small WZ: #1]}}}
\newcommand{\Q}{\mathbb{Q}}
\newcommand{\dX}{\bar{X}}
\newcommand{\cL}{\mathcal{L}}
\newcommand{\dg}{\bar{g}}
\newcommand{\cP}{\mathcal{P}}
\newcommand{\tB}{\tilde{B}}
\newcommand{\Slab}{\mathbb{S}_N}
\newcommand{\tSlab}{\tilde{\mathbb{S}}_N}
\newcommand{\Fbox}{F^{h}_N}
\newcommand{\Th}{\theta^h_N}
\newcommand{\cC}{\mathcal{C}}
\newcommand{\dist}{\text{dist}}
\newcommand{\Ann}{\mathbb{A}}
\newcommand{\bfa}{\mathbf{a}}
\newcommand{\bfb}{\mathbf{b}}
\newcommand{\bfd}{\mathbf{d}}
\newcommand{\bfe}{\mathbf{e}}
\newcommand{\bfG}{\mathbf{G}}
\newcommand{\tP}{\tilde{P}}
\newcommand{\tg}{\tilde{g}}
\renewcommand{\phi}{\varphi}
\renewcommand{\tilde}{\widetilde}
\renewcommand{\hat}{\widehat}
\renewcommand{\epsilon}{\varepsilon}
\newcommand{\Pzd}[1]{P_{#1}^{2}}
\newcommand{\Pz}[1]{P_{#1}^{1}}
\definecolor{Red}{rgb}{1,0,0}
\definecolor{Blue}{rgb}{0,0,1}
\definecolor{Olive}{rgb}{0.41,0.55,0.13}
\definecolor{Yarok}{rgb}{0,0.5,0}
\definecolor{Green}{rgb}{0,1,0}
\definecolor{MGreen}{rgb}{0,0.8,0}
\definecolor{DGreen}{rgb}{0,0.55,0}
\definecolor{Yellow}{rgb}{1,1,0}
\definecolor{Cyan}{rgb}{0,1,1}
\definecolor{Magenta}{rgb}{1,0,1}
\definecolor{Orange}{rgb}{1,.5,0}
\definecolor{Violet}{rgb}{.5,0,.5}
\definecolor{Purple}{rgb}{.75,0,.25}
\definecolor{Brown}{rgb}{.75,.5,.25}
\definecolor{Grey}{rgb}{.7,.7,.7}
\definecolor{Black}{rgb}{0,0,0}
\def\red{\color{Red}}
\def\black{\color{Black}}
\titleformat{\subsection}[runin]{\normalfont\bfseries}{\thesubsection.}{.5em}{}[.]\titlespacing{\subsection}{0pt}{2ex plus .1ex minus .2ex}{.8em}
\titleformat{\subsubsection}[runin]{\normalfont\bfseries}{\thesubsubsection.}{.5em}{}[.]
\titlespacing{\subsubsection}{0pt}{2ex plus .1ex minus .2ex}{.8em}
\begin{document}
\thispagestyle{empty}
\maketitle
\vspace{0.1cm}
\begin{center}
\vspace{-1.9cm}
Pierre-Fran\c cois Rodriguez$^{1,2}$ and Wen Zhang$^1$

\end{center}
\vspace{0.1cm}
\begin{abstract}
\centering
\begin{minipage}{0.80\textwidth}
We consider the Gaussian free field on two-dimensional slabs with a thickness described by a height $h$ at spatial scale $N$. We investigate the radius of critical clusters for the associated cable-graph percolation problem, which depends sensitively on the parameter $h$. Our results unveil a whole family of new ``fixed points'', which interpolate between recent results from \cite{arXiv:2303.03782} in two dimensions and from \cite{drewitz_critical_2024} and \cite{cai_one-arm_2024} in three dimensions, and describe critical behaviour beyond those regimes. In the delocalised phase, the one-arm decay exhibits a ``plateau'', i.e.~it doesn't depend on the speed at which the variance of the field diverges in the large-$N$ limit. Our methods rely on a careful analysis of the interplay between two- and three-dimensional effects for the underlying random walk, which manifest themselves in a corresponding decomposition of the field.
\end{minipage}
\end{abstract}

\thispagestyle{empty}

\vspace{12.5cm}
\begin{flushright}

\noindent\rule{5cm}{0.4pt} \hfill 5 December 2025 \\
\bigskip
\end{flushright}

\begin{multicols}{2}

\noindent$^1$Imperial College London\\
 Department of Mathematics\\
 London SW7 2AZ, UK \\
  \url{kate.zhang23@imperial.ac.uk} 

\columnbreak
\begin{flushright}
\hfill $^2$ Center for Mathematical Sciences\\
\hfill University of Cambridge\\
\hfill Cambridge CB3 0WB, UK\\
\hfill\url{pfr26@cam.ac.uk}
\end{flushright}
\end{multicols}

\newpage

\section{Introduction}\label{sec:intro}

For the cable-system Gaussian free field $\varphi$, it was recently proved in \cite{drewitz_critical_2024} and separately in \cite{cai_one-arm_2024} (see also \cite{ding_percolation_2020,drewitz_arm_2023} for precursor results)
that the probability to connect $0$ to distance $N$ within $\{ \varphi \geq 0\}$ is of order $N^{-\frac12}$ on $\Z^3$, up to multiplicative constants. For the continuum analogue in a two-dimensional disk, it was proved in \cite{arXiv:2303.03782} that the corresponding quantity decays like $(\log N)^{-\frac12+o(1)}$ as $N \to \infty$.

One motivation for this article is to unify these results. To this effect we design a rigorous (and refined) version of what physicists often refer to as  ``$\epsilon$-expansion'' (cf.~\cite{WILSON197475}) around the lower-critical dimension of the problem (i.e.~$\epsilon=d-2$), by compactifying one of the three spatial dimensions; i.e.,~we consider the model in appropriately scaled two-dimensional slabs of height $h=h_N$ at spatial scale $N$.

A natural -- but, as it turns out, naive -- choice is $h=N^{\varepsilon}$ for $\varepsilon \in [0,1]$. As will become clear, our results recover the above characteristic decay in the extremal cases $\varepsilon =0$, corresponding to dimension $d=2$ (in fact also improving on the $o(1)$ in that case), as well as $\varepsilon =1$, corresponding to $d=3$. For $\varepsilon \in (0,1)$, our findings are in accordance with results of \cite{drewitz_critical_2023} (picking $\alpha=2+\varepsilon$, $\nu=\varepsilon$ therein) valid on a large family of transient graphs with polynomial volume growth, thus supplying new examples in this class. The polynomial scaling $h=N^{\varepsilon}$ is but a choice, and in fact rather coarse. Indeed it does not properly ``detect'' the lower-critical dimension; i.e.,~the bounds obtained trivialize in the limit $\varepsilon \downarrow 0$.

Our approach deals at once with all possible choices of height $h$ growing at most linearly with $N$.
For generic $h$, they  unveil a whole new family of ``long-range fixed points,'' extending those found in~\cite{drewitz_critical_2023,chalhoub2024universality} beyond the familiar polynomial scaling at criticality. Among others, the resulting one-arm decay displays a ``plateau'' at the onset of the regime for $h$ where $\varphi$ delocalises, and illuminates the logarithmic-to-polynomial transition occurring at criticality between dimensions $2$ and $3$, cf.~Fig.~\ref{F:plateau} below.

\medskip

We now describe our results more precisely, starting with the relevant notion of slabs. Let ${h}: (0,\infty) \rightarrow \R$ be a non-decreasing function such that $1\leq {h}(t)\leq t$ for $t>0$.
For $N \geq 1$, we set  
 \begin{equation}\label{def:slab}
 h_N= \floor{{h}(N)}, \quad \mathbb{S}_N= \mathbb{Z}^2 \times (\Z / h_N \Z),
\end{equation}
and consider the graph with vertex set $\mathbb{S}_N$, a (cylindrical) slab. We frequently use the notation $x=(y,z) \in \mathbb{S}_N$ (and $x'=(y',z')$ etc.) with $y\in \mathbb{Z}^2$ and $z \in (\Z / h_N \Z)$. We sometimes refer to the $y$- and $z$-components as \emph{horizontal} and \emph{vertical}, respectively. The choice of periodic boundary condition in \eqref{def:slab} is a matter of convenience; see Remark~\ref{R:green-additional},\ref{R:bc} below regarding other natural choices.

Attached to $\Slab$ for every $N  \geq 1$ is a continuous-time 
Markov chain, which jumps between neighbors on $\Slab$ at unit rate and gets killed at rate $N^{-2}$; see the beginning of Section~\ref{sec:green-slab} for the precise setup. We write $P_x$ for the canonical law of $X$ starting from $ x \in \mathbb{S}_N$. For $N \geq 1$ and $x,x^\prime \in \mathbb{S}_N$,  the Green's function of $X$ is defined as 
\begin{equation} \label{def:green}
  g_{N}^h(x,x^\prime)= 
  \frac1{\lambda_{x^\prime}}E_{x}\Big[\int_0^\infty \1_{\{X_t=x^\prime\}} \,dt\Big].
\end{equation}
We will often abbreviate $ g_N
= g_N^h$ in the sequel, as with other quantities (such as $\Slab$) that implicitly depend on the height profile $h$. We note that $g_N$ is symmetric, that $g_N(x,x^\prime)=g_N(0,x^\prime-x)$ by translation invariance (where addition is understood mod~$h_N$ in the vertical component), and write $g_N(x)\coloneqq g_N(0,x)$ for all $x\in \mathbb{S}_N$. We will later show, see Theorem~\ref{thm:green_main} below for this and more, that
\begin{equation}  \label{eq:green_estimate0intro}
     g_N(x) 
    \asymp
    \frac{1}{\|x\|\vee 1} +
     \frac{1}{h_N} K_0\left(\frac{|y|\vee h_N}{N}\right), \quad x=(y,z) \in \mathbb{S}_N
  \end{equation}
holds for all $N \geq1$ and $\|x\| \leq N$ (say), where $a_N \asymp b_N$ means that $c \leq (a_N / b_N) \leq C$ for constants $c,C \in (0,\infty)$ (uniform in $x$, $N$ and $h$), and $K_0: (0,\infty) \to (0,\infty)$ with
\begin{equation} \label{eq:K_0}
    K_0(t)\stackrel{\text{def.}}{=} \int_0^{\infty} e^{-t\cosh(r)} \mathrm{d}r, \quad t >0
\end{equation}
\black
denotes the zeroth order modified Bessel function of the second kind (see \cite[page 917, (8.432)]{gradshteyn_table_2014} for this representation of $K_0$). In \eqref{eq:green_estimate0intro} and below for $x = (y,z) \in \mathbb{S}_N$, we write $\|x\| \coloneqq |(y, \hat{z})|$ with $\hat{z}$ the projection of $z$ onto $\mathbb{Z}$ such that $|\hat{z}|=d_{(\Z/h_N\Z)}(0,z) \leq \frac{h_N}{2}$, where $d_G$ is the graph distance on $G$.

It is instructive to highlight the limiting cases $h_N=1$ and $h_N=N$ in \eqref{eq:green_estimate0intro}, corresponding to the choices $h(\cdot)\equiv 1$ and $h=\text{id}$ (i.e.~$h(t)=t$) above \eqref{def:slab}. In particular, since $K_0(t)\sim\log(\frac{1}{t})$ as $t\to 0^+$, see Lemma~\ref{L:Bessel},~\eqref{eq:green_estimate0intro} (and more precisely, Theorem \ref{thm:green_main}) recovers the following familiar boundary cases: for all $\|x \| \leq N$ say,
  \begin{equation*} 
    c \log\big(\tfrac{N}{ |y| \vee 1}\big)
    \leq g_N^h(x) 
    \leq 
    C \log\big(\tfrac{N}{ |y| \vee 1}\big),
    \quad \text{ when } h(\cdot) \equiv1,
  \end{equation*}
  (with $x=(y,0)$) and
  \begin{equation*} 
    {c}(\|x\| \vee 1)^{-1}
    \leq g_N^h(x) 
    \leq 
    {C}(\|x\| \vee 1)^{-1},
    \quad \text{ when } h=\text{id},
  \end{equation*}
corresponding to the $\Z^2$ and $\Z^3$ regimes for the Green's function of the walk, respectively (in the former case, with suitable killing at spatial scale $N$). We refer to Remark~\ref{R:green},\ref{R:green-coro} for a thorough discussion of the various regimes emerging from \eqref{eq:green_estimate0intro}. Loosely speaking, the logarithmic and polynomial terms appearing in (\ref{eq:green_estimate}) correspond to two- and three-dimensional contributions to $g_N^h(x)$. 


Associated to the weighted graph $\Slab=(\mathbb{S}_N,\lambda,\kappa)$ is the mean zero Gaussian field $\varphi$ indexed by $\mathbb{S}_N$, with covariance
\begin{equation} \label{eq:GFF-disc}
    \E_N^h[\phi_{x}\phi_{x^\prime}]=g_N^h(x-x^\prime), \quad x,x^\prime\in\mathbb{S}_N,
\end{equation}
with $g_N^h(\cdot, \cdot)=g_N^h(\cdot, \cdot)$ as defined in \eqref{def:green}. The field $\varphi$, whose law is denoted by $\P_N \equiv \P_N^h$ in the sequel, is the (discrete) Gaussian free field on $\Slab$. Depending on the choice of $h_N$ (see \eqref{def:slab}), the free field $\varphi$ can be both {\em localized} or {\em delocalized}. Indeed, it will follow as a corollary of Theorem~\ref{thm:green_main} (see \eqref{eq:green_asymp_macx} and Remark~\ref{R:green},\ref{R:green-coro} below for the details) that, 
  \begin{equation} \label{eq:variance-GFF}
      g_N^h(0) \sim
      \begin{cases*}
         \frac{3}{\pi}\frac{\log {N}}{h_N}, & if $\displaystyle \lim_{N\to\infty} \textstyle\frac{\log N}{h_N}=\infty$ \\
         \Cl[c]{d3_green_on_diag} +\frac{3}{\pi}\lambda , &  if $\displaystyle\lim_{N\to\infty} \textstyle\frac{\log N}{h_N}=\lambda\in [0,\infty)$  
        \end{cases*}
        ,
  \end{equation}
where $\Cr{d3_green_on_diag} \coloneqq  g_{\Z^3}(0)$ denotes the Green's function at $0$ of the random walk on $\Z^3$ and $a_N \sim b_N$ means that the ratio tends to $1$ in the limit $N \to \infty$. In particular, the field $\varphi$ delocalizes whenever $h_N = o(\log N)$, i.e.~the variance $g_N^h(0)$ diverges as $N \to \infty$, whereas the latter case for $h_N$ describes a regime of uniform (in $N$) transience for the walk $X$, in which $\sup_N g_N(0)< \infty$.

\medskip

We now discuss applications of the above random walk results to the bond percolation problem induced by the excursion sets of the metric Gaussian free field on the slab. Various arm events for this model have been recently studied below the mean-field regime, in low dimensions $d \geq 3$ on $\Z^d$ \cite{ding_percolation_2020,drewitz_arm_2023, drewitz_critical_2024, cai_one-arm_2024, cai2025quasimultiplicativityregularitymetricgraph, cai2025heterochromatictwoarmprobabilitiesmetric, cai2025separation}  and more generally on a collection of transient graphs where the Green's function exhibits polynomial decay, see previous references, and also \cite{drewitz_cluster_2022,drewitz_critical_2023}.

To obtain the cable system, $\tSlab^h \equiv \tSlab$, of the slab $\Slab$, we first replace all edges $\{x_1,x_2\}$ by open intervals $I_{x_1,x_2}$ of length $(2\lambda_{x_1,x_2})^{-1}$ and glue them through their endpoints. We then attach a half-open interval $I_x$ of length $(2\kappa_x)^{-1}$ to each $x\in \mathbb{S}_N$ (see \cite{drewitz_critical_2024,lupu_loop_2016} for precise definitions). We endow $\tSlab$ with the natural geodesic distance which assigns length 1 to each interval. The chain $X$ can then be naturally extend to a Markov process on $\tSlab$ with continuous trajectories. We denote by $\tP_x$ the law of the canonical diffusion on $\tSlab$ when starting at $x\in \tSlab$.
 We denote by $\tg_N(\cdot,\cdot)$ the Green's function associated to this diffusion. Informally, one can construct a diffusion with law $\tP_x$ by running a Brownian motion starting at $x_1$ on $I_e$, where $x_1\in I_e$ and $e$ is an edge of the slab, until a vertex $x_2\in \mathbb{S}_N$ is reached. One then chooses uniformly at random an interval glued to the vertex $x_2$ and runs a Brownian excursion on the interval until a vertex is reached. This procedure is repeated until the process reaches the end of the open interval $I_x$ for some $x\in \mathbb{S}_N$. We refer to \cite[Section 2.1]{drewitz_cluster_2022} for a formal definition through its Dirichlet form. Note that by taking $X^{\mathbb{S}_N}$ to be the \emph{trace} process of $X$ on $\mathbb{S}_N$ (see the precise definition of $X^{\mathbb{S}_N}$ around \cite[(2.4))]{drewitz_cluster_2022}), it follows from \cite[Theorem 6.2.1]{fukushima_dirichlet_2010} that the law of $X^{\mathbb{S}_N}$ under $\tP_x$ is the same as the law of $X$ under $P_x$. In particular, this implies that $(\tg_N)_{\vert_{\mathbb{S}_N \times \mathbb{S}_N}}= g_N.$

We consider the Gaussian free field $(\phi_x)_{x\in\tSlab}$ with canonical law $\P_N$, which is the continuous centered Gaussian field with covariance function $\tg_N$. Its restriction to $\Slab$ is the field defined in \eqref{eq:GFF-disc}, which justifies the slight abuse of notation. Our main interest is in the function
\begin{equation} \label{eq:connectivity_function}
\begin{split}
 & \Th(R) \stackrel{\text{def.}}{=} \P_N^h\big(0 \leftrightarrow\partial B_R \big), 
 \quad  R \geq 0
  \end{split}
\end{equation}
where $B_R\subset \mathbb{S}_N$ refers to the ball of radius $R$ around $0$ in the metric $\|\cdot\|$ and for $U,V\subset \tSlab$, we denote by $\{U\leftrightarrow V\}$ the event that $U$ and $V$ are connected by a continuous path in $\tSlab \cap
\{ \varphi \geq 0 \}$. Recall our assumptions on $h(\cdot)$ from  above \eqref{def:slab}, which are in force throughout this paper. { In the sequel, we write $a_N\gg b_N$ and $a_N\gtrsim b_N$ to mean $\lim_N\frac{b_N}{a_N}=0$ and $\limsup_N \black\frac{b_N}{a_N}<\infty$ respectively.} The following theorem yields up-to-constant bounds for the function $ \Th(R)$ that manifest critical behaviour. We refer to the end of this introduction regarding our policy with constants $c,C$ etc. In the sequel $\Cl{C:range}<\infty$ is an arbitrary positive constant and $a \vee b =\max\{ a,b \}$, $a \wedge b =\min \{a,b\}$.

\begin{theorem} \label{thm:critical_connect} Let 
\begin{equation} \label{eq:def_fbox}
    \Fbox(R)\stackrel{\text{def.}}{=}R\wedge \frac{h_N}{K_0(\frac{R\vee h_N }{N})} \quad \text{(see \eqref{eq:K_0} for the function $K_0$).}
\end{equation}
    For all $N, R\geq 1$ such that ${R} \leq \Cr{C:range}N$, we have that
    \begin{equation}\label{eq:critical_one_arm}
        \Cl[c]{arm_lb} \big(g^h_N(0)\Fbox(R)\big)^{-\frac{1}{2}} \leq \Th(R) \leq \Cl{arm_ub} \big(g^h_N(0)\Fbox(R)\big)^{-\frac{1}{2}}.
    \end{equation}
    In particular, for all $N \geq 2$,
    \begin{equation}\label{critical_one_arm-macro}
 \begin{array}{cl}
 c\,  (\log N)^{-1/2} \leq \theta_{N}^h(N) \leq C\,  (\log N)^{-1/2},
    & \text{when }{ h_N \leq \log N},\\
      c \, h_N^{-1/2} \leq \theta_{N}^{h}(N) \leq C\,  h_N^{-1/2},
    & \text{when } {h_N \geq \log N}.
 \end{array}
  \end{equation}
\end{theorem}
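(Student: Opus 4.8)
The plan is to first reduce~\eqref{eq:critical_one_arm} to a ``capacity form'' and then establish the latter by adapting the metric-graph one-arm technology developed on $\Z^3$, and more generally on transient graphs. The macroscopic bounds~\eqref{critical_one_arm-macro} are then immediate: at $R=N$ one has $(R\vee h_N)/N=1$ since $h_N\leq N$, so $\Fbox(N)=N\wedge\bigl(h_N/K_0(1)\bigr)\asymp h_N$, and combining with~\eqref{eq:variance-GFF} -- which gives $g^h_N(0)\asymp(\log N)/h_N$ when $h_N\leq\log N$ and $g^h_N(0)\asymp1$ when $h_N\geq\log N$ -- one obtains $g^h_N(0)\Fbox(N)\asymp\log N$ in the first regime and $\asymp h_N$ in the second. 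For the reduction, the key fact I would establish from Theorem~\ref{thm:green_main} is that, uniformly in $N$, $h$, in $1\leq R\leq\Cr{C:range}N$, and in $x\in\partial B_R$,
\begin{equation}\label{eq:plan-cap}
  \Fbox(R)\ \asymp\ \capacity_{\Slab}(B_R)\ \asymp\ g^h_N(x)^{-1};
\end{equation}
then~\eqref{eq:critical_one_arm} reads $\Th(R)\asymp\bigl(g^h_N(0)\,\capacity_{\Slab}(B_R)\bigr)^{-1/2}$, equivalently $\Th(R)\asymp\sqrt{g^h_N(x)/g^h_N(0)}$ for $x\in\partial B_R$. The rightmost $\asymp$ in~\eqref{eq:plan-cap} follows from~\eqref{eq:green_estimate0intro} together with $a^{-1}\vee b^{-1}\asymp a^{-1}+b^{-1}$; the middle one from the standard bounds $\capacity_{\Slab}(B_R)\asymp|B_R|\big/\sum_{y\in B_R}g^h_N(y)$, the volume asymptotics $|B_R|\asymp R^3\wedge(R^2h_N)$, and~\eqref{eq:green_estimate0intro} again -- the point being that summing the modified-Bessel term of $g^h_N$ over $B_R$ produces the factor $K_0\bigl((R\vee h_N)/N\bigr)$ precisely when $R\gtrsim h_N$, which is what yields the minimum in~\eqref{eq:def_fbox}.

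For the lower bound I would run a second-moment argument. Let $e_{B_R}$ denote the equilibrium measure of $B_R$ (supported on $\partial B_R$) and set $W=\capacity_{\Slab}(B_R)^{-1}\sum_{x\in\partial B_R}\1\{0\leftrightarrow x\}\,e_{B_R}(x)\in[0,1]$. Lupu's sign-cluster description of the cable-graph free field (\cite{lupu_loop_2016}; see also~\cite{drewitz_cluster_2022}) gives, for $x\in\Slab$,
\[
  \P^h_N(0\leftrightarrow x)\ \geq\ \tfrac12\,\P^h_N\bigl(0\leftrightarrow x\text{ inside }\{\varphi\neq0\}\bigr)\ =\ \tfrac1\pi\arcsin\bigl(g^h_N(x)/g^h_N(0)\bigr)\ \geq\ c\,g^h_N(x)/g^h_N(0),
\]
which together with $\sum_{x\in\partial B_R}e_{B_R}(x)\,g^h_N(x)=1$ (valid since $0\in B_R$) yields $\E^h_N[W]\geq c\,\bigl(g^h_N(0)\,\capacity_{\Slab}(B_R)\bigr)^{-1}$. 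Since $\Th(R)=\P^h_N(W>0)\geq\E^h_N[W]^2/\E^h_N[W^2]$, it then suffices to establish the second-moment bound
\begin{equation}\label{eq:plan-2nd}
  \E^h_N[W^2]\ \leq\ C\,\Th(R)\,\E^h_N[W],
\end{equation}
which gives $\Th(R)\geq c\,\bigl(g^h_N(0)\,\capacity_{\Slab}(B_R)\bigr)^{-1/2}$. Spelled out, \eqref{eq:plan-2nd} is a two-point bound: $\P^h_N(0\leftrightarrow x,\,0\leftrightarrow x')$ is at most a product of a single-arm factor out to the dyadic scale at which $x$ and $x'$ separate and two roughly independent onward arms, summed over that scale, i.e.\ a quasi-multiplicativity statement for $\Th$, proved using~\eqref{eq:green_estimate0intro} and the cable-graph connection estimates of~\cite{lupu_loop_2016,drewitz_cluster_2022}; this is where essentially all of the work lies.

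For the matching upper bound I would adapt the arguments of~\cite{drewitz_critical_2024,cai_one-arm_2024}, whose relevant output is a quasi-monotonicity property of the one-arm probability of the form $\Th(R)\,\capacity_{\Slab}(B_R)^{1/2}\leq C\,\Th(\rho)\,\capacity_{\Slab}(B_\rho)^{1/2}$ for all $1\leq\rho\leq R$, obtained by conditioning on the cluster of $0$ inside $B_R$ -- equivalently, on the field outside $B_\rho$ via the decomposition of $\varphi$ on $B_\rho$ into an independent free field with zero boundary condition plus its harmonic extension -- and feeding the induced capacity obstruction into the loop-soup isomorphism. Applying this with $\rho=\rho_\ast$ a base scale -- of order $1$ when $g^h_N(0)\asymp1$, and of order $h_N$ in the delocalised regime $h_N=o(\log N)$, where a direct conditioning on the value of $\varphi_0$ shows $\Th(\rho)\asymp1$ at all scales $\rho\leq h_N$ -- and observing via~\eqref{eq:plan-cap} and~\eqref{eq:variance-GFF} that in both cases $\Th(\rho_\ast)\asymp\bigl(g^h_N(0)\,\capacity_{\Slab}(B_{\rho_\ast})\bigr)^{-1/2}$, one concludes $\Th(R)\lesssim\Th(\rho_\ast)\bigl(\capacity_{\Slab}(B_{\rho_\ast})/\capacity_{\Slab}(B_R)\bigr)^{1/2}\asymp\bigl(g^h_N(0)\,\capacity_{\Slab}(B_R)\bigr)^{-1/2}$.

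The main obstacle throughout is that $\Slab$ is genuinely inhomogeneous: below scale $h_N$ it behaves like $\Z^3$, with transient Green's function $\asymp\|x\|^{-1}$, whereas above scale $h_N$ it behaves like $\Z^2$ killed at spatial scale $N$, with Green's function $\asymp h_N^{-1}K_0(|y|/N)$. Both the quasi-multiplicativity input for~\eqref{eq:plan-2nd} and the quasi-monotonicity used for the upper bound must be pushed through this $2$D--$3$D crossover, and -- this being the whole point of allowing an arbitrary height profile $h$ -- with all implicit constants uniform in $h$: one runs the gluing and sprinkling constructions with the three-dimensional Green's function at scales $\rho\lesssim h_N$ and with the quasi-two-dimensional one at scales $\rho\gtrsim h_N$, their matching at scale $\rho\asymp h_N$ being exactly what the interpolation $\capacity_{\Slab}(B_R)\asymp\Fbox(R)$ between $R$ and $h_N/K_0(R/N)$ in~\eqref{eq:plan-cap} encodes. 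A final, routine point is the standard identification on the cable system of $\{\varphi\geq0\}$-, $\{\varphi>0\}$- and $\{\varphi\neq0\}$-connectivity, harmless by the local finiteness of $\{\varphi=0\}$.
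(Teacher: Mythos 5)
Your reduction to the capacity form \eqref{eq:plan-cap} is sound and matches the paper (it is the content of Proposition~\ref{prop:ball_cap} and Theorem~\ref{thm:green_main}), and the derivation of \eqref{critical_one_arm-macro} from \eqref{eq:critical_one_arm} is exactly right. But both halves of your argument for \eqref{eq:critical_one_arm} rest on inputs that you assert rather than prove, and these are precisely the hard part. For the lower bound, your second-moment scheme requires the quasi-multiplicativity bound \eqref{eq:plan-2nd} for the one-arm probability on $\Slab$, uniformly across the $2$d--$3$d crossover and uniformly in $h$; you correctly flag that ``this is where essentially all of the work lies,'' but nothing of the sort is available off the shelf (even on $\Z^d$ it is the subject of a separate paper, cited here as \cite{cai2025quasimultiplicativityregularitymetricgraph}). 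You are also missing the key simplification that makes the lower bound short on the cable graph: $\capacity_N(\mathcal{C})$ has an \emph{exactly known} law \eqref{eq:caplaw} (the $\arctan$ law of \cite{drewitz_cluster_2022}), and the single deterministic inclusion $\{\capacity_N(D_R)<\capacity_N(\mathcal{C})\}\subset\{0\leftrightarrow\partial B_R\}$ combined with $\capacity_N(D_R)\leq C\,\Fbox(R)$ gives the lower bound in one line, with no second moment and no quasi-multiplicativity. This is how the paper proceeds at the end of Section~\ref{sec:lower_bounds}.

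For the upper bound, the ``quasi-monotonicity'' $\Th(R)\capacity_N(B_R)^{1/2}\leq C\,\Th(\rho)\capacity_N(B_\rho)^{1/2}$ is essentially a restatement of what has to be proved, not an output you can import: on the slab, the argument of \cite{drewitz_critical_2024} requires (i) Green's function and capacity estimates \emph{with killing} on rough obstacle sets (Proposition~\ref{cor:killed_green}, Corollary~\ref{cor:killed_cap}), (ii) a Harnack inequality and lower deviation bounds for the capacity of the range of the walk (Corollary~\ref{C:harnack}, Proposition~\ref{lem:cap_walk_space_2}), and (iii) a genuinely new treatment of the term $P^{\lO}_{x'}(H_x<\infty)$ in \eqref{eq:no_hit_Obis}, which in the recurrent regime cannot simply be bounded by the unkilled Green's function as in the polynomial-decay setting (this is Lemma~\ref{lem:nohit_obstacle}, and the paper explicitly identifies it as the point where the existing arguments break). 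Your one-sentence plan (``feed the induced capacity obstruction into the loop-soup isomorphism'') names the right mechanism but does not address any of this, so as written the upper bound has a genuine gap. The correct target statement is also slightly different from yours: the paper proves the quantitative bound of Theorem~\ref{prop:one_arm} on $\P_N(0\leftrightarrow\partial B_R,\ \capacity_N(\mathcal{C}\cap B_R)<s\Fbox(R))$ via a multi-scale recursion in $s$, and then concludes by splitting on whether the cluster capacity is large (handled by \eqref{eq:caplaw}) or small (handled by Theorem~\ref{prop:one_arm}).
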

We start by making a few comments about Theorem~\ref{thm:critical_connect}; see Remark~\ref{R:final} for more.

\begin{remark}
\begin{enumerate}[label={\arabic*)}]
\item Since \eqref{eq:def_fbox} implies that $\Fbox(N)= c h_N$  (recall that $1 \leq h_N \leq N$), the bounds in \eqref{critical_one_arm-macro} follow immediately by combining \eqref{eq:critical_one_arm} and the on-diagonal behaviour of the Green's function from~\eqref{eq:variance-GFF} (see also Theorem~\ref{thm:green_main}). In the extremal case
$h_N=N$, corresponding to three spatial dimensions (cf.~the discussion following \eqref{eq:K_0}),
the above results match those obtained in \cite{drewitz_critical_2023, drewitz_critical_2024}, and separately in \cite{cai_one-arm_2024}, by which the decay is $N^{-\frac{d-2}{2}}$ on $\Z^d$ for $d=3$. When $h_N=1$, so that $\Slab$ is isomorphic to $\Z^2$ (cf.~\eqref{def:slab}), recent results of \cite{arXiv:2303.03782} yield $(\log N)^{-\frac12 + o(1)}$-behaviour for the corresponding annulus crossing probability by a critical two-dimensional Brownian loop soup cluster \cite{MR2045953}. Thus \eqref{critical_one_arm-macro} for $h_N=1$ corresponds to a strengthened version of \cite{MR2045953} on the cables, with up-to-constant bounds; see also \cite{gao2024percolationdiscretegffdimension,bi2025armeventscriticalplanar} regarding  events involving multiple arms, and \cite{MR2979861,MR3941462,MR4091511} for related CLE results. One could also generalize \eqref{eq:critical_one_arm} in replacing $0$ by $\partial B_r$ for $r < R$ by incorporating ideas from \cite{cai_one-arm_2024} though we won't pursue that here.
For generic choice of $h_N$ satisfying $ \log N \ll h_N \ll N$ as $N \to \infty$, \eqref{eq:def_fbox}-\eqref{critical_one_arm-macro} yield a host of new characteristic one-arm decays, corresponding to distinct long-range universality classes. 

\item The ``plateau'' regime, in which the one-arm probability no longer depends on $h_N$, corresponds to the regime in which $g_N(0)$ delocalizes; cf.~\eqref{eq:variance-GFF} and Fig.~\ref{F:plateau}. Perhaps somewhat surprisingly, the tail behaviour of the one-arm probability does not depend on the strength of the delocalization: for instance the variance $g\coloneqq g_N(0)$ of $\varphi_0$ is of order $(\log N)^\varepsilon$ for $\varepsilon \in (0,1)$ when $h_N= (\log N)^{1-\varepsilon} (\ll \log N)$, of order $(\log \log N)^{\alpha}$ when $h_N= \log N/(\log \log N)^{\alpha}$ for all $\alpha > 0 $ etc.~but this has no effect on connection probabilities. This is the case even though the capacity of the cluster of a point, which has an explicit law, identified in \cite[(3.8)]{drewitz_cluster_2022} (see also \eqref{eq:arctan_calculus} and \eqref{eq:caplaw} below), actually depends on the field precisely through $g$; see Remark~\ref{R:plateau} for more on this.

\end{enumerate}\end{remark}

\begin{figure}[ht] \centering
\includegraphics[width=0.6\textwidth]{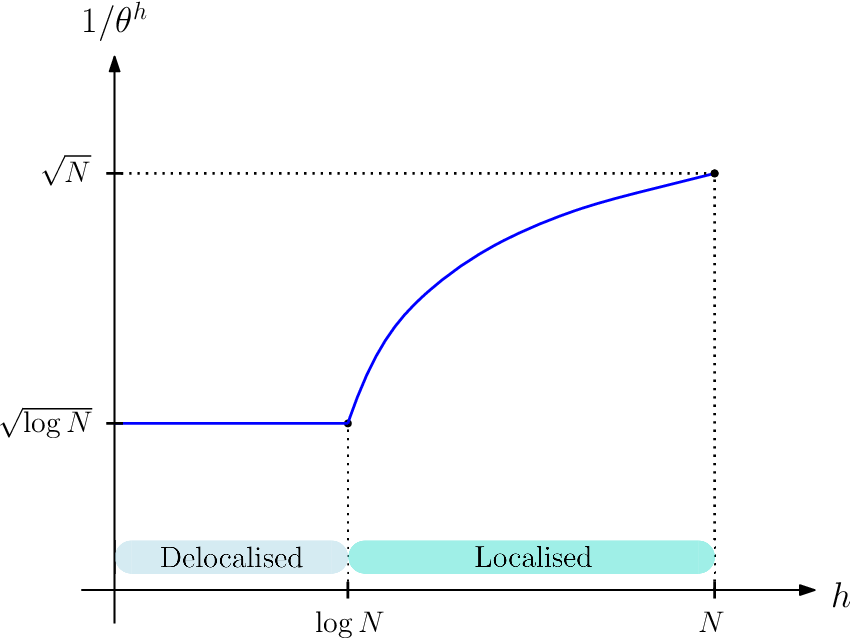} 
\caption{The map $h \mapsto \tfrac{1}{\theta^h}$ with $\theta^h\equiv \theta^h_N(N)$ in Theorem~\ref{thm:critical_connect}.}
\label{F:plateau}
\end{figure}

The proof of Theorem~\ref{thm:critical_connect} is split into two parts.
The lower bound in \eqref{eq:def_fbox} is proved at the end of Section~\ref{sec:lower_bounds}, the upper bounds in Section~\ref{sec:percolation}; see Theorem~\ref{prop:one_arm} for a more general upper bound. The proof does not distinguish between various regimes for $h$. It turns out that ideas from~\cite{drewitz_critical_2023, drewitz_critical_2024} (initially developed in the context of transient graphs with polynomial decay of $g_N$), when suitably extended to account for effects due to recurrence, are sufficiently robust to account for all cases $h, R, N$, including effectively two-dimensional regimes. 


Our approach is completely agnostic to the regime of parameters considered. In particular, it does not rely on any planar features available in (near-) two-dimensional regimes (including, among others, access to information about scaling limits of various related objects, whether or not loops ``surround'' a given set, duality considerations etc.). These tools either invariably fail or are at present out of reach when the ``effective dimension'' at scale $R$ becomes larger. Instead, we make use of so-called \emph{obstacles}, which are random and hard to avoid for the cluster cf.~\cite{drewitz_arm_2023, drewitz_critical_2024}; cf.~also \cite{RI-III,RI-I} in the context of sharpness results, where related ideas also appear. Obstacles play an important role in helping the cluster achieve certain features -- for instance, reach a certain capacity. 

Our arguments require a sufficiently fine understanding of the walk $X$ on $\Slab$, which occupies a significant proportion of this article.
 A key question is to understand precisely enough the extent to which two- vs. three-dimensional effects prevail for typical trajectories of $X$ at a given scale $R$. One basic observation rendering the analysis feasible is that this feature can be conveniently encoded \emph{topologically}, in terms of scenarios where trajectories of $X$ until the relevant time scale do or do not wrap around the torus, including possibly many times. This induces a decomposition of the field as
 \begin{equation}\label{eq:intro-decomp}
 \varphi\ \text{``$=$''} \ \varphi^{2d}+\varphi^{3d}
 \end{equation} 
 (cf.~\eqref{eq:projection} below for a precise statement) into corresponding orthogonal, i.e.~independent, components. The dichotomy stemming from the decomposition \eqref{eq:intro-decomp} is a recurrent theme, see for instance Propositions~\ref{thm:greens_function_3d} and~\ref{thm:greens_function_bound}, the splitting into two energy forms  $\mathscr{E}^i(\mu)$, $i=2,3$ in \eqref{eq:line_cap_two_prob},
etc.

\bigskip

One can in fact get more precise information on the function $\theta_N^h$ from \eqref{eq:connectivity_function} via the decomposition~\eqref{eq:intro-decomp} when the slab is sufficiently ``thin". To be more precise,
one should think of the slab at scale $R$ to be ``effectively two-dimensional" when $R\gg\frac{h_N\log R }{\log(N/(R\vee h_N))}$ (see Remark~\ref{rem:arm_condition},\ref{R:example_r} just below for examples of $R,h$ that satisfy this condition). Intuitively, this is the scale where the connection is distant enough so that the height $h_N$ is felt. In this regime, we are able to obtain the precise asymptotics of the function $\theta_N^h$ as $N\to\infty$. Let 
\begin{equation}\label{eq:f-infty}
    f_\infty(s)= \frac{1}{ \pi} \arctan\left[\frac{1}{\sqrt{s-1}}\right], \quad s>1.
\end{equation}
The $\arctan$ appearing in \eqref{eq:f-infty} is no stranger to the present percolation model; it describes for instance the tail of the capacity of a cluster, which was identified in \cite{drewitz_critical_2023, drewitz_cluster_2022} as having an explicit law on a large class of graphs. This is also the reason for its occurrence in the context of the following result.

\begin{theorem}\label{thm:arm_asymp}
If $N,R,h\geq1$ is such that $\lim_NR=\infty$ and $N\gg R\gg\frac{h_N\log R}{\log(N/(R\vee h_N))}$,
    \begin{equation}\label{eq:arm_main}
        \lim_N \frac{\theta_N^h(R) }{f_\infty(s_*)}=1,  
    \end{equation}
    where $ s_*=s_*(N,R,h) \stackrel{\text{def.}}{=}\frac{\pi}{3} \frac{g_N(0)h_N}{\log(N/(R\vee h_N))}$. 

\end{theorem}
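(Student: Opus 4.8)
\emph{Strategy.} The plan is to reduce $\theta_N^h(R)$ to the law of the capacity of the cluster $\cC_0\subset\tSlab$ of $0$ in $\{\varphi\ge0\}$, which was identified in \cite{drewitz_cluster_2022} (see also \eqref{eq:arctan_calculus}--\eqref{eq:caplaw} below): it depends on $\varphi$ only through $g_N(0)$ and has its tail given by $f_\infty$, namely $\P_N^h(\capacity(\cC_0)\ge\kappa)=f_\infty(g_N(0)\kappa)$ for $\kappa\ge g_N(0)^{-1}$ (with the convention $f_\infty\equiv\tfrac12$ on $(0,1]$). Granting the \emph{capacity transfer}
\begin{equation*}
\theta_N^h(R)=\P_N^h\big(\capacity(\cC_0)\ge\tfrac{\pi}{3}\Fbox(R)\big)\,(1+o(1)),
\end{equation*}
the theorem follows: by \eqref{eq:def_fbox} the minimum defining $\Fbox(R)$ is attained at $h_N/K_0(\tfrac{R\vee h_N}{N})$ --- this is exactly what the hypothesis $R\gg\tfrac{h_N\log R}{\log(N/(R\vee h_N))}$ guarantees --- and since $R\vee h_N\ll N$, Lemma~\ref{L:Bessel} gives $K_0(\tfrac{R\vee h_N}{N})\sim\log(\tfrac{N}{R\vee h_N})\to\infty$, whence $\tfrac{\pi}{3}g_N(0)\Fbox(R)\sim s_*$; as $f_\infty$ is continuous and strictly positive on $(1,\infty)$ with $f_\infty(1^+)=\tfrac12$, one has $f_\infty(u_N)\sim f_\infty(v_N)$ whenever $u_N\sim v_N$ with $u_N,v_N>1$, so the $(1+o(1))$ above together with $\tfrac\pi3 g_N(0)\Fbox(R)\sim s_*$ propagate to give $\theta_N^h(R)\sim f_\infty(s_*)$.

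\emph{Upper bound of the capacity transfer.} On $\{0\leftrightarrow\partial B_R\}$ the cluster $\cC_0$ contains a connected set joining $0$ to $\partial B_R$. Adapting the exploration-and-obstacle arguments of \cite{drewitz_arm_2023,drewitz_critical_2024} to $\Slab$, and using the a priori bound $\theta_N^h(R)\asymp(g_N(0)\Fbox(R))^{-1/2}$ from Theorem~\ref{thm:critical_connect} to calibrate error terms, one shows that, off an event of probability $o(\theta_N^h(R))$, this forces $\capacity(\cC_0)\ge(\tfrac{\pi}{3}-o(1))\Fbox(R)$. The constant is the effective conductance of $\Slab$ between scales $R$ and $N$: by the precise two-dimensional profile $\tfrac1{h_N}K_0(\tfrac{\cdot}{N})\sim\tfrac1{h_N}\log(\tfrac{N}{\cdot})$ of Theorem~\ref{thm:green_main} --- the three-dimensional term $\tfrac1{\|\cdot\|}$ in \eqref{eq:green_estimate0intro} being subleading in the thin regime, and split off cleanly via the decomposition \eqref{eq:intro-decomp} --- this conductance is $(\tfrac{\pi}{3}+o(1))\Fbox(R)$, and in the thin regime it is governed by the \emph{single} annulus between scales $R\vee h_N$ and $N$, so that the multi-scale spreading of capacity which costs constants in genuinely polynomial regimes does not occur. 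Thus $\theta_N^h(R)\le\P_N^h(\capacity(\cC_0)\ge(\tfrac{\pi}{3}-o(1))\Fbox(R))+o(\theta_N^h(R))$, which by \eqref{eq:caplaw} equals $f_\infty((\tfrac{\pi}{3}-o(1))g_N(0)\Fbox(R))(1+o(1))$.

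\emph{Lower bound of the capacity transfer.} By \eqref{eq:caplaw} one has $\capacity(\cC_0)\ge(\tfrac{\pi}{3}+o(1))\Fbox(R)$ with probability $f_\infty((\tfrac{\pi}{3}+o(1))g_N(0)\Fbox(R))$, and it suffices to show such a cluster reaches $\partial B_R$ with conditional probability $1-o(1)$. Here $R\gg\tfrac{h_N\log R}{\log(N/(R\vee h_N))}$ enters once more: it forces $\Fbox(R)\ll R/\log R$, so that in the thin regime a cluster of capacity $\asymp\Fbox(R)$ has ``extent'' comfortably exceeding $R$ --- two-dimensionally, capacity determines extent up to lower order \emph{irrespective of shape}, since by Theorem~\ref{thm:green_main} a connected set of extent $r\in[h_N,CR]$ has $\capacity\sim\tfrac{\pi}{3}h_N/\log(\tfrac{N}{r})$ whatever its geometry. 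Conditioning on $\cC_0$ via the strong Markov property of the cable-system free field (off $\cC_0$ the field is a zero-boundary free field) and, if $\cC_0$ itself falls short, using an obstacle near $\partial B_R$ to complete the connection through it --- the resistance of that last leg being $o(1/\capacity(\cC_0))$ --- a short second-moment argument gives the matching lower bound $\theta_N^h(R)\ge f_\infty((\tfrac{\pi}{3}+o(1))g_N(0)\Fbox(R))(1-o(1))$, completing the proof.

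\emph{Main obstacle.} The crux is this sharpening of the capacity/one-arm correspondence to leading order: the robust methods of \cite{drewitz_critical_2024,cai_one-arm_2024} control it only up to multiplicative constants, whereas here the constant $\tfrac{\pi}{3}$ must be pinned and all errors kept of size $o(\theta_N^h(R))=o((g_N(0)\Fbox(R))^{-1/2})$, uniformly over admissible $(N,R,h)$. What makes it tractable --- and dictates the precise form of the hypothesis on $R$ --- is that in the thin regime the problem at scale $R$ becomes single-scale: up to a $\varphi^{3d}$-perturbation negligible at that scale (cf.~\eqref{eq:intro-decomp}, \eqref{eq:green_estimate0intro}), $\varphi$ is governed by the one annulus between scales $R\vee h_N$ and $N$ where $g_N$ has the explicit logarithmic profile of Theorem~\ref{thm:green_main}; and since $\Fbox(R)\ll R/\log R$, the two-dimensional shape-independence of capacity makes $\{0\leftrightarrow\partial B_R\}\leftrightarrow\{\capacity(\cC_0)\gtrsim\Fbox(R)\}$ tight. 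Quantifying that shape-independence at the level of leading-order constants --- ruling out both anomalously low-capacity crossings and anomalously high-capacity clusters that fail to cross, uniformly in the parameters --- is the heart of the argument.
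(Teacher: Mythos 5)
Your high-level reduction is the right one: the exact capacity law \eqref{eq:caplaw} of \cite{drewitz_cluster_2022}, the observation that the hypothesis $R\gg\frac{h_N\log R}{\log(N/(R\vee h_N))}$ forces $\Fbox(R)=\frac{h_N}{K_0((R\vee h_N)/N)}\sim\frac{h_N}{\log(N/(R\vee h_N))}$, and the identification $\tfrac{\pi}{3}g_N(0)\Fbox(R)\sim s_*$ are all exactly the ingredients of Theorem~\ref{prop:arm_general}, of which Theorem~\ref{thm:arm_asymp} is a direct corollary. But both halves of your ``capacity transfer'' are asserted rather than proven, and the machinery you propose for them is not the mechanism that actually closes the argument.

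For the lower bound you invoke conditioning on $\cC_0$, an obstacle near $\partial B_R$ to ``complete the connection,'' and a second-moment argument. None of this is needed and the completion step is backwards: the relevant implication is deterministic. If $0$ is \emph{not} connected to $\partial B_R$ then $\cC\subset\tilde B_R\subset\tilde D_R$, hence $\capacity_N(\cC)\le\capacity_N(D_R)$ by monotonicity; contrapositively $\{\capacity_N(D_R)<\capacity_N(\cC)\}\subset\{0\leftrightarrow\partial B_R\}$ (the first inclusion in \eqref{eq:cluster_cap_comparison}), and $\capacity_N(D_R)\le\frac{\pi(1+\epsilon)}{3}\frac{h_N}{\log(N/(R\vee h_N))}$ by \eqref{eq:ball-cap-const}. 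No event of conditional probability $1-o(1)$ has to be controlled.

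For the upper bound — connection forces $\capacity_N(\cC)\ge(1-\epsilon)\tfrac{\pi}{3}\Fbox(R)$ — you propose ``exploration-and-obstacle arguments'' with errors calibrated to be $o(\theta_N^h(R))$. This is precisely the hard quantitative step your sketch does not supply, and it is not how the paper proceeds: there is no exceptional event at all. On $\{0\leftrightarrow\partial B_R\}$ one deterministically extracts from $\cC$ a chain of vertices $(x_n)$ with one point at every (horizontal or full) distance $n\le\hat R$, maps it onto the straight segment $\ell_{\hat R+1}$ by a projection $\tau$ that can only \emph{decrease} $\|\cdot\|$- and $|y|$-distances, hence only \emph{increase} the Green's function; comparing the energies $\mathscr{E}^2,\mathscr{E}^3$ of the uniform measure before and after projection (using \eqref{eq:g2-asymp-unif}, \eqref{eq:3d_tube_unif}, \eqref{eq:2d_line_cr0_unif}) gives $\capacity_N(\cC)\ge\capacity_N(\hat\ell_R)\ge(1-\epsilon)\capacity_N(\ell_R)$, and the hypothesis on $R$ is used exactly here to make the $\mathscr{E}^3$ (i.e.\ $\tfrac{\log R}{R}$) contribution negligible against $\tfrac{1}{h_N}K_0(\tfrac{R\vee h_N}{N})$, so that $\capacity_N(\ell_R)\sim\capacity_N(D_R)\sim\tfrac{\pi}{3}\tfrac{h_N}{\log(N/(R\vee h_N))}$ (Propositions~\ref{prop:line_cap},(ii) and \ref{prop:ball_cap},(ii)). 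Until you either carry out this deterministic projection/energy comparison or genuinely execute the $o(\theta)$-error obstacle scheme you allude to, the central step of your proof remains open.
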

We refer to Theorem~\ref{prop:arm_general} for a more general version of Theorem \ref{thm:arm_asymp}. We now highlight a few consequences of Theorem \ref{thm:arm_asymp}. In view of the elementary scaling $\arctan(x)\sim x$ as $x\to0$, the following corollary is a direct consequence of Theorem \ref{thm:arm_asymp} and \eqref{eq:variance-GFF}.

\begin{corollary} \label{cor:arm_const}
    Under the assumption of Theorem \ref{thm:arm_asymp}, if one further assumes $s_*\gg 1$ (as $N\to\infty$), then \begin{equation} \label{eq:cor_const}
        \lim_N \sqrt{\frac{g_N(0)h_N}{\log(N/(R\vee h_N))}} \theta^h_N(R) = \sqrt{\frac{3}{\pi^3}}.
    \end{equation}
\end{corollary}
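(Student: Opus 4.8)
The plan is to deduce \eqref{eq:cor_const} directly from Theorem~\ref{thm:arm_asymp} by linearising the $\arctan$ appearing in the definition \eqref{eq:f-infty} of $f_\infty$. Under the hypotheses of Corollary~\ref{cor:arm_const} those of Theorem~\ref{thm:arm_asymp} are satisfied, so $\theta^h_N(R)=(1+o(1))\,f_\infty(s_*)$ as $N\to\infty$, where $s_*=\frac{\pi}{3}\frac{g_N(0)h_N}{\log(N/(R\vee h_N))}$; it therefore suffices to find the asymptotics of $f_\infty(s_*)$ under the extra assumption $s_*\to\infty$ (which is exactly the hypothesis $s_*\gg1$, by the convention recalled before Theorem~\ref{thm:critical_connect}).

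First I would observe that $s_*\to\infty$ forces $1/\sqrt{s_*-1}=(1+o(1))/\sqrt{s_*}\to0$, so that the elementary expansion $\arctan(u)=u+O(u^3)$ as $u\downarrow0$, applied with $u=1/\sqrt{s_*-1}$, gives
\begin{align*}
  f_\infty(s_*)&=\frac{1}{\pi}\arctan\!\Big(\frac{1}{\sqrt{s_*-1}}\Big)=\frac{1+o(1)}{\pi\sqrt{s_*}}\\
  &=(1+o(1))\sqrt{\frac{3}{\pi^3}}\,\bigg(\frac{\log(N/(R\vee h_N))}{g_N(0)h_N}\bigg)^{1/2},
\end{align*}
where the last equality merely substitutes the definition of $s_*$ and collects constants. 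Combining this with $\theta^h_N(R)=(1+o(1))f_\infty(s_*)$ and rearranging is precisely \eqref{eq:cor_const}.

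Since all the analytic work is carried by Theorem~\ref{thm:arm_asymp}, the deduction itself poses no genuine difficulty; the one point deserving (mild) care is to confirm that the additional hypothesis $s_*\gg1$ is compatible with the range $N\gg R\gg\frac{h_N\log R}{\log(N/(R\vee h_N))}$ of that theorem, i.e.\ that the corollary is not vacuous. This is where \eqref{eq:variance-GFF} is used: it yields $g_N(0)h_N\asymp h_N\vee\log N$, so that $s_*\asymp\frac{h_N\vee\log N}{\log(N/(R\vee h_N))}$. Hence $s_*\gg1$ holds automatically once $h_N\gg\log N$ (as then $g_N(0)h_N\asymp h_N\gg\log N\ge\log(N/(R\vee h_N))$), whereas for $h_N=O(\log N)$ it becomes the genuine requirement $\log(N/(R\vee h_N))=o(\log N)$, i.e.\ that $R\vee h_N$ be $N^{1-o(1)}$; one checks directly that such choices of $R$ are still consistent with $R\gg\frac{h_N\log R}{\log(N/(R\vee h_N))}$, so both situations genuinely occur and the hypotheses of the corollary are non-trivial.
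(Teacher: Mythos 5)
Your proposal is correct and follows exactly the route the paper intends: the corollary is stated there as "a direct consequence of Theorem \ref{thm:arm_asymp} and \eqref{eq:variance-GFF}" via the elementary scaling $\arctan(x)\sim x$ as $x\to0$, which is precisely your linearisation of $f_\infty(s_*)$ followed by substitution of $s_*$. The constant bookkeeping $\frac{1}{\pi\sqrt{s_*}}=\sqrt{3/\pi^3}\,\bigl(\log(N/(R\vee h_N))/(g_N(0)h_N)\bigr)^{1/2}$ checks out, and your non-vacuousness discussion matches Remark~\ref{rem:arm_condition}.
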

In particular, \eqref{eq:cor_const} entails the following: if $\lambda_{g^2} \stackrel{\text{def.}}{=} \lim_N\frac{\log(N/h_N)}{h_N}\in [0,\infty]$ exists (the slightly enigmatic notation for this limit will become clear in Section~\ref{sec:green-slab}; the subscript $g^2$ refers to the relevant contribution to the Green's function here), then (cf.~\eqref{eq:variance-GFF} regarding $\Cr{d3_green_on_diag}$)
\begin{equation*}
\begin{array}{r@{}ll} 
\lim_N\sqrt{\frac{\log N}{\log(N/R)}}\,\theta^h_N(R) &=\frac{1}{\pi},
& \text{when }\lambda_{g^2}=\infty,\\[1em]
\lim_N\sqrt{\frac{h_N}{\log(N/(R\vee h_N))}}\,\theta^h_N(R)
&=\frac{1}{\pi}\sqrt{\frac{3}{\Cr{d3_green_on_diag}\pi}}(\lambda_{g^2}\frac{3}{\pi \Cr{d3_green_on_diag}}+1)^{-\frac{1}{2}},
& \text{when } \lambda_{g^2}\in[0,\infty).
\end{array}
\end{equation*}
\begin{remark} \label{rem:arm_condition}
\begin{enumerate}[label={\arabic*)}]
\item  If the height $h_N$ is sufficiently large, it is plausible that the slab can never be ``effectively two-dimensional" no matter the choice of $R$. Indeed, if $h_N\gtrsim \frac{N}{\log N} $, the condition $R\gg\frac{h_N\log R }{\log(N/(R\vee h_N))}$ cannot be met for any $R\leq \Cr{C:range}N$.

\item\label{R:_costly} The condition $\frac{g_N(0)h_N}{\log(N/(R\vee h_N))}\gg 1$ (from $s_* \gg 1$) appearing in Corollary \ref{cor:arm_const} corresponds to the regime where the scale $R$ is large enough so that the connection is actually costly. Conversely, if $\frac{g_N(0)h_N}{\log(N/(R\vee h_N))}\lesssim 1$, then $\liminf_N \theta^h_N(R)\geq c.$
A useful picture is that for $R\gg 1$, the connection is always expensive whenever the height $h_N$ is large, but this need not be true when $h_N$ is small.
In fact, $\frac{g_N(0)h_N}{\log(N/(R\vee h_N))}\gg 1$ always holds if $h_N\gg \log N $, whereas when $h_N\lesssim \log N $ one needs $R\gtrsim N^{1-o(1)}$ for the condition to be true. We return to this below (cf.~\eqref{eq:R_c}).

\item\label{R:example_r} We now give some explicit choices of $R$ that meet the conditions of Theorem \ref{thm:arm_asymp} and Corollary \ref{cor:arm_const} given an $h_N$. The intuition is that when $h_N\lesssim \log N$ one needs $R\gtrsim N^{1-o(1)}$ as discussed in \ref{R:_costly} and when $\frac{N}{\log N }\gg h_N\gg \log N $, one can pick $R$ to be larger than $h_N$ by a factor of $\log h_N $ for the slab to be ``effectively two-dimensional". Let $\alpha>1,\beta \in(0,1)$, the following pairs of $h_N,R$ satisfy the conditions of Theorem \ref{thm:arm_asymp} and Corollary \ref{cor:arm_const}:
\begin{equation*}
 \begin{array}{ll}
h_N\lesssim \log N , & R=N/\log N \\
h_N= \log(N)^\alpha, & R=\log(N)^\alpha\log\log N \\
h_N= N^\beta, & R=N^\beta\log N.
 \end{array}
 \end{equation*}
\end{enumerate}
\end{remark}

When the field is delocalized, we witness an intrinsic scale in the recurrent regime,
\begin{equation} \label{eq:R_c}
    R_c(s)=N^{1-\frac{1}{s}} \text{ , }s>1,
\end{equation}
along which $\theta_N^h(R_c(s))$ converges, as $N\to\infty$ to a function of $s$ only; moreover the scaling limit transitions from being order one to having polynomial decay as $s$ varies. In the special case $h_N=1$, results matching those displayed below are found in \cite[Theorem 1.3]{arXiv:2303.03782} for the corresponding continuum limit (to ease comparison observe that $f_{\infty}(s) = \frac1{2\pi}\int_{s-1}^\infty \frac1{\sqrt{t}(t+1)} dt$); noteworthily the precise limiting asymptotics are actually already present at the discrete level. 
\begin{corollary}[Intrinsic scale in the recurrent regime] \label{C:R-c}
  Under the assumption of Theorem \ref{thm:arm_asymp}, if one further assumes $\lim_N\frac{\log N }{h_N}=\infty$ and takes $R=R_c(s)$ as in \eqref{eq:R_c} for $s>1$, then
  \begin{equation} \label{eq:arm_s}  \lim_{s\uparrow\infty}\lim_N\sqrt{s}\,\theta^h_N(R) =\frac{ 1\black}{\pi} \quad\text{and}\quad
  \lim_{s\downarrow1}\lim_N\sqrt{s}\,\theta^h_N(R) =\frac{1}{2}.
    \end{equation}
\end{corollary}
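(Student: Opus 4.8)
The plan is to obtain Corollary~\ref{C:R-c} as a direct (and essentially computational) consequence of Theorem~\ref{thm:arm_asymp} together with the on-diagonal asymptotics~\eqref{eq:variance-GFF}, followed by two elementary asymptotic evaluations of the function $s\mapsto\sqrt{s}\,f_\infty(s)$.

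First I would check that, for every fixed $s>1$, the choice $R=R_c(s)=N^{1-1/s}$ is admissible for Theorem~\ref{thm:arm_asymp} under the delocalisation hypothesis $\lim_N\tfrac{\log N}{h_N}=\infty$ (equivalently $h_N=o(\log N)$); in particular this makes the statement of the corollary well-posed. Indeed $R_c(s)\to\infty$, and $N\gg R_c(s)$ since $1-\tfrac1s\in(0,1)$; moreover, since $h_N=o(\log N)$ while $R_c(s)$ is polynomial in $N$, one has $R_c(s)\vee h_N=R_c(s)$ for all large $N$, so that $\log\!\big(N/(R\vee h_N)\big)=\tfrac1s\log N$ and the right-hand threshold in Theorem~\ref{thm:arm_asymp} equals $\tfrac{h_N\log R}{\log(N/(R\vee h_N))}=(s-1)h_N=o(\log N)\ll R_c(s)$. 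Hence all the hypotheses hold.

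Next I would compute the limit of $s_*$ along this scale. Using $\log\!\big(N/(R\vee h_N)\big)=\tfrac1s\log N$ and, from~\eqref{eq:variance-GFF} in the delocalised case, $g^h_N(0)\sim\tfrac{3}{\pi}\tfrac{\log N}{h_N}$ (so that $g^h_N(0)h_N/\log N\to\tfrac3\pi$), we get
\[
  s_*=\frac{\pi}{3}\,\frac{g^h_N(0)\,h_N}{\log(N/(R\vee h_N))}
     =\frac{\pi s}{3}\cdot\frac{g^h_N(0)\,h_N}{\log N}\ \longrightarrow\ s
  \qquad\text{as }N\to\infty.
\]
Since $f_\infty$ is continuous on $(1,\infty)$ and $\theta^h_N(R)/f_\infty(s_*)\to1$ by Theorem~\ref{thm:arm_asymp}, it follows that $\lim_N\theta^h_N(R_c(s))=f_\infty(s)$, and therefore $\lim_N\sqrt{s}\,\theta^h_N(R_c(s))=\sqrt{s}\,f_\infty(s)$ for each fixed $s>1$.

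Finally I would evaluate the two outer limits in $s$. As $s\downarrow1$, $\arctan\!\big(1/\sqrt{s-1}\big)\to\tfrac\pi2$, hence $\sqrt{s}\,f_\infty(s)\to\tfrac1\pi\cdot\tfrac\pi2=\tfrac12$; as $s\uparrow\infty$, $\arctan\!\big(1/\sqrt{s-1}\big)=\tfrac1{\sqrt{s-1}}\big(1+o(1)\big)=\tfrac1{\sqrt s}\big(1+o(1)\big)$, hence $\sqrt{s}\,f_\infty(s)=\tfrac{\sqrt s}{\pi}\cdot\tfrac1{\sqrt s}\big(1+o(1)\big)\to\tfrac1\pi$. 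This gives~\eqref{eq:arm_s}. There is no substantive obstacle here; the only points requiring (minor) care are the admissibility check in the second step — which relies crucially on $h_N=o(\log N)$, so that the polynomial scale $R_c(s)$ dominates both $h_N$ and the threshold $(s-1)h_N$ — and the interchange of limits, which is harmless because the inner ($N\to\infty$) limit is an explicit continuous function of $s$.
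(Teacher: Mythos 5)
Your proposal is correct and follows essentially the same route as the paper: apply Theorem~\ref{thm:arm_asymp} together with \eqref{eq:variance-GFF} to see that $s_*\to s$ along $R=R_c(s)$, so $\theta^h_N(R_c(s))\to f_\infty(s)$, and then evaluate $\sqrt{s}\,f_\infty(s)$ in the two limits using $\arctan(x)\sim x$ as $x\to0$ and $\arctan(x)\to\tfrac{\pi}{2}$ as $x\to\infty$. The only difference is that you additionally verify the admissibility of $R_c(s)$ under the hypotheses of Theorem~\ref{thm:arm_asymp}, which the paper leaves implicit; this is a harmless (and welcome) extra check.
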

\begin{proof}
By \eqref{eq:arm_main} and \eqref{eq:variance-GFF}
\begin{equation}\label{eq:asymp-2d-easy}
    \theta_N^h(R_c(s))\sim \frac{1 \black}{\pi} \arctan\left[\left(\frac{\log(N)}{\log(N^{s^{-1}})}-1\right)^{-\frac{1}{2}}\right] \stackrel{\eqref{eq:f-infty}}{=}f_\infty(s),
\end{equation} 
from which \eqref{eq:arm_s} follows since $\arctan(x)\sim x$ as $x\to 0$ and $\arctan(x)\sim \frac{\pi}{2}$ as $x\to \infty$.
\end{proof}

We now briefly describe how this article is organized. In Section~\ref{sec:green-slab} we derive effective bounds  on the Green's function $g_N(x)$ from \eqref{eq:green_estimate0intro} (Theorem~\ref{thm:green_main}), along with precise asymptotics at large scales. In Section~\ref{sec:cap-slab} we use these to obtain certain capacity estimates, which play an important role in the sequel. When combined with ideas from \cite{drewitz_critical_2023}, these results are already enough to draw various conclusions about $\theta_N^h(\cdot)$, gathered in Section~\ref{sec:lower_bounds}. They include all of Theorem~\ref{thm:arm_asymp} (see Theorem~\ref{prop:arm_general} for a generalisation) as well as the lower bound of Theorem~\ref{thm:critical_connect}. The outstanding upper bounds are proved in Section~\ref{sec:percolation}. They partly require a more refined understanding of the walk that involves killed estimates.
This is related to the ``obstacles'' mentioned above, that can be very rough and induce a killing on their boundary when explored. These finer results, which are typically more strongly felt the lower the dimension (this aspect requires some care) are gathered separately in Section~\ref{eq:sec:kill}.

\medskip

In the sequel, we write $c,c^\prime,C,C^\prime$ for strictly positive constants which may change from line to line and write $c_i,C_i$ for strictly positive constants which will remain fixed. These constants have no dependence unless we explicitly say so. 

\section{The Green's function $g_N$ on the slab} \label{sec:green-slab}

We start by formally introducing the random walk $X$ whose Green's function is given by \eqref{def:green}. Recall the $\Slab$ from \eqref{def:slab}.
 We introduce non-negative weights $\lambda_{x,x'}=\lambda_{x',x}$ for $x,x'\in \mathbb{S}_N$ with $\lambda_{x,x'}= \tfrac16$ for $x=(y,z)$ and $x'=(y',z')$ if and only if either $|y-y'|=1$ or $z' = z \pm 1 \mod h_N$, where $|\cdot|$ denotes the usual Euclidean distance, and $\lambda_{x,x'}=0$ otherwise. We further introduce the killing measure $\kappa$ on $\mathbb{S}_N$ with $\kappa_x=  N^{-2}$ for all $x\in \mathbb{S}_N$. These choices imply that
$\lambda_{x}=\sum_{x^\prime} \lambda_{x,x^\prime}+\kappa_{x}=1 +N^{-2}$ for all $x\in \mathbb{S}_N$. We refer to as \emph{slab} the weighted graph
$(\mathbb{S}_N,\lambda,\kappa).$ Its edges are the pairs $\{x,x'\}$ for which $\lambda_{x,x'}\neq 0$; they correspond to the natural product graph structure on $\mathbb{S}_N$.  

The random walk $X=(X_t)_{t \geq 0}$ is the continuous-time Markov chain on $\mathbb{S}_N\cup \{\Delta\}$, where $\Delta$ is an absorbing cemetery state, which jumps from $x$ to $y$ at rate $\lambda_{x,y}$, with $\lambda_{x,\Delta}= \kappa_x$. 
We denote by $P_x, x \in \mathbb{S}_N$, the canonical law of $X$ starting from $x$, and by $\dX=(\dX_n)_{n \geq 0}$ its discrete-time skeleton. The killing time $\tau=\tau_N\in (0,\infty)$ is such that $X_t\in \mathbb{S}_N$ for all $0\leq t < \tau$ and $X_t\in \Delta$ for all $t\geq \tau$.

 For reference, we write $g_{\mathbb{Z}^3}(x)= \int_0^\infty \overline{P}_0(\overline{X}_t=x)$, $x \in \mathbb{Z}^3$, for the Green's function of the simple random walk $\overline{X}_{\cdot}$ with unit jump rate and canonical law $\overline{P}_x$ when $\overline{X}_0=x$. One classically knows that
 \begin{equation}\label{eq:green3d-normal}
 g_{\mathbb{Z}^3}(x) \sim \frac{3}{2\pi |x|} , \quad \text{as }  |x| \to \infty
 \end{equation}
 (see \cite[Theorem 1.5.4]{lawler_intersections_2013} for a proof), where $|\cdot|$ refers to the Euclidean norm on $\Z^3$.  With a slight abuse of notation, if $x=(y,z) \in \mathbb{S}_N$ we set $g_{\mathbb{Z}^3}(x) \coloneqq g_{\mathbb{Z}^3}((y,\hat{z}))$ (see below \eqref{eq:K_0} regarding $\hat{z}$).

  Our main result of this section is the following estimate on the Green's function $g_N^h(x,y)\equiv g_N^h(x-y)$ from \eqref{def:green}. Item~{(i)} below yields matching upper and lower bounds (up to multiplicative constants) uniform in $N \geq 1$ and all $x \in \mathbb{S}_N$ of interest. Item~{(ii)} yields precise asymptotics (including pre-factors) in the limit as $N \to \infty$. The restriction to a macroscopic range parametrized by the (arbitrary) constant $\Cr{C:range} \in [1,\infty)$ below allows for uniform estimates in the sequel. Recall that $h: (0,\infty) \rightarrow \R$ is any function such that $1\leq h(t)\leq t$ for $t>0$, and $h_N= \floor{h(N)}$ below. 

\begin{theorem} \label{thm:green_main} For all $N\geq 1$ and $x=(y,z) \in \mathbb{S}_N$, the following hold:
 \begin{enumerate}[label={(\roman*)}]
\item  For all $\Cr{C:range} \geq 1$ and $N,x$ such that ${\|x\|}\leq \Cr{C:range} N$, 
  \begin{equation}  \label{eq:green_estimate}
    \frac{\Cl[c]{d3_lb_green}}{\|x\|\vee 1} +
    \frac{\Cl[c]{d2_lb_green}}{h_N}
    K_0\left(\frac{|y|\vee h_N}{N}\right)
    \leq g_N(x) 
    \leq 
    \frac{\Cl{d3_ub_green}}{\|x\|\vee 1} +
     \frac{\Cl{d2_ub_green}}{h_N} K_0\left(\frac{|y|\vee h_N}{N}\right),
  \end{equation}
with $K_0(\cdot)$ as in \eqref{eq:K_0}.
\item 
If $\lim_N\frac{h_N}{N}=0$, $\lim_N \|x\|\in[0,\infty]$ and  $\limsup_N\frac{\|x\|}{N}<\infty$, then as $N\to\infty$, 
 \begin{equation} \label{eq:green_asymp_macx}
     g_N(x) \sim  \frac{3}{\pi} \frac{1}{h_N} K_0\left(\sqrt{6}\frac{|y|\vee h_N}{N}\right) + g_{\Z^3}(x)  \exp\bigg(-\sqrt{6}\frac{\|x\|}{N}\bigg) .    
 \end{equation}
\end{enumerate}
\end{theorem}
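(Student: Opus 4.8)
The plan is to study the walk $X$ on $\Slab = \Z^2 \times (\Z/h_N\Z)$ by exploiting the product structure and, crucially, the topological decomposition alluded to in the introduction: a trajectory of $X$ either does or does not ``wrap around'' the vertical torus. Concretely, I would couple $X$ with a random walk $\widehat X$ on $\Z^3$ (with the same jump rates and killing rate $N^{-2}$) via the covering map $\Z^3 \to \Slab$; then $g_N(x,x') = \sum_{k\in\Z} \widehat g(x, x' + k h_N e_3)$, where $\widehat g$ is the Green's function on $\Z^3$ with killing $N^{-2}$ and $e_3$ is the vertical unit vector. The $k=0$ term produces the three-dimensional contribution $g_{\Z^3}(x)e^{-\sqrt 6 \|x\|/N}$ (the exponential being the standard effect of the exponential killing on the $\Z^3$ Green's function at scale $N$, since $N^{-2}$ killing rate against unit-rate SRW with step variance giving the $\sqrt 6$), while the sum over $k\neq 0$ — trajectories that travel vertical distance $\gtrsim h_N$ — is, after resummation, essentially a walk that has ``equilibrated'' in the vertical coordinate and hence behaves like a two-dimensional object: summing $\widehat g((y,0),(y',kh_N))$ over $k$ gives the Green's function of a $\Z^2$-walk with killing, whose large-scale form is $\frac{1}{h_N}\times(\text{Bessel }K_0\text{ at scale }N)$, with the $h_N$ in the denominator coming from the normalization of the uniform measure on the $h_N$ vertical slots and the $3/\pi$ and $\sqrt 6$ prefactors from the local CLT for the horizontal projection.

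For part~(i), the bound \eqref{eq:green_estimate} should follow from this decomposition together with standard off-diagonal heat-kernel / Green's-function estimates: a local CLT for $X$ on $\Slab$ (or equivalently Gaussian heat-kernel bounds, available since $\Slab$ has polynomial volume growth and satisfies a parabolic Harnack inequality uniformly in $N$), integrated against the killing. One would split the time integral $\int_0^\infty p_t(0,x)e^{-N^{-2}t}\,dt$ at $t \asymp |y|^2$ and at $t\asymp h_N^2$: for $t \lesssim h_N^2$ the walk hasn't felt the torus and one gets the $\|x\|^{-1}$ three-dimensional piece, while for $t \gtrsim h_N^2$ the vertical coordinate is equidistributed and the horizontal marginal is a genuine $2$d Gaussian, producing (after integrating against $e^{-N^{-2}t}$, which cuts the integral at $t\asymp N^2$) the $\frac1{h_N}K_0(|y|/N)$ term; the $|y|\vee h_N$ inside $K_0$ and the $\|x\|\vee 1$ handle the short-distance regularization. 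Matching lower bounds follow from the corresponding Gaussian lower bounds on $p_t$ (chaining / Harnack).

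For part~(ii), the asymptotics, I would make the above heuristic quantitative. The $k=0$ term: use the sharp asymptotic \eqref{eq:green3d-normal} for $g_{\Z^3}$ together with a precise computation of the effect of the $N^{-2}$ killing, which multiplies by $e^{-\sqrt 6 \|x\|/N}(1+o(1))$ uniformly for $\|x\| \le \Cr{C:range}N$ — this is a Laplace-transform / resolvent computation using that the massive Green's function on $\Z^3$ with mass $m^2 = N^{-2}$ decays as $e^{-\sqrt 6\, m|x|}$ (the $\sqrt6$ is $1/\sqrt{\text{(step variance per coordinate)}}$ with the $1/6$ edge weights). The $k\neq 0$ terms: sum the $\Z^3$ massive Green's function over the vertical sublattice $h_N\Z$; as $h_N/N\to 0$ a Riemann-sum / Poisson-summation argument converts this into $\frac{1}{h_N}$ times the massive Green's function of the effective $2$d walk evaluated at horizontal displacement $y$, whose closed form is exactly $\frac{3}{\pi}K_0(\sqrt6 (|y|\vee h_N)/N)$ up to $1+o(1)$ (the $K_0$ arising as the $2$d massive Green's function, $\int_0^\infty (4\pi\sigma^2 t)^{-1}e^{-|y|^2/(4\sigma^2 t)}e^{-N^{-2}t}\,dt$). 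One must check the error terms in the local CLT are summable in $k$ and negligible, and handle uniformity over the regimes $\|x\| \to 0$, $\|x\|\asymp 1$, $\|x\|\to\infty$ with $\|x\|/N$ bounded.

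\textbf{Main obstacle.} The delicate point is making the ``vertical equilibration'' rigorous \emph{with sharp constants and uniformly in all three regimes simultaneously} — in particular controlling the crossover scale $t\asymp h_N^2$ where the walk transitions from $3$d to $2$d behaviour, ensuring the local-CLT error terms (which are only $O(t^{-1/2})$-type relative corrections on $\Z^3$) do not spoil the $1+o(1)$ in \eqref{eq:green_asymp_macx} after summing over the $\asymp N/h_N$ relevant values of $k$, and correctly tracking the interaction between this crossover and the killing cutoff at $t\asymp N^2$. The boundary case where $h_N/N \not\to 0$ is excluded in (ii), which helps, but the regime $h_N$ comparable to $N^{1-o(1)}$ still requires care. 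Getting the precise prefactors $3/\pi$ and $\sqrt6$ right — and consistent with the $h_N=1$ ($\Z^2$) and $h_N=N$ ($\Z^3$) limits quoted after \eqref{eq:K_0} — is where most of the real work lies; the periodic boundary condition is what makes the clean Poisson-summation identity $g_N = \sum_k \widehat g(\cdot\, + kh_Ne_3)$ available and is essential to the argument.
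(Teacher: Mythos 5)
Your proposal is correct and follows essentially the same route as the paper: the covering-map/winding-number decomposition $g_N=\lambda_x^{-1}(g_N^2+g_N^3)$ (the paper's \eqref{eq:projection}, with the $k=0$ term giving $g_{\Z^3}(x)e^{-\sqrt6\|x\|/N}$ via the Bessel-$K_{1/2}$ identity, and the $k\neq0$ sum giving $\tfrac{3}{\pi h_N}K_0(\sqrt6(|y|\vee h_N)/N)$ after the time-split at $t\asymp h_N^2$ and vertical equidistribution), with local-CLT error control exactly as you describe. The paper carries this out in Propositions~\ref{thm:greens_function_3d} and~\ref{thm:greens_function_bound} together with Lemma~\ref{prop:green_error_term} and the heat-kernel approximations of Appendix~\ref{A:HK}.
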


We first make a few observations about the previous theorem. We refer to \eqref{eq:g3-asymp-unif} and \eqref{eq:g2-asymp-unif} (in combination with \eqref{eq:projection}) below for stronger forms of \eqref{eq:green_asymp_macx} yielding asymptotics uniform in $x$ in appropriate regimes. We further refer to Remark~\ref{R:green-additional} at the end of this section for more comments about Theorem~\ref{thm:green_main}, in particular, concerning large-$N$ asymptotics that complement the regime covered by \eqref{eq:green_asymp_macx}, i.e.~when $h_N$ is asymptotically of order $N$. 

\begin{remark}\label{R:green}
\begin{enumerate}[label={\arabic*)}]
\item Item (i) above simplifies under the additional assumption {$\frac{h_N}{N}>c$.}
In this case,  for each $N\geq 1$ and $x=(y,z) \in \mathbb{S}_N$ satisfying $\frac{|y|}{N}\leq \Cr{C:range}$, one has that
  \begin{equation} \label{eq:green_edbound_hnn}
    {c}h_N^{-1} 
    \leq  h_N^{-1} K_0\textstyle\big(\frac{|y|\vee h_N}{N}\big) 
    \leq {C}h_N^{-1},
  \end{equation}
  which, in view of \eqref{eq:green_3d_bound} and \eqref{eq:green3d-normal}, yields a sub-leading contribution to $g_N(x)$ unless $\Vert x \Vert \asymp N$, and altogether yields a large-distance behavior (until macroscopic scale $N$) comparable to that of simple random walk on $\Z^3$.
  
\item \label{R:green-coro} The ``height effect" only switches on once the random walk has travelled a distance of order $\frac{h_N}{K_0(h_N/N)}$, and this naturally splits \eqref{eq:green_asymp_macx} into three regimes:
\begin{itemize}
\item ($2d$ regime). When $\displaystyle \lim_{N}   \textstyle  \frac{K_0(h_N/N)}{h_N  (\|x\|\vee 1) ^{-1}}=\infty$, one has
  \begin{equation*} 
        g_N(x) \sim \textstyle \frac{3}{\pi} \frac{1}{h_N} K_0\big(\sqrt{6}\tfrac{|y|\vee h_N}{N}\big).
      \end{equation*} 
\item(Intermediate~regime). When $\displaystyle \lim_{N}   \textstyle\frac{K_0(h_N/N)}{h_N (\|x\|\vee 1)^{-1}}=\lambda \in(0,\infty)$ and $ \ell_x \coloneqq \displaystyle \lim_{N}   \textstyle \|x\|  \in [0,\infty]$, one has
    \begin{equation*}
        g_N(x) \sim
        \begin{cases*}  
        g_{\Z^3}(x)+\frac{3\lambda}{\pi}  \frac{1}{\|x\| \vee 1}, & $\ell_x<\infty$\\
         \big(\frac{3}{2\pi}\exp\big(-\sqrt{6}\frac{\|x\|}{N}\big) 
         +\frac{3\lambda}{\pi}\big)
         \frac{1}{\|x\|} , & $\ell_x=\infty$ 
        \end{cases*},
    \end{equation*}
    where we used \eqref{eq:green3d-normal}.
    \item ($3d$ regime). When $\displaystyle \lim_{N}   \textstyle\frac{K_0(h_N/N)}{h_N (\|x\|\vee 1)^{-1}}=0$ and $ \ell_x \coloneqq \displaystyle \lim_{N}   \textstyle \|x\|  \in [0,\infty]$, one has
    \begin{equation*}
        g_N(x) \sim
        \begin{cases*}
        g_{\Z^3}(x), & $\ell_x<\infty$ \\
           \frac{3}{2\pi \|x\|} \exp\big(-\sqrt{6}\frac{\|x\|}{N}\big) , & $\ell_x=\infty$. 
        \end{cases*}
    \end{equation*}
\end{itemize}

In particular, these asympstotics are readily seen to imply the large-$N$ behaviour for $g_N(0)$ asserted in \eqref{eq:variance-GFF}, with $\Cr{d3_green_on_diag} =  g_{\Z^3}(0)$: note to this effect that when $x=0$, owing to the fact that $K_0(t)\sim\log(\frac{1}{t})$ as $t\to 0^+$, the ratio defining the three above regimes simplifies to $ \textstyle\frac{\log N}{h_N}$.
\end{enumerate}
\end{remark}

The proof of Theorem \ref{thm:green_main} crucially relies on measuring the joint effects of the projections of $(X_t)_{t\geq0}$
onto ``horizontal'' (planar) and ``vertical'' directions. For this we introduce some further notation. We denote by $\Pzd{y}$ the canonical law of the simple random walk on $\Z^2$ with jump rate $\frac{2}{3}$ starting at $y\in\Z^2$ and denote by $\Pz{z}$ the canonical law of the simple random walk on $\Z$ with jump rate $\frac{1}{3}$  starting at $z\in\Z$. 
We write $(Y_t)_{t\geq 0}$ and $(Z_t)_{t\geq 0}$ for the canonical process respectively. Let $\pi_N: \mathbb{Z} \to \Z/h_N\Z$ denote the canonical projection. 
In view of our setup above \eqref{def:green}, one readily obtains that for all $(y,z) \in \mathbb{S}_N$, with $\hat z \in \pi_N^{-1}(\{ z\}) $ (a point in $\mathbb{Z}$) as defined below \eqref{eq:K_0}, that
\begin{equation}
\label{eq:X-decomp}
\text{$(Y_{t }, \pi_N(Z_{t }))_{0 \leq t < \tau}$ under $\Pzd{y} \otimes \Pz{\hat{z}}$ has the same law as $(X_t)_{0 \leq t <  H_\Delta}$ under $P_{(y, z)}$,}
\end{equation} 
where $\tau$ in the previous display is an exponential variable of mean $N^{-2}$ independent of $(Y,Z)$ and $H_\Delta$ is the hitting time of $\Delta$ (cf.~above \eqref{eq:green3d-normal}). Applying \eqref{eq:X-decomp} to \eqref{def:green}, the Green's function $g_N$ naturally splits into a ``topologically trivial'' part and one that witnesses the (periodic boundary) at height $h_N$, as 
  \begin{align} \label{eq:projection}
    \begin{split}
    g_N(x)={\lambda}_x^{-1}(g_N^2(x) +g_N^3(x)),  \quad  x \in \mathbb{S}_N,
    \end{split}
  \end{align}
  where, abbreviating $P= \Pzd{0} \otimes \Pz{0}$, for all $x=(y,z) \in \mathbb{S}_N$, we have set
    \begin{align} \label{eq:def_g3}
      &g_N^3(x) \stackrel{\text{def.}}{=}  \int_0^\infty P \big((Y_t,Z_t)=(y,\hat{z})\big) \exp\{-{t}/{N^2}\} \,dt, \text{ and}\\
& \label{eq:def_g2}
      g_N^2(x) \stackrel{\text{def.}}{=}  \int_0^\infty \Pzd{0} (Y_t=y) \exp\{-{t}/{N^2}\}
      \sum_{k\in \mathbb{Z} \setminus\{0\}}  \Pz{0}(Z_t=\hat{z}+  kh_N)  
     \,dt.
  \end{align}
We will soon see that the two terms in (\ref{eq:green_estimate}) come exactly from the decomposition in \eqref{eq:projection}, in particular the $g^3_N$ term corresponds to the polynomial contribution while the $g_N^2$ term corresponds to the logarithmic contribution. For ease of reading, some of the computations below have been gathered in Appendix~\ref{A:HK}. The presence of the factor ${\lambda}_x^{-1}$ in \eqref{eq:projection} is insignificant since our choice of normalization for the weights below \eqref{def:slab} imply that $\lambda_x = 1+ N^{-2} \sim 1$ as $N \to \infty$. To start with, it is straightforward to see that $g_N^3(x)$ in \eqref{eq:def_g3} is simply the Green's function for simple random walk on $\Z^3$ with an independent exponential killing. We have:

\begin{proposition}  \label{thm:greens_function_3d}
For all $N\geq 1$ and $x\in \mathbb{S}_N$, the following hold:
  \begin{enumerate}[label={(\roman*)}]
\item when $\|x\|\leq \Cr{C:range}N$,
  \begin{equation}  \label{eq:green_3d_bound}
    \frac{\Cr{d3_lb_green}}{\|x\|\vee 1} 
    \leq g_N^3(x) 
    \leq 
    \frac{\Cr{d3_ub_green}}{\|x\|\vee 1};
  \end{equation}
 \item  if {$\lim_{N}\|x\| \in [0,\infty]$ exists}, then as $N\to\infty$,
    \begin{equation} \label{eq:green_3d_asymp}
        g_N^3(x) \sim 
        g_{\Z^3}(x)\exp\big\{-\sqrt{6}{\|x\|}/{N}\big\} \stackrel{\textnormal{def.}}{=} \bar g_N^3(x).
    \end{equation}
   Moreover, for all $R \geq 1$,
    \begin{equation}\label{eq:g3-asymp-unif}
 \sup_{\Vert x \Vert \geq R}\left| \frac{g_N^3(x)}{\bar g_N^3(x)}-1\right| \le C R^{-c}.
\end{equation}
\end{enumerate}
\end{proposition}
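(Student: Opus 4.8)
\textbf{Proof plan for Proposition~\ref{thm:greens_function_3d}.}
The plan is to identify $g_N^3(x)$ explicitly with a killed $\Z^3$-Green's function and then run standard random-walk asymptotics. Recall from \eqref{eq:def_g3} that $g_N^3(x)=\int_0^\infty P((Y_t,Z_t)=(y,\hat z))e^{-t/N^2}\,dt$ where $(Y,Z)$ is simple random walk on $\Z^2\times\Z=\Z^3$ with the $\frac23,\frac13$ split of the total jump rate $1$; since that product chain \emph{is} the continuous-time simple random walk on $\Z^3$ with unit jump rate, $g_N^3(x)$ is precisely the Green's function of $\Z^3$-walk killed at rate $N^{-2}$, evaluated at $(y,\hat z)$. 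So the whole statement is a statement about $\Z^3$ only; the slab and $h_N$ have dropped out. Writing $p_t(w)=\overline P_0(\overline X_t=w)$ for the $\Z^3$ heat kernel, we have $g_N^3(x)=\int_0^\infty p_t(\hat x)e^{-t/N^2}\,dt$ with $\hat x=(y,\hat z)$, $|\hat x|=\|x\|$.

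For part~(i), the upper bound: split the integral at $t=N^2$. On $t\le N^2$, drop the exponential and bound by $\int_0^\infty p_t(\hat x)\,dt = g_{\Z^3}(\hat x)\le C/(\|x\|\vee 1)$ using the classical bound $g_{\Z^3}(w)\asymp (|w|\vee1)^{-1}$ (from \eqref{eq:green3d-normal} plus finiteness and positivity of $g_{\Z^3}$ near $0$). On $t>N^2$, use the on-diagonal heat-kernel bound $p_t(\hat x)\le p_t(0)\le Ct^{-3/2}$ (local CLT / Gaussian upper bound) together with $e^{-t/N^2}$ to get $\int_{N^2}^\infty Ct^{-3/2}e^{-t/N^2}\,dt\le CN^{-1}$, which is $\le C/(\|x\|\vee1)$ when $\|x\|\le\Cr{C:range}N$ — this is exactly where the hypothesis $\|x\|\le\Cr{C:range}N$ enters. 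For the lower bound: restrict the integral to $t\le N^2$, on which $e^{-t/N^2}\ge e^{-1}$, so $g_N^3(x)\ge e^{-1}\int_0^{N^2}p_t(\hat x)\,dt = e^{-1}\big(g_{\Z^3}(\hat x)-\int_{N^2}^\infty p_t(\hat x)\,dt\big)$; bound the tail again by $CN^{-1}$ and note $g_{\Z^3}(\hat x)\ge c/(\|x\|\vee1)\ge cN^{-1}$ (for $\|x\|\le\Cr{C:range}N$), so subtracting the tail leaves a term still $\gtrsim 1/(\|x\|\vee1)$ after possibly shrinking $c$; a small separate argument (or direct inspection) handles the bounded range $\|x\|\le C_0$ where $g_{\Z^3}(\hat x)\asymp1$.

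For part~(ii), I expect to use the precise large-$|x|$ asymptotics of the \emph{massive} Green's function on $\Z^3$. One clean route: the Laplace-transform identity $\int_0^\infty p_t(w)e^{-t/N^2}\,dt$ is, up to the jump-rate normalization, the $\Z^3$-Green's function with a mass term, whose long-distance asymptotics are $\sim g_{\Z^3}(w)e^{-m|w|}$ with mass $m=m_N$ determined by the killing rate; matching constants, $m_N/\,(1/N)\to\sqrt6$ because the covariance of a single $\Z^3$ step has the isotropic value yielding the $\sqrt6$ (the factor $\sqrt6$ is consistent with the Bessel scaling $K_0(\sqrt6\,\cdot)$ elsewhere in the paper). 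Concretely I would write $g_N^3(x)=\int_0^\infty p_t(\hat x)e^{-t/N^2}\,dt$, insert the local CLT approximation $p_t(\hat x)\approx (\tfrac{3}{2\pi t})^{3/2}e^{-3\|x\|^2/(2t)}$ valid for $t\gtrsim \|x\|$, and evaluate the resulting integral $\int_0^\infty (\tfrac{3}{2\pi t})^{3/2}e^{-3\|x\|^2/(2t)-t/N^2}\,dt$ by the saddle point at $t_*\asymp N\|x\|$: this gives exactly $g_{\Z^3}(\hat x)e^{-\sqrt6\,\|x\|/N}$ with $g_{\Z^3}(\hat x)\sim \tfrac{3}{2\pi\|x\|}$, i.e.~$\bar g_N^3(x)$. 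The contribution from small $t\lesssim\|x\|$ is negligible by Gaussian tail bounds. To upgrade to the uniform bound \eqref{eq:g3-asymp-unif}, I would carry quantitative error terms in the local CLT ($p_t(w)=(\tfrac{3}{2\pi t})^{3/2}e^{-3|w|^2/(2t)}(1+O(|w|^{-c})+O(t^{-c}))$ type bounds, e.g.\ from \cite{lawler_intersections_2013}) through the saddle-point estimate; since the saddle sits at $t_*\asymp N\|x\|\gtrsim\|x\|\gtrsim R$, all error factors are $O(R^{-c})$, giving the claimed $CR^{-c}$ uniformly over $\|x\|\ge R$ (the constraint $\|x\|\lesssim N$, if needed for the lower range of the sup, is implicit or can be added; for $\|x\|\gg N$ both sides are exponentially small and the ratio is still controlled).

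\textbf{Main obstacle.} The genuinely delicate part is the \emph{uniform} asymptotic \eqref{eq:g3-asymp-unif}: one must control the saddle-point evaluation of $\int_0^\infty p_t(\hat x)e^{-t/N^2}\,dt$ with \emph{relative} error $O(R^{-c})$ simultaneously for all $\|x\|\ge R$, including the regime $\|x\|\asymp N$ where the Gaussian factor and the killing factor are both of order one and the saddle is broad, and the regime $\|x\|\gg N$. This requires a careful quantitative local CLT with the right error scaling and a clean Laplace-type asymptotic lemma; I would isolate the latter as a separate computation (deferred to Appendix~\ref{A:HK}), reducing everything to a one-dimensional integral asymptotic with explicit error control.
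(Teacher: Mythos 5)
Your identification of $g_N^3$ as the Green's function of the unit-rate $\Z^3$ walk with independent exponential killing at rate $N^{-2}$ is exactly the paper's starting point, and the architecture of your part~(ii) — replace the heat kernel by its Gaussian surrogate with quantitative local-CLT error control, then evaluate a one-dimensional integral in $t$ — matches the paper's. The substantive difference is how that integral is evaluated: rather than a saddle-point/Laplace analysis, the paper computes $\int_0^\infty(\tfrac{3}{2\pi t})^{3/2}e^{-3\|x\|^2/(2t)-t/N^2}\,dt$ \emph{in closed form} using the integral representation \eqref{eq:bessel_integral} of $K_\nu$ together with the elementary identity $K_{1/2}(u)=\sqrt{\pi/2u}\,e^{-u}$ (see \eqref{eq:g3_hk}), which yields $\tfrac{3}{2\pi\|x\|}e^{-\sqrt6\|x\|/N}$ exactly, with no asymptotic error to track. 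The entire error budget is then carried by the single $L^1$ bound $\int_0^\infty|p(x,t)-q(x,t)|\,dt\le C\|x\|^{-3}$ of \eqref{eq:g3_claim} (Proposition~\ref{prop:lclt_approx_y} with $d=3$, $M=0$), which simultaneously delivers (i) for $x\neq0$, the asymptotics \eqref{eq:green_3d_asymp} when $\|x\|\to\infty$ (dominated convergence handles $\lim_N\|x\|<\infty$), and the uniform bound \eqref{eq:g3-asymp-unif}; your remark that the sup there implicitly lives in the range $\|x\|\lesssim N$ is apt, since for $\|x\|\gg N$ the polynomial error is not small relative to the exponentially small $\bar g_N^3$. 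Your saddle-point route would work, but the closed-form Bessel computation removes the ``main obstacle'' you identify.

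One step in your part~(i) fails as written: the lower bound via ``restrict to $t\le N^2$ and subtract the tail''. You bound the tail $\int_{N^2}^\infty p_t(\hat x)\,dt$ above by $CN^{-1}$ and $g_{\Z^3}(\hat x)$ below by $cN^{-1}$, but $c$ and $C$ are unrelated, so the difference has no sign. Worse, for $\|x\|\asymp\Cr{C:range}N$ with $\Cr{C:range}$ large, the quantity $\int_0^{N^2}p_t(\hat x)\,dt$ you are trying to lower-bound is genuinely smaller than $g_{\Z^3}(\hat x)$ by a factor exponentially small in $\Cr{C:range}^2$ (the Gaussian factor $e^{-3\|x\|^2/(2t)}$ suppresses all $t\le N^2$ once $\|x\|>N$), so no subtraction of crude two-sided bounds can recover it. The repair is easy: keep the killing factor and integrate over a window $t\in[\|x\|^2\vee1,\,2(\|x\|^2\vee 1)]$, where $e^{-t/N^2}\ge e^{-2\Cr{C:range}^2}$ and $p_t(\hat x)\ge c\,t^{-3/2}$ by a lower local CLT, giving $c(\Cr{C:range})(\|x\|\vee1)^{-1}$ directly; alternatively, follow the paper, where the exact main term $\tfrac{3}{2\pi\|x\|}e^{-\sqrt6\|x\|/N}\ge c(\Cr{C:range})\|x\|^{-1}$ dominates the $C\|x\|^{-3}$ error once $\|x\|\ge C'(\Cr{C:range})$, with small $\|x\|$ and $x=0$ handled by a direct short-time argument.
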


\begin{proof}
We start by recalling two useful identities involving Bessel functions. For $\nu\geq 0,\beta,\gamma>0,$ one has the following integral representation of $K_\nu$, the $\nu$-th order modified Bessel function of the second kind (see \cite[page 368, (3.471.9)]{gradshteyn_table_2014}),
\begin{equation} \label{eq:bessel_integral}
    K_\nu(2\sqrt{\beta\gamma})=\frac{1}{2}\left(\frac{\beta}{\gamma}\right)^{-\frac{\nu}{2}}\int_0^\infty s^{\nu-1}\exp{\{-\beta/s-\gamma s\}}\,ds.
\end{equation}
In the special case of $\nu=\frac{1}{2}$, one has the following explicit formulation (see \cite[page 925, (8.469.3)]{gradshteyn_table_2014}),
\begin{equation} \label{eq:bessel_half}
    K_{1/2}(x)=\sqrt{{\pi}/{2x}}\exp{\{-x\}}.
\end{equation}

For $x\in\mathbb S_N$, let $p(x,t)\coloneqq \big(\frac{3}{2\pi t}\big)^{3/2} \exp\big\{-3\frac{\|x\|^2}{2t}-\frac{t}{N^2}\big\}$. Using the substitution $s=\frac{3\|x\|^2}{2t}$, we have that
\begin{align} \label{eq:g3_hk}
\begin{split}
    \int_0^\infty p(x,t) \,dt 
    = &\frac{3^{1+1/4}}{2^{1/4}(\pi^3\|x\|N)^{1/2}} \times \frac{1}{2} (\frac{3\|x\|^2}{2N^2})^{-\frac{1}{4}}\int_0^\infty  s^{-1/2} \exp\left\{-s-\frac{3\|x\|^2}{2N^2s}\right\} \,ds \\  
    \stackrel{(\ref{eq:bessel_integral})}{=} &\frac{3^{1+1/4}}{2^{1/4}(\pi^3\|x\|N)^{1/2}} \times K_{1/2}\bigg(2\sqrt{\frac{3\|x\|^2}{2N^2}}\bigg) 
    \stackrel{(\ref{eq:bessel_half})}{=} \frac{3}{2\pi \|x\|} \exp\left\{-\sqrt{6}\frac{\|x\|}{N}\right\} .
\end{split}
\end{align}
Hence in order to deduce \eqref{eq:green_3d_bound} for $x \neq 0$ and \eqref{eq:green_3d_asymp} in the case $\lim_{N}\|x\|=\infty$ \black 
at once (note in the latter case that $\tfrac{3}{2\pi \|x\|}$ corresponds to the asymptotics of $g_{\Z^3}(x)$ as $\|x\| \to \infty$ on account of \eqref{eq:green3d-normal}), it suffices to show there exists $C \in (0,\infty)$ such that, for all $N\geq 1$ and $x=(y,z) \in \mathbb{S}_N \setminus \{ 0\}$, abbreviating $q(x,t) \coloneqq P \big((Y_t,Z_t)=(y,\hat{z})\big) e^{-t/N^2}$,
\begin{equation} \label{eq:g3_claim}
  \int_0^\infty \big\lvert
   p(x,t) 
  - q(x,t)
  \big\rvert  \,dt \leq 
  \frac{C}{\|x\|^3},
\end{equation}
which follows using (\ref{eq:lclt_approx_y}) with the choices $d=3,r=1,M=0$ upon noting that $(Y,Z)$ introduced above \eqref{eq:X-decomp} has the law of a unit rate simple random walk on $\Z^3$ started at the origin (this law is $P_0^{3,1}$ in the language of Appendix~\ref{A:HK}). As for \eqref{eq:green_3d_asymp} in the case $\lim_{N}\|x\| <\infty$
, one simply observes from \eqref{eq:def_g3} using {dominated convergence} that $g_N^3(x) \to g_{\mathbb{Z}^3}(x)$ as $N \to \infty$ (see below \eqref{eq:green3d-normal} for notation). 

It remains to explain \eqref{eq:green_3d_bound} when $x=0$. The lower bound is immediate by focusing on $t\in [0,1]$ in \eqref{eq:def_g3} and deriving a straightforward lower bound, uniform in $N$, on the probability that the walk hasn't jumped or been killed by time $t=1$. For the upper bound, Proposition~\ref{prop:lclt_approx_y} (see below (\ref{eq:lclt_approx_y})) further yields that $ \int_1^\infty \lvert p(0,t) - q(0,t) \rvert dt \leq C$. The claim now follows by bounding $g_N^3(0) \leq C + \int_1^\infty q(0,t) dt$, applying this bound, and noting that $t^{-3/2}$ is integrable at infinity. The bound \eqref{eq:g3-asymp-unif} follows by inspection of the above proof of \eqref{eq:green_3d_asymp}  (see, in particular, \eqref{eq:g3_claim}) and \eqref{eq:green3d-normal}.\black
\end{proof}

We turn our focus to the more interesting term $g_N^2$ in \eqref{eq:projection}, which comprises trajectories with a non-trivial winding number (i.e.,~with $|k| \geq 1$ below). The following key lemma controls the contribution of the 
integral in $g_N^2$ when $t$ is ``not too large.'' Although we are ultimately interested in setting $h=h_N$ (cf.~\eqref{def:slab}), the following result is best stated in terms of the (scalar) height parameter $h \geq 1$.

\begin{lemma} \label{prop:green_error_term}
 For $ M\geq 2,h\geq 1$ and $(y,z)\in \mathbb{Z}^2 \times (\Z / h \Z)$, letting  
\begin{equation*}
    E_ M\stackrel{\text{def.}}{=} \int_0^{Mh^2} \sum_{k\in \mathbb{Z} \setminus\{ 0 \}}   P \big((Y_t,Z_t)=(y,\hat{z}+ k h)\big)  \,dt,
\end{equation*}
we have that
\begin{equation} \label{eq:en_prop}
  {E_M}\leq \Cl{en_error}\bigg(\frac{M}{h\vee |y|} { + \frac{1}{h^2\sqrt{h\vee |y|}}} +     M \black\bigg(\frac{M}{h}\vee 1\bigg)\exp\left\{-c\frac{h\vee |y|}{M}\right\}\bigg).
\end{equation}
\end{lemma}
\begin{proof}
Abbreviating $I_k= (y,\hat{z}+ kh)$, we will first show that for all $M,h$ and $(y,z)$ as above, 
\begin{align} \label{eq:green_error_tsmall}
\begin{split}
    \int_0^{{h\vee |y|}} &\sum_{k\in \mathbb{Z} \setminus\{ 0 \}}  P \big((Y_t,Z_t)=I_k\big) \,dt
    \leq C  M \black \left(\frac{M}{h}\vee 1\right)\exp\left\{-c\frac{h\vee |y|}{M}\right\}. 
\end{split}
\end{align}
For all $|\sigma|=1$ and $k \geq 1$, using the inequalities $|\hat{z}|\leq h/2$ and $\sqrt{a^2+D}\geq { \Cl[c]{C:lbE}}(a+b)$ {for $a,b,D>0$ and $\sqrt{D}>b$} and some $\Cr{C:lbE}>0$, one has that $\vert I_k \vert= \sqrt{|y|^2+|\hat{z}+\sigma k h|^2}\geq {\Cr{C:lbE}}(|y|+(k-1/2) h).$ Applying the short-time estimate (\ref{eq:exit_time}) for $t\leq 
\Cr{C:lbE}^{-1}\vert I_k \vert M$ yields that for all $t \leq h\vee |y|$ (such $t$ satisfy $t\leq 
\Cr{C:lbE}^{-1}\vert I_k \vert M$ since $M \geq { 2}$ and $\vert I_k \vert \geq { \Cr{C:lbE} (|y| \vee h)/2}$ by the previous lower bound), 
\begin{equation} \label{eq:error_transition_tsmall}
   \sum_{k = 1}^\infty P \big((Y_t,Z_t)=(y,\hat{z}+\sigma k h)\big) 
    \leq Ce^{-c|y|/M}  \sum_{k=1}^\infty \exp\left\{-\frac{c(k-1/2)h}{M}\right\}. 
\end{equation}
By considering separately the cases $\tfrac h M \leq 1$ and  $\tfrac h M > 1$, one finds that the sum in \eqref{eq:error_transition_tsmall} is bounded by $C \left(\frac{M}{h}\vee 1\right)\exp\left\{-\frac{ch}{2M}\right\}$, and
(\ref{eq:green_error_tsmall}) readily follows.

With \eqref{eq:green_error_tsmall} at hand, it suffices to bound 
 $E_M^\prime \coloneqq   \sum_{k\in \mathbb{Z} \setminus\{ 0 \}} \int_{h\vee |y|}^{M h^2}  P ((Y_t,Z_t)=I_k)  \,dt$ instead of $E_M$. By (\ref{eq:prop_lclt_main_hbig}) applied with $d=2$ and $r=1$, and in view of \eqref{eq:BM-kernel}, we can rewrite $E_M^\prime$  as
\begin{equation} \label{eq:en_step2_heat_kernel}
   \sum_{k\in \mathbb{Z} \setminus\{ 0 \}} \int_{h\vee |y|}^{M h^2}  
    \Big(\frac{3}{2\pi t}\Big)^{3/2} \exp\left\{-3\frac{|y|^2+|\hat{z}+ kh|^2}{2t}\right\}  \,dt
\end{equation}
with an error bounded by ${C}/({h^2\sqrt{h\vee |y|}})$, accounted for by the {second} term on the right of \eqref{eq:en_prop}. When $|y|\geq h$ (so $|y| \geq 1$ and in particular, the term $\exp\{-3\tfrac{|y|^2}{2t}\}$ below does not trivialize), we have
\begin{align*}
\begin{split}
    (\ref{eq:en_step2_heat_kernel}) 
    &\leq 
    \int_{0}^{Mh^2} \frac{C}{t^{3/2}}
    e^{-3\frac{|y|^2}{2t}} 
    \sum_{k=1}^{\infty}  e^{- \frac{3(k-1/2)^2 h^2}{2t}} \,dt \\
    &\leq \int_{0}^{\infty} \frac{C}{t^{3/2}}
    e^{-3\frac{|y|^2}{2t}} 
   \,dt\times  \sum_{k=1}^\infty  e^{- \frac{3(k-1/2)^2}{2M}}  
    \leq \frac{C}{|y|}\times \frac{1}{1-e^{-c/M}}\leq \frac{CM}{|y|},  
\end{split}
\end{align*}
which concludes the proof of when $|y|\geq h$. When $|y|<h$, using the substitution $s=\frac{3h^2}{2t}$ we get
\begin{align*}
\begin{split}
    (\ref{eq:en_step2_heat_kernel}) \leq 
    \int_{h}^{M h^2} \frac{
    C}{t^{3/2}}
    \sum_{k=1}^{\infty}e^{- \frac{3(k-1/2)^2 h^2}{2t}} \,dt 
    &\leq \frac{C}{h} \int_{\frac{3}{2M}}^\infty s^{-1/2} \sum_{k=1}^\infty e^{-(k-1/2)^2s}\,ds \\
    &\leq \frac{C'\sqrt{M}}{h}\times \sum_{k=1}^\infty e^{-\frac{3(k-1/2)^2}{2M}}\leq \frac{CM}{h},
\end{split}
\end{align*}
where the last line follows by applying Fubini, bounding $s^{-1/2}$ by its maximal value bound, performing the integral over $s$ and using a straightforward Riemann sum argument involving the estimate $\int_0^{\infty}  \exp\{-\frac{3(x-1/2)^2}{2M}\}  \,dx \leq C\sqrt{M}$. Overall, this yields \eqref{eq:en_prop}.
\end{proof}

Following is the counterpart to Proposition~\ref{thm:greens_function_3d} for $g_N^2$ defined in \eqref{eq:def_g2}. Recall $K_0$ from \eqref{eq:K_0}. 

\begin{proposition}\label{thm:greens_function_bound}
  For all $N\geq 1$ and $x=(y,z) \in \mathbb{S}_N$, the following hold:
    \begin{enumerate}[label={(\roman*)}]
  \item if $|y|\leq\Cr{C:range}N$, we have
  \begin{equation} \label{eq:green_bound}
    \frac{\Cr{d2_lb_green}}{h_N} K_0\left(   \black \frac{|y|\vee h_N}{N} \right)\leq g^2_N(x) \leq \frac{\Cr{d2_ub_green}}{h_N} K_0\left(   \black \frac{|y|\vee h_N}{N} \right);
  \end{equation}
   \item  if $\limsup_{N}\frac{h_N}{N}=0$ and $\limsup_{N}\frac{|y|}{N}<\infty$, then as $N\to\infty$,
   \begin{equation} \label{eq:green_asymptotics_2d}
    g^2_N(x) \sim  \frac{3}{\pi} \frac{1}{h_N} K_0\left(\sqrt{6}\,\frac{|y| \vee h_N}{N}\right)\stackrel{\textnormal{def.}}{=} \bar g_N^2(x).
  \end{equation}
  Moreover, for all $\varepsilon \in (0,1)$, and $N, h_N$ satisfying $ N \geq C(\varepsilon) $ and $h_N \leq c(\varepsilon) N$,
    \begin{equation}\label{eq:g2-asymp-unif}
 \sup_{ \Vert x \Vert \leq \Cr{C:range}N }\left| \frac{g_N^2(x)}{\bar g_N^2(x)}-1\right| < \varepsilon.
\end{equation}
  \end{enumerate}
\end{proposition}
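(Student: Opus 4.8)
The plan is to compare $g_N^2(x)$ with an explicit Gaussian integral evaluated via the Bessel identity \eqref{eq:bessel_integral} at $\nu=0$, i.e.\ $\int_0^\infty t^{-1}e^{-\beta/t-\gamma t}\,dt=2K_0(2\sqrt{\beta\gamma})$. Write $S_t(x)=\sum_{k\in\Z\setminus\{0\}}\Pz{0}(Z_t=\hat z+kh_N)$, so that by \eqref{eq:def_g2} and independence $g_N^2(x)=\int_0^\infty\Pzd{0}(Y_t=y)\,e^{-t/N^2}S_t(x)\,dt$, and split this integral at $t=2h_N^2$. On $\{t\le 2h_N^2\}$ the requirement $k\neq0$ forces the vertical coordinate to travel $\ge h_N/2$, so this piece is dominated (as $e^{-t/N^2}\le1$) by the quantity $E_2$ of Lemma~\ref{prop:green_error_term} with $h=h_N$, $M=2$; by \eqref{eq:en_prop} it is $\le C(h_N\vee|y|)^{-1}$ up to an exponentially small remainder. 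As $|y|\vee h_N\le\Cr{C:range}N$ keeps $K_0\big((|y|\vee h_N)/N\big)$ bounded below, this is $\le Ch_N^{-1}K_0\big((|y|\vee h_N)/N\big)$, which is acceptable for part (i); it is also relatively negligible for part (ii), since $K_0$ diverges at $0$.

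On $\{t\ge 2h_N^2\}$ I would make two replacements. \emph{(a)} Replace $S_t(x)$ by $h_N^{-1}$: finite Fourier analysis on $\Z/h_N\Z$ gives $\sum_{k\in\Z}\Pz{0}(Z_t=\hat z+kh_N)=h_N^{-1}\sum_{j=0}^{h_N-1}e^{-i\hat z\,2\pi j/h_N}e^{-t\psi(2\pi j/h_N)}$ with $\psi(\theta)=\tfrac13(1-\cos\theta)\ge c\theta^2$, so the $j\ge1$ modes sum to $O(e^{-ct/h_N^2})$, and subtracting the $k=0$ term $\Pz{0}(Z_t=\hat z)=O(t^{-1/2})$ (one-dimensional local CLT, cf.\ Appendix~\ref{A:HK}) gives $|S_t(x)-h_N^{-1}|\le Ch_N^{-1}e^{-ct/h_N^2}+Ct^{-1/2}$ for $t\ge2h_N^2$. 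Integrating this against $\Pzd{0}(Y_t=y)e^{-t/N^2}$ over $t\ge 2h_N^2$ produces, via $\int_{2h_N^2}^\infty t^{-1}e^{-3|y|^2/(2t)-ct/h_N^2}\,dt\le 2K_0(\sqrt{6c}\,|y|/h_N)$ and $\int_{2h_N^2}^\infty t^{-3/2}\,dt\le\sqrt2\,h_N^{-1}$, an error $\le Ch_N^{-1}$ — exponentially small in $|y|/h_N$ when $|y|>h_N$ — so that truncating at $\Theta(h_N^2)$ (rather than at $t=0$) is what avoids a spurious $\log N$. \emph{(b)} On the surviving term $h_N^{-1}\int_{2h_N^2}^\infty\Pzd{0}(Y_t=y)e^{-t/N^2}\,dt$ replace $\Pzd{0}(Y_t=y)$ by the planar heat kernel $\tfrac{3}{2\pi t}e^{-3|y|^2/(2t)}$; by the local-CLT bounds of Appendix~\ref{A:HK} (with their Gaussian decay, used as in the proof of Proposition~\ref{thm:greens_function_3d}) the $L^1$-in-$t$ discrepancy over $t\ge 2h_N^2$ is $\le C(h_N^2\vee|y|^2)^{-1}$, whence a contribution $\le Ch_N^{-1}(h_N^2\vee|y|^2)^{-1}$.

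There remains $\tfrac{3}{2\pi h_N}\int_{2h_N^2}^\infty t^{-1}e^{-3|y|^2/(2t)-t/N^2}\,dt$. By \eqref{eq:bessel_integral} this equals $\tfrac{3}{\pi h_N}K_0(\sqrt6\,|y|/N)$ minus $\tfrac{3}{2\pi h_N}\int_0^{2h_N^2}t^{-1}e^{-3|y|^2/(2t)-t/N^2}\,dt$; the subtracted term is $\tfrac{3}{2\pi h_N}\big(\int_{3|y|^2/(4h_N^2)}^\infty u^{-1}e^{-u}\,du+O(h_N^2/N^2)\big)$ — thus $O(h_N^{-1})$ and exponentially small in $|y|/h_N$ when $|y|\ge h_N$ — whereas for $|y|<h_N$, where $3|y|^2/(2t)\le\tfrac34$ throughout, the whole integral equals $\int_{2h_N^2}^\infty t^{-1}e^{-t/N^2}\,dt+O(1)=2K_0(\sqrt6\,h_N/N)+O(1)$. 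Using $K_0(s)\asymp 1\vee\log(1/s)$ for $0<s\le\sqrt6\,\Cr{C:range}$ (Lemma~\ref{L:Bessel}; also $K_0(\sqrt6\,s)\asymp K_0(s)$ there), one concludes $g_N^2(x)=\bar g_N^2(x)+O(h_N^{-1})$ with $\bar g_N^2$ as in \eqref{eq:green_asymptotics_2d}; since $\bar g_N^2(x)\ge c h_N^{-1}$, this gives the upper bound in \eqref{eq:green_bound}. For the matching lower bound one restricts the $\{t\ge 2h_N^2\}$ integral to a window $[\,c(h_N^2\vee|y|^2),\,C(h_N^2\vee|y|^2)\wedge N^2\,]$, nonempty and carrying a definite fraction of the $K_0$-mass, on which the \emph{lower} local-CLT bound on $\Pzd{0}(Y_t=y)$ and $S_t(x)\ge\tfrac12h_N^{-1}$ both hold.

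The main obstacle is part (ii)'s \emph{uniformity} over $\|x\|\le\Cr{C:range}N$. Keeping track of constants — only the model term contributes to leading order, with its exact $\tfrac3\pi$ from $\tfrac{3}{2\pi}\cdot 2$ in \eqref{eq:bessel_integral} and the exact leading Fourier mode $h_N^{-1}$ — one writes $g_N^2(x)=\bar g_N^2(x)(1+\rho_N(x))$ with $|\rho_N(x)|\le C\,E(x)/K_0(\sqrt6\,(|y|\vee h_N)/N)$, where $E(x)$ bundles $\tfrac{h_N}{h_N\vee|y|}$, $\tfrac{h_N^2}{N^2}$, $(h_N^2\vee|y|^2)^{-1}$, and, for $|y|>h_N$, terms exponentially small in $|y|/h_N$. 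The delicate zones are the crossover $|y|\asymp h_N$ (where the $t\le 2h_N^2$ and $t\asymp N^2$ parts of the integral compete) and $h_N\asymp N$; here crude bounds — e.g.\ replacing a planar Green's-type integral by its $\log N$-sized supremum — are too lossy, as one must distinguish $\log N$ from $\log(N/h_N)$. Uniformity follows from a dichotomy: fix a small $\delta>0$; if $(|y|\vee h_N)/N\le\delta$ then $K_0(\sqrt6\,(|y|\vee h_N)/N)\ge K_0(\sqrt6\,\delta)$ is large and dominates the bounded numerator $E(x)$, while if $(|y|\vee h_N)/N>\delta$ then, since $h_N\le c(\varepsilon)N$, necessarily $|y|/h_N\ge\delta/c(\varepsilon)$ and every summand of $E(x)$ is correspondingly small. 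Choosing $\delta$, then $c(\varepsilon)$ and $C(\varepsilon)$, suitably gives \eqref{eq:g2-asymp-unif}; letting $N\to\infty$ along a sequence with $h_N/N\to0$ then yields \eqref{eq:green_asymptotics_2d}.
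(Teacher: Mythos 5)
Your proposal follows essentially the same route as the paper's proof: split the time integral at a scale $\asymp h_N^2$, control the small-$t$ piece by Lemma~\ref{prop:green_error_term}, replace the winding sum over $k$ by $h_N^{-1}$ and the planar kernel by its Gaussian approximation, evaluate the resulting model integral via \eqref{eq:bessel_integral} with $\nu=0$ together with a truncation-error estimate, and obtain the uniformity in \eqref{eq:g2-asymp-unif} through a case distinction on whether $(|y|\vee h_N)/N$ is small (so $K_0$ is large and swallows the $O(h_N^{-1})$ errors) or not (so $|y|/h_N$ is forced to be large and the errors are themselves small) --- the paper's version of this dichotomy is $|y|\lessgtr\sqrt{h_N N}$. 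The one genuinely different sub-step is how the winding sum is replaced by $h_N^{-1}$: the paper applies the local CLT to each term and then the Riemann-sum approximation \eqref{eq:approx_sum_gaussian}, whose error $C/\sqrt{M}$ forces the cutoff $Mh_N^2$ with $M=M(\varepsilon)$ large, whereas your Fourier computation on $\Z/h_N\Z$ gives an exponentially decaying error and lets you fix the cutoff at $2h_N^2$; this is a mild simplification and buys nothing essential.

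One step is stated incorrectly. For the lower bound in \eqref{eq:green_bound} you restrict to the window $[c(h_N^2\vee|y|^2),\,C(h_N^2\vee|y|^2)\wedge N^2]$ and assert it carries a definite fraction of the $K_0$-mass. When $(|y|\vee h_N)/N$ is small, the integral $\int t^{-1}e^{-3|y|^2/(2t)-t/N^2}\,dt$ spreads its mass logarithmically over $t\in[(|y|\vee h_N)^2,N^2]$, so a window of boundedly many dyadic scales contributes only $O(1)$ against $K_0\asymp\log\bigl(N/(|y|\vee h_N)\bigr)$, which may be arbitrarily large; as written, this yields only $g_N^2(x)\ge ch_N^{-1}$ and misses the $K_0$ factor. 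The repair is immediate: either extend the window to $[c(h_N^2\vee|y|^2),\,N^2]$, on all of which the lower local-CLT bound on $\Pzd{0}(Y_t=y)$, the bound $S_t(x)\ge ch_N^{-1}$, and $e^{-t/N^2}\ge e^{-1}$ still hold; or observe that the window is only needed in the regime where your two-sided error bound $|\rho_N(x)|\le CE(x)/K_0$ fails to give $|\rho_N|\le\tfrac12$, which (since $E(x)$ is bounded) forces $K_0\asymp1$ and makes the bounded window sufficient there. With that fixed, the argument is sound, including the constant-tracking needed for \eqref{eq:green_asymptotics_2d} and \eqref{eq:g2-asymp-unif}.
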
 
\begin{proof} We start with some preliminary reduction steps.
Combining the estimate (\ref{eq:en_prop}) with $h=h_N$ (cf.~around \eqref{def:slab}) and (\ref{eq:prop_lclt_main_hbig}) applied with {$d=2$} and $r=1$, we obtain in view of \eqref{eq:BM-kernel} that for all $M\geq 2$, $N \geq 1$ and $x=(y,z)\in \mathbb{S}_N$ (all tacitly assumed in the sequel),
\begin{multline} \label{eq:green_func_main_term}
   - \frac{C }{h_N^3 \sqrt{M}} \leq g_N^2(x) -  \frac{1}{h_N}\int_{M h_N^2}^\infty  \frac{3}{2\pi t} 
   e^{-3\frac{|y|^2}{2t}-\frac{t}{N^2}} \left[\sum_{k\in \mathbb{Z} \setminus \{0\}} h_N (\frac{3}{2\pi t})^{1/2} e^{ - \frac{3|\hat{z}+ kh_N|^2}{2t}}\right] \,dt 
   \\
   \leq C \left(\frac{M^{ 2 }}{h_N\vee|y|} {+\frac{1}{h_N^2\sqrt{h_N\vee |y|}}} +\frac{1 }{h_N^3 \sqrt{M}}\right).
\end{multline}
Note that the series appearing inside the square brackets in (\ref{eq:green_func_main_term}) can be
thought of as approximating the area under the Gaussian density and is uniformly bounded as the variance term is bounded away from $0$. More specifically, we note that this series converge to $1$ as the variance increases. Indeed, for suitable $\Cl{C:gauss-approx} \in [1,\infty)$ {and $\Cl[c]{c:gauss-lb}\in(0,1)$}, we have that for $M\geq 2$, $h_N \geq 1$, and $t\geq M h_N^2$, 
\begin{equation} \label{eq:approx_sum_gaussian}
     0<\Big(1- \frac{\Cr{C:gauss-approx}}{\sqrt{M}}\Big){ \vee \Cr{c:gauss-lb}} \leq \sum_{ k \in \mathbb{Z} \setminus \{0\} \black} h_N (\frac{3}{2\pi t})^{1/2} \exp\left\{ - \frac{3(\hat{z}+ kh_N)^2}{2t}\right\} \leq 1;
\end{equation}
 the lower bound {by $1- \frac{\Cr{C:gauss-approx}}{\sqrt{M}}$} in \eqref{eq:approx_sum_gaussian} essentially arises from the approximation error to the Gaussian integral near the maximum, which is bounded by $Ct^{-1/2} \times h_N$, where the first factor bounds the density and $h_N$ is the interval length considered; the other intervals yield summable corrections (in $k$) of the same order. The uniform lower bound in \eqref{eq:approx_sum_gaussian} can be obtained from the observation that the series is larger than $P(Z\geq \sqrt{3/t}(|\hat{z}|+h_N))\geq P(Z\geq \frac{3}{2}\sqrt{3/M})\geq\Cr{c:gauss-lb}$, where $Z$ is a standard normal variable. 

We now focus on estimating the integral $ I(M) \equiv \int_{M h_N^2}^\infty  \frac{3}{2\pi t} 
   \exp\{-3\frac{|y|^2}{2t}-\frac{t}{N^2}\} \,dt$ appearing in \eqref{eq:green_func_main_term} when neglecting the term in square brackets. Rescaling by $N$, expressing the exponential $\exp\{-\frac{3 |y|^2/N^2}{2t}\}$ as an integral and applying Fubini yields that for all $M \geq 1$,
 \begin{equation}
 \label{eq:green_hn_ub-general}
I(M)=    \frac{3}{2\pi} \int_{(\frac{\sqrt{M}h_N}{N})^2}^\infty \frac{1}{ t}
    e^{-\frac{3 |y|^2/N^2}{2t}-t} \,dt 
    =  \frac{3}{2\pi} \int_{0}^\infty  e^{-s}
     \int_{(\frac{\sqrt{M}h_N}{N})^2\vee\frac{3}{2s}(\frac{|y|}{N})^2}^\infty\frac{e^{-t}}{ t}   \,dt \,ds. 
 \end{equation}
  We now proceed to show the desired bounds. Let $\Cl{g2_hN} \in [1,\infty)$ be a large constant, soon to be chosen suitably. We start with item (i), under the additional hypothesis that {$h_N>\tfrac{1}{\Cr{g2_hN}}N$,} whence the factor $K_0(\cdot)$ in \eqref{eq:green_bound} can effectively be neglected (cf.~\eqref{eq:green_edbound_hnn}). For this we pick $M=2$ and note that \eqref{eq:green_hn_ub-general} yields that
\begin{align}  \label{eq:green_hn_big_ub}
\begin{split}
    I(2)
    &\leq  \frac{3}{2\pi} \int_0^\infty e^{-s}\int_{(\frac{\sqrt{2}h_N }{N})^2 \wedge \frac{1}{{ 4}}}^\infty
    \frac{e^{-t}}{t} \,dt \,ds \leq \frac{3}{\pi} \log\left(\frac{N}{\sqrt{2}h_N}\vee 2\right) + \frac{3}{2\pi} \leq C,
  \end{split}
\end{align}
where we also used 
\begin{equation} \label{eq:exp_inequality}
    \int_x^{\infty} \frac{e^{-t}}{ t}  \,dt 
    \leq
    \begin{cases*}
        \log(1/x) +1 & \text{if } $0<x\leq 1$\\
        1 & \text{if } $x> 1$.
    \end{cases*}
\end{equation}
For the lower bound, simply note since $\frac{h_N}{N} (\leq 1)$ and $\frac{|y|}{N} (\leq \Cr{C:range})$ are uniformly bounded in $N$, we have
\begin{equation*}
     I(2) \stackrel{\eqref{eq:green_hn_ub-general}}{\geq}  \frac{3}{2\pi} \int_{1}^\infty  e^{-s}  
  \int_{2\vee\frac{3}{2}\Cr{C:range}^2}^\infty\frac{e^{-t}}{ t}   \,dt \,ds  \geq c.
\end{equation*}
Together, \eqref{eq:green_func_main_term}, \eqref{eq:approx_sum_gaussian} and the fact that $c \leq   I(2) \leq C $ conclude the proof of \eqref{eq:green_bound} when
{$h_N>\tfrac{1}{\Cr{g2_hN}}N$.}

We now focus on the case that {$h_N\leq \tfrac{1}{\Cr{g2_hN}}N$ and $|y|\leq h_N$}
in \eqref{eq:green_bound}, and in doing so will derive bounds precise enough to deduce (\ref{eq:green_asymptotics_2d}) as well. 
In view of \eqref{eq:green_hn_ub-general}, and using \eqref{eq:exp_inequality} again, we have for all $N, M \geq 1$ and $y \in \mathbb{Z}^2$ that
\begin{align} \label{eq:xi-M-UB}
  \begin{split}
  I(M)
    &\leq  \frac{3}{2\pi} \int_0^\infty e^{-s}\int_{(\frac{h_N }{N})^2}^\infty
    \frac{e^{-t}}{t} \,dt \,ds 
    \leq \frac{3}{\pi} \log\Big(\frac{N}{h_N}\Big) + \frac{3}{2\pi}.
  \end{split}
\end{align}
and for the lower bound that, whenever $\frac{\sqrt{M}h_N}{N} \leq 1$,
\begin{align} \label{eq:xi-M-LB}
  \begin{split}
   I(M) &\geq  \frac{3}{2\pi} \int_{\frac{3}{2}(\frac{|y|}{\sqrt{M}h_N})^2}^\infty e^{-s}\int_{(\frac{\sqrt{M}h_N}{N})^2}^{1}
    \frac{1-t}{t} \,dt \,ds \\
    & \geq
      \frac{3}{2\pi}\left(2\log\Big(\frac{N}{h_N}\Big)- \log(M) - 1\right)
      - \frac{\Cl{C:gf-error-M}}{M}\left(2\log\Big(\frac{N}{h_N}\Big)-
      \log(M) -1\right),
  \end{split}
\end{align}
where the last line follows from the fact that $  1- \int_{\alpha}^\infty e^{-s} \,ds \leq \alpha$ with $\alpha= \tfrac{3}{2}(\tfrac{|y|}{\sqrt{M}h_N})^2 \leq \tfrac{\Cr{C:gf-error-M}}{M}$. Now, picking $M= 2 \vee 4\pi \Cr{C:gf-error-M}/3$ and combining \eqref{eq:xi-M-UB}, \eqref{eq:xi-M-LB} with \eqref{eq:green_func_main_term} and \eqref{eq:approx_sum_gaussian} as before, the claim \eqref{eq:green_bound} follows under the assumptions that 
{$h_N\leq \frac{1}{\Cr{g2_hN}}N$}
and $|y| \leq h_N$, using the fact that $K_0(x)\sim \log(1/x)$ as $x\to 0$. 
{Furthermore, with a view towards \eqref{eq:green_asymptotics_2d} and \eqref{eq:g2-asymp-unif}, we also obtain, for any $\epsilon\in(0,1)$, by taking $M=M(\epsilon
)$ and $\Cr{g2_hN}=\Cr{g2_hN}(\varepsilon)$ large enough,}
that
\begin{equation} \label{eq:g^2-sharpbd}
    \frac{3}{\pi} \frac{(1-\epsilon)}{h_N} K_0\left(\frac{ h_N}{N}\right) \leq g^2_N(x)\leq \frac{3}{\pi} \frac{(1+\epsilon)}{h_N} K_0\left(\frac{ h_N}{N}\right),
\end{equation}
whenever $|y| \leq h_N$ and $h_N\leq \frac{1}{\Cr{g2_hN}(\epsilon)}N$. Note that the latter of the two conditions will hold for large enough $N$ in the context of item (ii) since it is a requirement for \eqref{eq:g2-asymp-unif} and since $h_N=o(N)$ as $N \to \infty$ by assumption in the context of \eqref{eq:green_asymptotics_2d}.

To deal with the remaining case {$h_N\leq \frac{1}{\Cr{g2_hN}}N$ and $|y|> h_N$ in \eqref{eq:green_bound}}  (and in fact also in \eqref{eq:green_asymptotics_2d}-\eqref{eq:g2-asymp-unif} as we shall explain momentarily), we proceed as follows. For the upper bounds, using (\ref{eq:bessel_integral}) with $\nu=0$ we find that for all $N, M \geq 1$ and $y \in \mathbb{Z}^2$,
\begin{align} \label{eq:green_rn_ub}
  \begin{split}
    I(M)= \int_{Mh_N^2}^\infty \frac{3}{2\pi t}
    e^{-\frac{3 |y|^2}{2t}-\frac{t}{N^2}} \,dt \leq \int_{0}^\infty \frac{3}{2\pi t}
    e^{-\frac{3 |y|^2}{2t}-\frac{t}{N^2}} \,dt
    = \frac{3}{\pi} K_0\left(\sqrt{6}\frac{|y|}{N}\right),
  \end{split}
\end{align}
which gradually improves over \eqref{eq:xi-M-UB} whenever {$\tfrac{N}{|y|}= \tfrac{N}{h_N} \cdot \tfrac{h_N}{|y|} \leq \frac{N}{h_N}$.} In particular, using \eqref{eq:green_rn_ub} with say $M=2$ completes the verification of the upper bound in \eqref{eq:green_bound} upon combining the resulting estimate on $I(2)$ with (\ref{eq:green_func_main_term}) and (\ref{eq:approx_sum_gaussian}). Moreover, the right-hand side in \eqref{eq:green_rn_ub} is also a lower bound for $I(M)$ for any $N, M \geq 1$ and $y \in \mathbb{Z}^2$ up to an error
\begin{align*}
  \begin{split}
    \int_0^{Mh_N^2} \frac{3}{2\pi t}
    e^{-\frac{3 |y|^2}{2t}} \,dt 
    \leq \frac{3}{2\pi}  \int_{\frac{3|y|^2}{2Mh_N^2}}^{\infty} \frac{1}{s}
    e^{-s}\,ds
    \leq C\frac{Mh_N^2}{|y|^2}\exp\left\{-\frac{3|y|^2}{2Mh_N^2}\right\}.
  \end{split}
\end{align*}
To deduce the outstanding lower bound in \eqref{eq:green_bound} (and soon in \eqref{eq:green_asymptotics_2d}-\eqref{eq:g2-asymp-unif}) under the assumption {$h_N\leq \frac{1}{\Cr{g2_hN}}N$} and $|y| > h_N$, one distinguishes two cases as follows: for $(h_N <) |y| \leq \sqrt{h_N N} $, 
the error for any $M\geq 2$ is bounded by $C(M)$ uniformly in such $y$ and $N \geq 1$ whereas the right-hand side of \eqref{eq:green_rn_ub} is bounded from below by $ c (1\vee \log (N/h_N))$, which in particular can be made arbitrarily large by taking $\Cr{g2_hN}=\Cr{g2_hN}(M)$ large.
\black 
If on the other hand $ |y| > \sqrt{h_N N}$, then the right-hand side of \eqref{eq:green_rn_ub} is $ \geq c$ but the error is bounded by ${ C(M)} e^{-cN/h_N}$, {which can be made arbitrarily close to $0$ by taking $\Cr{g2_hN}{=\Cr{g2_hN}(M)}$ large.}
Putting things together and combining the above calculations with (\ref{eq:green_func_main_term}) and (\ref{eq:approx_sum_gaussian}) yields the desired lower bound and with it completes the proof of \eqref{eq:green_bound}. The proof of the corresponding lower bound for \eqref{eq:g2-asymp-unif}, i.e.~the analogue of the lower bound in \eqref{eq:g^2-sharpbd} with $\sqrt{6}|y|/N$ in the argument of $K_0$, valid for {$h_N\leq \frac{1}{\Cr{g2_hN}(\varepsilon)}N$} and $|y| > h_N$, is obtained by the same reasoning.

As to the outstanding upper bound akin to \eqref{eq:g2-asymp-unif} when $\Cr{g2_hN}(\varepsilon) h_N\leq N$ and $|y| > h_N$, one still distinguishes the above two cases: for $|y| \leq \sqrt{h_N N} $, 
the error from \eqref{eq:green_func_main_term} is bounded by $C(M)h_N^{-1}$ uniformly in such $y$ and $N \geq 1$ whereas by \eqref{eq:green_rn_ub} $\frac{I(M)}{h_N}$ is bounded from below by $ h_N^{-1} (1\vee \log (N/h_N)-C(M))$, which in particular can be made arbitrarily large by taking $\Cr{g2_hN}=\Cr{g2_hN}(M, \varepsilon)$ large.
If on the other hand $ |y| > \sqrt{h_N N}$, then by \eqref{eq:green_rn_ub}, $ \frac{I(M)}{h_N}\geq \tfrac{c}{ h_N}$ but the error from \eqref{eq:green_func_main_term} is bounded by $ C h_N^{-1}(\tfrac{M^2\sqrt{h_N}}{\sqrt{N}}{+\frac{1}{N^{1/4}}} + \tfrac{1}{\sqrt{M}})$, and the expression inside the brackets \black can be made arbitrarily close to $0$ by taking $M=M(\varepsilon)$ and $N \geq C(\varepsilon)$ large. All in all, we thus obtain the analogue of \eqref{eq:g^2-sharpbd} with $K_0(\tfrac{\sqrt{6}|y|}{N})$ replacing $K_0(\tfrac{ h_N}{N})$ in the regime $|y| > h_N$ and whenever $\Cr{g2_hN}(\varepsilon) h_N\leq N$ and $N \geq C(\varepsilon)$, from which \eqref{eq:g2-asymp-unif} follows; \eqref{eq:green_asymptotics_2d} is a direct consequence of \eqref{eq:g2-asymp-unif}. \black
\end{proof}

\begin{proof}[Proof of Theorem \ref{thm:green_main}]
   Item (i) of Theorem \ref{thm:green_main} is an immediate consequence of the decomposition $(1+N^{-2}) g_N=g_N^2+ g_N^3$  in \eqref{eq:projection}, Proposition \ref{thm:greens_function_3d},(i) and Proposition \ref{thm:greens_function_bound},(i). As to item~(ii), \eqref{eq:green_asymp_macx} follows immediately from the decomposition \eqref{eq:projection}, together with Proposition \ref{thm:greens_function_3d},(ii) and Proposition \ref{thm:greens_function_bound},(ii). 
   \end{proof}

\begin{remark}\label{R:green-additional}
 \begin{enumerate}[label={\arabic*)}]
 \item{(Asymptotics beyond \eqref{eq:green_asymp_macx}).} \label{R:green-2}  Although we won't need this in the sequel, we record for future reference the following complementary asymptotics to those stated in Theorem~\ref{thm:green_main},~(ii)  in the regime $c_h\stackrel{\text{def.}}{=} \lim_{N}\frac{h_N}{N}\in (0,1]$.
 
   Suppose $\lim_N \frac{\|x\|}{N} =0$, then
\begin{equation}\label{eq:green_asymp_hnn_micro}
       g_N(x)\sim g_{\Z^3}(x).
   \end{equation}
   This is because if in addition $\lim_N \|x\| < \infty$, Proposition \ref{thm:greens_function_bound},(i) implies that $g_N^2({x}) \leq Ch_N^{-1}=o(1)$ as $N \to \infty$, whereas $g_N^3(x) \to g_{\Z^3}(x)>0$, yielding \eqref{eq:green_asymp_hnn_micro} in this case. If instead $\lim_N \|x\| = \infty$, then $g_N^2(x) = o(\tfrac1{\|x\|})$ by combining Proposition \ref{thm:greens_function_bound},(i) with our standing assumptions, which imply that $h_N^{-1} \leq CN^{-1}= o(\tfrac1{\|x\|})$. Hence, \eqref{eq:green_asymp_hnn_micro} follows on account of \eqref{eq:green3d-normal}.
\black

 If $\lim_{N}\frac{\|x\|}{N}\in(0,\infty)$, then 
\begin{equation} \label{eq:green_asymp_hnn_main}
    g_N(x)\sim   \frac{3}{\pi} \frac{1}{h_N}K(c_h,\ell_y,\ell_z) + \frac{3}{2\pi \|x\|}\exp\bigg\{-\sqrt{6}\frac{\|x\|}{N}\bigg\} ,
\end{equation}
where $\ell_y\stackrel{\text{def.}}{=} \lim_{N\to\infty}\frac{|y|}{N}$, $\ell_z\stackrel{\text{def.}}{=}\lim_{N\to\infty}\frac{|\hat{z}|}{N}$ (which are implicitly assumed to exist) and
\begin{align*}
\begin{split}
    K(c_h,\ell_y,\ell_z)= \frac{c_h}{2}
    \sum_{k\in \mathbb{Z} \setminus \{0\}} \Big(\ell_y^2+(\ell_z+ c_h k)^2\Big)^{-1/2}\exp\Big\{-\sqrt{6}\Big(\ell_y^2+(\ell_z+ c_h k)^2\Big)^{1/2}\Big\}
    .
    \end{split}
    \end{align*}
The claim~\eqref{eq:green_asymp_hnn_main} is obtained by bounding $g_N^2$ in \eqref{eq:projection} in a similar manner as in the proof of  Proposition \ref{thm:greens_function_bound}. However, one keeps all terms in the sum over $k$ in the analogue of \eqref{eq:green_func_main_term}, which leads to an effective integral $  \sum_{k\in \mathbb{Z} \setminus \{0\}} \int_{Mh_N^2{/N^2}}^\infty 
    (\frac{3}{2\pi t})^{3/2} \exp\big\{-\frac{3}{2t}(\frac{|y|^2}{N^2}+(\frac{|\hat{z}|}{N}+ \frac{h_N}{N} k)^2) - t\big\}  dt$ that plays the role of $I(M)$, and leads to $g_N^2(x) \sim \frac{3}{\pi} \frac{1}{h_N}K(c_h,\ell_y,\ell_z)$, yielding \eqref{eq:green_asymp_hnn_main}.
    
Although the quantity $K(c_h,\ell_y,\ell_z)$ cannot be expected to have a closed form in general, one can compute it directly in the case of $\ell_y=\ell_z=0$. In this case one finds that  $K(c_h,0,0)=-\log(1-\exp\{-\sqrt{6}c_h\}).$
This is consistent with the first term in (\ref{eq:green_asymp_macx})  in the sense that if ${\ell_y=\ell_z=0}$ and ${c_h=\lim_{N}\frac{h_N}{N}=0}$, then 
\begin{equation*}
  K(c_h,\ell_y,\ell_z)=  -\log\big(1-\exp\{-\sqrt{6}c_h\}\big) \sim \log({N}/{h_N}) \sim K_0\big(\sqrt{6}{h_N}/{N}\big).
\end{equation*}
In light of this and (\ref{eq:green_asymp_macx}), one might naively expect at least when $\ell_y>c_h$, one would get $K(c_h,\ell_y,\ell_z)=K_0(\sqrt{6}\ell_y)$, however this is not the case. The reason is that the first term of (\ref{eq:green_asymp_macx}) arises from a simple random walk in $\Z^2$, whereas $K(c_h,\ell_y,\ell_z)$ arises from a simple random walk in $\Z^3$. This is because when $h_N=o(N)$, one can afford to separate the cost of wrapping around the periodic direction (see (\ref{eq:en_prop}) and (\ref{eq:approx_sum_gaussian})) from the rest of the walk, which is a simple random walk in $\Z^2$ (see, for instance, (\ref{eq:green_rn_ub})). However, when $h_N\asymp N$, this separation is no longer pertinent (see again (\ref{eq:en_prop})).

\item{(Boundary conditions).}\label{R:bc}
    The choice in \eqref{def:slab} is technically convenient, in particular, with regards to obtaining the precise asymptotics in \eqref{eq:green_asymp_macx}. We expect (with some work) results analogous to those of Theorem~\ref{thm:green_main} to remain true if 
one modifies the setup by any combination of the following: (i) replacing the presence of the killing measure by a Dirichlet boundary condition in the first two (corresponding to $\Z^2$ in \eqref{def:slab}) coordinate directions at spatial scale $N$, with suitable restriction on $x$ (at macroscopic distance from the boundary); (ii) replacing periodic by free boundary conditions in the ``vertical'' direction.
    \end{enumerate}
\end{remark}

\section{Capacity estimates on slabs} \label{sec:cap-slab}

Recall the random walk $X$ on $\mathbb{S}_N$ introduced above \eqref{def:green}, with canonical law denoted by $P_x$, $x \in \mathbb{S}_N$.  For $A\subset \mathbb{S}_N$, we introduce the equilibrium measure of $A$, 
\begin{equation}\label{eq:equil}
    e_{A,N}(x)\stackrel{\text{def.}}{=} \lambda_x P_x(\tilde{H}_A=\infty)\1_{x\in A}, \quad x\in \mathbb{S}_N,
\end{equation}
where $\tilde{H}_A=\tilde{H}_A(X_{\cdot})\coloneqq \inf\{t>0:X_t\in A 
\text{ and there exists } s\in (0,t) \text{ with } X_s\neq X_0\}$ is
the hitting time of $A$. The total mass of the equilibrium measure of $A$ is the capacity of $A$, 
\begin{equation} \label{def:capacity}
    \capacity_N(A)\stackrel{\text{def.}}{=} \sum_{x\in \mathbb{S}_N}e_{A,N}(x).
\end{equation}
The capacity can also be expressed via the variational formula (see \cite[(1.61)]{sznitman_topics_2012})
\begin{equation} \label{eq:cap_var}
    \capacity_N(A) =\left(\inf_\mu  \sum_{x_1,x_2}g_N(x_1,x_2)\mu(x_1)\mu(x_2)\right)^{-1},
\end{equation}
where the infimum is over all probability measures $\mu$ supported on $A$.
By a version of \cite[(1.57)]{sznitman_topics_2012} on infinite graphs, one has the last-exit decomposition, valid for all finite $A\subset \mathbb{S}_N$ and $x\in \mathbb{S}_N$,
\begin{equation} \label{eq:last_exit}
    P_x(H_A< \infty)=\sum_{x'\in \mathbb{S}_N}g_N(x,x')e_{A,N}(x'),
\end{equation}
where $H_A\coloneqq \inf\{t\geq0:X_t\in A \}$ is
the entrance time of $K$. For $y \in \Z^2$ and $R>0$, let $B^{2}(y,R)=\{y'\in \Z^2: |y-y'|<R\}$ (recall $|\cdot|$ denotes the Euclidean norm) and $B_{(\Z/h_N\Z)}(z,R)=\{z'\in (\Z/h_N\Z): d_{(\Z/h_N\Z)}(z,z')< R \}$. For $x\in \mathbb{S}_N,R\geq1$, we consider balls of the form (with $x=(y,z)$ below)
\begin{equation}\label{eq:def_box}
B(x,R) = \{x\in\mathbb{S}_N:\|x\|<R\}
\quad\text{and}\quad
D(x,R) =  B^{2}(y,R) \times B_{(\Z/h_N\Z)}(z,R) 
\end{equation}
(recall $\|\cdot\|$ from below \eqref{eq:K_0})
and line (segments) of the form
\begin{equation*}
  {\ell_R}= ([0,R-1]\cap \Z)\times\{0\}^2.
\end{equation*}
We use $B_R,D_R$ to denote $B(0,R),D(0,R)$. The set $D(x,R)$ corresponds to an ($R$-thickened) two-dimensional disk. For some of the precise estimates we have in mind, $D(x,R)$ will be
more pertinent at large scales $R$, see for instance \eqref{eq:ball-cap-const} below.
This is essentially because the function $g_N^2(\cdot)$ from Proposition~\ref{thm:greens_function_bound}, in regimes of $R$ where it supplies the leading contribution to the Green's function $g_N(\cdot)$, follows asymptotics that depend on $x=(y,z)$ only through $|y|$; see \eqref{eq:green_asymptotics_2d}.

In what follows, we seek estimates for the capacities of lines and balls, with up-to-constant upper and lower bounds uniform in both $R $ and $N$ and
with explicit constants in special cases. 
Recall the function $\Fbox(\cdot)$ from \eqref{eq:def_fbox} and $\Cr{C:range} \in [1,\infty)$ from above Theorem~\ref{thm:green_main}, which is arbitrary.

\begin{proposition}[Capacity of a line]\label{prop:line_cap}
  For all $\epsilon\in(0,1)$ and $N\geq 1$, the following hold:
   \begin{enumerate}[label={(\roman*)}]
   \item  For all $R\geq 1$ such that $R\leq\Cr{C:range}N$,\label{thm:line_cap}
    \begin{equation} \label{eq:line-cap}
      \Cl[c]{lb_line} \, \Big( \frac{R}{\log(R\vee 2)}\wedge \frac{h_N}{K_0(\frac{R\vee h_N}{N})} \Big)
      \leq \capacity_N({\ell_R}) \leq \Cl{ub_line} \Big( \frac{R}{\log(R\vee2)}\wedge \frac{h_N}{K_0(\frac{R\vee h_N}{N})} \Big).
    \end{equation}
  In particular in the case of $R=N \geq 2$, we have that
  \begin{equation}\label{eq:line-cap-macro}
  \begin{array}{cl}
      \Cr{lb_line} \,h_N \leq \capacity_N(\ell_N) \leq \Cr{ub_line} h_N,
      & \text{ when } \frac{N}{h_N\log N }\in [1,\infty  ) \black \\[0.6em]
     \displaystyle    \frac{ \Cl[c]{lb_line_b} N}{\log N }\leq \capacity_N(\ell_N) \leq  \frac{\Cl{ub_line_b} N}{\log N},
      & \text{ when } \frac{N}{h_N\log N} \in [0,1).
      \end{array}
    \end{equation}
\item There exists $\Cl{line_r}(\epsilon),\Cl{line_rn}(\epsilon),\Cl{line_hnn}(\epsilon)<\infty$ such that for all $N,R,h_N$ satisfying $\Cr{line_r}\leq R\leq \frac{1}{\Cr{line_rn}}N$ and $h_N\leq \frac{1}{\Cr{line_hnn}}N$,
\begin{equation} \label{eq:line-cap-const}
\frac{\pi(1-\epsilon)}{3}\left(\frac{\log(R)}{R}+\frac{K_0\big(\frac{R\vee h_N}{N}\big)}{h_N}\right)^{-1}
\leq 
\capacity_N(\ell_R) \leq  \frac{\pi(1+\epsilon)}{3}\left(\frac{\log(R)}{R}+\frac{K_0\big(\frac{R\vee h_N}{N}\big)}{h_N}\right)^{-1}.
\end{equation}
   \end{enumerate}
\end{proposition}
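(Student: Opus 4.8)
## Proof plan for Proposition~\ref{prop:line_cap} (capacity of a line)

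The plan is to estimate $\capacity_N(\ell_R)$ via the variational formula \eqref{eq:cap_var}, which requires a good choice of probability measure $\mu$ on $\ell_R$ for the upper bound on capacity (lower bound on energy) and a matching lower bound on the energy $\sum g_N(x_1,x_2)\mu(x_1)\mu(x_2)$ uniformly over all $\mu$. The crucial input is the two-regime Green's function estimate \eqref{eq:green_estimate}, which splits $g_N$ into a ``$3d$'' part $\asymp (\|x\|\vee1)^{-1}$ and a ``$2d$'' part $\asymp h_N^{-1}K_0(\tfrac{|y|\vee h_N}{N})$. Since on the line segment $\ell_R$ one has $\|x-x'\|=|y-y'|$ with $y,y'\in[0,R-1]$, the relevant Green's function reads $g_N(x-x')\asymp |y-y'|^{-1} + h_N^{-1}K_0(\tfrac{|y-y'|\vee h_N}{N})$, and the second term is essentially constant $\asymp h_N^{-1}K_0(\tfrac{R\vee h_N}{N})$ over pairs in $\ell_R$ (monotonicity of $K_0$ and $|y-y'|\le R$). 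This near-constancy is exactly what produces the harmonic-mean structure $\big(\tfrac{\log R}{R} + \tfrac{K_0(\cdot)}{h_N}\big)^{-1}$ visible in \eqref{eq:line-cap} and \eqref{eq:line-cap-const}: the $1/\|x\|$ part behaves like a $3d$ line whose capacity is $\asymp R/\log R$, while the constant part $\kappa\coloneqq h_N^{-1}K_0(\tfrac{R\vee h_N}{N})$ acts like a uniform additive perturbation $g_N(x-x')\asymp |y-y'|^{-1}+\kappa$, for which $\inf_\mu \sum g_N \mu\otimes\mu \asymp \inf_\mu\sum |y-y'|^{-1}\mu\otimes\mu + \kappa = (\log R)/R \cdot (\text{const}) + \kappa$ (the cross term is controlled since $\mu$ is a probability measure, so $\sum\kappa\,\mu(x_1)\mu(x_2)=\kappa$ exactly). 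Inverting gives the $\wedge$ in \eqref{eq:line-cap}.

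For part~(i), concretely: \emph{Upper bound on $\capacity_N(\ell_R)$.} Take $\mu$ uniform on $\ell_R$ (or, to get the sharper constant in part~(ii), a slightly tilted measure — see below). Then $\sum_{x_1,x_2}g_N(x_1,x_2)\mu(x_1)\mu(x_2) \gtrsim R^{-2}\sum_{0\le a,b< R}\big(|a-b|^{-1}\vee 1\big) + \kappa \gtrsim R^{-1}\log(R\vee2) + \kappa$, using the standard estimate $\sum_{0\le a,b<R}(|a-b|\vee1)^{-1}\asymp R\log(R\vee2)$. Inverting yields $\capacity_N(\ell_R)\lesssim \big(R^{-1}\log(R\vee2)+\kappa\big)^{-1}\asymp \tfrac{R}{\log(R\vee2)}\wedge\tfrac1\kappa$, which is the RHS of \eqref{eq:line-cap}. \emph{Lower bound on $\capacity_N(\ell_R)$.} Here I need an upper bound on the energy for the optimal $\mu$, equivalently a lower bound on capacity. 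One clean route: use monotonicity $\capacity_N(\ell_R)\ge \capacity_N(\ell_R\cap(\text{sub-interval}))$ together with the last-exit formula \eqref{eq:last_exit} and a first/second moment estimate à la the standard lower bound on the capacity of a line in $\Z^3$; alternatively, directly exhibit a near-equilibrium measure. The cleanest is probably: for \emph{any} probability measure $\mu$ on $\ell_R$, $\sum g_N\mu\otimes\mu \le C\big(\sum |y_1-y_2|^{-1}\mu\otimes\mu + \kappa\big)$, and then observe $\inf_\mu \sum_{a,b}(|a-b|\vee1)^{-1}\mu(a)\mu(b)$ over probability measures on $\{0,\dots,R-1\}$ is $\asymp (\log R)/R$ (this is just $\capacity_{\Z^3}(\ell_R)^{-1}$ up to constants, a known computation, e.g.\ via the variational characterization and a logarithmically-tilted test measure concentrating mass away from the endpoints). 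Combining, $\inf_\mu \sum g_N\mu\otimes\mu \le C\big((\log R)/R + \kappa\big)$, hence $\capacity_N(\ell_R)\ge c\big((\log R)/R+\kappa\big)^{-1}$, matching the LHS of \eqref{eq:line-cap}. The macroscopic case \eqref{eq:line-cap-macro} with $R=N$ follows by plugging $\kappa = h_N^{-1}K_0(\tfrac{N}{N})=h_N^{-1}K_0(1)\asymp h_N^{-1}$ and comparing $h_N^{-1}$ with $N^{-1}\log N$.

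For part~(ii), the sharp constant $\tfrac{\pi}{3}$ comes from the sharp asymptotics in Theorem~\ref{thm:green_main},(ii): on $\ell_R$ with $1\ll R$, $1\ll R/h_N$ wait — more precisely using \eqref{eq:g3-asymp-unif} and \eqref{eq:g2-asymp-unif}, $g_N(x-x')\sim \tfrac{3}{2\pi}|y-y'|^{-1}e^{-\sqrt6|y-y'|/N} + \tfrac{3}{\pi h_N}K_0(\sqrt6\tfrac{|y-y'|\vee h_N}{N})$ uniformly on $\ell_R$ when $R\le N/\Cr{line_rn}$, $h_N\le N/\Cr{line_hnn}$; since $|y-y'|\le R\le N/\Cr{line_rn}$ the exponential is $1+O(1/\Cr{line_rn})$ and $K_0(\sqrt6\,\cdot)=K_0(\cdot)(1+o(1))$ by $K_0(t)\sim\log(1/t)$, so $g_N \sim \tfrac{3}{2\pi}|y-y'|^{-1} + \tfrac{3}{\pi h_N}K_0(\tfrac{R\vee h_N}{N})$ up to $(1\pm\epsilon)$ factors for $R\ge\Cr{line_r}(\epsilon)$ large. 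Then one needs the matching \emph{sharp} two-sided bound on $\inf_\mu\sum_{a,b}(|a-b|\vee1)^{-1}\mu(a)\mu(b) = (1+o(1))\cdot 2(\log R)/R$ as $R\to\infty$ — this is the sharp $\Z^3$ line-capacity asymptotic $\capacity_{\Z^3}(\ell_R)\sim \tfrac{?}{}\,R/\log R$; the factor is pinned down by the classical computation that for $g(a,b)=(2\pi)$-normalized, the log-energy minimizer on an interval gives $\int\!\!\int |s-t|^{-1}d\nu(s)d\nu(t)$ minimized over probability measures on $[0,1]$, whose value after rescaling to $[0,R]$ is $\sim 2(\log R)/R$ (the optimal $\nu$ being roughly uniform with logarithmic endpoint correction). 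Carrying the $\tfrac{3}{2\pi}$ prefactor through, $\inf_\mu\sum g_N\mu\otimes\mu \sim \tfrac{3}{2\pi}\cdot\tfrac{2\log R}{R} + \tfrac{3}{\pi}\kappa = \tfrac{3}{\pi}\big(\tfrac{\log R}{R} + \tfrac{K_0((R\vee h_N)/N)}{h_N}\big)$, and inverting gives \eqref{eq:line-cap-const}.

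\textbf{Main obstacle.} The hard part is the \emph{lower bound on capacity} (upper bound on the minimal energy) \emph{with the sharp constant} in part~(ii): one must produce an explicit near-optimal probability measure $\mu$ on $\ell_R$ whose energy matches $\tfrac{3}{\pi}\big(\tfrac{\log R}{R}+\kappa\big)$ up to $(1+\epsilon)$. The uniform measure gives the right \emph{order} but not the right constant for the $3d$ part — the correct test measure needs the logarithmic endpoint tilt familiar from the equilibrium measure of an interval for the $1/|x|$ kernel; verifying that its energy is $(1+o(1))\tfrac{3}{2\pi}\cdot\tfrac{2\log R}{R}$ while the additive-$\kappa$ part contributes exactly $\tfrac{3}{\pi}\kappa$ (no cross-term loss, since $\mu$ is a probability measure) requires a careful but standard Riemann-sum/asymptotic analysis. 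A secondary technical point is ensuring all error terms in \eqref{eq:g3-asymp-unif}–\eqref{eq:g2-asymp-unif} are genuinely uniform over $\ell_R$ in the stated parameter window $\Cr{line_r}(\epsilon)\le R\le N/\Cr{line_rn}(\epsilon)$, $h_N\le N/\Cr{line_hnn}(\epsilon)$ — this is where the constants $\Cr{line_r},\Cr{line_rn},\Cr{line_hnn}$ are chosen (large enough that the exponential factor, the $K_0(\sqrt6\,\cdot)/K_0(\cdot)$ ratio, and the LCLT errors are all within $\epsilon$).
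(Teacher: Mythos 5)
Your overall strategy coincides with the paper's: the variational formula \eqref{eq:cap_var}, a splitting of the kernel into the polynomial ($3d$) part and the Bessel ($2d$) part, the uniform test measure for the lower bound on capacity, and a pointwise lower bound on the kernel (resp.\ a last-exit argument) for the upper bound. Two points, however, need repair.

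First, your claim that the $2d$ part of the kernel ``is essentially constant $\asymp h_N^{-1}K_0(\tfrac{R\vee h_N}{N})$ over pairs in $\ell_R$'' is false, and with it the assertion that for \emph{any} probability measure $\mu$ one has $\sum g_N\,\mu\otimes\mu\le C(\sum|y_1-y_2|^{-1}\mu\otimes\mu+\kappa)$. Monotonicity of $K_0$ only gives the one-sided bound $K_0(\tfrac{|y_1-y_2|\vee h_N}{N})\ge K_0(\tfrac{R\vee h_N}{N})$; in the other direction the ratio can blow up (take $h_N=1$ and $R$ of order $N$: near the diagonal the kernel is $\asymp\log N$ while $K_0(R/N)\asymp 1$, and indeed $\mu=\delta_0$ violates your inequality). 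The correct statement is that the \emph{average} of $K_0(k/N)$ over $k\le R$ is comparable to $K_0(R/N)\vee 1$ — this is \eqref{eq:K_0_bound}, proved via Stirling, with the sharp $(1+\epsilon)$ version \eqref{eq:K_0_asymp} valid only when $N\ge \Cr{N_big}(\epsilon)R$; this averaging step is exactly what makes the uniform measure work and is where the constants $\Cr{line_rn}(\epsilon),\Cr{line_hnn}(\epsilon)$ in part~(ii) come from. So the energy upper bound must go through the uniform measure plus this Bessel-sum estimate, not through pointwise near-constancy.

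Second, your ``main obstacle'' is misplaced. The uniform measure \emph{does} achieve the sharp constant $\tfrac{3}{\pi}\tfrac{\log R}{R}$ for the $3d$ energy (no logarithmic endpoint tilt is needed: the diagonal logarithmic divergence of the $1/|i-j|$ kernel is insensitive to the profile of $\mu$ to leading order, cf.\ \eqref{eq:3d_tube_unif}); your two statements on this point contradict each other. The genuine difficulty for \eqref{eq:line-cap-const} is the matching \emph{lower} bound on $\inf_\mu\mathscr E^3(\mu)$ over all probability measures, which you dismiss as ``a known computation''. The paper obtains it by a last-exit decomposition for the exponentially killed walk on $\Z^3$ (bounding $\capacity_N^3(\ell_R)$ from above by trimming $R^{1-\epsilon/2}$-neighbourhoods of the endpoints and minimizing $\sum_{x'}g_N^3(x,x')$ over the remaining bulk), adapted from \cite{goswami_radius_2022}; some version of this argument, with uniform control of the killing via \eqref{eq:g3-asymp-unif}, is needed to close part~(ii).
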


\begin{proposition}[Capacity of a ball]\label{prop:ball_cap}
  For all $\epsilon\in(0,1)$ and $N\geq 1$, the following hold:
   \begin{enumerate}[label={(\roman*)}]
\item For all $R\geq 1$ such that $R\leq\Cr{C:range}N$ \label{thm_box_cap}, 
 \begin{equation} \label{eq:box-cap}
      \Cl[c]{lb_box} \,\Fbox(R) \leq \capacity_N(B_R) \leq \capacity_N(D_R)\leq \Cl{ub_box} \Fbox(R).
    \end{equation}
  In particular in the case of $R=N$, we have that
  \begin{equation} \label{eq:box-cap-macro}
      \Cr{lb_box} \,h_N \leq \capacity_N(B_N)\leq \capacity_N(D_N) \leq \Cr{ub_box} h_N.
    \end{equation}
    \item  There exists $\Cl{ball_r}(\epsilon),\Cl{ball_rn}(\epsilon),\Cl{ball_hnn}(\epsilon)<\infty$ such that for all $N,R,h_N$ satisfying $\Cr{ball_r}\leq R\leq \frac{1}{\Cr{ball_rn}}N$ and $h_N\leq \frac{1}{\Cr{ball_hnn}}N$,
    \begin{equation} \label{eq:ball-cap-const}
    \begin{array}{cl}
        \frac{\pi(1-\epsilon)}{3}\frac{h_N}{K_0\big(\frac{R\vee h_N}{N}\big)}\leq 
        \capacity_N(D_R) \leq  \frac{\pi(1+\epsilon)}{3}\frac{h_N}{K_0\big(\frac{R\vee h_N}{N}\big)},
    & \text{ when } \frac{1}{ \epsilon R}\leq \frac{K_0(\frac{R\vee h_N}{N})}{h_N}\\[1.5em]
        \frac{2\pi(1-\epsilon)}{3}\left(\frac{1}{R}+ \frac{2K_0\big(\frac{ h_N}{N}\big)}{h_N}\right)^{-1} 
        \!\!\!\!
        \leq 
        \capacity_N(B_R) \leq  \frac{2\pi(1+\epsilon)}{3}\left(\frac{1}{R}+ \frac{2K_0\big(\frac{ h_N}{N}\big)}{h_N}\right)^{-1}
        \!\!\!,
    & \text{ when } \frac{1}{\epsilon R}> \frac{K_0(\frac{R\vee h_N}{N})}{h_N} .
    \end{array}
    \end{equation}
   \end{enumerate}
\end{proposition}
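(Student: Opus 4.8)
The plan is to run everything through the variational formula \eqref{eq:cap_var}, writing $\mathscr E_N(\mu):=\sum_{x_1,x_2}g_N(x_1,x_2)\mu(x_1)\mu(x_2)$, so that $\capacity_N(A)=\big(\inf_\mu\mathscr E_N(\mu)\big)^{-1}$ over probability measures $\mu$ on $A$, and to feed in the Green's function estimates of Section~\ref{sec:green-slab}: the two-term bound \eqref{eq:green_estimate} for part~(i), and the sharp uniform asymptotics of Proposition~\ref{thm:greens_function_3d},(ii) and Proposition~\ref{thm:greens_function_bound},(ii) (together with \eqref{eq:green3d-normal}) for part~(ii). Throughout I split $g_N\asymp g_N^3+g_N^2$ with $g_N^3(x)\asymp(\|x\|\vee1)^{-1}$ and $g_N^2(x)\asymp h_N^{-1}K_0\!\big(\tfrac{|y|\vee h_N}{N}\big)$, using crucially that $g_N^2$ depends on $x=(y,z)$ only through $|y|$. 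Two elementary facts are used repeatedly: first, $D_R$ seen through $\|\cdot\|$ (which wraps the vertical coordinate at $h_N$) maps distance-non-increasingly into a $\Z^3$-ball of radius $O(R)$, so that $\sum_{x_1,x_2}(\|x_1-x_2\|\vee1)^{-1}\mu(x_1)\mu(x_2)\ge c/R$ for every probability measure $\mu$ on $D_R$ (the classical lower bound on the Green energy of a probability measure on a three-dimensional ball); and second, $B_R\subset D_R$ gives $\capacity_N(B_R)\le\capacity_N(D_R)$ by monotonicity.

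For the \emph{upper bounds in (i)} it therefore suffices to bound $\inf_\mu\mathscr E_N(\mu)$ over $\mu$ on $D_R$ from below. By \eqref{eq:green_estimate} (applied with a slightly enlarged range constant, since $\|x_1-x_2\|$ reaches $O(R)$) and $K_0(at)\asymp K_0(t)$ on bounded intervals, $g_N(x_1,x_2)\ge c_1(\|x_1-x_2\|\vee1)^{-1}+c_2h_N^{-1}K_0\!\big(\tfrac{R\vee h_N}{N}\big)$ for $x_1,x_2\in D_R$; integrating against $\mu\otimes\mu$ and using the first fact above, $\mathscr E_N(\mu)\ge c\big(R^{-1}+h_N^{-1}K_0(\tfrac{R\vee h_N}{N})\big)=c\,\Fbox(R)^{-1}$, whence $\capacity_N(D_R)\le\Cr{ub_box}\Fbox(R)$.

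For the \emph{lower bounds in (i)} I would use a single explicit test measure: for $R$ large, $\mu$ is the uniform law on the flat disk $B^{2}(0,\lfloor R/10\rfloor)\times\{0\}\subset B_R$ (for $R$ bounded one simply uses $\capacity_N(B_R)\ge\capacity_N(\{0\})=1/g_N(0)\asymp\Fbox(R)$, by Theorem~\ref{thm:green_main}). On its support $\|x_1-x_2\|=|y_1-y_2|$, so the $g_N^3$-part of $\mathscr E_N(\mu)$ is $\asymp R^{-1}$ (a normalized sum of $(|y_1-y_2|\vee1)^{-1}$ over a planar disk of radius $\asymp R$), while the $g_N^2$-part is $\asymp h_N^{-1}\sum\mu_0(y_1)\mu_0(y_2)K_0\!\big(\tfrac{|y_1-y_2|\vee h_N}{N}\big)$ with $\mu_0$ the uniform law on that disk; when $R\le5h_N$ all pairs have $|y_1-y_2|\le h_N$, so this equals $\asymp h_N^{-1}K_0(h_N/N)$, and when $R>5h_N$ one splits into $|y_1-y_2|\le h_N$ (giving $\le C\tfrac{h_N}{R^2}K_0(h_N/N)$) and $h_N<|y_1-y_2|\le R/5$ (which, after rescaling by $N$ and using the classical identity $\int_0^\infty sK_0(s)\,\mathrm{d}s=1$, gives $\le Ch_N^{-1}K_0(R/N)$). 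A short case analysis then shows each term is $\le C\Fbox(R)^{-1}$: the term $R^{-1}$ since $\Fbox(R)\le R$; the terms $h_N^{-1}K_0(h_N/N)$ and $h_N^{-1}K_0(R/N)$ since $\Fbox(R)^{-1}\ge h_N^{-1}K_0(\tfrac{R\vee h_N}{N})$ together with $K_0(R/N)\le C\big(K_0(\tfrac{R\vee h_N}{N})+h_N/R\big)$; and the term $\tfrac{h_N}{R^2}K_0(h_N/N)$ (present only for $R\ge h_N$), which amounts to the inequality $\psi(h_N/N)\le C\psi(R/N)$ for $\psi(t):=t^2K_0(t)$ --- valid since $\psi$ is continuous, positive and unimodal on $(0,\Cr{C:range}]$ with $\psi(0^+)=0$, so that $\sup_{a\le b}\psi(a)\le C\psi(b)$ for all $b\le\Cr{C:range}$. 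This gives $\capacity_N(B_R)\ge\Cr{lb_box}\Fbox(R)$; the macroscopic case $R=N$ then follows from $\Fbox(N)=N\wedge h_N/K_0(1)\asymp h_N$, and for part of the range one may alternatively quote Proposition~\ref{prop:line_cap},(i) via $\ell_R\subset B_R$. I expect the bookkeeping of this $K_0$-sum, in particular the comparison $\psi(h_N/N)\le C\psi(R/N)$, to be the one genuinely fiddly point of part~(i).

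For \emph{part~(ii)} I would rerun the variational argument with the sharp asymptotics, first choosing $\Cr{ball_r}(\epsilon),\Cr{ball_rn}(\epsilon),\Cr{ball_hnn}(\epsilon)$ so large that on the relevant ranges: \eqref{eq:g3-asymp-unif} and \eqref{eq:g2-asymp-unif} hold with relative error $\le\epsilon'$; the factors $e^{-\sqrt{6}\|\cdot\|/N}$ and the ratios $K_0(\sqrt{6}t)/K_0(t)$, $K_0(2\sqrt{6}t)/K_0(t)$ are $1\pm\epsilon'$ (which forces $R/N$, $h_N/N$ small); and in the regime $\tfrac1{\epsilon R}>\tfrac{K_0((R\vee h_N)/N)}{h_N}$ one automatically has $R\le h_N$ --- indeed if $R>h_N$ this reads $h_N>\epsilon RK_0(R/N)\ge\epsilon RK_0(1/\Cr{ball_rn})$, contradicting $R>h_N$ once $\epsilon K_0(1/\Cr{ball_rn}(\epsilon))\ge1$. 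In the complementary regime $\Fbox(R)=h_N/K_0(\tfrac{R\vee h_N}{N})\le\epsilon R$ (``$2d$-dominated''), the $g_N^3$-contribution to $\mathscr E_N$ is $O(R^{-1})$, hence an $O(\epsilon)$-fraction of the answer, and since $g_N^2$ sees $x$ only through $|y|$, the variational problem for $D_R$ reduces to the capacity of the planar disk $B^{2}(0,R)$ for $\tfrac6{h_N}$ times the massive planar Green's function $\tfrac1{2\pi}K_0(\sqrt{6}|\cdot|/N)$ (with a harmless plateau below scale $h_N\ll R$), which is classically $\tfrac{h_N}{6}\cdot\tfrac{2\pi}{K_0(\sqrt{6}R/N)}(1+o(1))=\tfrac{\pi h_N}{3K_0(\tfrac{R\vee h_N}{N})}(1\pm\epsilon)$, giving the stated bounds for $\capacity_N(D_R)$. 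In the remaining regime $R\le h_N$ forces $g_N^2\equiv\tfrac{3}{\pi h_N}K_0(\sqrt{6}h_N/N)(1\pm\epsilon')=\tfrac{3K_0(h_N/N)}{\pi h_N}(1\pm\epsilon)$ on all of $B_R$, so $\mathscr E_N(\mu)=\sum_{x_1,x_2}g_N^3(x_1,x_2)\mu(x_1)\mu(x_2)+\tfrac{3K_0(h_N/N)}{\pi h_N}(1\pm\epsilon)$ for every probability $\mu$ on $B_R$; since $g_N^3=(1+o(1))g_{\Z^3}$ and $B_R$ is then a genuine $\Z^3$-ball of radius $R$ (no wrapping), the infimum of the first summand is $(1+o(1))\capacity_{\Z^3}(B_R)^{-1}=(1+o(1))\tfrac3{2\pi R}$ (from \eqref{eq:green3d-normal} and a discrete-to-continuum comparison, the minimiser being essentially uniform on $\{\|x\|\approx R\}$), whence $\capacity_N(B_R)=\tfrac{2\pi}{3}\big(\tfrac1R+\tfrac{2K_0(h_N/N)}{h_N}\big)^{-1}(1\pm\epsilon)$. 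The delicate point throughout part~(ii) is exactly this last interplay: the $3d$ equilibrium measure concentrates near $\{\|x\|\approx R\}$, where one needs $g_N^2$ to be (nearly) constant at value $\tfrac{3K_0(h_N/N)}{\pi h_N}$ rather than $\tfrac{3K_0(R/N)}{\pi h_N}$ --- which is precisely why $R\le h_N$ must be arranged in that regime --- and pinning down the prefactors $\tfrac\pi3$, $\tfrac{2\pi}3$ reduces to the (classical, and for the disk case already encoded in Proposition~\ref{prop:line_cap},(ii)) evaluation up to $1+o(1)$ of the capacity of a disk for a massive planar Green's function and of a ball in $\Z^3$.
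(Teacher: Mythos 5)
Your argument is sound in outline and reaches the stated bounds, but it follows a genuinely different route from the paper's in both parts. For part~(i), the paper gets the upper bound on $\capacity_N(D_R)$ from the last-exit decomposition \eqref{eq:last_exit} ($1\geq \min_{x,x'\in D_R}g_N(x,x')\,\capacity_N(D_R)$) and the lower bound from the domination principle applied to the uniform measure on the full ball $B_R$, with the resulting sum over dyadic annuli controlled by Lemma~\ref{prop:bessel_approx}; you instead work entirely through the variational formula and, for the lower bound, use the uniform measure on a \emph{flat} two-dimensional disk. Your test measure works (the $g_N^3$-energy of a planar disk of radius $\asymp R$ is indeed $\asymp R^{-1}$, and your case analysis of the $K_0$-sum, including the reduction to $\sup_{a\le b}t^2K_0(t)\big|_{t=a}\le C\,t^2K_0(t)\big|_{t=b}$ for $b\le \Cr{C:range}$, is correct — monotonicity of $t^2K_0(t)$ near $0$ plus compactness suffices, unimodality is not needed), but it trades the paper's Bessel-sum lemma for a comparable amount of bookkeeping. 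For part~(ii) the divergence is more substantial: the paper never identifies any equilibrium measure or quotes classical capacity asymptotics; it writes $1=\sum_x g_N(0,x)e_{B,N}(x)$, observes via \eqref{eq:int_em} and \eqref{eq:ball_error} that the interior contributes negligibly, and exploits the near-constancy of $g_N(0,\cdot)$ on $\partial D_R$ (resp.\ $\partial B_R$) from \eqref{eq:green_asymp_ball} to read off $\capacity_N(B)\sim g_N(0,x)^{-1}$, $x\in\partial B$ — which produces both prefactors $\tfrac{\pi}{3}$ and $\tfrac{2\pi}{3}$ in one stroke. Your route instead needs two external sharp inputs with quantitative errors: $\capacity_{\Z^3}(B_R)\sim\tfrac{2\pi R}{3}$ (classical, but you must also track the effect of the rate-$N^{-2}$ killing, harmless since $R\le N/\Cr{ball_rn}$) and the $(1+o(1))$ evaluation of the uniform-measure energy for the massive planar kernel on a disk (the two-dimensional analogue of \eqref{eq:2d_line_cr0_unif}, which is not in the paper for disks and would have to be written out). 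So your proof is correct but outsources to ``classical'' facts precisely the computations the paper's last-exit argument is designed to avoid.

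One small point to repair: in the second regime you assert that $R\le h_N$ makes $B_R$ a genuine $\Z^3$-ball with no vertical wrapping, but that requires $R\le h_N/2$. This is fine after tuning constants: the condition $\tfrac{1}{\epsilon R}>\tfrac{K_0(h_N/N)}{h_N}$ together with $K_0(h_N/N)\ge\log \Cr{ball_hnn}(\epsilon)$ forces $R\le h_N/(\epsilon\log\Cr{ball_hnn}(\epsilon))$, which is $\le h_N/2$ once $\Cr{ball_hnn}(\epsilon)\ge e^{2/\epsilon}$ — you should say so explicitly, since the constancy of $g_N^2$ on $B_R\times B_R$ and the identification of $\inf\mathscr{E}^3$ with the $\Z^3$ ball capacity both rely on it.
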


\begin{remark}\label{R:cap}
\begin{enumerate}[label={\arabic*)}]

\item The capacity of lines and boxes have been computed by \cite{drewitz_geometry_2023} in the setting of transient graphs with Green's functions that decay as a polynomial of order $\nu>0$. For such graphs, \cite[(3.11),(3.14)]{drewitz_geometry_2023} suggest $\Z^3$, which corresponds to $\nu=1$, marks the threshold for the ``mismatch" between line and box capacity. In particular, the capacity of a line and a box of diameter $r$ are both of order $r^\nu$, when $\nu\in(0,1)$; whereas when $\nu=1$, the order of capacity of a line and a box is $ \frac{r}{\log r }$ and $r$ respectively. Propositions~\ref{prop:line_cap} and~\ref{prop:ball_cap} (cf.~in particular \eqref{eq:line-cap-macro} and \eqref{eq:box-cap-macro}) refine this threshold as this mismatch appears when $h_N\asymp \frac{N}{\log N}$. See Figure \ref{fig:h_regimes} for a summary of the different phenomena as the height $h_N$ of the slab changes.

\begin{figure}[ht]
\centering
\begin{tikzpicture}[>=stealth, thick]

\def\N{10}            
\def\logn{2}          
\def\NOverLogN{8}     
\def\inset{0.05}

\draw[->] (0,0) -- (\N+0.5,0) node[right] {$h_N$};

\draw (0,0) -- (0,0.1) node[below=5pt] {0};
\draw (\logn,0) -- (\logn,0.1) node[below=5pt] {$\log N $};
\draw (\NOverLogN,0) -- (\NOverLogN,0.1) node[below=5pt] {$\frac{N}{\log N }$};
\draw (\N,0) -- (\N,0.1) node[below=5pt] {$N$};


\draw [decorate,decoration={brace,amplitude=5pt,raise=5pt}] (0,0.3) -- (\NOverLogN-0.05,0.3);
\node[above=15pt] at ({(0+\NOverLogN)/2},0.3) {$\capacity_N(\ell_N)\asymp\capacity_N(B_N)$};

\draw [decorate,decoration={brace,amplitude=5pt,raise=5pt}] (\NOverLogN+0.05,0.3) -- (\N,0.3);
\node[above=15pt] at ({(\NOverLogN+\N)/2},0.3) {$\capacity_N(\ell_N)\ll\capacity_N(B_N)$};

\fill[blue!10, rounded corners=6pt]
  (0,0.12) rectangle (\logn,0.42);

\fill[cyan!10, rounded corners=6pt]
  (\logn,0.12) rectangle (\N,0.42);

\node at (\logn/2,0.27) {\small recurrence};
\node at ({(\logn+\N)/2},0.27) {\small transience};

\end{tikzpicture}
\caption{Behaviour of the random walk on $\mathbb{S}_N$ for different $h_N$}
\label{fig:h_regimes}
\end{figure}
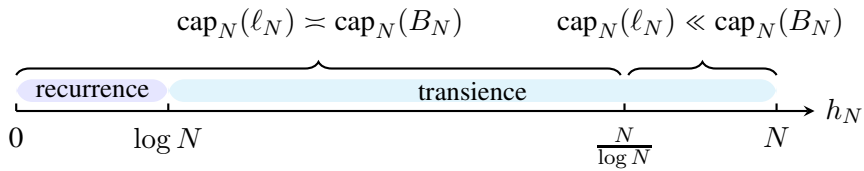
\item As for the Green's function, cf.~Remark~\ref{R:green},\ref{R:green-coro}, the behaviour of the capacity functionals follows either the $\Z^2$ or $\Z^3$ regime depending on the relationship between $r$ and $h_N$. Taking $\capacity_N(B_R)$ for example, one can interpret the result of Proposition~\ref{prop:ball_cap} by distinguishing the following regimes:
\begin{itemize}
    \item (2d regime). If ${RK_0(\frac{R\vee h_N}{N})}>{h_N}$ , then by \eqref{eq:box-cap} we have
        \begin{equation*} 
          \frac{\Cr{lb_box} h_N}{K_0(\frac{R\vee h_N}{N})} \leq \capacity_N(B_R) \leq  \frac{\Cr{ub_box} h_N}{K_0(\frac{R\vee h_N}{N})}.
        \end{equation*}
    \item (3d regime). If $R K_0(\frac{r\vee h_N}{N})<{h_N}$, then by \eqref{eq:box-cap} (cf.~also \eqref{eq:ball-cap-const}) we have
        \begin{equation*} 
        \Cr{lb_box} R\leq \capacity_N(B_R) \leq \Cr{ub_box} R.
        \end{equation*}
\end{itemize}
 
\end{enumerate}
\end{remark}

Before looking into the capacity functionals, we isolate the following estimates for the sum of Bessel functions. Recall $K_0(\cdot)$ from \eqref{eq:K_0}. 

\begin{lemma} \label{prop:bessel_approx}
There exists $\Cl{bessel_ub}<\infty$ such that for all $N,R,k,h\geq1$, we have
\begin{equation} \label{eq:bessel_approx2}
    \sum_{j=1}^k (2^j)^2 K_0\left(\frac{2^j\vee h}{N}\right) \leq C \,(2^k)^2 \left(K_0\left(\frac{{2^k} \vee h}N\right)\vee 1\right),
\end{equation}
and
\begin{equation} \label{eq:K_0_bound}
    \sum_{k=1}^{R} K_0\left({k}/{N}\right) \leq \Cr{bessel_ub} \,R \left(K_0(R/N)\vee 1\right).
\end{equation}
Further, for any $\epsilon\in(0,1)$, there exists $\Cl{N_big}(\epsilon)<\infty$ such that for all $N,R\geq 1$ with $N \geq \Cr{N_big} R$,
\begin{equation} \label{eq:K_0_asymp}
    \sum_{k=1}^{R} K_0\left({k}/{N}\right) \leq (1+\epsilon) \,R K_0(R/N).
\end{equation}
\end{lemma}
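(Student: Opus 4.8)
The plan is to reduce all three bounds to elementary features of the Bessel function $K_0$: it is strictly decreasing on $(0,\infty)$, with $K_0(s)\sim\log(1/s)$ as $s\downarrow0$ and $K_0(s)\to0$ (indeed $K_0(s)<1$) for $s$ large, cf.\ Lemma~\ref{L:Bessel} and \eqref{eq:K_0}. The one input that deserves a moment's thought, and the crux of the argument, is the \emph{doubling} property of the truncated function $K_0(\cdot)\vee1$: I claim
\[
\kappa\stackrel{\text{def.}}{=}\sup_{t>0}\frac{K_0(t/2)\vee1}{K_0(t)\vee1}<4 .
\]
Indeed the ratio is continuous in $t>0$, equals $1$ once $t$ is large (there $K_0(t/2),K_0(t)<1$), and tends to $1$ as $t\downarrow0$ (there it is $\frac{K_0(t/2)}{K_0(t)}\sim\frac{\log(2/t)}{\log(1/t)}\to1$), so the supremum is finite; a crude explicit estimate places it below $2$. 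The point worth isolating is that truncating by $1$ is precisely what preserves doubling across the crossover where $K_0$ passes from logarithmic growth to exponential decay.

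For \eqref{eq:bessel_approx2} I would set $a_j\stackrel{\text{def.}}{=}\frac{2^j\vee h}{N}$ and $\psi(j)\stackrel{\text{def.}}{=}(2^j)^2\big(K_0(a_j)\vee1\big)$, so that the left-hand side of \eqref{eq:bessel_approx2} is at most $\sum_{j=1}^k\psi(j)$ while its right-hand side is a constant times $\psi(k)$; it then suffices to see that $\psi$ grows at least geometrically. From $2^j\vee h\le2^{j+1}\vee h\le2(2^j\vee h)$ one gets $\tfrac12 a_{j+1}\le a_j\le a_{j+1}$, hence by monotonicity of $K_0$ and the definition of $\kappa$,
\[
\frac{\psi(j)}{\psi(j+1)}=\frac14\cdot\frac{K_0(a_j)\vee1}{K_0(a_{j+1})\vee1}\le\frac14\cdot\frac{K_0(a_{j+1}/2)\vee1}{K_0(a_{j+1})\vee1}\le\frac{\kappa}{4}<1 .
\]
Iterating gives $\psi(j)\le(\kappa/4)^{k-j}\psi(k)$, and summing the geometric series yields $\sum_{j=1}^k\psi(j)\le\frac{4}{4-\kappa}\psi(k)$, which is \eqref{eq:bessel_approx2} with $C=\frac{4}{4-\kappa}$. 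Note this handles all heights $h\ge1$ at once, the dichotomy $2^j\lessgtr h$ being absorbed into $a_j$.

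For \eqref{eq:K_0_bound} I would instead compare the sum to an integral. Since $K_0$ is decreasing, $K_0(k/N)\le\int_{k-1}^kK_0(t/N)\,dt$ for each $k\ge1$, so $\sum_{k=1}^RK_0(k/N)\le\int_0^RK_0(t/N)\,dt=N\int_0^{R/N}K_0(u)\,du$. If $R\ge N$, bound $\int_0^{R/N}K_0\le\int_0^\infty K_0(u)\,du=\frac{\pi}{2}$, so the right-hand side is $\le\frac{\pi}{2}N\le\frac{\pi}{2}R\le\frac{\pi}{2}R\big(K_0(R/N)\vee1\big)$. If $R<N$, use $K_0(u)\le\log(2/u)+1$ on $(0,1]$ to get $N\int_0^{R/N}K_0(u)\,du\le R\big(\log(N/R)+C\big)$, which is $\le C'R\,K_0(R/N)$ because $K_0(R/N)\ge c\,(1+\log(N/R))$ for $0<R/N\le1$ (again from the asymptotics of $K_0$ and $K_0\ge K_0(1)>0$ on $(0,1]$); with $K_0(R/N)\vee1\ge1$ this yields \eqref{eq:K_0_bound}, with $\Cr{bessel_ub}$ depending only on these universal constants.

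Finally, \eqref{eq:K_0_asymp} is the quantitative refinement of the regime $R<N$ just treated: with $\delta\stackrel{\text{def.}}{=}R/N$, the same comparison gives $\frac{1}{R\,K_0(R/N)}\sum_{k=1}^RK_0(k/N)\le\frac{N\int_0^\delta K_0(u)\,du}{R\,K_0(\delta)}=\frac{\delta^{-1}\int_0^\delta K_0(u)\,du}{K_0(\delta)}$, and since $\delta^{-1}\int_0^\delta K_0(u)\,du$ and $K_0(\delta)$ are both asymptotic to $\log(1/\delta)$ as $\delta\downarrow0$, this ratio tends to $1$; choosing $\Cr{N_big}(\epsilon)$ large enough that $R/N\le1/\Cr{N_big}(\epsilon)$ forces it below $1+\epsilon$ then gives the claim. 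Altogether I expect the only genuine obstacle to be the doubling estimate of the first paragraph; everything else is routine monotonicity and geometric-series bookkeeping.
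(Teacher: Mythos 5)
Your proof is correct, and it takes a genuinely different route from the paper's on all three counts. For \eqref{eq:bessel_approx2} the paper splits the sum at $j=\floor{\log_2 h}$ and manipulates logarithms directly in each piece; your doubling inequality $K_0(t/2)\vee 1\leq \kappa\,(K_0(t)\vee 1)$ with $\kappa<4$ packages the whole dichotomy into a single geometric-series bound, which is cleaner and makes transparent why the last term dominates. For \eqref{eq:K_0_bound}--\eqref{eq:K_0_asymp} the paper sums $\log(N/k)$ exactly and invokes Stirling for $\log(R!)$, whereas you compare to $N\int_0^{R/N}K_0$, using $\int_0^\infty K_0=\pi/2$ for $R\geq N$ and $\delta^{-1}\int_0^\delta K_0\sim K_0(\delta)\sim\log(1/\delta)$ for the sharp constant; both are routine and yours has the advantage of treating \eqref{eq:K_0_bound} and \eqref{eq:K_0_asymp} by one computation. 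The only step I would insist you write out is the claim $\kappa<4$: it is load-bearing (the geometric series needs ratio $\kappa/4<1$), and it does \emph{not} follow from Lemma~\ref{L:Bessel} alone, whose constant $C$ is unquantified. It does follow from the series representation \eqref{eq:bessel_series}: for $t\leq 1$ one gets $K_0(t/2)-K_0(t)\leq \log 2+\epsilon(1/2)\leq 2$ with $\epsilon(\cdot)$ the explicit positive remainder in \eqref{eq:bessel_rearrange}, hence $\kappa\leq 1+ \sup_t(K_0(t/2)-K_0(t))^+\leq 3<4$, while for $t$ large enough that $K_0(t/2)\leq 1$ the ratio equals $1$; so your assertion is right, but the quantitative input should be made explicit.
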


Lemma~\ref{prop:bessel_approx} is proved in Appendix~\ref{A:K_0}.

\subsection{Line} In this section we focus on Proposition~\ref{prop:line_cap}, which will follow from Lemmas~\ref{lem:tube_3d} and \ref{lem:tube_2d}. In view of \eqref{eq:projection} and \eqref{eq:cap_var}, we consider two minimisation problems separately to gain some insight. They are 
associated to the energy forms
\begin{equation} \label{eq:line_cap_two_prob}
 \mathscr{E}^i(\mu) \stackrel{\text{def.}}{=}\sum_{x_1,x_2\in \mathbb{S}_N} g_N^i(x_1,x_2)\mu(x_1)\mu(x_2), \quad i\in \{2,3\},
\end{equation}
where $\mu$ refers to a probability measure with finite support in $\mathbb{S}_N$.  In the sequel for $A \subset \mathbb{S}_N$ a finite set we write $\mathcal{P}(A)$ for the set of probability measures supported on $A$. We first consider the case $i=3$.

\begin{lemma} \label{lem:tube_3d}
For all $\epsilon\in(0,1)$
there exists $\Cl{3d_line_R}(\epsilon)<\infty$ such that for all $ \Cr{3d_line_R}\leq R\leq \Cr{C:range}N$, 
    \begin{equation} \label{eq:3d_tube}
        \frac{3(1-\epsilon)}{\pi}\frac{\log R}{R} \leq\inf_{\mu\in\mathcal{P}(\ell_R)} \mathscr{E}^3(\mu)
        \leq \frac{3(1+\epsilon)}{\pi}\frac{\log R}{R}.    \end{equation}
\end{lemma}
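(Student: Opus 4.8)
The plan is to estimate the energy $\mathscr{E}^3(\mu)=\sum_{x_1,x_2}g_N^3(x_1,x_2)\mu(x_1)\mu(x_2)$ over probability measures $\mu$ on the line segment $\ell_R=([0,R-1]\cap\Z)\times\{0\}^2$. Since $\ell_R$ is a one-dimensional segment of length $R$ sitting inside $\Slab$ (with $R\leq\Cr{C:range}N$), and $g_N^3$ is, by Proposition~\ref{thm:greens_function_3d}(i), comparable to the $\Z^3$-Green's function $\frac1{\|x\|\vee1}$ up to the exponential cutoff at scale $N$ (which is harmless here since $R\lesssim N$, so $e^{-\sqrt6\|x\|/N}\asymp 1$ on $\ell_R$), the problem reduces to the classical computation of the $3d$ energy of a segment: $\inf_\mu\mathscr{E}^3(\mu)\asymp\big(\sum_{k=1}^R\frac1k\big)^{-1}\asymp\frac{\log R}{R}$. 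The content of the lemma is to pin down the constant $\frac3\pi$, which comes from the precise asymptotics $g_N^3(x)\sim g_{\Z^3}(x)e^{-\sqrt6\|x\|/N}\sim\frac{3}{2\pi\|x\|}$ from \eqref{eq:green_3d_asymp}--\eqref{eq:green3d-normal}, and from the fact that on a line the optimal strategy is asymptotically the uniform measure, for which $\sum_{k}\frac{1}{k}\sim\log R$ with no extra constant.

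For the \textbf{upper bound}, I would simply plug in the uniform probability measure $\mu_{\mathrm{unif}}$ on the $R$ lattice points of $\ell_R$. Then $\mathscr{E}^3(\mu_{\mathrm{unif}})=\frac{1}{R^2}\sum_{j,j'}g_N^3\big((j-j',0,0)\big)$; using \eqref{eq:g3-asymp-unif} to replace $g_N^3$ by $\bar g_N^3$ uniformly down to a small scale $R^{\delta}$, bounding the diagonal and near-diagonal terms (those with $|j-j'|\leq R^{\delta}$) crudely by $O(R^{1+\delta})/R^2=o(\log R/R)$, and for the rest writing $\sum_{j,j'}\bar g_N^3((j-j',0,0))\leq 2R\sum_{k=1}^{R}\frac{3}{2\pi k}(1+o(1))\sim\frac{3R\log R}{\pi}$, gives $\mathscr{E}^3(\mu_{\mathrm{unif}})\leq\frac{3(1+\epsilon)}{\pi}\frac{\log R}{R}$ for $R\geq\Cr{3d_line_R}(\epsilon)$. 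One must be a little careful that the exponential factor $e^{-\sqrt6|k|/N}$ only helps (it is $\leq1$), so it can be dropped for the upper bound.

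For the \textbf{lower bound}, I would use the variational characterization in reverse: for \emph{any} probability measure $\mu$ on $\ell_R$, bound $\mathscr{E}^3(\mu)$ from below. The clean way is a capacity/entropy-type argument: by \eqref{eq:cap_var}, $\inf_\mu\mathscr{E}^3(\mu)$ is comparable to the reciprocal of the ``capacity'' associated with the kernel $g_N^3$, and one can invoke the last-exit decomposition together with the hitting-probability estimate — or, more elementarily, use the convexity bound that for any probability $\mu$, $\sum_{j,j'}g_N^3\mu(j)\mu(j')\geq\big(\text{something}\big)$ via Jensen applied to the convex function and the fact that the kernel $k\mapsto\frac{1}{|k|\vee1}$ on $\{-R,\dots,R\}$ has its quadratic form minimized (up to $1+o(1)$) by the uniform measure. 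Concretely: for each fixed $j$, $\sum_{j'}g_N^3((j-j',0,0))\geq \Cr{d3_lb_green}\sum_{j'}\frac{1}{|j-j'|\vee1}\geq c\log R$ uniformly — no, that gives the wrong (too weak) bound since it ignores that $\mu$ may concentrate. The correct route is to note $\sum_{j,j'}\frac{\mu(j)\mu(j')}{|j-j'|\vee1}\ge\big(\sup_j\sum_{j'}\frac{\mu(j')}{|j-j'|\vee1}\big)^{-1}\cdot(\cdots)$ — cleaner still, I would directly cite the standard fact (provable by a dyadic/rearrangement argument) that $\inf_{\mu\in\mathcal P(\{1,\dots,R\})}\sum_{j,j'}\frac{\mu(j)\mu(j')}{|j-j'|\vee1}\sim\frac1{2\log R}$, then multiply by the constant $\frac{3}{2\pi}$ coming from \eqref{eq:green3d-normal} (and control the passage $g_N^3\to\bar g_N^3\to\frac{3}{2\pi\|\cdot\|}$ on the relevant range using \eqref{eq:g3-asymp-unif}, handling the unbounded-below behaviour near the diagonal and the $e^{-\sqrt6\|x\|/N}\ge e^{-\sqrt6\Cr{C:range}}$ lower bound on $\ell_R$).

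The \textbf{main obstacle} I anticipate is the lower bound, specifically establishing that the uniform measure is \emph{asymptotically optimal} with the sharp constant $\frac{3}{\pi}$ rather than merely order-optimal — i.e. ruling out that a cleverly concentrated $\mu$ does better by a constant factor. This requires the sharp one-dimensional energy estimate $\inf_\mu\sum\frac{\mu(j)\mu(j')}{|j-j'|\vee1}=(1+o(1))\frac{1}{2\log R}$, which is a genuine (if classical) computation — most transparently done by a dyadic decomposition of $\ell_R$ into scales and a convexity/Lagrange-multiplier argument showing any optimal $\mu$ must equidistribute mass across dyadic scales — and then one must carefully transfer this estimate through the uniform approximation $g_N^3\sim\bar g_N^3\sim\frac{3}{2\pi\|\cdot\|}$ provided by Proposition~\ref{thm:greens_function_3d}(ii), making sure the small-scale ($\|x\|\leq R^\delta$) and boundary ($\|x\|\asymp R$) corrections are $o(\log R/R)$ and thus absorbed into the $\epsilon$.
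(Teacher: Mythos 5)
Your upper bound follows the paper's route (uniform measure, split at scale $R^{\delta}$, asymptotics for the far part), but one step is wrong as written: bounding the near-diagonal contribution by $O(R^{1+\delta})/R^{2}=R^{\delta-1}$ does \emph{not} give $o(\log R/R)$ --- for fixed $\delta>0$ one has $R^{\delta-1}\gg \log R/R$. The fix is easy and is what the paper does: use the up-to-constant bound $g_N^3(x_1,x_2)\leq \Cr{d3_ub_green}/(|i-j|\vee 1)$ from \eqref{eq:green_3d_bound} on the near-diagonal terms, which gives a contribution $\leq C\delta\log R/R$, absorbed into $\epsilon$ by taking $\delta$ small.

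The real gap is the lower bound. You correctly identify that the whole difficulty is showing the uniform measure is asymptotically \emph{optimal} with the sharp constant, but you do not prove it: you start a Jensen-type argument, abandon it mid-sentence, and then propose to ``cite the standard fact'' that $\inf_\mu\sum_{j,j'}\mu(j)\mu(j')/(|j-j'|\vee 1)\sim 1/(2\log R)$ (which, incidentally, should read $\sim 2\log R/R$; the reciprocal-of-capacity form you wrote is off by a factor of $R$, though the final constant you extract happens to be right). Deferring to an unproved classical fact, flagged by yourself as ``the main obstacle,'' leaves the lemma unproved. The paper closes this gap with a short, self-contained argument that you should adopt (it is the method of \cite[Lemma 2.2]{goswami_radius_2022}): observe that $\inf_{\mu\in\mathcal{P}(\ell_R)}\mathscr{E}^3(\mu)=\capacity_N^{3}(\ell_R)^{-1}$, where $\capacity_N^3$ is the capacity for the $\Z^3$ walk with exponential killing at rate $N^{-2}$; then upper bound the capacity by trimming the endpoints, $\ell_R^-=\ell_R\setminus(([0,R^{1-\epsilon/2}]\cup[R-R^{1-\epsilon/2},R))\cap\Z)\times\{0\}^2$, and using the last-exit decomposition together with symmetry of $g_N^3$ to get
\begin{equation*}
\capacity_N^{3}(\ell_R)\leq |\ell_R\setminus\ell_R^-|+\frac{R}{\min_{x_1\in\ell_R^-}\sum_{x_2\in\ell_R}g_N^3(x_1,x_2)}.
\end{equation*}
For $x_1$ in the trimmed bulk the sum over $x_2$ is at least $2\sum_{k=\lceil R^{\epsilon/2}\rceil}^{\lfloor R^{1-\epsilon/2}\rfloor}\frac{3(1-\epsilon)}{2\pi k}e^{-\sqrt6 k/N}\geq\frac{3(1-\epsilon)^3}{\pi}\log R$, which yields $\capacity_N^3(\ell_R)\leq(1+O(\epsilon))\frac{\pi}{3}\frac{R}{\log R}$ and hence the sharp lower bound on the energy, with no need for any rearrangement or dyadic optimality argument over $\mu$.
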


\begin{proof} We start by showing the upper bound on the infimum in \eqref{eq:3d_tube}. Take $\mu$ to be the uniform measure of $\ell_R$ and $t\in(0,1)$. Combining \eqref{eq:green_3d_bound},  \eqref{eq:g3-asymp-unif} \black and \eqref{eq:green3d-normal}, 
 we get that for all $C(\varepsilon)\leq R\leq \Cr{C:range}N$, 
 \begin{align} \label{eq:3d_tube_unif}
\begin{split}
     \mathscr{E}^3(\mu) &=\frac{1}{R^2} \bigg(\sum_{\substack{x_1,x_2\in \ell_R\\ \|x_1-x_2\|>R^t}} g_N^3(x_1,x_2)+\sum_{\substack{x_1,x_2\in \ell_R\\ \|x_1-x_2\|\leq R^t}} g_N^3(x_1,x_2)\bigg)\\[0.3em]
    &\leq \frac{1}{R^2} \bigg( \frac{3(1+\epsilon)}{2\pi}\sum_{\substack{i,j\in \{0,1,\dots,R-1\}\\ |i-j|>R^t}} \frac{1}{|i-j|}+\sum_{\substack{i,j\in \{0,1,\dots,R-1\}\\ |i-j|\leq R^t}} \frac{\Cr{d3_ub_green}}{|i-j|\vee 1} \bigg)\\[0.3em]
    &\leq \frac{1}{R} \left( \frac{3(1+\epsilon)}{\pi}\log R+ t\Cr{d3_ub_green}\log R+1 \right) \leq \frac{3(1+2\epsilon)}{\pi}\frac{\log R}{R},
\end{split}
\end{align}
where the last inequality follows upon taking $t(\epsilon)\in(0,1)$ to be small enough.

For the lower bound on the infimum in \eqref{eq:3d_tube}, we follow the strategy of the proof of \cite[Lemma 2.2]{goswami_radius_2022}, with some additional care owing to the killing and the presence of two parameters $R$ and $N$.
Let $e_{\cdot,N}^{3}$ be the equilibrium measure of the simple random walk on $\Z^3$ killed by an independent exponential clock $\tau$ of rate $N^{-2}$ and let $\capacity_N^{3}(\cdot)$ be the corresponding capacity, its total mass. That is, using the notation from the beginning of Section~\ref{sec:green-slab},  $e_{A,N}^{3}(x)=\overline{P}_x(\widetilde{H}_A(\overline{X}_{\cdot}) > \tau)1_{x\in A}$, with $\widetilde{H}_A$ as introduced below \eqref{eq:equil}. In view of \eqref{eq:def_g3} and \eqref{eq:line_cap_two_prob}, one has by the same reasoning as in \eqref{eq:cap_var} that $\inf_{\mu\in\mathcal{P}(\ell_R)}   \mathscr{E}^3(\mu) = \capacity_N^{3}(\ell_R)^{-1}$.
Now letting
  \begin{equation*}
      \ell_R^{-}=\ell_R^{-}(\epsilon)=\ell_R\setminus \big(\big(\big([0,R^{1-\epsilon/2}]\cup[R-R^{1-\epsilon/2},R)\big)\cap \Z \big)\times\{0\}^2\big),
  \end{equation*}
one has by analogues of \eqref{def:capacity} and \eqref{eq:last_exit} for the process $X_{\cdot \wedge \tau}$ that
\begin{equation}\label{eq:cut_line_strat}
  \capacity_N^{3}(\ell_R)\leq |\ell_R\setminus \ell_R^-|+
  \sum_{x\in\ell_R^-} e^3_{\ell_R,N}(x)
  \leq
  2(1+R^{1-\epsilon/2}) + \frac{R}{\min_{x_1\in \ell_R^{-}}\sum_{x_2\in \ell_R}g_N^3(x_1,x_2)}.    
\end{equation}
Now by definition of $\ell_R^-$,  \eqref{eq:g3-asymp-unif} and \eqref{eq:green3d-normal}, we have that for all $\Cr{3d_line_R}\leq R\leq \Cr{C:range}N$ with $\Cr{3d_line_R}=\Cr{3d_line_R}(\varepsilon)$ large enough that the minimum on the right-hand side of \eqref{eq:cut_line_strat} is bounded from below by\begin{equation*} 
    2  \sum_{k=\lceil R^{\epsilon/2}\rceil}^{\floor{R^{1-\epsilon/2}}} \frac{3(1-\epsilon)}{2\pi} \frac{1}{k}e^{-\sqrt{6}k/N} 
   \geq \frac{3(1-\epsilon)^2}{\pi}\log(R)e^{-\sqrt{6}R^{1-\epsilon/2}/N} \geq \frac{3(1-\epsilon)^3}{\pi}\log R ,
\end{equation*}
where in the last inequality we also used the fact that $e^{-\sqrt{6}R^{1-\epsilon/2}/N}\geq (1-\epsilon)$ for all $\Cr{3d_line_R}\leq R\leq \Cr{C:range}N$ since $R^{1-\epsilon/2}/N \leq \Cr{C:range}R^{-\epsilon/2}$. Hence combining the last display with \eqref{eq:cut_line_strat} we find that ${\capacity_N^{3}(\ell_R)}/{\frac{\pi}{3}\frac{R}{\log R}}$ is bounded by $C\frac{\log R}{R^{\epsilon/2}}+\frac{1}{(1-\epsilon)^3}\leq \epsilon+\frac{1}{(1-\epsilon)^3}$ for $R \geq C(\varepsilon)$, which concludes the proof of \eqref{eq:3d_tube}.
\end{proof}

We now deal with matters concerning $\mathscr{E}^2(\cdot)$ in \eqref{eq:line_cap_two_prob}. 
\begin{lemma} \label{lem:tube_2d}
For all $\epsilon\in(0,1)$
there exists $\Cl{2d_line_N}(\epsilon),
\Cl{2d_line_h_NN}(\epsilon),
\Cl{2d_line_cr}(\epsilon)<\infty$ such that for all $\Cr{2d_line_N}\leq R\leq \Cr{C:range} N$  
the following hold:
\begin{enumerate}[label={(\roman*)}]
\item If $h_N> \frac{1}{\Cr{2d_line_h_NN}}N$ or $R> \frac{1}{\Cr{2d_line_cr}}N$, then  
    \begin{equation}\label{eq:2d_line_micro_easy_bd}
        \frac{c}{h_N}
        \leq\inf_{\mu\in\mathcal{P}(\ell_R)}  \mathscr{E}^2(\mu)       \leq  \frac{C(\epsilon)}{h_N}
        ;
    \end{equation}
\item If $R\leq \frac{1}{\Cr{2d_line_cr}}N$ and $h_N\leq \frac{1}{\Cr{2d_line_h_NN}}N$, then
    \begin{equation} \label{eq:2d_line_micro}
         \frac{3(1-\epsilon)}{\pi} \frac{K_0(\frac{R\vee h_N}{N})}{h_N}
         \leq\inf_{\mu\in\mathcal{P}(\ell_R)}  \mathscr{E}^2(\mu)
        \leq \frac{3(1+\epsilon)}{\pi} \frac{K_0(\frac{R\vee h_N}{N})}{h_N}.
    \end{equation}
\end{enumerate}
\end{lemma}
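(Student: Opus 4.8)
The plan is to estimate $\inf_{\mu\in\mathcal{P}(\ell_R)}\mathscr{E}^2(\mu)$ directly, feeding in the bounds on $g_N^2$ from Proposition~\ref{thm:greens_function_bound}. For $x_1=(i,0,0),x_2=(j,0,0)\in\ell_R$ one has $g_N^2(x_1,x_2)=g_N^2((i-j,0,0))$, a quantity depending only on $|i-j|\le R-1$; since $R\le\Cr{C:range}N$ by assumption, all these differences satisfy $\|x_1-x_2\|\le\Cr{C:range}N$, the range where the cited estimates apply. Lower bounds on $\mathscr{E}^2(\mu)$ will be obtained pointwise: as $K_0$ is decreasing, $g_N^2(x_1,x_2)$ is bounded below, uniformly over $x_1,x_2\in\ell_R$, by the value of the kernel at the largest separation, and since $\sum_{x_1,x_2}\mu(x_1)\mu(x_2)=1$ this gives a lower bound for $\mathscr{E}^2(\mu)$ valid for every $\mu$. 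Upper bounds will come from testing against the uniform measure on $\ell_R$, for which $\mathscr{E}^2(\mu)=R^{-2}\sum_{i,j=0}^{R-1}g_N^2((i-j,0,0))$, and estimating the double sum.

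For part~(i): the lower bound $c/h_N$ follows from the lower half of \eqref{eq:green_bound} together with $K_0(\tfrac{|i-j|\vee h_N}{N})\ge K_0(\Cr{C:range})>0$ (using $|i-j|\le R-1\le\Cr{C:range}N$ and $h_N\le N$); this bound is in fact valid for every $R\le\Cr{C:range}N$. For the upper bound I insert the upper half of \eqref{eq:green_bound} into $R^{-2}\sum_{i,j}$ and reduce to showing $R^{-2}\sum_{i,j=0}^{R-1}K_0(\tfrac{|i-j|\vee h_N}{N})\le C$ in the regime $h_N\ge N/\Cr{2d_line_h_NN}$ or $R\ge N/\Cr{2d_line_cr}$. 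If $h_N\ge N/\Cr{2d_line_h_NN}$, then $K_0(\tfrac{|i-j|\vee h_N}{N})\le K_0(h_N/N)=O(1)$ and the bound is immediate. If $R\ge N/\Cr{2d_line_cr}$, I split according to whether $|i-j|<h_N$ or $|i-j|\ge h_N$: the first group has at most $2Rh_N$ pairs, each contributing $K_0(h_N/N)\le C+\log(N/h_N)$ (using $K_0(t)\le C+\log(1/t)$ for $0<t\le1$, cf.~Lemma~\ref{L:Bessel}), whence a contribution $\le 2R^{-1}h_N(C+\log(N/h_N))\le CN/R\le C'$ by the elementary inequality $h_N\log(N/h_N)\le CN$; the second group gives $R^{-2}\sum_{h_N\le|i-j|\le R-1}K_0(|i-j|/N)\le 2R^{-1}\sum_{k=1}^{R-1}K_0(k/N)\le C(K_0(R/N)\vee1)\le C'$ via \eqref{eq:K_0_bound} and $R/N\ge c$.

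For part~(ii) I invoke the uniform asymptotics \eqref{eq:g2-asymp-unif}: for fixed $\epsilon'\in(0,1)$ and suitably large $\Cr{2d_line_N}(\epsilon'),\Cr{2d_line_h_NN}(\epsilon')$ one has $(1-\epsilon')\bar g_N^2\le g_N^2\le(1+\epsilon')\bar g_N^2$ on $\ell_R$, where $\bar g_N^2((i-j,0,0))=\tfrac{3}{\pi h_N}K_0(\sqrt6\tfrac{|i-j|\vee h_N}{N})$. The lower bound then proceeds as in the first paragraph, giving $\mathscr{E}^2(\mu)\ge(1-\epsilon')\tfrac{3}{\pi h_N}K_0(\sqrt6\tfrac{R\vee h_N}{N})$ for every $\mu$, after which one removes the $\sqrt6$ via $K_0(\sqrt6\,t)/K_0(t)\to1$ as $t\to0^+$, valid since $\tfrac{R\vee h_N}{N}\to0$ (equivalently $K_0(\tfrac{R\vee h_N}{N})\to\infty$) once $\Cr{2d_line_cr}(\epsilon'),\Cr{2d_line_h_NN}(\epsilon')$ are large. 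For the upper bound, testing against the uniform measure reduces everything to $R^{-2}\sum_{i,j=0}^{R-1}K_0(\sqrt6\tfrac{|i-j|\vee h_N}{N})\le(1+\epsilon')K_0(\tfrac{R\vee h_N}{N})$. When $h_N\ge R$ the left-hand side equals $K_0(\sqrt6\,h_N/N)=(1+o(1))K_0(h_N/N)$ and we are done; when $h_N<R$, substituting $K_0(t)=\log(1/t)+O(1)$ (valid for $0<t\le\sqrt6\,R/N$, which is small) turns it into $\log N-R^{-2}\sum_{i,j}\log(|i-j|\vee h_N)+O(1)\le\log N-R^{-2}\sum_{i,j}\log(|i-j|\vee1)+O(1)$, and the classical logarithmic-energy identity $R^{-2}\sum_{i,j=0}^{R-1}\log(|i-j|\vee1)=\log R-\tfrac32+o(1)$ yields $\le\log(N/R)+O(1)=(1+o(1))K_0(R/N)$, with the $o(1)$ vanishing as $\Cr{2d_line_cr}(\epsilon')\to\infty$.

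The main obstacle is not any individual estimate but the simultaneous, consistent choice of the three thresholds $\Cr{2d_line_N}(\epsilon),\Cr{2d_line_h_NN}(\epsilon),\Cr{2d_line_cr}(\epsilon)$: they must at once validate \eqref{eq:g2-asymp-unif} (forcing $h_N\le c(\epsilon)N$, $N\ge C(\epsilon)$), make the factor $\sqrt6$ and the additive $O(1)$ in $K_0(t)=\log(1/t)+O(1)$ negligible against $K_0(\tfrac{R\vee h_N}{N})$ (forcing $R\vee h_N\le N/C(\epsilon)$), and cover the genuinely planar case $h_N<R\le N/\Cr{2d_line_cr}$ --- the only one in which the uniform test measure must actually spread out over $\ell_R$ to realize the logarithmic energy rather than concentrate. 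Once the Green's-function input of Proposition~\ref{thm:greens_function_bound} and the Bessel estimates of Lemma~\ref{prop:bessel_approx} are granted, the remaining steps are elementary.
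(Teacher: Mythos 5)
Your proposal is correct and follows essentially the same route as the paper: pointwise lower bounds on $g_N^2$ (via monotonicity of $K_0$ and \eqref{eq:green_bound}, \eqref{eq:g2-asymp-unif}) valid for every $\mu$, the uniform measure on $\ell_R$ as test measure for the upper bounds, and the same case split in $h_N$, $R$, $N$. The only (inessential) variation is in the upper bound of part~(ii) when $h_N<R$, where you evaluate the discrete logarithmic energy $R^{-2}\sum_{i,j}\log(|i-j|\vee 1)=\log R-\tfrac32+o(1)$ directly instead of invoking the midpoint maximization together with \eqref{eq:K_0_asymp} as in \eqref{eq:2d_line_cr0_unif}; both yield the same $(1+\epsilon)K_0(R/N)$ bound once $\Cr{2d_line_cr}(\epsilon)$ is taken large.
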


\begin{proof}
First recall that $K_0(\cdot)$ is decreasing, as follows plainly from \eqref{eq:K_0}, hence by \eqref{eq:green_bound} and using that $h_N \leq N$ and $R \leq  \Cr{C:range} N$, we have that $g^2_N(x_1,x_2)\geq \frac{c}{h_N}$  for all $x_1,x_2\in \ell_R$, from which the lower bound in \eqref{eq:2d_line_micro_easy_bd} easily follows. For the upper bound in \eqref{eq:2d_line_micro_easy_bd}, if $h_N> \frac{1}{\Cr{2d_line_h_NN}}N$, the factor $K_0(\cdot)$ in \eqref{eq:green_bound} can effectively be neglected by the same reasoning as in \eqref{eq:green_edbound_hnn}. Hence the upper bound in this case follows, as we have that $g^2_N(x_1,x_2) \leq C(\epsilon)h_N^{-1}$  for all $x_1,x_2\in \ell_R$.

Now assume that $R> \frac{1}{\Cr{2d_line_cr}}N$. By \eqref{eq:K_0_bound}, \eqref{eq:green_bound} and taking $\mu$ to be the uniform measure over $\ell_R$, one has that
\begin{equation}\label{eq:line_easy_ub_r}
  \mathscr{E}^2(\mu) =   \frac{1}{R^2}\sum_{x_1,x_2\in\ell_R}g^2_N(x_1,x_2) \leq Ch_N^{-1}(K_0(R/N)\vee 1) \leq C(\epsilon)h_N^{-1} ,
\end{equation}
which concludes the proof of \eqref{eq:2d_line_micro_easy_bd}.

We now show \eqref{eq:2d_line_micro} in the case $h_N\geq R$. 
By \eqref{eq:g2-asymp-unif} (or \eqref{eq:g^2-sharpbd}) \black one has that for all $x_1,x_2\in \ell_R$, if $h_N\geq R (\geq \Vert x_i \Vert)$, $h_N\leq  \frac{1}{\Cr{2d_line_h_NN}}N$ \text{ and }$\Cr{C:range}N\geq R \geq \Cr{2d_line_N}$ with $\Cr{2d_line_N}$ and $\Cr{2d_line_h_NN}$ sufficiently large, the bound \eqref{eq:g^2-sharpbd} holds with $x=x_1-x_2$. Since the resulting upper and lower bounds on $g_N^2(x_1, x_2)=g_N^2(x_1- x_2)$ are uniform in $x$, the claim \eqref{eq:2d_line_micro} easily follows in this case.

It remains to show \eqref{eq:2d_line_micro} when $h_N<R$. 
We start with the upper bound. Taking $\mu$ to be the uniform measure over $\ell_R$, 
combining \eqref{eq:g2-asymp-unif}, \eqref{eq:K_0_asymp} and monotonicity of $K_0(\cdot)$, one obtains that for all $R,N,h_N$ with $\Cr{2d_line_N}\leq R\leq \frac{1}{\Cr{2d_line_cr}}N $ and $h_N\leq \frac{1}{\Cr{2d_line_h_NN}}N$,
\begin{multline}\label{eq:2d_line_cr0_unif}
   \mathscr{E}^2(\mu)\leq \frac{1}{R^2}\frac{3(1+\epsilon)}{\pi h_N}\sum_{i,j=0}^{R-1}K_0\left(\sqrt{6}\frac{|i-j|\vee 1}{N}\right)
    \leq \frac{1}{R}\frac{3(1+\epsilon)}{\pi h_N}
    \cdot 2\sum_{k=0}^{\lceil R/2\rceil }K_0\left(\sqrt{6}\frac{k \vee 1}{N}\right) \\ \leq \frac{3(1+\epsilon)}{\pi h_N}\left((1+\epsilon)K_0\left(\frac{\sqrt{6} R}{2N}\right) + 
\frac{1}{R}K_0\left(\frac{\sqrt{6}}{N}\right)\right),
\end{multline}
where the second bound follows using the fact that $K_0(\cdot)$ is decreasing to conclude that the function $i \mapsto \sum_{j=0}^{R-1}K_0(\sqrt{6}\frac{|i-j|\vee 1}{N})$ is maximized for $i$ closest to $(R-1)/2$.
Now by \eqref{eq:bessel_ub} we have  for all $R\geq \Cr{2d_line_N}$ and $N \geq 2R$ that
\begin{equation*} 
    \frac{1}{R} \frac{K_0\left({\sqrt{6}}/{N}\right)}{K_0(R/N)}\leq \frac{1}{R}\frac{\log(N/\sqrt{6})+C}{\log(N/R)} 
    =\frac{1}{R}\left(1+\frac{\log(R/\sqrt{6})}{\log(N/R)}\right) + \frac{C}{R\log(N/R)}\leq \epsilon,
\end{equation*}
and similarly using \eqref{eq:bessel_ub} that $K_0(\tfrac{\sqrt{6} R}{2N}) \leq K_0(\tfrac{R}{N})(1+\varepsilon)$ whenever $ N \geq \Cr{2d_line_cr} R$. Feeding these bounds into \eqref{eq:2d_line_cr0_unif}, the upper bound in \eqref{eq:2d_line_micro} follows. 

For the lower bound, by \eqref{eq:g2-asymp-unif} and the fact that $K_0(x)$ is decreasing one has that for all $\mu\in \mathcal{P}(\ell_R)$, $R,N,h_N$ with $\Cr{2d_line_N}\leq R\leq \frac{1}{\Cr{2d_line_cr}} N$ and $h_N\leq \frac{1}{\Cr{2d_line_h_NN}}N$,
\begin{align*}
\begin{split}
    \mathscr{E}^2(\mu)
     &\geq \frac{3(1-\epsilon)}{\pi h_N}
     \sum_{x_1,x_2\in \ell_R} K_0\left(\sqrt{6}\frac{|y_1-y_2|\vee h_N}{N}\right)\mu(x_1)\mu(x_2)\\
     &\geq \frac{3(1-\epsilon)}{\pi h_N}K_0\left(\sqrt{6}\frac{R\vee h_N}{N}\right)\geq \frac{3(1-\epsilon)}{\pi h_N}K_0\left(\sqrt{6}\frac{R}{N}\right),
\end{split}
\end{align*}
where the last inequality holds true since $R>h_N$.
To conclude the lower bound in \eqref{eq:2d_line_micro}, it suffices to note that by \eqref{eq:bessel_ub}, $K_0(\sqrt{6}R/N)\geq K_0(R/N)-C \geq (1-\epsilon)K_0(R/N)$,
where the last inequality follows since $ N \geq \Cr{2d_line_cr} R$.
\end{proof}

\begin{proof}[Proof of Proposition \ref{prop:line_cap}]
In view of \eqref{eq:projection} we have
\begin{align*}
      ( 1+ N^{-2} )\inf_{\mu\in\mathcal{P}(\ell_R)} \sum_{x_1,x_2\in \ell_R} g_N(x_1,x_2)\mu(x_1)\mu(x_2)
    \geq 
    \inf_{\mu\in\mathcal{P}(\ell_R)}  \mathscr{E}^3(\mu)
    +\inf_{\mu\in\mathcal{P}(\ell_R)} \mathscr{E}^2(\mu)
\end{align*}
Hence if $\Cr{line_r}\leq R\leq \Cr{C:range}N$, $R\leq \frac{1}{\Cr{line_rn}}N$ and $h_N\leq \frac{1}{\Cr{line_hnn}}N$, the upper bound in \eqref{eq:line-cap-const} is an immediate consequence of \eqref{eq:3d_tube},  \eqref{eq:2d_line_micro} and \eqref{eq:cap_var}. For the upper bound in \eqref{eq:line-cap}, it suffices to take $\epsilon=1/2$ and note that we can w.l.o.g assume $ \Cr{3d_line_R}(1/2)\vee\Cr{2d_line_N}(1/2)\leq R\leq \Cr{C:range}N$, hence the upper bound in \eqref{eq:line-cap} is an immediate consequence of \eqref{eq:3d_tube}, \eqref{eq:2d_line_micro_easy_bd}, \eqref{eq:2d_line_micro}
and \eqref{eq:cap_var}. 

We now turn to the lower bounds. 
For \eqref{eq:line-cap}, we similarly take $\epsilon=1/2$ and w.l.o.g assume $ \Cr{3d_line_R}(1/2)\vee\Cr{2d_line_N}(1/2)\leq R\leq \Cr{C:range}N$. Combining \eqref{eq:3d_tube_unif}, \eqref{eq:line_easy_ub_r} and \eqref{eq:2d_line_cr0_unif}, and the bound $g^2_N(x_1,x_2) \leq Ch_N^{-1}$  for all $x_1,x_2\in \ell_R$ valid when $h_N \geq c N$, we obtain that
\begin{equation} \label{eq:line_final_lb_micro}
    \frac{1}{R^2}\sum_{x_1,x_2\in \ell_R} \big(g_N^3(x_1,x_2)+g_N^2(x_1,x_2)\big) \leq C\left(\frac{\log R}{R}+\frac{K_0(\frac{R\vee h_N}{N})}{h_N}\right),
\end{equation}
from which \eqref{eq:line-cap} follows via \eqref{eq:cap_var} upon choosing $\mu$ to be uniform.
For \eqref{eq:line-cap-const}, the lower bound is an immediate consequence of \eqref{eq:cap_var} together with \eqref{eq:3d_tube_unif}, \eqref{eq:2d_line_cr0_unif}, and using \eqref{eq:g2-asymp-unif} to deal with the case $h_N \geq R$ complementary to \eqref{eq:2d_line_cr0_unif}, similarly as with the proof of the upper bound for \eqref{eq:2d_line_micro} in that case (see the paragraph following \eqref{eq:line_easy_ub_r}). 
\end{proof}

\subsection{Ball} We now prove Proposition~\ref{prop:ball_cap}, which is simpler.
\begin{proof}[Proof of \eqref{eq:box-cap}]
Note the bound $\capacity_N(B_R)\leq\capacity_N(D_R)$ is an easy consequence of monotonicity of the capacity, as $B_R \subset D_R$ (recall \eqref{eq:def_box}).
We start with the upper bound. For all $x \in D_R$,
  \begin{align*}
    \begin{split}
      1\stackrel{\eqref{eq:last_exit}}{=} \sum_{x' \in D_R} g_N(x,x')e_{D_R,N}(x') 
       \geq \min_{x,x' \in D_R} g_N(x,x') \, \capacity_N(D_R) 
      \stackrel{\eqref{eq:green_estimate}}{\geq}  \capacity_N(D_R)  \times \frac{1}{\Cr{ub_box}\Fbox(R)}
        ,
    \end{split}
  \end{align*}
  where we recall $F_N^h$ from \eqref{eq:def_fbox} for the last inequality.

  For the lower bound, we construct a measure $\nu$ supported on $B_R$ such that
  \begin{equation}\label{eq:nu-condition}
  \sum_{x \in B_R} g_N(\cdot,x)\nu(x) \leq 1, \text{ on $B_R$.}
  \end{equation}
   By the `Principle of domination' (see for instance Proposition 7.6 and Theorem 7.8 in 
  \cite{barlow_random_2017}), any measure $\nu(\cdot)$ satisfying \eqref{eq:nu-condition} yields the lower bound $\capacity_N(B_R) \geq \nu(B_R)$.  We choose $\nu$ to be the uniform measure such that $\nu(x)=|B_R|^{-1}\mu>0$ for all $x \in B_R$ and some scalar $\mu \in (0,\infty)$. In order to satisfy \eqref{eq:nu-condition} we need to pick $\mu$ such that
  \begin{equation}\label{eq:mu-condition}
     \mu \times \max_{x\in B_R} \frac1{|B_R|}\sum_{x' \in B_R} g_N(x,x') \leq 1.
  \end{equation}
  We now determine the largest value of $\mu$ such that \eqref{eq:mu-condition} holds, thus implying $\capacity_N(B_R) \geq \mu$.  For this we consider for a given $x=(y,z)\in B_R$ annuli of the form
  \begin{equation*}
    A_j = \{x'=(y',z') \in B_R: 2^j < \lvert y-y'\rvert \leq  2^{j+1}\}.
  \end{equation*} 
  By picking $k$ such that $R/2 \leq 2^k <R $,
   we get using Theorem~\ref{thm:green_main} that
  \begin{equation}\label{eq:capbox-pf1}
    \sum_{x \in B_R} g_N(x,x') 
    \leq C (R\wedge h_N)\bigg(1+\frac{K_0({h_N}/{N})}{h_N}\bigg) 
    +\sum_{j=1}^k \sum_{x' \in A_j} g_N(x,x') .
  \end{equation}
  First note that
  \begin{equation*}
    \lvert A_j \rvert \leq C (2^j)^2 \times (R\wedge h_N),
  \end{equation*}
  and for $x' \in A_j$, using the upper bound in \eqref{eq:green_estimate},
  \begin{equation*}
    g_N(x,x') \leq \frac{\Cr{d3_ub_green}}{2^j} 
    +\frac{\Cr{d2_ub_green}}{h_N} K_0\left(\frac{h_N\vee 2^j}{N}\right) .
  \end{equation*}
  Hence we obtain using Lemma~\ref{prop:bessel_approx} that
  \begin{multline}\label{eq:capbox-pf2}
      \sum_{j=1}^k \sum_{x' \in A_j} g_N(x,x') 
      \leq C \,(R\wedge h_N) 
      \bigg[ \Cr{d3_ub_green} \sum_{j=1}^k 2^j
      + \frac{\Cr{d2_ub_green}}{h_N}\sum_{j=1}^k (2^j)^2 K_0\left(\frac{h_N\vee 2^j}{N}\right) 
      \bigg] \\
      \stackrel{\eqref{eq:bessel_approx2}}{\leq} \,C'(R\wedge h_N) R^2  \times \bigg[\frac{1}{R}  + \frac{1}{h_N }K_0\left(\frac{R\vee h_N}{N}\right)\bigg].
  \end{multline}
  Feeding this into \eqref{eq:capbox-pf1}, it readily follows that \eqref{eq:capbox-pf2} is the dominating term in the upper bound for $\sum_{x \in B_R} g_N(x,x')$. Now using that $|B_R| \geq c (R\wedge h_N) R^2 $ and observing that the term in square brackets in the last line of \eqref{eq:capbox-pf2} is bounded by $2 F_N^h(R)$, we deduce that
\eqref{eq:mu-condition} is satisfied  by picking
  \begin{equation*}
    \mu = c \Fbox(R), 
  \end{equation*}
  and the ensuing lower bound $\capacity_N(B_R) \geq \mu$ completes the proof of \eqref{eq:box-cap}.
\end{proof}

\begin{proof}[Proof of \eqref{eq:ball-cap-const}]
We start by making two elementary observations that will be useful below. First, using the property that $K_0(\cdot)$ is decreasing and applying \eqref{eq:bessel_ub}, one has that 
\begin{equation} \label{eq:ball_error}
    \frac{R^2(h_N\wedge R)}{N^2}\left(\frac{1}{R}  + \frac1{h_N} {K_0\left(\frac{R\vee h_N}{N}\right)}{ }\right) \leq \frac{R^2}{N^2} \left(1 + K_0(R/N) \right)\to 0 \quad\text{as } N/R\to\infty .
\end{equation}
Second, letting $T_1$ be an independent exponential random variable with rate $1$, for any $B\subset \mathbb{S}_N$ with $\text{int}(B)\coloneqq (B \setminus \partial B)\neq \emptyset$ (recall that $\partial B \subset B $ denotes the inner vertex boundary of $B$), it's easy to see that for all $x\in \text{int}(B)$,
\begin{equation} \label{eq:int_em}
    e_{B,N}(x)= ({ 1}+N^{-2})P_x(T_1 >H_{\Delta})=\frac{{ 1}+N^{-2}}{N^2+1}.
\end{equation}

Now, pick $R\leq \frac{1}{\Cr{ball_rn}}N$ and $h_N\leq \frac{1}{\Cr{ball_hnn}}N$ with $\Cr{ball_rn}$ and $\Cr{ball_hnn}$ large enough such that $\epsilon K_0(\tfrac{R\vee h_N}{N})\geq 1$, hence if $\frac{1}{R}>\epsilon{K_0(\frac{R\vee h_N}{N})}/{h_N}$, this in particular implies that $R\leq h_N$.
As we now explain, upon possibly adapting the value of $\Cr{ball_rn}, \Cr{ball_hnn}$, one has for all $R,N,h_N$ with
$\Cr{ball_r}\leq R\leq \frac{1}{\Cr{ball_rn}}N$ and $h_N\leq \frac{1}{\Cr{ball_hnn}}N$ that
\begin{equation} \label{eq:green_asymp_ball}
     \begin{array}{cl}
    \frac{3(1-\epsilon)}{\pi}\frac{K_0\left(\frac{R\vee h_N}{N}\right)}{h_N}
    \leq g_N(x)\leq  \frac{3(1+\epsilon)}{\pi}\frac{K_0\left(\frac{R\vee h_N}{N}\right)}{h_N},
    & \forall x\in \partial D_R \text{ when }\frac{1}{R}\leq  \epsilon\frac{K_0(\frac{R\vee h_N}{N})}{h_N}\\[0.3em]
    \frac{3(1-\epsilon)}{2\pi} \left(\frac{1}{R}+\frac{2K_0(\frac{h_N}{N})}{h_N}\right)\leq g_N(x)\leq  \frac{3(1+\epsilon)}{2\pi} \left(\frac{1}{R}+\frac{2K_0(\frac{h_N}{N})}{h_N}\right) ,
    &  \forall x\in \partial B_R \text{ when } \frac{1}{R}> \epsilon\frac{K_0(\frac{R\vee h_N}{N})}{h_N} .
    \end{array}   
\end{equation}
Both sets of inequalities in \eqref{eq:green_asymp_ball} essentially follow by a combination of \eqref{eq:projection}, \eqref{eq:g3-asymp-unif} and \eqref{eq:g2-asymp-unif}. The condition on $R$ in the first line of \eqref{eq:green_asymp_ball} ensures on account of \eqref{eq:g3-asymp-unif} that the contribution stemming from $g_N^3$ can be absorbed into the error term involving $\varepsilon$ in \eqref{eq:green_asymp_ball}, and the resulting bounds for $g_N(x)$ follow effectively from \eqref{eq:g2-asymp-unif} alone in this regime. Note that the argument for $K_0(\cdot)$ uses the fact that $R-1 < |y| \leq R$ for $x=(y,z) \in \partial D_R$ (recall $D_R$ from below \eqref{eq:def_box}, which is a natural choice in this regime because the relevant asymptotic quantity $\bar{g}_N^2$ in \eqref{eq:green_asymptotics_2d} only depends on $x$ through $|y|$). 

In the second line of \eqref{eq:green_asymp_ball}, the term $g_N^3$ does contribute, and the term proportional to $\tfrac1R$ stems from \eqref{eq:g3-asymp-unif} and \eqref{eq:green3d-normal}. Here the relevant argument of $K_0(\cdot)$ uses the fact that the regime of $R$ considered implies that $R\leq h_N$, as observed above.

Additionally, one obtains following the same argument as for \eqref{eq:capbox-pf2} that for $B\in\{B_R,D_R\}$ and all $R \leq \Cr{C:range}N$ and $1 \leq h_N \leq N$,
\begin{equation}\label{eq:int_sum_green}
    \sum_{x\in \text{int} (B)}\!g_N(x) \leq CR^2(h_N\wedge R)\left(\frac{1}{R}  + \frac{K_0(\frac{R\vee h_N}{N})}{h_N }\right) .      
\end{equation}

We now start the proof of the upper bounds in \eqref{eq:ball-cap-const}, each of which involves one of the two estimates from \eqref{eq:green_asymp_ball}. By \eqref{eq:last_exit} and \eqref{eq:int_em}, one has that for $B\in\{B_R,D_R\}$ and all $N,R \geq 1$,
\begin{align*}
\begin{split}
    1
    \geq \sum_{x\in B}g_N(0,x)e_{B,N}(x)
    \geq \min_{x\in \partial B} g_N(x)\left(\capacity_N(B) - \frac{CR^2(h_N\wedge R)}{N^2}\right).
\end{split}
\end{align*}
Rearranging the above expression and using \eqref{eq:box-cap} along with \eqref{eq:def_fbox}, it follows that
\begin{align}
\begin{split}
    \capacity_N(B)^{-1}&\geq\min_{x\in \partial B} g_N(x)\left(1- \frac{CR^2(h_N\wedge R)}{N^2}\left(\frac{1}{R}  + \frac{K_0(\frac{R\vee h_N}{N})}{h_N }\right)\right)\geq (1-\epsilon)\min_{x\in \partial B} g_N(x),
\end{split}
\end{align}
where the last inequality follows from \eqref{eq:ball_error} by taking $R\leq \frac{1}{\Cr{ball_rn}}N$ with $\Cr{ball_rn}$ large enough. This concludes the proof of the upper bounds in \eqref{eq:ball-cap-const} upon using the lower bounds for $g_N(x)$ from \eqref{eq:green_asymp_ball} in the corresponding regime.

For the lower bounds in  \eqref{eq:ball-cap-const}, by \eqref{eq:last_exit}, \eqref{eq:int_em} and \eqref{eq:int_sum_green}, one finds that
\begin{align}
\begin{split}
        1
    &=\sum_{x\in B}g_N(0,x)e_{B,N}(x)=\sum_{x\in \partial B}g_N(0,x)e_{B,N}(x)
    +\frac{{ 1}+N^{-2}}{N^2+1}\sum_{x\in \text{int} (B)}g_N(0,x)\\
    &\leq \max_{x\in \partial B} g_N(x) \capacity_N(B) +\frac{CR^2(h_N\wedge R)}{N^2}\left(\frac{1}{R}  + \frac{K_0(\frac{R\vee h_N}{N})}{h_N }\right).
\end{split}
\end{align}
The lower bound in \eqref{eq:ball-cap-const} follows  by taking $R\leq \frac{1}{\Cr{ball_rn}}N$ with $\Cr{ball_rn}$ large enough using \eqref{eq:ball_error} and \eqref{eq:green_asymp_ball}.
\end{proof}

\section{Lower bounds and precise asymptotics in the ``thin'' case} \label{sec:lower_bounds}

We now return to the percolation problem discussed in the Introduction. Some of the results will require a more refined understanding of the walk on the slab $\mathbb{S}_N$. These are postponed to the last two sections. The results of Sections~\ref{sec:green-slab} and~\ref{sec:cap-slab} alone already allow for various conclusions, which we discuss in the present section. These include all lower bounds as well as various results in which the radius $R$ of the box to be crossed effectively satisfies $R \ll N$ but is large enough to make $B_R$ sufficiently ``thin.'' The main observation is that  relating the properties that radius and capacity of a given cluster be large becomes remarkably precise in the ``thin'' case. This mechanism is essentially what underlies the following result, which is
 a stronger version of Theorem \ref{thm:arm_asymp}.

\begin{theorem} \label{prop:arm_general}
    For all $\epsilon\in(0,1)$, there exists $\Cl{arm_r}(\epsilon),\Cl{arm_rn}(\epsilon)<\infty$ such that for all $N,R,h_N$ such that $\Cr{arm_r}\leq R\leq \frac{1}{\Cr{arm_rn}}N$, $h_N\leq \frac{1}{\Cr{arm_rn}}N$ and $\frac{\log R}{ R}\leq \frac{\epsilon}{h_N}\log(\frac{N}{R\vee h_N})$,
    \begin{multline} \label{eq:arm_prop}
        \frac{1}{\pi}\arctan\left[\left(\frac{(1+\epsilon)\frac{\pi}{3}g_N(0)h_N}{\log(N/(R\vee h_N ))}-1 \right)^{-\frac{1}{2}}\right] 
        \leq \theta_N^h(R) 
        \leq \frac{1}{\pi}\arctan\left[\left(\frac{(1-\epsilon)\frac{\pi}{3}g_N(0)h_N}{\log(N/(R\vee h_N ))}-1 \right)^{-\frac{1}{2}}\right].
    \end{multline}
\end{theorem}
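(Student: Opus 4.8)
The plan is to squeeze $\theta_N^h(R)=\P_N^h(0\leftrightarrow\partial B_R)$ between two probabilities of a \emph{capacity event} for the cable cluster $\cC_0$ of $0$ in $\tSlab\cap\{\varphi\ge0\}$, and to evaluate both sides via the explicit law of $\capacity(\cC_0)$ identified in \cite[(3.8)]{drewitz_cluster_2022} on the cable system of a transient network (here the slab, whose walk is killed at rate $N^{-2}$). Combined with the elementary ``$\arctan$ calculus'' \eqref{eq:arctan_calculus}, this law reads
\[
  \P_N^h\big(\capacity(\cC_0)\ge \ell\big)=\tfrac1\pi\arctan\!\big[(g_N^h(0)\,\ell-1)^{-1/2}\big]\qquad \big(\ell\ge 1/g_N^h(0)\big),
\]
with the convention that the right-hand side is $\tfrac12$ when $g_N^h(0)\ell\le1$. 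On the capacity side I would first record a consequence of Section~\ref{sec:cap-slab}: the hypothesis $\tfrac{\log R}{R}\le\tfrac{\epsilon}{h_N}\log\tfrac{N}{R\vee h_N}$ forces the regime in which the first line of \eqref{eq:ball-cap-const} and the estimate \eqref{eq:line-cap-const} apply, and since $N\ge\Cr{arm_rn}(R\vee h_N)$ gives $K_0(\tfrac{R\vee h_N}{N})=(1+o(1))\log\tfrac{N}{R\vee h_N}$ by Lemma~\ref{L:Bessel}, Propositions~\ref{prop:line_cap}(ii) and~\ref{prop:ball_cap}(ii) yield, after harmlessly enlarging $\Cr{arm_r},\Cr{arm_rn}$ and relabelling $\epsilon$,
\[
  (1-\epsilon)\,\tfrac\pi3\,\tfrac{h_N}{\log(N/(R\vee h_N))}\ \le\ \capacity_N(\ell_R),\ \capacity_N(D_R)\ \le\ (1+\epsilon)\,\tfrac\pi3\,\tfrac{h_N}{\log(N/(R\vee h_N))}.
\]
In particular $\tfrac\pi3\tfrac{g_N^h(0)h_N}{\log(N/(R\vee h_N))}$ equals $g_N^h(0)\capacity_N(\ell_R)$ and $g_N^h(0)\capacity_N(D_R)$ up to factors $1\pm\epsilon$.

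\smallskip\noindent\emph{Upper bound.} On $\{0\leftrightarrow\partial B_R\}$ the cluster $\cC_0$ contains a connected crossing of $B_R$, i.e.~a connected $S\subset\mathbb{S}_N$ with $0\in S$ and $\|\cdot\|$-diameter at least $R$; the crux is that in the thin regime every such $S$ satisfies $\capacity_N(S)\ge(1-\epsilon)\capacity_N(\ell_R)$. I would prove this via \eqref{eq:cap_var}: either (a) the horizontal projection of $S$ has Euclidean diameter $\ge(1-\epsilon)R$ — which holds whenever the vertical extent of $S$, bounded by $h_N$, is $\le(1-\epsilon)R$ — in which case projecting further onto the line through two points realising that diameter produces points of $S$ whose pairwise horizontal distances dominate those of $\ell_{(1-\epsilon)R}$, and since by Theorem~\ref{thm:green_main}(ii), cf.~\eqref{eq:g2-asymp-unif}, $g_N^h$ is at these scales a decreasing function of the horizontal distance up to $1+o(1)$, the corresponding uniform measure has energy $\le(1+\epsilon)\inf_{\mu\in\mathcal{P}(\ell_R)}\mathscr{E}^2(\mu)$ (the computation of the lower bound in Lemma~\ref{lem:tube_2d}, run on a segment of length $(1-\epsilon)R$ and using \eqref{eq:K_0_asymp}); or (b) $S$ has vertical extent $\ge cR$, so $h_N\ge cR$ and $S$ behaves like a set of diameter $\ge cR$ in a $\Z^3$-like neighbourhood, giving $\capacity_N(S)\ge c'\tfrac{R}{\log R}$, which already dwarfs $\capacity_N(\ell_R)$ by the thin condition $\tfrac{R}{\log R}\ge\tfrac1\epsilon\tfrac{h_N}{\log(N/(R\vee h_N))}$. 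By monotonicity of the capacity, $\capacity(\cC_0)\ge\capacity_N(\cC_0\cap\mathbb{S}_N)\ge\capacity_N(S)\ge(1-\epsilon)\capacity_N(\ell_R)$, so
\[
  \theta_N^h(R)\le\P_N^h\big(\capacity(\cC_0)\ge(1-\epsilon)\capacity_N(\ell_R)\big)=\tfrac1\pi\arctan\!\big[\big(g_N^h(0)(1-\epsilon)\capacity_N(\ell_R)-1\big)^{-1/2}\big],
\]
and inserting the lower estimate on $\capacity_N(\ell_R)$ together with monotonicity of $x\mapsto\arctan[(x-1)^{-1/2}]$ gives the upper bound in \eqref{eq:arm_prop}.

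\smallskip\noindent\emph{Lower bound.} Here only the soft implication from monotonicity of the capacity is needed: if $\capacity(\cC_0)>(1+\epsilon)\capacity_N(D_R)$ then $\cC_0$ is not contained in the cable neighbourhood of $D_R$ (using that the extra cables of $D_R$ add at most an $\epsilon$-fraction to its cable capacity), and any point of $\cC_0$ outside $D_R$ has $\|\cdot\|\ge R$ (when $R>h_N/2$, $D_R$ is a full cross-section, so leaving $D_R$ means leaving $B^2(0,R)$; when $R\le h_N/2$, leaving $D_R$ forces $|y|\ge R$ or $|\hat z|\ge R$, and either way $\|\cdot\|\ge R$). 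Hence the continuous path in $\cC_0$ from $0$ to such a point meets $\partial B_R$, so $\{\capacity(\cC_0)>(1+\epsilon)\capacity_N(D_R)\}\subset\{0\leftrightarrow\partial B_R\}$ and
\[
  \theta_N^h(R)\ge\P_N^h\big(\capacity(\cC_0)>(1+\epsilon)\capacity_N(D_R)\big)=\tfrac1\pi\arctan\!\big[\big(g_N^h(0)(1+\epsilon)\capacity_N(D_R)-1\big)^{-1/2}\big].
\]
Inserting the upper estimate on $\capacity_N(D_R)$ yields the lower bound in \eqref{eq:arm_prop}, after absorbing all $\epsilon$-dependent thresholds into $\Cr{arm_r}(\epsilon),\Cr{arm_rn}(\epsilon)$ and relabelling $\epsilon$.

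\smallskip\noindent\emph{Main obstacle.} The delicate point is the sharp lower bound $\capacity_N(S)\ge(1-\epsilon)\capacity_N(\ell_R)$ for an arbitrary connected $S\ni0$ of $\|\cdot\|$-diameter $\ge R$: a crossing cannot be cheaper in capacity than a straight needle, \emph{with the correct constant}. This is precisely where ``thinness'' enters — at the relevant scales $g_N^h$ depends on a point essentially only through its horizontal coordinate, so the variational problem decouples onto $\Z^2$ and reduces to the one-dimensional one already solved in Lemma~\ref{lem:tube_2d} — but one must carefully handle the possibility $h_N\gtrsim R$ (via the $\Z^3$-like alternative (b) above) as well as the passage between the cable capacity $\capacity(\cC_0)$ on $\tSlab$ and the discrete capacities $\capacity_N(\cdot)$ on $\mathbb{S}_N$, the discrepancy being lower order for the sets at hand. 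Everything else — the sharp capacity estimates and the law of $\capacity(\cC_0)$ — is available from Section~\ref{sec:cap-slab} and from \cite{drewitz_cluster_2022,drewitz_critical_2023}.
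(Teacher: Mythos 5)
Your proposal follows essentially the same route as the paper: sandwich $\{0\leftrightarrow\partial B_R\}$ between the capacity events $\{\capacity_N(D_R)<\capacity_N(\mathcal{C})\}$ and $\{(1-\epsilon)\capacity_N(\ell_R)<\capacity_N(\mathcal{C})\}$ (the paper's \eqref{eq:cluster_cap_comparison}), evaluate both via the explicit $\arctan$ law \eqref{eq:caplaw}, and match $\capacity_N(\ell_R)$ and $\capacity_N(D_R)$ to leading order using Propositions~\ref{prop:line_cap}(ii) and~\ref{prop:ball_cap}(ii); the crux — that a crossing is no cheaper in capacity than a needle, with the sharp constant — is handled exactly as in the paper, by extracting a line-like subset, projecting onto $\ell_R$, exploiting the near-monotonicity of $g_N^2$ in the horizontal coordinate, and letting the thinness hypothesis absorb the crude constant on the $\mathscr{E}^3$ part. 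The only slip is in your dichotomy for the upper bound: vertical extent $\le(1-\epsilon)R$ gives horizontal diameter only $\ge R\sqrt{2\epsilon-\epsilon^2}$, not $\ge(1-\epsilon)R$; this is harmless (a segment of length $cR$ suffices, since the constant is washed out inside $K_0\sim\log$ once $N/R$ is large), and the paper avoids the issue altogether by splitting on $R>h_N$ versus $R\le h_N$, where $R>h_N$ automatically forces horizontal diameter $\ge R/2$.
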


Theorem \ref{thm:arm_asymp} can be readily obtained from \eqref{eq:arm_prop}, as we now explain.
\begin{proof}[Proof of Theorem \ref{thm:arm_asymp}]
It suffices to note that under the standing assumption of Theorem \ref{thm:arm_asymp}, for any $\epsilon\in(0,1)$ there exists $N_0(\epsilon)\geq \Cr{arm_r}\Cr{C:range}^{-1}$ such that for all $N\geq N_0(\epsilon)$, we have $\Cr{arm_r}\leq R\leq \frac{1}{\Cr{arm_rn}}N$ and $\frac{\log R}{ R}\leq \frac{\epsilon}{h_N} \log(\frac{N}{R\vee h_N})$. Moreover, the assumption 
$R\gg\frac{h_N\log R}{\log(N/(R\vee h_N))}$ (as $N \to \infty$) of Theorem \ref{thm:arm_asymp} implies in particular that $h_N \ll N$ so we can further assume that $h_N\leq \frac{1}{\Cr{arm_rn}}N$ for $N \geq N_0(\varepsilon)$. Thus, \eqref{eq:arm_prop} holds for such $N$, and the asymptotics \eqref{eq:arm_main} follow by letting first $N\to\infty$, then $\varepsilon \downarrow 0$.
\end{proof}

We now give the proofs of Theorem \ref{prop:arm_general} and of the lower bound in \eqref{eq:critical_one_arm}. Let us first collect a few basic facts concerning $\arctan$.
The following calculus exercise will be useful. For $g>0$ consider $x> g^{-1}$ and $I_g(x)\stackrel{\text{def.}}{=}\frac{1}{2\pi\sqrt{g}} \int_x^\infty \frac{1}{t\sqrt{t-g^{-1}}} \,dt$. With the substitution $u^2=t-g^{-1}$,
\begin{equation} \label{eq:arctan_calculus}
   I_g(x)=
   \frac{1}{\pi\sqrt{g}} \int_{\sqrt{x-g^{-1}}}^\infty \frac{1}{u^2+g^{-1}} \,du 
   = \frac{1}{\pi}\left( \frac{\pi}{2}-\arctan(\sqrt{gx-1})\right)
   =\frac{1}{\pi}\arctan\left(\frac{1}{\sqrt{gx-1}}\right).    
\end{equation}

The following elementary estimates involving $\arctan$ will also be useful.
\begin{lemma} \label{lem:one_arm_minus}
    \begin{align} 
    & \label{eq:arctan_bound}
 \text{for all }x\geq 0: \quad   \frac{\pi}{4}(1\wedge x)\leq \arctan(x)\leq \frac{\pi}{2}(1\wedge x),\\    &\label{eq:one_arm_minus}
   \text{for all $x>1$}: \quad 
 \arctan\left({1}/{\sqrt{x}}\, \right)
    \leq 
    \arctan\left({1}/{\sqrt{x-1}} \,\right) 
    \leq 
    2 \sqrt{2} \arctan\left({1}/{\sqrt{x}} \,\right).    \end{align}
\end{lemma}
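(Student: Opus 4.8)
The plan is to establish each of the two displays in Lemma~\ref{lem:one_arm_minus} by elementary monotonicity and concavity arguments, using only that $\arctan$ is strictly increasing and concave on $[0,\infty)$, with $\arctan(0)=0$, $\arctan'(0)=1$, $\arctan(1)=\tfrac{\pi}{4}$ and $\arctan(x)\uparrow \tfrac{\pi}{2}$ as $x\to\infty$.

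For \eqref{eq:arctan_bound}, I would first record the two standard facts $\arctan(x)\le x$ (concavity together with unit slope at the origin) and $\arctan(x)\le \tfrac{\pi}{2}$ for $x\ge 0$; combining them gives $\arctan(x)\le x\wedge \tfrac{\pi}{2}\le \tfrac{\pi}{2}(1\wedge x)$, since $x\le \tfrac{\pi}{2}x$ for $x\le 1$ and $\tfrac{\pi}{2}\le \tfrac{\pi}{2}$ for $x\ge 1$. For the lower bound I split into two cases: if $x\ge 1$, then $\arctan(x)\ge \arctan(1)=\tfrac{\pi}{4}=\tfrac{\pi}{4}(1\wedge x)$ by monotonicity; if $0\le x\le 1$, concavity of $\arctan$ on $[0,1]$ puts its graph above the chord through $(0,0)$ and $(1,\tfrac{\pi}{4})$, i.e.\ $\arctan(x)\ge \tfrac{\pi}{4}x=\tfrac{\pi}{4}(1\wedge x)$. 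This is the only point at which a property beyond monotonicity is used.

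For \eqref{eq:one_arm_minus}, the left-hand inequality follows immediately from monotonicity of $\arctan$ and $1/\sqrt{x}\le 1/\sqrt{x-1}$ for $x>1$. For the right-hand inequality I would apply \eqref{eq:arctan_bound} to numerator and denominator of the ratio, getting
\begin{equation*}
\frac{\arctan(1/\sqrt{x-1})}{\arctan(1/\sqrt{x})}
\le \frac{\frac{\pi}{2}\bigl(1\wedge (x-1)^{-1/2}\bigr)}{\frac{\pi}{4}\bigl(1\wedge x^{-1/2}\bigr)}
= 2\,\frac{1\wedge (x-1)^{-1/2}}{1\wedge x^{-1/2}},
\end{equation*}
and then verify the elementary bound $1\wedge(x-1)^{-1/2}\le \sqrt{2}\,\bigl(1\wedge x^{-1/2}\bigr)$ for all $x>1$: for $1<x<2$ the left side equals $1$ while the right side equals $\sqrt{2}/\sqrt{x}>1$; for $x\ge 2$ both sides have the form $(\cdot)^{-1/2}$ and the inequality reduces to $x\le 2(x-1)$, i.e.\ $x\ge 2$. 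This yields the constant $2\sqrt{2}$.

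I expect no genuine obstacle here; the computation is routine. The only mild care needed is to carry out the case split for $1\wedge(x-1)^{-1/2}\le \sqrt{2}(1\wedge x^{-1/2})$ on the correct ranges of $x$, and to note that $\arctan(1/\sqrt{x})>0$ for $x>1$ so that the ratio is well defined.
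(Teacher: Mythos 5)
Your proof is correct and follows essentially the same route as the paper: both parts rest on concavity/monotonicity of $\arctan$ together with the values $\arctan'(0)=1$, $\arctan(1)=\tfrac{\pi}{4}$, $\arctan\le\tfrac{\pi}{2}$, and the upper bound in \eqref{eq:one_arm_minus} is obtained in both cases by applying \eqref{eq:arctan_bound} to numerator and denominator with a case split at $x=2$. The only difference is cosmetic (you bound the ratio, the paper chains the inequalities directly), so nothing further is needed.
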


We postpone the short verification of Lemma \ref{lem:one_arm_minus} to the end of this section, and proceed to prove the main result of this section. It relies on a refined version of the comparison method with the capacity observable employed in \cite[Section 4]{drewitz_critical_2023}.

\begin{proof}[Proof of Theorem \ref{prop:arm_general}]
Let $\mathcal{C} \subset \tSlab$ denote the cluster of the origin in $\{ \varphi \geq 0\}$.
By Theorem~\ref{thm:green_main},(i) and using \eqref{eq:bessel_ub}, it follows that $g_N(0) \leq C \log N$, hence applying \cite[Lemma 3.2,(2)]{drewitz_cluster_2022} with $g_0=C\log N$ and by translation invariance, it follows that the weighted graph $\Slab$ (see below \eqref{def:slab} for the formal definition) satisfies the condition (Cap) introduced in \cite{drewitz_cluster_2022}. In particular, combining Theorems 1.1 and 3.7 from the same reference, we deduce that $\capacity_N(\mathcal{C})$ is finite almost surely and (cf.~\cite[(3.8)]{drewitz_cluster_2022} and \eqref{eq:arctan_calculus})
\begin{equation}\label{eq:caplaw}
\P_N^h(\capacity_N(\mathcal{C}) > x)= I_g(x), \quad \text{for all } x> g^{-1}, \text{ with } g=g_N(0)
\end{equation}
(we refer to \cite[Section 2]{drewitz_cluster_2022} regarding the extension of $\capacity_N(\cdot)$ as introduced in \eqref{eq:cap_var} to subsets of $\tSlab$). Additionally, as we now explain, with $D_R$ and $\ell_R$ as introduced around \eqref{eq:def_box}, one has the chain of inclusions, 
    \begin{equation} \label{eq:cluster_cap_comparison}
        \{\capacity_N(D_R)<  \capacity_N(\mathcal{C})\}\subset\{0 \leftrightarrow\partial  B_R \}
        \subset
        \{(1-\varepsilon) \capacity_N(\ell_R)<  \capacity_N(\mathcal{C})\},
    \end{equation}
valid for all $\varepsilon \in (0,1)$ and $R,N, h_N$ satisfying the assumptions above \eqref{eq:arm_prop}.

The inclusions \eqref{eq:cluster_cap_comparison} constitute a refinement of \cite[Lemma 4.1]{drewitz_critical_2023}.
Below let $\tilde B_R, \tilde D_R$ be the sets obtained from $B_R$, $D_R$ (see \eqref{eq:def_box}) by adding all the cables joining any pair of neighbors in $B_R$, resp.~$D_R$.
The first inclusion in \eqref{eq:cluster_cap_comparison} is valid for all $R,N,h(\cdot)$, and immediate: if $\{0 \leftrightarrow\partial  B_R \}$ does not occur, then  $\mathcal{C} \subset { \tilde B_R}\subset { \tilde D_R}$, hence $\capacity_N(\mathcal{C}) \leq \capacity_N(D_R)$ by  monotonicity {and \cite[(2.16)]{drewitz_cluster_2022}}. 
 For the second inclusion, we consider the cases $R>h_N$ and $R\leq h_N$ separately. 
 
 Let $\hat{R}^3:=R$ and $\hat{R}^2:=\lfloor{R}/{2}\rfloor$. On the event $\{0 \leftrightarrow\partial  B_R \}$ one extracts from $\mathcal{C}$ a finite sequence $(x_n^{(3)}: 0\leq n\leq \hat{R}^{3})$ with $x_n^{(3)}=(y_n^{(3)},z_n^{(3)})$ of vertices in $\Slab$ with $|x_n^{(3)}|_{\infty}=n$ when $R\leq h_N$ and similarly $(x_n^{(2)} : 0\leq n\leq \hat{R}^2)$ with
$|y_n^{(2)}|_{\infty}=n$ when $R> h_N$. (Note the existence of the sequence $(x_n^{(2)})$ is justified when $R>h_N$ by the fact that for all $x=(y,z)\in\partial B_R$, one has $|y|_\infty\geq |y|\geq \sqrt{R^{2}-(h_N/2)^{2}}\geq R/2$.) 
Let $\hat{\ell}^{i}_R (\subset \mathcal{C})$ denote the union of all the points in $(x_n^{(i)})$. It's straightforward to see that $\capacity_N(\mathcal{C}) \geq \capacity_N(\hat{\ell}^i_R)$; hence it to prove \eqref{eq:cluster_cap_comparison}, it suffices to show that $\capacity_N(\hat{\ell}^2_R)\geq (1-\epsilon)\capacity_N(\ell_R)$ when $R> h_N$ and $\capacity_N(\hat{\ell}^3_R)\geq (1-\epsilon)\capacity_N(\ell_R)$ when $R\leq h_N$.

 Now for $i\in\{3,2\}$, let $\tau^i:\mathbb{S}_N\mapsto\mathbb{S}_N$ be the projection such that $\tau^i(x_n^{(i)})=(n,0,0)$ for all $0\leq n\leq \hat{R}^i $. It's easy to see that $\tau^i(\hat{\ell}^i_R)=\ell_{\hat{R}^i+1}$ and for all $x^{(i)}=(y^{(i)},z^{(i)}),\bar{x}^{(i)}=(\bar{y}^{(i)},\bar{z}^{(i)}) \in \hat{\ell}^i_R$,
\begin{equation}\label{eq:tau_proj}
    \|\tau^i(x^{(i)})-\tau^i(\bar{x}^{(i)})\|\leq \|x^{(i)}-\bar{x}^{(i)}\| \text{ for } i\in\{3,2\} \text{ and }
    |\tau^2(y^{(2)})-\tau^2(\bar{y}^{(2)}) |\leq |y^{(2)}-\bar{y}^{(2)}|,
\end{equation}
where, with a light abuse of notation, we use $\tau^2(y)$ to denote the first two components of $\tau^2(x)$. 
 By \eqref{eq:tau_proj}, \eqref{eq:green_3d_bound} and \eqref{eq:g2-asymp-unif}, we have that $$\kappa^3\stackrel{\text{def.}}{=}\max_{i\in\{3,2\}}\max_{x,x^\prime \in \hat\ell^i_R}\frac{g_N^3(x,x^\prime)}{g_N^3(\tau(x),\tau(x^\prime))}\leq \frac{\Cr{d3_ub_green}}{\Cr{d3_lb_green}} \frac{\|\tau(x)-\tau(x^\prime)\|\vee1}{\|x-x^\prime\|\vee1}\leq C$$ and similarly $$\kappa^2\stackrel{\text{def.}}{=}\max_{x,x^\prime \in \hat\ell^2_R}\frac{g_N^2(x,x^\prime)}{g_N^2(\tau(x),\tau(x^\prime))}\leq 1+\epsilon$$ for $\Cr{arm_r}\Cr{arm_rn}\leq N,h_N\leq \frac{1}{\Cr{arm_rn}}N$, with $\Cr{arm_r}(\epsilon),\Cr{arm_rn}(\epsilon)$ sufficiently large. 
If $R>h_N$, letting $\nu$ be the uniform measure on $ \ell_{\hat{R}^2+1}$, 
\begin{align}
\begin{split}
    \capacity_N(\hat{\ell}^2_R)&=(\inf_{\mu\in\cP(\hat{\ell}^2_R)} \mathscr{E}^3(\mu)+\mathscr{E}^2(\mu))^{-1}
    \geq (\kappa^3\mathscr{E}^3(\nu)+\kappa^2\mathscr{E}^2(\nu))^{-1}\\
    &\geq \left(C\frac{2\log(R/2)}{ R}+ (1+\epsilon)^2\frac{3}{\pi}\frac{1}{h_N}K_0\left(\frac{R}{2N}\right)\right)^{-1} \geq (1-\epsilon)\capacity_N(\ell_R),
\end{split}
\end{align}
where the third inequality follows from \eqref{eq:3d_tube_unif} and \eqref{eq:2d_line_cr0_unif}; the last inequality follows from the assumption $\frac{\log R}{R}\leq\frac{\epsilon}{h_N}\log(\frac{N}{R\vee h_N})$, \eqref{eq:line-cap-const} (which is in force under our assumptions on $R, N, h_N$), and a change of variable in $\epsilon$.

If $R\leq h_N$, letting $\nu$ be the uniform measure on $ \ell_{\hat{R}^3+1}$, 
\begin{align}
\begin{split}
    \capacity_N(\hat{\ell}^3_R)&=\big(\inf_{\mu\in\cP(\hat{\ell}^3_R)} \mathscr{E}^3(\mu)+\mathscr{E}^2(\mu)\big)^{-1}\\
    &\geq \left((\kappa^3\mathscr{E}^3(\nu)+ (1+\epsilon)\frac{3}{\pi}\frac{1}{h_N}K_0\left(\frac{h_N}{N}\right)\right)^{-1}\\
    &\geq \left(C\frac{\log( R)}{ R}+ (1+\epsilon)\frac{3}{\pi}\frac{1}{h_N}K_0\left(\frac{h_N}{N}\right)\right)^{-1} \geq (1-\epsilon)\capacity_N(\ell_R),
\end{split}
\end{align}
where the second inequality follows from \eqref{eq:g^2-sharpbd}; the third inequality follows from \eqref{eq:3d_tube_unif} and the last inequality follows from the assumption $\frac{\log R}{R}\leq\frac{\epsilon}{h_N}\log(\frac{N}{R\vee h_N})$, \eqref{eq:line-cap-const} and a change of variable in $\epsilon$.

\black

With \eqref{eq:cluster_cap_comparison} now shown, its usefulness hinges on having sufficiently sharp estimates for $\capacity_N(D_R)$ and $\capacity_N(\ell_R)$, which are supplied by the results of Section~\ref{sec:cap-slab}; see, in particular, item~(ii) in each of Propositions~\ref{prop:line_cap} and \ref{prop:ball_cap}. In the ``flat'' regime of parameters $R,N,h_N$ considered here, these asymptotics essentially match to leading order. Indeed, combining the 
  fact that $K_0(x)\sim \log(1/x)$ as $x\to 0$ (see Lemma~\ref{L:Bessel}), \eqref{eq:line-cap-const}, and the first line of \eqref{eq:ball-cap-const}, we get that for $A_R\in\{\ell_R,D_R\}$,    whenever $R,N,h_N$ satisfy the conditions above \eqref{eq:arm_prop} for a given $\varepsilon \in (0,1)$,\begin{equation}\label{eq:arm_line_ball}
        \frac{\pi(1-\epsilon)}{3}\frac{h_N}{\log(N/(R\vee h_N ))}
        \leq \capacity_N(A_R)
        \leq \frac{\pi(1+\epsilon)}{3}\frac{h_N}{\log(N/(R\vee h_N ))}.
    \end{equation}
 The claim \eqref{eq:arm_prop} is a direct consequence of \eqref{eq:cluster_cap_comparison}, \eqref{eq:caplaw}, \eqref{eq:arctan_calculus} and \eqref{eq:arm_line_ball}.
\end{proof}

A slight adaptation of the above method also yields the lower bound in Theorem~\ref{thm:critical_connect}.

\begin{proof}[Proof of the lower bound in \eqref{eq:critical_one_arm}] Recall that the first inclusion in \eqref{eq:cluster_cap_comparison} holds without restrictions on $R,N$ and $h_N$. Thus, it remains valid in the context of Theorem~\ref{thm:critical_connect}. Combining it with \eqref{eq:box-cap} gives
\begin{equation*}
    \{\Cr{ub_box}\Fbox(R)\leq  \capacity_N(\mathcal{C})\}\subset\{0 \leftrightarrow\partial  B_R \},
\end{equation*}
since the event on the left implies that $\capacity_N(\mathcal{C}) \geq \capacity_N(D_R)$.
Hence, applying \eqref{eq:caplaw}, \eqref{eq:arctan_calculus}, we get that
\begin{equation*}
    \theta_{N}^{h}(R) \geq c \arctan\big((\Cr{ub_box}g_N^h(0)\Fbox(R)-1)^{-1/2} \big) 
\end{equation*}
from which \eqref{eq:critical_one_arm} follows using \eqref{eq:one_arm_minus},\eqref{eq:arctan_bound}.
\end{proof}

It remains to supply the elementary:

\begin{proof}[Proof of Lemma \ref{lem:one_arm_minus}]
The bounds in \eqref{eq:arctan_bound} follow from elementary considerations, upon combining the facts that $\arctan'(0)=1$, $\arctan(1)=\tfrac{\pi}{4}$ and $\arctan(
\cdot) \leq \tfrac{\pi}{2}$ and concavity.
As to \eqref{eq:one_arm_minus}, the lower bound is immediate since $\arctan(\cdot)$ is increasing. For the upper bound, using \eqref{eq:arctan_bound} repeatedly we have that 
\begin{equation*}
    \arctan\left(\frac{1}{\sqrt{x-1}}\right)\leq \frac{\pi}{2}\left(\frac{1}{\sqrt{x-1}} \wedge 1\right)\leq 
    \begin{cases*}
    \frac{\pi}{2} \leq 2\sqrt{2}\arctan\left(\frac{1}{\sqrt{x}}\right), & if $1< x\leq 2$ \\
    \frac{\pi}{2} \sqrt{\frac{2}{x} } = \frac{\pi}{4}\frac{2\sqrt{2}}{\sqrt{x}} \leq 2\sqrt{2}\arctan\left(\frac{1}{\sqrt{x}}\right), &  if $x>2$  
    \end{cases*}.
\end{equation*}
\end{proof}

\begin{remark}[Plateau]\label{R:plateau} From \eqref{eq:caplaw} and the capacity estimates \eqref{eq:line-cap-macro} and \eqref{eq:box-cap-macro} in the regime $h_N \ll N/\log N$, one sees via the comparison between radius and capacity as in \eqref{eq:cluster_cap_comparison} that $\text{cap}_N(\mathcal{C})$ is of order $g h_N$, where $g=g_N(0)$, whenever $\mathcal{C}$ is connected to distance $N$, and in particular scales multiplicatively in $g$. In the delocalized regime $h_N \ll \log N$, the Green's function $g$ scales like $h_N^{-1} \log N$, cf.~\eqref{eq:variance-GFF}, hence the factors of $h_N$ cancel and $\text{cap}_N(\mathcal{C}) \asymp \log N$ uniformly in $N$ on the connection event, yielding estimates uniform in $h_N$ as in the first line of \eqref{critical_one_arm-macro}.

\end{remark}

\section{Killed estimates and Harnack inequality}
\label{eq:sec:kill}

We now extend the bounds on $g_N$ of Theorem~\ref{thm:green_main} to accommodate killing on a suitable set $K \subset \Slab$, giving rise to the killed Green's function $g_N^K$. The effect of the killing on the asymptotics is the content of Proposition~\ref{cor:killed_green}, the main result of this section. Proposition~\ref{cor:killed_green} has several immediate consequences that will be useful in the next section. Namely, an (elliptic) Harnack inequality, see Corollary~\ref{C:harnack}, as well as killed capacity estimates for balls, see Corollary~\ref{cor:killed_cap}.  Using the estimates from Proposition~\ref{cor:killed_green} and with a bit more work, we then prove in Proposition~\ref{lem:cap_walk_space_2} lower deviation bounds on the (killed) capacity of the range of the walk, which are fit for purpose.

Let $K\subset \mathbb{S}_N$ be compact, and $P_x^K$ be the law of $X_{\cdot \wedge H_K}$ under $P_x$ (see above \eqref{def:green} regarding $X$) the canonical law of the continuous-time Markov chain killed upon hitting $K$; see below \eqref{eq:last_exit} regarding $H_K$, the entrance time in $K$. We denote the Green's function killed upon hitting $K$ by
    \begin{equation} \label{eq:gNK}
      g_N^K(x_1,x_2)\stackrel{\text{def.}}{=} \frac{1}{\lambda_{x_1}}E_{x_1}^K\Big[\int_0^{\infty} \1_{\{X_t=x_2\}} \,dt\Big]
      =\frac{1}{\lambda_{x_1}}E_{x_1}\Big[\int_0^{H_K} \1_{\{X_t=x_2\}} \,dt\Big],
    \end{equation}
    so that $g_N= g_N^{\emptyset}$. Note that $ g_N^K(x_1,x_2)$ is no longer translation invariant.
Similarly as in \eqref{eq:equil}-\eqref{def:capacity}, for $A\subset \mathbb{S}_N$, we define the equilibrium measure $e_{A,N}^K(\cdot)$ of $A$ and the capacity $\capacity_N^K(A)$ of $A$ relative to $\mathbb{S}_N\setminus K$, with $P_x^K$ replacing $P_x$ everywhere, or equivalently, $H_K$ replacing $\infty$. The analogue of the variational formula \eqref{eq:cap_var} remains true for $\capacity_N^K(A)$, with $g_N^K$ in place of $g_N$. The following result extends the bounds of Theorem~\ref{thm:green_main},~(i) to allow for a killing on the set $K$.

\begin{proposition} \label{cor:killed_green}
    For all $N\geq 1,r\geq 2$ such that $r\leq \Cr{C:range}N$, $x_0=(y_0,z_0)\in \mathbb{S}_N$ and $K=B(x_0,r)^{\text{c}}$, and all $x_1,x_2,\in B(x_0,r/2)$, the following hold: 
          \begin{multline} \label{eq:killed_green_smallh} 
            \frac{\Cl[c]{d3_lb_killed}}{\|x_1-x_2\|\vee 1} +\frac{\Cl[c]{d2_lb_killed}}{h_N}  \log\left(\frac{r}{(|y_1-y_2|\vee h_N) \wedge \tfrac r2 }\right)         \\   \leq g_N^K(x_1,x_2) 
            \leq 
            \frac{\Cl{d3_ub_kileed}}{\|x_1-x_2\|\vee 1}+
              \frac{\Cl{d2_ub_kileed}}{h_N}  \log\left(\frac{r}{(|y_1-y_2|\vee h_N)  \wedge \tfrac r2 }\right). 
          \end{multline}
\end{proposition}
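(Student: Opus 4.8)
The plan is to reduce the killing set $K=B(x_0,r)^{\mathrm c}$ to the complement of a product box and then to rerun the arguments behind Propositions~\ref{thm:greens_function_3d} and~\ref{thm:greens_function_bound}, with the exponential killing at scale $N$ replaced by exit from a box of radius $\asymp r$.

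\emph{Step 1 (reduction to product killing sets).} From the definitions one has the inclusions $D(x_0,r/\sqrt2)\subset B(x_0,r)\subset D(x_0,r)$ (recall $D(\cdot,\cdot)$ from \eqref{eq:def_box}), so monotonicity of the killed Green's function in the killing set gives
\[ g_N^{D(x_0,r/\sqrt2)^{\mathrm c}}(x_1,x_2)\ \le\ g_N^{K}(x_1,x_2)\ \le\ g_N^{D(x_0,r)^{\mathrm c}}(x_1,x_2). \]
Since $x_1,x_2\in B(x_0,r/2)$ sit at $\|\cdot\|$-distance $\ge c r$ from $\partial D(x_0,\rho)$ for $\rho\in\{r/\sqrt2,r\}$, and since the argument $r/((|y_1-y_2|\vee h_N)\wedge\tfrac r2)$ of the logarithm in \eqref{eq:killed_green_smallh} is bounded below by $2$ (as $r\ge2$) and comparable, up to universal constants, to $\rho/((|y_1-y_2|\vee h_N)\wedge\tfrac\rho2)$ because $\rho\asymp r$, it suffices to prove, for arbitrary $x_0$, $2\le\rho\le\Cr{C:range}N$, and $x_1,x_2$ at $\|\cdot\|$-distance $\ge c\rho$ from $\partial D(x_0,\rho)$, the two-sided bound
\[ g_N^{D(x_0,\rho)^{\mathrm c}}(x_1,x_2)\ \asymp\ \frac1{\|x_1-x_2\|\vee1}+\frac1{h_N}\log\!\Big(\frac{\rho}{(|y_1-y_2|\vee h_N)\wedge\tfrac\rho2}\Big). \]

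\emph{Step 2 (decomposition).} For $D=D(x_0,\rho)=B^{2}(y_0,\rho)\times B_{(\Z/h_N\Z)}(z_0,\rho)$, the decomposition \eqref{eq:X-decomp} writes $X$ (with its $N^{-2}$-killing) as an independent planar walk $Y$ and cyclic walk $\pi_N(Z)$, and $X$ exits $D$ iff $Y$ exits $B^{2}(y_0,\rho)$ or $\pi_N(Z)$ exits $B_{(\Z/h_N\Z)}(z_0,\rho)$, these two events being independent. Exactly as in \eqref{eq:projection}--\eqref{eq:def_g2}, the Green's function splits as $(1+N^{-2})g_N^{D^{\mathrm c}}=g_N^{3,\rho}+g_N^{2,\rho}$ into winding-$0$ and winding-$\ne0$ parts of $Z$, the time integrand in each carrying the factor $\Pzd{y_1}(Y_t=y_2,\,t<\tau^Y_{B^{2}(y_0,\rho)})\,e^{-t/N^2}$ together with the corresponding (Dirichlet-killed, if $\rho<h_N/2$) vertical heat kernel. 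If $\rho<h_N/2$ the cyclic walk is confined to an interval of length $<h_N$, never winds, so $g_N^{2,\rho}\equiv0$ and $g_N^{D^{\mathrm c}}$ is the Green's function of simple random walk on $\Z^3$ killed on exiting a box of diameter $\asymp\rho$; classical three-dimensional estimates give $g_N^{D^{\mathrm c}}(x_1,x_2)\asymp(\|x_1-x_2\|\vee1)^{-1}$, which matches the displayed target, since here $|y_1-y_2|\vee h_N=h_N>\rho/2$ makes the logarithmic term $\asymp h_N^{-1}\lesssim\rho^{-1}\lesssim(\|x_1-x_2\|\vee1)^{-1}$. If $\rho\ge h_N/2$ the cyclic ball is the whole torus, so the vertical walk is unrestricted (precisely the setting of Proposition~\ref{thm:greens_function_bound}), and the only change relative to the proofs of Propositions~\ref{thm:greens_function_3d} and~\ref{thm:greens_function_bound} is that $\Pzd{0}(Y_t=y)$ is replaced by the $B^{2}(y_0,\rho)$-Dirichlet-killed planar heat kernel. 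As $\rho\lesssim N$, this kernel stays comparable to $\Pzd{0}(Y_t=y)$ for $t\lesssim\rho^2$ and decays like $Ce^{-ct/\rho^2}$ for $t\gtrsim\rho^2$ (spectral gap in a disk of radius $\rho$), so it effectively replaces the cutoff $N^2$ by $\rho^2$ in the integrals; concretely this turns $K_0\big(\tfrac{|y|\vee h_N}N\big)\asymp\log\tfrac N{|y|\vee h_N}$ into $\log\tfrac{\rho}{(|y|\vee h_N)\wedge\rho}$, the asserted form. The winding estimates of Lemma~\ref{prop:green_error_term} apply verbatim to $g_N^{2,\rho}$, since exit from the ball only decreases the relevant probabilities, and the matching lower bound on the main term follows as in the proof of \eqref{eq:green_bound}, using that the planar walk started deep inside stays in $B^{2}(y_0,\rho)$ up to time $\asymp\rho^2$ with probability bounded below.

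\emph{Step 3 (assembly; main obstacle).} Combining the two parts gives the displayed estimate of Step~1, and undoing the reduction yields \eqref{eq:killed_green_smallh}. The upper bounds are routine: one has $g_N^{D^{\mathrm c}}\le g_N$, to which Theorem~\ref{thm:green_main},(i) and the spectral decay of the ball-killed planar heat kernel are applied to trade the $N$-scale cutoff for the $\rho$-scale one. I expect the main obstacle to be the lower bound: one must show that the $B^{2}(y_0,\rho)$-killed planar heat kernel is genuinely comparable (with constants uniform in $N,\rho,h_N$) to the free one on the window $[c h_N^2,\rho^2/C]$ --- i.e.\ that a planar walk started deep inside neither exits too soon nor has its law distorted there --- and, in the regime $|y_1-y_2|\asymp\rho$, that $g_N^{D^{\mathrm c}}(x_1,x_2)$ is still bounded below by a positive multiple of $h_N^{-1}$. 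The latter rests on the planar fact that the Dirichlet Green's function of a disk of radius $\rho$, evaluated at two points each at distance $\ge c\rho$ from the boundary, is bounded below by a positive constant irrespective of their separation (from one point the walk reaches a neighbourhood of the other before exiting with probability $\gtrsim(\log\rho)^{-1}$, then accumulates $\asymp\log\rho$ expected visits there); the analogous three-dimensional statement, giving the lower bound $\gtrsim\rho^{-1}$, is used in the $\rho<h_N/2$ case. These are all standard random-walk facts; the only real care needed is the uniformity of constants, which is exactly why the reduction to $\rho\le\Cr{C:range}N$ --- making the ball-exit time comparable to $\rho^2$ uniformly --- was carried out at the start.
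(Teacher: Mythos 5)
Your proposal is correct and follows essentially the same route as the paper: the winding decomposition of \eqref{eq:projection} adapted to the killed walk, two-sided Gaussian bounds for the ball-killed planar heat kernel (comparable to the free kernel up to time $\asymp r^2$, exponentially decaying thereafter) to replace the $N$-scale cutoff by the $r$-scale one, and the winding sum \eqref{eq:approx_sum_gaussian} supplying the factor $h_N^{-1}$. The only cosmetic difference is that you sandwich $K$ between complements of product boxes up front, whereas the paper lifts $K$ to a set $K^{\text{proj}}\subset\Z^3$ and uses the inclusions $\{|x-x_0|<r\}\subset B^{\text{proj}}\subset B^2(y_0,r)\times\Z$ term by term; the technical inputs you flag as the main obstacle (uniform two-sided killed heat-kernel estimates, e.g.\ \cite[Theorem 5.26]{barlow_random_2017}) are exactly those the paper invokes.
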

In comparison with \eqref{eq:green_estimate}, and in view of Lemma~\ref{L:Bessel}, the effect of the killing is to localize the two-dimensional effect to scale $r$, whenever it is felt, i.e.~when $r \gtrsim h_N$.
Notice also concerning the argument of the logarithm that in the present case, (and unlike in Theorem~\ref{thm:green_main}, where one always has ${N}{} \gtrsim (|y|\vee h_N)$ by assumption), it may well be that ${r} \ll ({|y_1-y_2|\vee h_N})$.

We postpone the proof of Proposition~\ref{cor:killed_green} for a few lines. We say that $u: \mathbb{S}_N \to \mathbb{R}$ is \emph{harmonic} at $x$ if $Lu(x)=0$, where $Lu(x)=\sum_{x'}\lambda_{x,x'}(u(x')-u(x))- \kappa_x u(x)$. As a consequence of Proposition~\ref{cor:killed_green}, we obtain the following:

\begin{corollary}[Harnack inequality on $\mathbb{S}_N$] \label{C:harnack}
    There exists $\Cl{harnack}<\infty$ such that for all $t\in(0,\frac{1}{2}]$, $R\geq 1$ and any function $u$ that's nonnegative on $\bar{B}_{2R} (=B_{2R} \cup \partial (B_{2R}^c))$ and harmonic on $B_{2R}$,
    \begin{equation} \label{eq:harnack_general}
        u(x)\leq \Cr{harnack}u(x^\prime) \quad\quad \forall x,x^\prime \in B_{tR}.
    \end{equation}
\end{corollary}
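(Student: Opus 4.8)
The plan is to derive the Harnack inequality \eqref{eq:harnack_general} from the killed Green's function estimates in Proposition~\ref{cor:killed_green} by a standard representation of harmonic functions via the last-exit (or Poisson kernel) decomposition, together with a comparison of the resulting hitting-probability kernels at two interior points.

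\medskip

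\textbf{Step 1: representation of $u$ on $B_{tR}$.} First I would fix $R\geq 1$, $t\in(0,\tfrac12]$, and a function $u\geq 0$ on $\bar B_{2R}$ that is harmonic on $B_{2R}$. Set $K=B_{2R}^{\mathrm{c}}$ and apply the optional stopping theorem to the (bounded) martingale $u(X_{\cdot\wedge H_K})$ under $P_x$ for $x\in B_{tR}$; since $t\leq \tfrac12$ one has $B_{tR}\subset B_R\subset\mathrm{int}(B_{2R})$, and the walk started inside exits $B_{2R}$ through $\partial(B_{2R}^{\mathrm{c}})$ a.s.\ (it is killed at rate $N^{-2}$, but the value at the cemetery contributes $0$, which only helps since $u\geq 0$). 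Hence
\begin{equation}\label{eq:harnack-rep}
 u(x)=\sum_{w\in \partial(B_{2R}^{\mathrm c})} P_x\big(X_{H_K}=w\big)\,u(w),\qquad x\in B_{tR}.
\end{equation}
It therefore suffices to show that the harmonic measure kernel satisfies, for some $\Cr{harnack}<\infty$ depending only on the absolute constants of Proposition~\ref{cor:killed_green},
\begin{equation}\label{eq:harnack-kernel}
 P_x\big(X_{H_K}=w\big)\leq \Cr{harnack}\,P_{x'}\big(X_{H_K}=w\big),\qquad \forall\, x,x'\in B_{tR},\ w\in\partial(B_{2R}^{\mathrm c}).
\end{equation}

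\medskip

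\textbf{Step 2: comparing hitting kernels via killed Green's functions.} For a fixed boundary point $w$, I would express $P_x(X_{H_K}=w)$ in terms of the Green's function killed upon hitting $K'\coloneqq K\cup\{w\}= (B_{2R}\setminus\{w\})^{\mathrm c}$. A standard last-exit decomposition (the analogue of \eqref{eq:last_exit} for the walk killed outside $B_{2R}\setminus\{w\}$, see e.g.\ \cite[(1.57)]{sznitman_topics_2012} or \cite[Section 2]{drewitz_cluster_2022}) gives, for $x\in B_{2R}\setminus\{w\}$,
\begin{equation}\label{eq:harnack-lastexit}
 P_x\big(X_{H_K}=w,\,H_{\{w\}}<H_K\big)=\sum_{w'\sim w,\,w'\in B_{2R}} g_N^{K'}(x,w')\,\lambda_{w',w},
\end{equation}
and one checks that on the event $H_K=H_{\{w\}\cup K}$ the walk must pass through a neighbour $w'$ of $w$ lying in $B_{2R}$ just before hitting $w$; so the left side equals $P_x(X_{H_K}=w)$. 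Since $w\in\partial(B_{2R}^{\mathrm c})$, such neighbours $w'$ satisfy $w'\in B_{2R}$ with $\|x_0-w'\|$ of order $2R$ (here $x_0=0$), and in particular $w'\notin B_{tR}$ while $x,x'\in B_{tR}\subset B(0,R)$; thus all the Green's functions $g_N^{K'}(x,w')$ and $g_N^{K'}(x',w')$ appearing in \eqref{eq:harnack-lastexit} are evaluated at pairs of points inside $B(0,2R)$ at mutual $\|\cdot\|$-distance between $\asymp R$ and $\asymp R$, with killing set $K'\supset B(0,2R)^{\mathrm c}$. Applying Proposition~\ref{cor:killed_green} to $g_N^{K'}$ — more precisely, to $g_N^{B(x_0',r)^{\mathrm c}}$ for a suitable ball $B(x_0',r)$ with $r\asymp R$ containing $x,x',w'$ well inside, together with monotonicity of the killed Green's function in the killing set (larger $K'$ gives smaller $g_N^{K'}$, but the comparison is two-sided once both points and the relevant ball are fixed) — yields
\begin{equation}\label{eq:harnack-gncompare}
 g_N^{K'}(x,w') \asymp g_N^{K'}(x',w')\asymp \frac{1}{R}+\frac{1}{h_N}\log\!\Big(\frac{R}{R\wedge h_N}\Big)
\end{equation}
uniformly in $w'$ as above and in $x,x'\in B_{tR}$, with implicit constants depending only on those of Proposition~\ref{cor:killed_green}. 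Here one uses $|y-w'|,|y'-w'|\asymp R$ (up to the excluded directions, for which $\|\cdot\|\asymp R$ still controls the polynomial term), so that the right-hand sides of \eqref{eq:killed_green_smallh} for the two points differ only by absolute constants. Summing \eqref{eq:harnack-lastexit} over the (boundedly many, and $x$-independent) neighbours $w'$ of $w$ then gives \eqref{eq:harnack-kernel}, and feeding this into \eqref{eq:harnack-rep} proves \eqref{eq:harnack_general}.

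\medskip

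\textbf{Main obstacle.} The delicate point is handling the geometry near the boundary point $w$: I need a two-sided comparison of $g_N^{K'}(x,w')$ and $g_N^{K'}(x',w')$ even though $w'$ is \emph{on} the boundary of the ball used in Proposition~\ref{cor:killed_green}, where that proposition's bounds formally require points in $B(x_0,r/2)$. The fix is to apply Proposition~\ref{cor:killed_green} with a slightly larger ball $B(0,r)$, $r=(2+c)R$ for small $c>0$ (so that $K=B_{2R}^{\mathrm c}\subset B(0,r)^{\mathrm c}$ only after further enlarging, hence one actually compares $g_N^{K'}$ with $g_N^{B(0,r)^{\mathrm c}}$ via monotonicity in the killing set for the \emph{lower} bound, and directly for the \emph{upper} bound), ensuring $x,x',w'\in B(0,r/2)$ and that $\|x_i-w'\|\asymp R\asymp r$. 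A second, more technical issue is that $\|\cdot\|$ (the graph-type norm) and the Euclidean norm $|y|$ on the $\Z^2$-part differ near the ``wrap-around'' scale $h_N$; but since we only ever use that both $\|x_i-w'\|$ and $|y_i-w'_{(1,2)}|$ are comparable to $R$, and the logarithm in \eqref{eq:killed_green_smallh} is a slowly varying function of its argument, changing the argument by a bounded multiplicative factor changes the bound by at most an additive $O(h_N^{-1})$, which is harmless. These are routine once set up carefully, so the essential content is simply \eqref{eq:harnack-rep}--\eqref{eq:harnack-gncompare}.
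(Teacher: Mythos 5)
There is a genuine gap at the heart of your Step 2: the claimed two-sided comparison $g_N^{K'}(x,w')\asymp g_N^{K'}(x',w')\asymp \frac1R+\frac{1}{h_N}\log(\tfrac{R}{R\wedge h_N})$ for $w'$ adjacent to $\partial(B_{2R}^{\text{c}})$ does not follow from Proposition~\ref{cor:killed_green}, and its lower-bound half is false. Proposition~\ref{cor:killed_green} only controls $g_N^{K}(x_1,x_2)$ for $x_1,x_2\in B(x_0,r/2)$, i.e.\ well inside the killed region; for $w'$ at distance $O(1)$ from the killing set $B_{2R}^{\text{c}}$ the killed Green's function carries an extra boundary-decay (gambler's-ruin) factor and is of strictly smaller order than the interior estimate \eqref{eq:killed_green_smallh} (already on $\Z^3$, $g_{B_{2R}}(0,w')\asymp R^{-2}$ rather than $R^{-1}$ for such $w'$). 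Your proposed fix --- enlarging the ball to $B(0,(2+c)R)$ and invoking monotonicity in the killing set --- only yields an \emph{upper} bound on $g_N^{K'}(x,w')$ of interior type; the matching lower bound on $g_N^{K'}(x',w')$ that you would need to divide by is precisely what fails. Worse, the ratio bound you are after, $g_N^{K'}(x,w')\leq C\,g_N^{K'}(x',w')$ uniformly in boundary-adjacent $w'$ and $x,x'\in B_{tR}$, is itself an instance of the Harnack inequality applied to the nonnegative function $g_N^{K'}(\cdot,w')$, which is harmonic on $B_R$; deriving it from two-sided pointwise estimates alone is therefore essentially circular.

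The paper's proof avoids evaluating the killed Green's function near the boundary altogether: it compares $g_N^{B_{2R}^{\text{c}}}(x,w)$ and $g_N^{B_{2R}^{\text{c}}}(x',w)$ only for $w$ on the \emph{intermediate} sphere $\partial B_R$, where both arguments lie in $B(0,R)=B(x_0,r/2)$ with $r=2R$, so that Proposition~\ref{cor:killed_green} gives genuinely two-sided bounds whose ratio is bounded by an absolute constant; the reduction of \eqref{eq:harnack_general} to this interior comparison is the representation of $u$ on $B_{tR}$ through $\partial B_R$, i.e.\ the adaptation of \cite[Lemma A.2]{sznitman_critical_2011} (which also absorbs the killing measure, since setting $u(\Delta)=0$ only helps when $u\geq0$ --- your Step 1 handles this correctly). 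To repair your argument you would need to insert a first-passage decomposition of $g_N^{K'}(w',\cdot)$ through $\partial B_R$, reducing the boundary comparison to the interior one --- at which point you have reproduced the paper's proof.
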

\begin{proof}
It follows from Proposition \ref{cor:killed_green} that 
\begin{align*}
\begin{split}
      &\max_{x,x^\prime \in B_{tR}} \max_{w\in \partial B_R}\frac{g^{B_{2 R}}_N(x,w)}{g^{B_{2R}}_N(x^\prime,w)}\leq \frac{C\left(\frac{1}{(1-t)R}+\frac{\log(2/(1-t))}{h_N}  1_{R \geq 2h_N} \right)}{c\left(\frac{1}{R}+\frac{\log 2}{h_N} 1_{R \geq 2h_N} \right)}\leq C, 
\end{split}
\end{align*}
from which \eqref{eq:harnack} follows by a straightforward adaptation (to incorporate the presence of the killing measure) of the proof of \cite[Lemma A.2]{sznitman_critical_2011}.
\end{proof}

We also record the following bounds on box capacities with killing on $K$, which will be useful later.
\begin{corollary} \label{cor:killed_cap}
    For all $N\geq 1$, $r^\prime\geq r\geq 1$ such that $r^\prime\leq \Cr{C:range}N$, $x_0\in \mathbb{S}_N$ and $K\subseteq B(x_0,2r^\prime)^{\text{c}}$: 
    \begin{enumerate}[label={(\roman*)}]
        \item if $r^\prime \geq h_N$, we have
        \begin{equation}
        c \left(r\wedge \frac{h_N}{\log\big(\frac{2r^\prime}{r\vee h_N}\big)}\right)
        \leq 
        \capacity_N^K(B(x_0,r)) 
        \leq C \left(r\wedge \frac{h_N}{\log\big(\frac{2r^\prime}{r\vee h_N}\big)}\right).
        \end{equation}
        \item if $r^\prime < h_N$, we have 
        \begin{equation}
       c r
        \leq 
        \capacity_N^K(B(x_0,r)) 
        \leq C r.
        \end{equation}
    \end{enumerate}
\end{corollary}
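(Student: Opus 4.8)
The plan is to prove Corollary~\ref{cor:killed_cap} as the killed counterpart of Proposition~\ref{prop:ball_cap},(i) (i.e.~of \eqref{eq:box-cap}), following the very same two-sided scheme --- the last-exit decomposition \eqref{eq:last_exit} for the upper bound and the principle of domination for the lower bound --- but feeding in the killed Green's function estimate of Proposition~\ref{cor:killed_green} in place of Theorem~\ref{thm:green_main}. First I would reduce to $K=B(x_0,2r')^{\mathrm c}$ using monotonicity of $\capacity_N^K$ in the killing set, and observe that Proposition~\ref{cor:killed_green}, applied with its radius parameter equal to $2r'$ (legitimate since $\Cr{C:range}$ is arbitrary), is then valid for all $x_1,x_2\in B(x_0,r')\supseteq B(x_0,r)$ and gives, for such points, $g_N^K(x_1,x_2)\asymp\frac1{\|x_1-x_2\|\vee1}+\frac1{h_N}\log\bigl(\tfrac{2r'}{(|y_1-y_2|\vee h_N)\wedge r'}\bigr)$. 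Via Lemma~\ref{L:Bessel} (so that $K_0(t)\asymp\log(1/t)\vee1$ for $t\le1$) the logarithmic term here is comparable to $\frac1{h_N}K_0$ of a suitably rescaled argument, which is the clean way to see that the entire argument of \eqref{eq:box-cap} transcribes with ``$N$'' replaced throughout by a quantity of order $r'$. One then distinguishes $r'\ge h_N$ (the logarithm is genuinely felt) from $r'<h_N$: in the latter case, for $x_1,x_2\in B(x_0,r)$ one has $(|y_1-y_2|\vee h_N)\wedge r'=r'$, so the log term equals $\tfrac{\log2}{h_N}$ and is dominated by $\frac1{\|x_1-x_2\|\vee1}$ (as $\|x_1-x_2\|\le2r<2h_N$), whence $g_N^K(x_1,x_2)\asymp\frac1{\|x_1-x_2\|\vee1}$ is comparable to the $\Z^3$ Green's function on $B(x_0,r)$ and $\capacity_N^K(B(x_0,r))\asymp r$, exactly as in the $3d$ regime of Remark~\ref{R:cap}.

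For the upper bound, by the killed analogues of \eqref{eq:last_exit} and \eqref{def:capacity}, for any $x\in B(x_0,r)$ one has $1=\sum_{x'\in B(x_0,r)}g_N^K(x,x')\,e_{B(x_0,r),N}^K(x')\ge\bigl(\min_{x,x'\in B(x_0,r)}g_N^K(x,x')\bigr)\capacity_N^K(B(x_0,r))$, so it suffices to lower-bound $\min_{x,x'\in B(x_0,r)}g_N^K(x,x')$. For such $x,x'$ one has $\|x-x'\|<2r$ and $|y-y'|<2r$, hence $(|y_1-y_2|\vee h_N)\wedge r'\le2(r\vee h_N)\wedge r'$; combining this with the elementary inequality $\max\bigl(\log\tfrac{r'}{r\vee h_N},\log2\bigr)\ge\tfrac12\log\tfrac{2r'}{r\vee h_N}$ (valid since $r'\ge r\vee h_N$ when $r'\ge h_N$) yields $\min_{x,x'\in B(x_0,r)}g_N^K(x,x')\ge c\bigl(\tfrac1r+\tfrac1{h_N}\log\tfrac{2r'}{r\vee h_N}\bigr)$ when $r'\ge h_N$ and $\ge c/r$ when $r'<h_N$. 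The bound $\tfrac1{a+b}\le\tfrac1a\wedge\tfrac1b$ then gives $\capacity_N^K(B(x_0,r))\le C\bigl(r\wedge\tfrac{h_N}{\log(2r'/(r\vee h_N))}\bigr)$ and $\le Cr$ respectively.

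For the lower bound I would invoke the principle of domination exactly as in the proof of \eqref{eq:box-cap}: put $\nu=\tfrac{\mu}{|B(x_0,r)|}\mathbf 1_{B(x_0,r)}$ with $\mu=\bigl(\max_{x\in B(x_0,r)}\tfrac1{|B(x_0,r)|}\sum_{x'\in B(x_0,r)}g_N^K(x,x')\bigr)^{-1}$, so that $\capacity_N^K(B(x_0,r))\ge\mu$. To bound $\sum_{x'\in B(x_0,r)}g_N^K(x,x')$ one decomposes $B(x_0,r)$ into dyadic annuli $\{2^j<|y-y'|\le2^{j+1}\}$ in the horizontal coordinate as in \eqref{eq:capbox-pf1}--\eqref{eq:capbox-pf2}, each containing $\le C(2^j)^2(r\wedge h_N)$ points, applies the upper bound from Proposition~\ref{cor:killed_green} on each annulus, and uses Lemma~\ref{prop:bessel_approx},\eqref{eq:bessel_approx2} (now with ``$N$''$\sim r'$, the logarithms being $\asymp K_0$ as above) together with $|B(x_0,r)|\ge cr^2(r\wedge h_N)$. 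This produces $\tfrac1{|B(x_0,r)|}\sum_{x'}g_N^K(x,x')\le C\bigl(\tfrac1r+\tfrac1{h_N}(\log\tfrac{2r'}{r\vee h_N}\vee1)\bigr)$ uniformly in $x\in B(x_0,r)$, whence $\mu\ge c\bigl(r\wedge\tfrac{h_N}{\log(2r'/(r\vee h_N))}\bigr)$ when $r'\ge h_N$ and $\mu\ge cr$ when $r'<h_N$ (in which case $r<h_N$, so the $\tfrac1r$ term dominates). In assembling the bounds one uses that $\log\tfrac{2r'}{r\vee h_N}\ge\log2$ always, so that $\tfrac{h_N}{\log(2r'/(r\vee h_N))}\asymp h_N$ whenever $\log\tfrac{2r'}{r\vee h_N}=O(1)$ and the extra ``$\vee1$'' is harmless.

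I expect no single hard step: the computations are routine transcriptions of those in Section~\ref{sec:cap-slab}. The care that is needed concentrates in the boundary cases $r\asymp r'$ or $r\vee h_N$ comparable to $2r'$, where $\log\tfrac{2r'}{r\vee h_N}=O(1)$ and one must check that the $\Z^3$ contribution $\tfrac1r$ takes over consistently on both sides of the estimate --- this is exactly where the truncation $(\,\cdot\,)\wedge r'$ in the logarithm argument of Proposition~\ref{cor:killed_green} is essential, since without it $\min_{x,x'\in B(x_0,r)}g_N^K$ could not be controlled from below when $|y-y'|$ is of order $2r\gtrsim r'$. The remaining bookkeeping is matching the truncated-logarithm bounds to the claimed expressions $r\wedge\tfrac{h_N}{\log(2r'/(r\vee h_N))}$ across the regimes $r'\ge h_N$ and $r'<h_N$, and verifying that Lemma~\ref{prop:bessel_approx},\eqref{eq:bessel_approx2} indeed supplies the dyadic-sum estimate after the $K_0\!\leftrightarrow\!\log$ comparison.
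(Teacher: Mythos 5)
Your overall strategy---transcribe the proof of \eqref{eq:box-cap} verbatim, substituting the killed estimates of Proposition~\ref{cor:killed_green} (applied at radius $2r'$) for those of Theorem~\ref{thm:green_main}---is exactly the paper's proof, which consists of precisely that one sentence; your last-exit/domination scheme with dyadic annuli and the $K_0\leftrightarrow\log$ comparison is the intended argument, and your case analysis around $r'\asymp r\vee h_N$ is where the real (routine) work sits.

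One step of your write-up does not work as stated: the opening reduction to $K=B(x_0,2r')^{\text{c}}$ ``by monotonicity''. If $K\subseteq K'$ then $H_{K'}\le H_K$, hence $g_N^K\ge g_N^{K'}$ and $\capacity_N^K\le\capacity_N^{K'}$; so the \emph{upper} bound for the extremal case $K'=B(x_0,2r')^{\text{c}}$ does transfer to all smaller $K$, but the \emph{lower} bound points the wrong way. Your domination argument needs an upper bound on $g_N^K$, and Proposition~\ref{cor:killed_green} only supplies one when $K$ \emph{contains} the complement of a ball of radius $\asymp r'$, not when it is contained in it. In fact the lower bound of the corollary, read literally for all $K\subseteq B(x_0,2r')^{\text{c}}$, fails for $K=\emptyset$: take $r=r'=h_N=1$, so the claimed bound is a positive constant, whereas $\capacity_N(\{x_0\})=1/g_N(0)\asymp 1/\log N\to 0$ by \eqref{eq:variance-GFF}. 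This is an imprecision of the statement itself rather than of your computation---every application in Sections~\ref{eq:sec:kill} and~\ref{sec:percolation} takes $K$ equal to (or containing) the complement of a ball of radius comparable to $r'$ around a point at distance $O(r')$ from $x_0$, for which your argument goes through verbatim---but you should either restrict the lower bound to such $K$, or state explicitly that only the upper bound follows from the extremal case by monotonicity while the lower bound is proved directly for $K\supseteq B(x_0,Cr')^{\text{c}}$.
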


Corollary \ref{cor:killed_cap} is a direct consequence of Proposition \ref{cor:killed_green} by running through the proof of \eqref{eq:box-cap} with the estimates in Proposition \ref{cor:killed_green}.

\begin{proof}[Proof of Proposition~\ref{cor:killed_green}]
We start by doing a projection of $(X_t)_{t\geq0}$ as in \eqref{eq:projection}; some extra care is needed here owing to how the killing set $K$ is projected. Let 
\begin{equation} \label{eq:def_bproj}
    B^{\text{proj}}= B^{\text{proj}}(x_0,r)=\left\{x=(y,z)\in \Z^3: |y-y_0|^2+\big(d_{\Z}(\hat{z}_0,z)\text{ mod }\tfrac{h_N}{2}\big)^2 <r^2\right\}
    \text{ and }
    K^{\text{proj}}=(B^{\text{proj}})^{ c }.
\end{equation}
    With hopefully obvious notation, we proceed with the same decomposition as in \eqref{eq:projection} and write
    \begin{equation}\label{eq:decomp-kill}
        g_N^K(x_1,x_2) = g_N^{K,3}(x_1,x_2)+g_N^{K,2}(x_1,x_2),
    \end{equation}
    where (cf. below~\eqref{eq:projection})
    \begin{align}
    &g_N^{K,3}(x_1,x_2) = \frac{1}{\lambda_{x_1}} \int_0^\infty P_{(y_1,\hat{z}_1)} \big((Y_t,Z_t)=(y_2,\hat{z}_2),t<H_{K^{\text{proj}}}\big) e^{-\frac{t}{N^2}} \,dt, \text{ and}\\
    &  \label{eq:def_g2K}
    g_N^{K,2}(x_1,x_2) = \frac{1}{\lambda_{x_1}} \int_0^\infty \sum_{k\in \mathbb{Z} \setminus\{0\}} P_{(y_1,\hat{z}_1)} \big((Y_t,Z_t)=(y_2,\hat{z}_2+  kh_N),t<H_{K^{\text{proj}}}\big) e^{-\frac{t}{N^2}}  
    \,dt.
    \end{align}
  
        We start with the ``topologically trivial" part $g_N^{K,3}$ and claim that
    \begin{equation} \label{eq:killed_g3}
        \frac{c}{\|x_1-x_2\|\vee 1} \leq g_N^{K,3}(x_1,x_2) \leq \frac{C}{\|x_1-x_2\|\vee 1} .    
    \end{equation}
    The upper bound of \eqref{eq:killed_g3} follows from the upper bound in \eqref{eq:green_3d_bound} since $P \big((Y_t,Z_t)=(y,\hat{z}),t<H_{K^\text{proj}}\big)\leq P \big((Y_t,Z_t)=(y,\hat{z})\big)$ and hence $g_N^{K,3}(x_1,x_2)\leq g_N^{3}(x_1,x_2)$. For the lower bound, note that since $\{x\in \Z^3:|x-x_0|<r\}\subset B^{\text{proj}}$,
    by the bound \cite[Theorem 5.26]{barlow_random_2017} with $\alpha=3,\beta=2$ on the killed heat kernel, and using that $e^{-t/N^2}\geq e^{-(r/N)^2} \geq c$ for all $t \leq r^2$, we obtain that when $\|x_1-x_2\|\geq 1$,
    \begin{align*}
    \begin{split}
        g_N^{K,3}(x_1,x_2) 
        &\geq c \int_{\|x_1-x_2\|}^{r^2} t^{-3/2}e^{-\frac{3\|x_1-x_2\|^2}{2t}} \,dt   
        \geq \frac{c}{\|x_1-x_2\|} \int_{\frac{3\|x_1-x_2\|^2}{2r^2}}^{\frac{3\|x_1-x_2\|}{2}} s^{-1/2}e^{-s} \,ds  \geq \frac{c'}{\|x_1-x_2\|}.
    \end{split}
    \end{align*}
    When $\|x_1-x_2\|=0$, a trivial lower bound is obtained by integrating over $t\in[1,2]$ in the first line of the display above. Overall, this establishes \eqref{eq:killed_g3}.

    We now move onto upper bounding the term $g_N^{K,2}(x_1,x_2)$. For $a,b\in[0,\infty]$, let 
    \begin{equation}\label{eq:killed_integtal}
        I(a,b)= \frac{1}{\lambda_{x_1}}\int_a^b\sum_{k\in \mathbb{Z} \setminus\{0\}} P_{(y_1,\hat{z}_1)} \big((Y_t,Z_t)=(y_2,\hat{z}_2+  kh_N),t<H_{K^\text{proj}}\big) e^{-\frac{t}{N^2}}  
    \,dt.
    \end{equation}
    In view of \eqref{eq:def_bproj}, it is easy to see that $B^{\text{proj}}(x_0,r)\subset B^2(y_0,r)\times \Z $. Hence letting $K^2=B^2(y_0,r)^{\text{c}} \subset \Z^2$ (see around \eqref{eq:def_box} for notation), one has using \eqref{eq:X-decomp} that 
    \begin{equation} \label{eq:smaller_killing}
        I(a,b)\leq \frac{1}{\lambda_{x_1}}\int_a^b
        P^2_{y_1}(Y_t=y_2,t<H_{K^2})
        \sum_{k\in \mathbb{Z} \setminus\{0\}}  \Pz{\hat{z}_1}(Z_t=\hat{z}_2+  kh_N)  
    \,dt.
    \end{equation}
    Now note that by applying the Markov property at time $t/2$, we have that for all $t\geq r^2$,
    \begin{align} \label{eq:killed_hk_large_t}
    \begin{split}
        \Pzd{y_1} (Y_t=y_2,t<H_{K^2})
        &=\sum_{y\in B^2(y_0,r)}
        \Pzd{y_1} (Y_{t/2}=y,t/2<H_{K^2})
        \Pzd{y} (Y_{t/2}=y_2,t/2<H_{K^2}) \\
        & \leq \sup_{y\in B^2(y_0,r)}\Pzd{y} (Y_{t/2}=y_2) \Pzd{y_1} (t/2<H_{K^2})
        \leq \frac{C}{r^2}\exp\left\{-\frac{ct}{r^2}\right\},
    \end{split}
    \end{align}
    where the last inequality follows from bounding the first probability by \cite[(5.18)]{barlow_random_2017} and the fact that $t\geq r^2$ and bounding the second probability by \cite[(2.50)]{lawler_random_2010}. Now by applying, in this order, \eqref{eq:smaller_killing}, the same argument as for \eqref{eq:prop_lclt_approx_z} (with $M=r^2,d=2$; the presence of the additional killing for $Y_t$ is  inconsequential), and the upper bound in \eqref{eq:approx_sum_gaussian}, which is in fact true for all $t>0$, one obtains that
    \begin{align} \label{eq:killed_int_large_t}
    \begin{split}
        I(r^2,\infty)
        \leq&\frac{1}{\lambda_{x_1}}\int_{r^2}^\infty \Pzd{y_1} (Y_t=y_2,t<H_{K^2}) 
        \sum_{k\in \mathbb{Z} \setminus\{0\}}  \Pz{\hat{z}_1}(Z_t=\hat{z}_2+  kh_N)\,dt\\
        \leq 
        &\frac{1}{\lambda_{x_1}}\frac{1}{h_N}\int_{r^2}^\infty \Pzd{y_1} (Y_t=y_2,t<H_{K^2}) 
         \left[\sum_{k\in \mathbb{Z} \setminus \{0\}}  \sqrt{\frac{3 h_N^2}{2\pi t}} e^{ - \frac{3|\hat{z}_2-\hat{z}_1+ kh_N|^2}{2t}}\right] \,dt + \frac{C}{h_N^2 r}\\
         \leq &\frac{1}{\lambda_{x_1}}\frac{1}{h_N}
         \int_{r^2}^\infty  \Pzd{y_1} (Y_t=y_2,t<H_{K^2})  \,dt + \frac{C}{h_N^2 r} \leq \frac{C}{h_N},
    \end{split}
    \end{align}
    where the last inequality follows from \eqref{eq:killed_hk_large_t}.

    If $r \geq 2h_N$, by applying \eqref{eq:prop_lclt_main_hbig} with $d=2,r=1,M=h_N^2$ and \eqref{eq:approx_sum_gaussian}, one finds that
    \begin{align} \label{eq:killed_int_small_t}
    \begin{split}
        I(h_N^2,r^2)\leq&C \int_{h_N^2}^{r^2} \Pzd{y_1} (Y_t=y_2) 
        \sum_{k\in \mathbb{Z} \setminus\{0\}}  \Pz{\hat{z}_1}(Z_t=\hat{z}_2+  kh_N)\,dt\\
        \leq &
        \frac{C}{h_N}\int_{h_N^2}^{r^2}\frac{1}{t}\exp\left\{-3\frac{|y_1-y_2|^2}{2t}\right\}\,dt + \frac{C^\prime}{h_N^3} \\
        \leq &\frac{C}{h_N} 
        \int_{\frac{3|y_1-y_2|^2}{2r^2}}^{\frac{3|y_1-y_2|^2}{2h_N^2}} \frac{e^{-u}}{u}\,du +\frac{C^\prime}{h_N^3} 
        \leq 
        \begin{cases*}
        \frac{C}{h_N} \log\big(\frac{r}{|y_1-y_2|}\big), & if $|y_1-y_2|\geq h_N$ \\
        \frac{C}{h_N} \log\big(\frac{r}{h_N}\big) , &  if $|y_1-y_2|<h_N$  
        \end{cases*},
    \end{split}
    \end{align}
where for the last inequality, we used \eqref{eq:exp_inequality} when $|y_1-y_2|\geq h_N$ and used $e^{-u}<1$ when $|y_1-y_2|< h_N$.
    Additionally, without any condition on $r$ and $h_N$, it's an easy consequence of \eqref{eq:en_prop} with $M=1$ that 
    \begin{equation}\label{eq:killed_int_very_small_t}
       I(0,h_N^2)\leq \frac{C}{h_N\vee |y_1-y_2|} { +\frac{C^\prime}{h_N^2\sqrt{h_N\vee |y_1-y_2|}}}, 
    \end{equation}
    hence combining this with \eqref{eq:killed_int_large_t} and \eqref{eq:killed_int_small_t}, we have that 
    \begin{equation} \label{eq:killed_g2_ub}
        g_N^{K,2}(x_1,x_2) \leq 
        \begin{cases*}
        \frac{C}{h_N}\log\big(\frac{r}{|y_1-y_2|\vee h_N}\big), &  if $r\geq2h_N$ \\
        \frac{C}{h_N} , & if $r< 2h_N$  
        \end{cases*},
    \end{equation}
    which, together with the upper bound on $g_N^{K,3}$ from \eqref{eq:killed_g3} and \eqref{eq:decomp-kill}, concludes the proof of the upper bound in \eqref{eq:killed_green_smallh}. 

    For the lower bound of $g_N^{K,2}$, when $r< \lambda h_N$ for some $\lambda \geq 2$ to be fixed momentarily, we take the trivial bound $g_N^{K,2}(x_1,x_2)\geq 0$ and the lower bound in \eqref{eq:killed_green_smallh} follows via \eqref{eq:killed_g3} and \eqref{eq:decomp-kill}. In the case $r\geq \lambda h_N (\geq 2 h_N)$, we have in particular that $B^2(y_0,\frac{3}{4}r)\times \Z 
    \subset B^{\text{proj}}(x_0,r)$. Abbreviating $K^2= \Z^2 \setminus B^2(y_0,\frac{3}{4}r)$ and $M_{\lambda}= \tfrac\lambda{10} (|y_1-y_2|\vee h_N)^2$, it follows that 
    \begin{align*}
    g_N^{K,2}(x_1,x_2) 
        \geq 
    \frac{1}{\lambda_{x_1}}\int_{M_{\lambda}}^{(\frac{3}{4}r)^2}\Pzd{y_1} (Y_t=y_2,t<H_{K^2}) e^{-\frac{t}{N^2}}
        \sum_{k\in \mathbb{Z} \setminus\{0\}}  \Pz{\hat{z}_1}(Z_t=\hat{z}_2+  kh_N) \,dt.
    \end{align*}
    In addition, by means of a similar adaptation of \eqref{eq:prop_lclt_approx_z} with $d=2$ as in \eqref{eq:killed_int_large_t} above, it follows from the previous display and \eqref{eq:approx_sum_gaussian} that
    \begin{align*}
    \begin{split}
        g_N^{K,2}(x_1,x_2) 
        &\geq 
    \frac{1}{\lambda_{x_1}}\frac{1}{h_N}\int_{ M_{\lambda}}^{(\frac{3}{4}r)^2}\Pzd{y_1} (Y_t=y_2,t<H_{K^2}) e^{-\frac{t}{N^2}}
    \left[\sum_{k\in \mathbb{Z} \setminus \{0\}} h_N (\frac{3}{2\pi t})^{1/2} e^{ - \frac{3|\hat{z}+ kh_N|^2}{2t}}\right]\,dt -  \frac{C}{h_N^3 \sqrt{\lambda}} \\
    &\geq 
    \frac{c}{h_N} \left(\int_{M_{\lambda}}^{(\frac{3}{4}r)^2}\Pzd{y_1} (Y_t=y_2,t<H_{K^2}) e^{-\frac{t}{N^2}}\,dt - \frac{C}{\sqrt{\lambda}}  \right).
    \end{split}
    \end{align*}
    To conclude, we apply \cite[Proposition 5.26]{barlow_random_2017} with $\alpha=\beta=2$ to the probability involving $Y_t$ in the last display, which yields, for all $\lambda \geq 2$,
    \begin{multline*}
        \int_{M_{\lambda}}^{(\frac{3}{4}r)^2} \frac{1}{t} 
        e^{-C \frac{|y_1-y_2|^2}{t}}  \,dt     
        =
       \int_{\frac{C|y_1-y_2|^2}{(3r/4)^2}}^{\frac{10 C |y_1-y_2|^2}{\lambda(|y_1-y_2|\vee h_N)^2}} \frac{e^{-u}}{u}\,du \\
        \geq {e^{-\frac{C'|y_1-y_2|^2}{(|y_1-y_2|\vee h_N)^2}}}\log\left(\frac{10(3r/4)^2}{ \lambda (|y_1-y_2|\vee h_N)^2}\right)
        \geq c \log\left(\frac{3 r}{|y_1-y_2|\vee h_N}\right),
    \end{multline*}
    where we used $e^{-\frac{C|y_1-y_2|^2}{(|y_1-y_2|\vee h_N)^2}}\geq e^{-C}\geq c$ and $r^2/\lambda \geq r$ (since $r \geq \lambda h_N \geq \lambda$) in the last step. Since the last bound is $\geq c$ uniformly in $\lambda$, returning to the previous display and choosing $\lambda \geq 20 \Cr{C:gauss-approx}^2$ large enough, we deduce that $g_N^{K,2}(x_1,x_2) 
        \geq c \log(\tfrac{ r}{|y_1-y_2|\vee h_N})$ for $r \geq \lambda h_N$, thus completing the proof.
\end{proof}

Using the killed estimates from Proposition~\ref{cor:killed_green}, we now prove lower deviations on the (killed) capacity of the range of the walk. Below for $T \geq 0$ (possibly random) we abbreviate $X_{[0,T]}= \{ x \in \mathbb{S}_N: x = X_s \text{ for some } 0 \leq s \leq T\}$ the range of $X$ until time $T$. The following result asserts that when $T$ is the exit time of $ B(x,R)$ for some $R \geq 1$ and  $x\in \mathbb{S}_N$, the capacity of $X_{[0,T]}$ under $P_x$ is proportional to the capacity of $B(x,R)$ with high probability; see e.g.~\cite{MR2819660, drewitz_geometry_2023, drewitz_critical_2023, prevost_first_2024} for results of this flavour in transient setups with polynomial decay of the Green's function.

\begin{proposition} \label{lem:cap_walk_space_2}
    There exists $\Cl[c]{rw_cap}>0$ such that for all $x\in \mathbb{S}_N$, $R\geq 1$ such that $R\leq \Cr{C:range}N$, $2\leq s\leq \frac{R}{2}$ and $K \in \{ B(x,2R)^{\text{c}}, \emptyset \}$, we have, with $T=H_{B(x,R)^{\text{c}}}$,
    \begin{equation}\label{eq:rw_cap_with_killing}
        P_x\left( \capacity_{N}^{K}\big(X_{[0,T]}\big)
        \leq  \frac{\Cr{rw_cap}}{s}\capacity_N^K\big(B(x,R)\big) \right)
        \leq Ce^{-cs}.
    \end{equation}
\end{proposition}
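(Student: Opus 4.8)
The plan is to first prove a constant-probability version of \eqref{eq:rw_cap_with_killing} valid at all scales, and then to bootstrap it to the stated exponential tail by a nested-scales argument. Concretely, the base case asserts the existence of $c_0\in(0,1)$ (depending only on $\Cr{C:range}$) such that, for the fixed $K$ of the statement, all $z\in B(x,R)$ and all $1\le\rho\le R/4$, and with $T_\rho=H_{B(z,\rho)^{\mathrm c}}$,
\begin{equation}\label{eq:base-case}
  P_z\big(\capacity_N^{K}(X_{[0,T_\rho]})\ge c_0\,\capacity_N^{K}(B(z,\rho))\big)\ge c_0.
\end{equation}

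To establish \eqref{eq:base-case} I would use the occupation-time measure of $X$ on $[0,T_\rho]$, namely $L(y)=\int_0^{T_\rho}\1_{\{X_t=y\}}\,dt$, which has total mass $L(\mathbb{S}_N)=T_\rho$ and support $X_{[0,T_\rho]}$. The probability measure $\mu=T_\rho^{-1}\sum_y\lambda_y L(y)\delta_y$ is admissible in the $g_N^{K}$-version of the variational formula \eqref{eq:cap_var}, which gives $\capacity_N^{K}(X_{[0,T_\rho]})\ge T_\rho^{2}\big/\int_0^{T_\rho}\!\!\int_0^{T_\rho}g_N^{K}(X_s,X_t)\,ds\,dt$. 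The numerator is handled by the elementary facts $E_z[T_\rho]\asymp\rho^{2}$ (uniformly in $N$, using $\rho\le\Cr{C:range}N$) and $P_z(T_\rho\le\epsilon\rho^{2})\le C\sqrt{\epsilon}$, which follow from standard exit-time estimates for the walk on $\mathbb{S}_N$ (or from Corollary~\ref{C:harnack}). For the denominator, a first-visit decomposition and the Markov property yield
\begin{equation}\label{eq:selfmean}
  E_z\Big[\int_0^{T_\rho}\!\!\int_0^{T_\rho}g_N^{K}(X_s,X_t)\,ds\,dt\Big]=2\sum_{y_1}\lambda_{y_1}g_N^{B(z,\rho)^{\mathrm c}}(z,y_1)\sum_{y_2}\lambda_{y_2}\,g_N^{K}(y_1,y_2)\,g_N^{B(z,\rho)^{\mathrm c}}(y_1,y_2),
\end{equation}
and the crux is to show that the inner sum is bounded, uniformly over $y_1\in B(z,\rho)$, by $C\rho^{2}/\capacity_N^{K}(B(z,\rho))$; since $\sum_{y_1}\lambda_{y_1}g_N^{B(z,\rho)^{\mathrm c}}(z,y_1)=E_z[T_\rho]\asymp\rho^2$, it follows that \eqref{eq:selfmean} is $\le C\rho^{4}/\capacity_N^{K}(B(z,\rho))$, and Markov's inequality combined with the control on $T_\rho$ then gives \eqref{eq:base-case}. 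To bound the inner sum one uses the monotonicity $g_N^{B(z,\rho)^{\mathrm c}}\le g_N^{K}$ (trivial for $K=\emptyset$, and valid for $K=B(x,2R)^{\mathrm c}$ since $B(z,\rho)\subset B(x,2R)$), the comparison $g_N^{B(z,\rho)^{\mathrm c}}(y_1,\cdot)\le g_N^{B(y_1,5\rho)^{\mathrm c}}(y_1,\cdot)$, and then Proposition~\ref{cor:killed_green} (resp.~Theorem~\ref{thm:green_main},(i) when $g_N^{K}=g_N$) to control all the Green's functions involved; summing over dyadic shells $\{\|y_1-y_2\|\asymp 2^{j}\}$, $2^{j}\le 2\rho$, and distinguishing the cases $\rho\lessgtr h_N$ (and, when $K=\emptyset$, also $\rho$ versus $N$), the sum reduces to elementary geometric series whose value one matches against $\capacity_N^{K}(B(z,\rho))$ as furnished by Proposition~\ref{prop:ball_cap},(i) (if $K=\emptyset$) or Corollary~\ref{cor:killed_cap} with $r=\rho$ and $r'\asymp R$ (if $K=B(x,2R)^{\mathrm c}$); in each regime the product of the inner-sum bound and $\capacity_N^{K}(B(z,\rho))$ comes out $O(\rho^{2})$, as needed.

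For the bootstrap, fix $s\in[2,R/2]$; the regime of bounded $R$ is immediate because $X_{[0,T]}$ reaches $\partial B(x,R)$ and hence contains a connected path of length $\ge R$, whose capacity is $\ge c\,\capacity_N^{K}(\ell_R)$ and thus comparable to $\capacity_N^{K}(B(x,R))$ by Propositions~\ref{prop:line_cap}--\ref{prop:ball_cap}. Otherwise set $m=\lceil s/4\rceil$, $\rho=\lceil R/(4m)\rceil\le R/4$, and define $\sigma_0=0$, $\sigma_{j+1}=\inf\{t>\sigma_j:\|X_t-X_{\sigma_j}\|>\rho\}$. Since each increment displaces $X$ by at most $\rho+1$, one has $\|X_{\sigma_m}-x\|<R$, so $\sigma_m<T$ unless $X$ is killed before $\sigma_m$, an event of probability $\le Ce^{-cs}$ by a standard estimate (the relevant time scale being $\lesssim R^{2}\le\Cr{C:range}^{2}N^{2}$). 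On the complement, $X_{[\sigma_j,\sigma_{j+1}]}\subset X_{[0,T]}$ for all $j<m$, so $\capacity_N^{K}(X_{[0,T]})\ge\max_{0\le j<m}\capacity_N^{K}(X_{[\sigma_j,\sigma_{j+1}]})$ by monotonicity of the capacity. The strong Markov property at $\sigma_j$ together with the base case \eqref{eq:base-case} (applicable since $X_{\sigma_j}\in B(x,R)$ and $\rho\le R/4$) gives $P(\capacity_N^{K}(X_{[\sigma_j,\sigma_{j+1}]})\ge c_0\capacity_N^{K}(B(X_{\sigma_j},\rho))\mid\mathcal F_{\sigma_j})\ge c_0$; and the explicit capacity formulas (Proposition~\ref{prop:ball_cap},(i) and Corollary~\ref{cor:killed_cap}, both uniform in the centre), together with $\rho\ge R/(4m)$ and $\log(4m)\le 4m$, yield $\capacity_N^{K}(B(z,\rho))\ge c\,m^{-1}\capacity_N^{K}(B(x,R))\ge c'\,s^{-1}\capacity_N^{K}(B(x,R))$ for every $z\in B(x,R)$. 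Hence, by the Markov property, the event in the last display fails for all $j<m$ with probability at most $(1-c_0)^{m}\le e^{-c_0 s/4}$, and on its complement (intersected with $\{\sigma_m<T\}$) one has $\capacity_N^{K}(X_{[0,T]})\ge c_0c'\,s^{-1}\capacity_N^{K}(B(x,R))$. Choosing $\Cr{rw_cap}=c_0c'$ and adding the two error probabilities yields \eqref{eq:rw_cap_with_killing}.

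The main obstacle is the uniform bound on the inner sum in \eqref{eq:selfmean}: the dominant contribution switches between the ``two-dimensional'' and ``three-dimensional'' parts of $g_N^{K}$ according to how $\rho$ compares with $h_N$ (and, when $K=\emptyset$, with $N$), and making this precise enough to match the corresponding ball-capacity estimate in every regime is where essentially all the input from Sections~\ref{sec:green-slab}--\ref{sec:cap-slab} and from Proposition~\ref{cor:killed_green} and Corollary~\ref{cor:killed_cap} enters. A secondary point requiring care is the behaviour of the $N^{-2}$-killing relative to the stopping times $\sigma_j$ in the bootstrap.
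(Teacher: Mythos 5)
Your strategy is sound and structurally parallel to the paper's proof -- a constant-probability building block combined with roughly $s$ applications of the Markov property -- but the execution is genuinely different. The paper iterates over \emph{time}: its building block \eqref{eq:rw_cap_main} states that over a single time window of length $R^2/s^2$ the range of the discrete skeleton already achieves capacity $\frac{c}{s}\capacity_N^K(B(x,R))$ with probability at least $\tfrac12$, proved by a first-moment (Markov inequality) bound on the energy of the empirical measure of the skeleton using the killed return-probability estimate \eqref{eq:killed_hk_int}; the exponential tail then comes from chaining $s$ consecutive windows of length $R^2/s^2$ and a displacement estimate. You instead iterate over \emph{space}: your building block is that the walk run until exiting a ball of radius $\rho\approx R/s$ captures a constant fraction of $\capacity_N^K(B(z,\rho))$ with constant probability, proved by a Markov bound on the occupation-measure energy together with an exit-time lower bound, after which you chain $\approx s$ successive displacements and convert $\capacity_N^K(B(z,\rho))$ into $\frac{c}{s}\capacity_N^K(B(x,R))$ via Proposition~\ref{prop:ball_cap} and Corollary~\ref{cor:killed_cap}. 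The two energy computations are essentially time-integrated versions of one another, and your reduction to the bound $\sum_{y_2}\lambda_{y_2}g_N^K(y_1,y_2)\,g_N^{B(z,\rho)^{\text{c}}}(y_1,y_2)\le C\rho^2/\capacity_N^K(B(z,\rho))$ is correct and does check out against \eqref{eq:box-cap} and Corollary~\ref{cor:killed_cap} in each regime; this is exactly the point where the paper instead sums return probabilities via \eqref{eq:killed_hk_int}. Your route yields a reusable scale-by-scale statement at the cost of an extra regime-by-regime capacity comparison; the paper's route folds the $1/s$ factor into a single moment computation and never introduces the intermediate scale $\rho$.

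One step is mis-justified: the claim that $X$ is killed before $\sigma_m$ with probability $\le Ce^{-cs}$. The killing acts at rate $N^{-2}$ and $\sigma_m\lesssim R^2\le \Cr{C:range}^2N^2$, so the naive killing probability is of order $R^2/N^2$, which is $O(1)$ rather than exponentially small in $s$; the correct accounting is that early killing merely aborts the remaining trials, so one must either show that the range accumulated before the killing time already carries the required capacity, or accept an additive error of order $P(\tau\le\sigma_1)\lesssim R^2/(s^2N^2)$. The paper's own final step has the same latent issue (its citation of the displacement bound controls spatial exit of the skeleton but not the jump to the cemetery state), and the proposition is only ever invoked with $s$ of constant order, where this is harmless; still, as written your justification of that step does not hold uniformly up to $s=R/2$.
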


Towards the proof of Proposition~\ref{lem:cap_walk_space_2}, we first isolate the following estimate.

\begin{lemma}
    For all $N\geq 1,r\geq 2$ such that $r\leq \Cr{C:range}N$, the following holds. For all $x_0\in \mathbb{S}_N$, $K=B(x_0,r)^{\text{c}}$,  $x \in B(x_0,r/2)$ and $s\in[0,r/2)$, we have
    \begin{equation} \label{eq:killed_hk_int}
        \int_{s^2}^\infty P_{x}\left(X_t=x, t < H_K\right)\,dt\leq 
        \frac{C}{s\vee 1}+
        \frac{C}{h_N}\log\left(\frac{r}{(s\vee h_N)\wedge \frac{r}{2}}\right).
    \end{equation}
\end{lemma}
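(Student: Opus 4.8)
The plan is to bound the integral $\int_{s^2}^\infty P_x(X_t = x, t< H_K)\,dt$ by passing to the projection decomposition as in the proof of Proposition~\ref{cor:killed_green}, namely $P_x(X_t=x,t<H_{K}) = P_{(y,\hat z)}\big((Y_t,Z_t)=(y,\hat z), t< H_{K^{\text{proj}}}\big)e^{-t/N^2}$ upon writing $x=(y,z)$, and then splitting the killed heat kernel into a ``topologically trivial'' piece (the $k=0$ term) and the wrapping piece (the $k\neq0$ sum), exactly as in \eqref{eq:decomp-kill}. For the trivial piece, which is at most $P^2_{y}(Y_t=y,t<H_{K^2})\,P^1_{\hat z}(Z_t=\hat z)$ with $K^2 = B^2(y_0,r)^{\text c}$ (using $B^{\text{proj}}\subset B^2(y_0,r)\times\Z$ as in \eqref{eq:smaller_killing}), I would use the on-diagonal bound $P^1_{\hat z}(Z_t=\hat z)\le Ct^{-1/2}$ together with $\int_{s^2}^\infty P^2_y(Y_t=y,t<H_{K^2})\cdot t^{-1/2}\,dt$, which by the same manipulation as in \eqref{eq:killed_hk_large_t}--\eqref{eq:killed_int_large_t} (Markov at $t/2$, heat-kernel bound \cite[(5.18)]{barlow_random_2017} for the two-dimensional walk, and the exit-time bound \cite[(2.50)]{lawler_random_2010} giving $P^2_y(t/2<H_{K^2})\le C\exp\{-ct/r^2\}$ for $t\ge r^2$, plus the crude bound $P^2_y(Y_t=y)\le C/(t\vee1)$ for $s^2\le t\le r^2$) yields a contribution of order $\frac1{s\vee1} + \frac1r\le \frac{C}{s\vee1}$.

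For the wrapping piece $\sum_{k\neq0}$, I would again bound the $Y$-marginal (with killing on $K^2$) by the unkilled one where convenient and by the exponential decay past $t=r^2$, and treat the $Z$-sum via \eqref{eq:approx_sum_gaussian}: for $t\ge Mh_N^2$ the sum $\sum_{k\neq0}h_N(3/2\pi t)^{1/2}e^{-3(\hat z + kh_N)^2/2t}$ is at most $1$, so (after the Gaussian-approximation step, as in the passage to \eqref{eq:killed_int_large_t}, with error $O(1/(h_N^2 r))$) the $k\neq0$ contribution for $t\ge r^2$ is $\le \frac{C}{h_N}\int_{r^2}^\infty P^2_y(Y_t=y,t<H_{K^2})\,dt\le \frac{C}{h_N}$ by the exit-time decay, while for $h_N^2\vee s^2\le t\le r^2$ I would, exactly as in \eqref{eq:killed_int_small_t}, use $P^2_y(Y_t=y)\le C/t$ and the bound $\sum_{k\neq0}P^1_{\hat z}(Z_t=\hat z + kh_N)\le \frac{C}{h_N}$ to get $\frac{C}{h_N}\int_{s^2\vee h_N^2}^{r^2}\frac{dt}{t} = \frac{C}{h_N}\log\big(\frac{r^2}{s^2\vee h_N^2}\big)\asymp \frac{C}{h_N}\log\big(\frac{r}{s\vee h_N}\big)$; finally the short-time range $0\vee s^2\le t\le h_N^2$ (only relevant when $s<h_N$) is handled by \eqref{eq:en_prop} with $M=1$, contributing $\le \frac{C}{h_N\vee1}\le \frac{C}{h_N}\log\big(\frac{r}{(s\vee h_N)\wedge\frac r2}\big)$ since $r\ge 2h_N$ forces the logarithm to be $\ge c$. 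When $r<2h_N$ the entire $k\neq0$ contribution is $O(1/h_N)$, and one checks $(s\vee h_N)\wedge\frac r2 \le \frac r2$ makes the stated logarithm nonnegative but possibly small, in which case one simply uses that the left side is $\le C/h_N$ and $\log(r/((s\vee h_N)\wedge \frac r2))$; here I'd note the bound \eqref{eq:killed_hk_int} is only claimed up to a constant and the $\frac{C}{s\vee1}$ term already dominates whenever the logarithm is bounded.

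Assembling the three (or two) ranges and combining with the trivial piece gives $\int_{s^2}^\infty P_x(X_t=x,t<H_K)\,dt \le \frac{C}{s\vee1} + \frac{C}{h_N}\log\big(\frac{r}{(s\vee h_N)\wedge \frac r2}\big)$, as claimed, and the case $K=\emptyset$ (i.e.\ ``$r=\infty$'', relevant in the application inside Proposition~\ref{lem:cap_walk_space_2}) follows by the same computation with $K^{\text{proj}}=\emptyset$, the logarithm then being replaced by the larger quantity coming from $K_0$-asymptotics, which is already subsumed in the estimates of Theorem~\ref{thm:green_main}.

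The main obstacle I anticipate is bookkeeping the Gaussian-approximation errors uniformly in the various regimes of $s$ relative to $h_N$ and $r$: one must be careful that the error terms of size $O(1/(h_N^2 r))$ and $O(1/h_N^3)$ arising from replacing the theta-like sum $\sum_{k}P^1_{\hat z}(Z_t = \hat z + kh_N)$ by its Gaussian proxy (via the local CLT estimates of Appendix~\ref{A:HK}, as used around \eqref{eq:killed_int_large_t} and \eqref{eq:killed_int_small_t}) are genuinely absorbed into $\frac{C}{s\vee1}$ or $\frac{C}{h_N}$ and do not require an extra hypothesis; this is why the integral starts at $s^2$ rather than $0$, ensuring we are always in the range $t\gtrsim 1$ where the local CLT corrections are summable. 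The transition point $t\asymp h_N^2$ (below which the $Z$-walk does not ``feel'' the torus and the $k\neq 0$ terms are exponentially small, handled via \eqref{eq:en_prop}) versus $t\asymp s^2$ versus $t\asymp r^2$ must be ordered correctly, but since $s\le r/2$ this is just the elementary case analysis $s\lessgtr h_N$ and $h_N\lessgtr r$ already present in the proof of Proposition~\ref{cor:killed_green}.
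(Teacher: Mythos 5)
Your proposal is correct and follows essentially the same route as the paper: the same projection/decomposition into the $k=0$ term (handled by the on-diagonal heat-kernel decay $t^{-3/2}$, giving $C/(s\vee 1)$) and the $k\neq 0$ wrapping sum, which is then split over the ranges $t\le h_N^2$, $h_N^2\vee s^2\le t\le r^2$ and $t\ge r^2$ using exactly the estimates \eqref{eq:en_prop}, \eqref{eq:killed_int_small_t} and \eqref{eq:killed_int_large_t}, with the same case distinction $r\lessgtr 2h_N$. The only cosmetic difference is that you bound the $k=0$ piece via the product of the killed two-dimensional kernel and the one-dimensional kernel rather than citing the three-dimensional on-diagonal bound directly; this changes nothing.
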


\begin{proof}
    Note if $s=0$, $\int_{s^2}^\infty P_{x}\left(X_t=x, t < H_K\right)\,dt=\lambda_xg^K_N(x,x)$ hence \eqref{eq:killed_hk_int} is a directly consequence of Proposition \ref{cor:killed_green}.
    For $s>0$, let $K^{\text{proj}}=B^{\text{proj}}(x_0,r)^{\text{c}}$ as in \eqref{eq:def_bproj} and write
    \begin{align}\label{eq:killed_hk_decomp}
    \begin{split}
    P_{x}\left(X_t=x,  t < H_K\right)= \sum_{k \in \Z}P_{(y,z)}&\left((Y_t,Z_t)=(y,\hat z +kh_N),t<H_{K^{\text{proj}}}\right)e^{-t/N^2}.
    \end{split}  
    \end{align}
    It is clear that by \cite[(5.18)]{barlow_random_2017}, integrating over the term $k=0$ in \eqref{eq:killed_hk_decomp} contributes to the $Cs^{-1}$ in \eqref{eq:killed_hk_int}. The integral of the sum over $k \neq 0$ in \eqref{eq:killed_hk_decomp} can simply be bounded by $g^{K,2}_N(x,x)$ (cf.~\eqref{eq:def_g2K}) when $r<2h_N$ and the result in this case follows from \eqref{eq:killed_g2_ub}.
    When $r\geq 2h_N$, recall the definition of $I(a,b)$ from \eqref{eq:killed_integtal}. By picking $x=x_1=x_2$ in \eqref{eq:killed_integtal} and following \eqref{eq:killed_int_small_t} we have that $I((s\vee h_N)^2,r^2)\leq \frac{C}{h_N}\log(r/(s\vee h_N))$. Combining this with \eqref{eq:killed_int_large_t} and \eqref{eq:killed_int_very_small_t} we thus obtain \eqref{eq:killed_hk_int}.
\end{proof}

\begin{proof}[Proof of Proposition \ref{lem:cap_walk_space_2}]
    We give the proof for $K=B(x,2R)^{\text{c}}$, the case $K=\emptyset$ is similar, and simpler.
    Recall that $(\bar X_n)_{n\geq 0}$ denotes the discrete-time skeleton of $(X_t)_{t\geq 0}$. We may work with $\bar X$ since the set of vertices $x'\in B(x,R)$ visited by $\bar X$ before exiting $B(x,R)$ coincides with $X_{[0,T]}$. 
    
    We will first show that there exists $\Cr{rw_cap}>0$ such that for any $u\in B(x,R)$ and $2\leq s\leq \frac{R}{2}$,
    \begin{equation} \label{eq:rw_cap_main}
        P_u\left( 
        \capacity_{N}^{K}\big(\dX_{[0,R^2/s^2]} \big)\leq  \frac{\Cr{rw_cap}}{s}\capacity_N^K\big(B(x,R)\big) ,
        \dX_{[0,R^2/s^2]} \cap B(x,R)^{\text{c}}=\emptyset
        \right) \leq \frac{1}{2},
    \end{equation}
    where, with a slight abuse of notation, we identify $R^2/s^2$ with its integer part in the sequel.
    By the Markov inequality, the l.h.s.~of \eqref{eq:rw_cap_main} is bounded by 
    \begin{equation} \label{eq:rw_cap_markov}
         \frac{\Cr{rw_cap}}{s}\capacity_N^K\big(B(x,R)\big) \times  E_u\left[ 
        \capacity_{N}^{K}\big(\dX_{[0,R^2/s^2]} \big)^{-1} \1\big\{
        \dX_{[0,R^2/s^2]} \cap B(x,R)^{\text{c}}=\emptyset\big\}
        \right].
    \end{equation}
  By applying the analogue of the variational principle \eqref{eq:cap_var} for $\capacity_N^K(\cdot)$ with
    $\mu(x)=\frac{s^2}{R^2}\sum_{p=0}^{R^2/s^2}\delta_{\dX_p}$ and using the fact that on the event $\{\dX_{[0,R^2/s^2]} \cap B(x,R)^{\text{c}}=\emptyset\}$, $\dX_{[0,R^2/s^2]}$ has the same law under $E_u$ and $E_u^K$, we get that the expectation in the last display is smaller than
    \begin{equation} \label{eq:rw_cap_var_form}
        \frac{s^4}{R^4}\sum_{i,j=0}^{R^2/s^2} \lambda_u E_u^K\left[ g^K_N(\dX_i,\dX_j) \1\{
            \dX_i,\dX_j\in B(x,R)\}
            \right].
    \end{equation}
    Now let $\hat{P}^K_\cdot$ be an independent copy of $P^K_\cdot$ converning the process $\hat{X}$. For $i<j$, with $\dg_N^K(x,x^\prime)=E_x^K(\sum_{n\geq 0}\1\{\dX_n=x^\prime\}) = \lambda_{x'} g_N^K(x,x')$ the Green's function of the discrete skeleton,  we have that,
    \begin{align}\label{eq:get_killed_hk}
    \begin{split}
         \lambda_u &E_u^K\left[ 
        \hat{E}_{\dX_i}^K\left[
        g^K_N(\hat{X}_0,\hat{X}_{j-i}) \1\{
        \hat{X}_{j-i}\in B(x,R)\}
        \right]
        \1\{
        \dX_i \in B(x,R)\}
        \right]\\
        &\leq \sup_{v\in B(x,R)} 
        E^K_v\left(\dg_N^K(v,X_{j-i}) \1\{
        \dX_{j-i}\in B(x,R)\}\right) 
        =\sup_{v\in B(x,R)} \sum_{n=j-i}^\infty P^K_{v}(\dX_{n}=v),
    \end{split}
    \end{align}
    where the last equality follows upon rewriting the expectation as $\sum_{n=0}^\infty \sum_{v^\prime} P^K_{v}(\dX_n=v^\prime) P^K_{v}(\dX_{j-i}=v^{\prime})$ with $v^\prime$ ranging in $B(x,R)$, using time reversal to exchange $v$ and $v^\prime$ in the last probability and applying the Markov property at time $n$. 
    Now let $N_{t}$ be an independent Poisson random variable with parameter $t$. With \eqref{eq:killed_hk_int} and an easy comparison bound $P_v^K(X_{n}=v)\geq cP_v^K(\dX_{n}=v)$ one has that for all $v\in B(x,R)$ and $i<j$,
    \begin{equation} \label{eq:apply_killed_hk}
        \sum_{n=j-i}^\infty P^K_{v}(\dX_{n}=v)
        \leq C\int_{j-i}^\infty P^K_{v}(X_t=v)\,dt \leq C\left(\frac{1}{\sqrt{j-i}}+\frac{1}{h_N}\log\left(\frac{2R}{(\sqrt{j-i}\vee h_N)\wedge R}\right)\right).
    \end{equation}
    Hence combining Proposition \ref{cor:killed_green} to deal with the on-diagonal case $i=j$, \eqref{eq:get_killed_hk} and \eqref{eq:apply_killed_hk} we get that
    \begin{align}
    \begin{split}
        \eqref{eq:rw_cap_var_form} &\leq 
        C\frac{s^2}{R^2}
        \left(1+  \frac{1}{h_N} \black \log\left(\frac R{h_N} \vee 2 \right)+\sum_{n=1}^{R^2/s^2}\frac{1}{\sqrt{n}}+\frac{1}{h_N}\log\left(\frac{2R}{(\sqrt{n}\vee h_N)\wedge R}\right) 
        \right)\\
        &\leq  C\left( \frac{s}{R}+ \frac{1}{h_N}\log\left(\frac{2R}{((R/s)\vee h_N)\wedge R}\right)\right).
    \end{split}
    \end{align}
    Together with \eqref{eq:rw_cap_markov} and Corollary \ref{cor:killed_cap} we thus obtain that the probability in \eqref{eq:rw_cap_main} is bounded by 
    \begin{align*}
        &\Cr{rw_cap}\times C \frac{\frac{1}{s}\left(R\wedge \frac{h_N}{\log(2)}\right)}{(R/s)\wedge \frac{h_N}{\log\left(\frac{2R}{((R/s)\vee h_N)\wedge R}\right)}} 
        \leq \Cr{rw_cap}\times C \frac{\left((R/s)\wedge \frac{h_N}{s\log(2)}\right)}{\left((R/s)\wedge \frac{h_N}{\log(2s)}\right)}  \leq  \Cr{rw_cap}\times C ,      
    \end{align*}
    hence by picking $\Cr{rw_cap}$ to be small enough we have proven \eqref{eq:rw_cap_main}.

Let $\pi_u(t)$ refer to the probability in \eqref{eq:rw_cap_main} with both instances of $\dX_{[0,R^2/s^2]}$ replaced by $\dX_{[0,t]}$. To finish, we use \eqref{eq:rw_cap_main} and apply the Markov property iteratively to get
   $$
    \pi_x\big(R^2/s\big) \leq  \pi_x\big(R^2/s^2\big) \times \textstyle \sup_{u} \pi_u\big((s-1)R^2/{s^2}\big) \leq 2^{-s},
$$
with $u$ ranging in $B(x,R)$. This completes the proof since $P_x\left(\dX[0,R^2/s] \cap B(x,R)^{\text{c}}\neq\emptyset \right)\leq  C\exp{\{-c s\}}$ as follows from \cite[(2.51)]{lawler_random_2010}. 
\end{proof}

\section{Upper bounds} \label{sec:percolation}

In this section, we show the upper bound in Theorem~\ref{thm:critical_connect}, thus completing the proof of \eqref{eq:critical_one_arm}; for the complementary lower bound, see the end of Section~\ref{sec:lower_bounds}. In certain sub-regimes of parameters $h_N, R, N$, several alternative approaches are possible. To be expedient, we present here an argument which works uniformly for \textit{all} choices of $h_N, R$ and $N$ satisfying the standing assumptions of Theorem~\ref{thm:critical_connect}; recall these entail that $R \leq \Cr{C:range}N$ for an arbitrary (large) finite constant $\Cr{C:range}$ and that $h_N=\floor{h(N)}$ for some non-negative increasing function $h(\cdot)$ with $1 \leq h(t) \leq t$.
These assumptions are tacitly assumed in the sequel. In particular, the proof below works equally well in the extreme cases $h_N=1$ (2-dimensional) and $h_N=N$ (3-dimensional), as well as any intermediate choice.

\medskip
To complete the proof of \eqref{eq:critical_one_arm} we will show the following result, which is effectively a stronger version of the upper bound. Recall $\Fbox(\cdot)$ from \eqref{eq:def_fbox}, $\capacity_N(\cdot)$ from the beginning of Section~\ref{sec:cap-slab} and let $\mathcal{C}$ denote the cluster of $0$ in $\{ \varphi \geq 0\}$.

\begin{theorem}\label{prop:one_arm}
    For any $N,R\geq 1$ such that $R\leq \Cr{C:range}N$ and $s\leq c$, we have
    \begin{equation} \label{eq:one_arm_exp}
        \P_N\left(0 \leftrightarrow\partial B_R,\capacity_N(\mathcal{C}\cap B_R)<s \Fbox(R)\right) \leq \arctan 
        \left[
        \left(\left(1\vee\frac{\log R}{h_N}\right)\Fbox( R)\right)^{-\frac{1}{2}} e^{-\frac{c}{s}}
        \right].
    \end{equation}
\end{theorem}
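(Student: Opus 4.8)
The plan is to combine the capacity-based comparison from Section~\ref{sec:lower_bounds} with the killed capacity lower deviations of Proposition~\ref{lem:cap_walk_space_2}, exploiting the explicit law \eqref{eq:caplaw} of $\capacity_N(\mathcal{C})$. The starting point is that on the event $\{0\leftrightarrow\partial B_R\}$ the cluster $\mathcal{C}$ contains a path from $0$ to $\partial B_R$, and hence contains the range of an auxiliary walk-like object reaching scale $R$; more precisely one wants to say that if in addition $\capacity_N(\mathcal{C}\cap B_R)<s\Fbox(R)$, then the cluster is ``thin'' in a way that forces $\capacity_N(\mathcal{C})$ to be atypically small on a set of scales, which is costly by \eqref{eq:caplaw} and \eqref{eq:arctan_calculus}. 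Concretely, I would introduce a geometric sequence of radii $R_k = 2^{-k}R$, $k=0,1,\dots$, and use the first inclusion in \eqref{eq:cluster_cap_comparison} together with monotonicity of capacity to reduce matters to controlling $\capacity_N(\mathcal{C}\cap B_{R_k})$ simultaneously over $k$: the goal is to show that $\{0\leftrightarrow\partial B_R,\ \capacity_N(\mathcal{C}\cap B_R)<s\Fbox(R)\}$ forces $\capacity_N(\mathcal{C})$ to be at most of order $s\Fbox(R)\cdot(\text{polylog})$ while also forcing $0\leftrightarrow\partial B_R$, and then invoke \eqref{eq:caplaw}.

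The mechanism for turning a small cluster-capacity into a large probabilistic cost is the ``obstacle'' idea: because the range of the walk inside $B(x,R_k)$ has capacity comparable to $\capacity_N(B(x,R_k))\asymp\Fbox(R_k)$ with overwhelming probability (Proposition~\ref{lem:cap_walk_space_2}, applied with the killing set $K=B(x,2R_k)^{\mathrm{c}}$ to localise), a cluster that crosses $B(x,R_k)$ but has capacity $\ll \Fbox(R_k)$ inside it is a rare event. Using the FKG/Markov-field structure of $\varphi$ on the cable system together with the coupling between $\{\varphi\geq 0\}$-clusters and loop-soup/random-walk traces (as in \cite{drewitz_arm_2023,drewitz_critical_2024}), each annulus $B(x,2R_k)\setminus B(x,R_k)$ that is ``traversed without accumulating capacity'' contributes a multiplicative factor $e^{-c}<1$; chaining $\asymp 1/s$ such annuli (which is how many scale-doublings fit between the final capacity $s\Fbox(R)$ and the ambient capacity $\Fbox(R)$, roughly speaking) produces the $e^{-c/s}$ factor. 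The prefactor $\big((1\vee\tfrac{\log R}{h_N})\Fbox(R)\big)^{-1/2}$ inside the $\arctan$ should come from \eqref{eq:caplaw}–\eqref{eq:arctan_calculus} evaluated at $x\asymp (1\vee\tfrac{\log R}{h_N})\Fbox(R)$: note $g_N(0)\asymp\log N$ is replaced here by the \emph{local} scale $1\vee\tfrac{\log R}{h_N}$ because the killing at scale $R$ (as in Proposition~\ref{cor:killed_green} and Corollary~\ref{cor:killed_cap}) localises the relevant Green's function, so that the effective on-diagonal value controlling the capacity of $\mathcal{C}\cap B_R$ is $\asymp(1\vee\tfrac{\log R}{h_N})$ rather than $\log N$.

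The key technical steps, in order, would be: (1) set up the killed GFF $\varphi^{K}$ on $B_{2R}$ with $K=B_{2R}^{\mathrm{c}}$, and note via the domain-Markov property that $\{0\leftrightarrow\partial B_R\}$ and $\capacity_N(\mathcal{C}\cap B_R)$ can be analysed under the killed measure, with $g_N^{K}$ replacing $g_N$, invoking Proposition~\ref{cor:killed_green} and Corollary~\ref{cor:killed_cap}; (2) establish a one-scale estimate: on $\{x\leftrightarrow\partial B(x,2R_k)\}$, the conditional probability that $\capacity_N^{K}(\mathcal{C}\cap B(x,2R_k))<\tfrac{\Cr{rw_cap}}{s_0}\Fbox(R_k)$ is at most $e^{-c s_0}$, using Proposition~\ref{lem:cap_walk_space_2} to lower-bound the capacity of the crossing trace together with the second-moment/sign-decomposition trick that relates $\{\varphi\geq0\}$-connectivity to walk ranges; (3) iterate this over the $\asymp 1/s$ dyadic scales inside $B_R$, using the Markov property at each scale and summability of the error series, to get the bound $e^{-c/s}$; (4) feed the resulting capacity information into \eqref{eq:caplaw}, \eqref{eq:arctan_calculus} and simplify using Lemma~\ref{lem:one_arm_minus} to obtain \eqref{eq:one_arm_exp}. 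I expect step~(2)—making precise, uniformly in all regimes of $h_N$, how a crossing of an annulus that fails to accumulate capacity costs a factor bounded away from $1$, and in particular handling the near-two-dimensional case where the walk is (locally) barely transient—to be the main obstacle; this is exactly where the killed estimates of Section~\ref{eq:sec:kill} (Proposition~\ref{cor:killed_green}, Corollary~\ref{cor:killed_cap}, Proposition~\ref{lem:cap_walk_space_2}) are designed to be invoked, and where one must be careful that all constants $c,C$ are uniform in $h_N,R,N$.
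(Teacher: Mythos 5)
Your plan assembles the right toolbox (killed Green's function and capacity estimates, Proposition~\ref{lem:cap_walk_space_2}, the obstacle idea, the explicit capacity law \eqref{eq:caplaw}), and your intuition that the prefactor $1\vee\tfrac{\log R}{h_N}$ arises from localising the Green's function by killing at scale $R$ is correct. However, the central quantitative step — where the factor $e^{-c/s}$ comes from — rests on a counting that does not work. You propose chaining ``$\asymp 1/s$ dyadic scales'' $R_k=2^{-k}R$, each contributing a factor $e^{-c}$; but there are only $\asymp\log_2 R$ dyadic scales available, and the number of doublings between $s\Fbox(R)$ and $\Fbox(R)$ is $\log_2(1/s)$, not $1/s$, so this scheme would at best produce a polynomial factor in $s$ or $R$, not $e^{-c/s}$. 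In the paper the exponent $1/s$ is produced at a \emph{single} scale: one fixes a box size $L\asymp tR/\delta$ so that $\ell\asymp\delta/t$ disjoint boxes fit along any crossing of one fixed annulus (Lemma~\ref{lem:renorm_la}), and a good obstacle forces a multiplicative cost $e^{-c\kappa n/(L\wedge h_N)}\asymp e^{-c\delta^2/t}$ via \eqref{eq:no_hit_Obis}; the remaining scales enter only through a recursion \eqref{eq:def_recursion}--\eqref{eq:rec-s} over iterated-logarithmic radii $a_k=4/\log_k R$, with a base case \eqref{eq:recursion_step0} supplied by the line-capacity comparison \eqref{eq:cluster_cap_comparison} and \eqref{eq:line-cap}. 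Neither the single-scale box counting nor the recursion with its base case appears in your plan.

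The second gap is the mechanism converting ``the cluster crosses but has small capacity'' into a probabilistic cost. Your step~(2) asserts that a crossing of an annulus which fails to accumulate capacity costs a factor $e^{-c}$ ``using Proposition~\ref{lem:cap_walk_space_2} \dots together with the second-moment/sign-decomposition trick,'' but Proposition~\ref{lem:cap_walk_space_2} concerns the range of the random walk, not the cluster, and the event of small cluster capacity is not local or independent across scales, so this cannot be iterated by a bare Markov-property argument. The paper's resolution (Lemma~\ref{lem:pre_recursion}) is to use Lupu's isomorphism and the restriction property of the loop soup: the \emph{big} loops in the annulus (those whose ranges have capacity $>t\Fbox((\bfb-\bfa)R)$, which by Proposition~\ref{lem:cap_walk_space_2} are plentiful enough to form a $(L,R,\ell/2,\delta(L\wedge h_N))$-good obstacle $\mathcal{O}$ with probability $1-e^{-c\ell}$) cannot belong to $\mathcal{C}$ on the small-capacity event, so $\mathcal{C}$ is the cluster of the soup with those loops removed and must therefore avoid $\mathcal{O}$, which is exactly what \eqref{eq:no_hit_Obis}, \eqref{eq:no_hit_O} and \cite[Proposition 2.1]{drewitz_critical_2024} quantify and what produces the $\arctan[\cdots e^{-c/t}]$ bound of Lemma~\ref{lem:hit_annulus}. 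Without this decomposition your one-scale estimate has no proof, and this is precisely the part of the argument that the auxiliary results of Section~\ref{eq:sec:kill} were built to support.
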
 

The functional form of the bound on the right-hand side of \eqref{eq:one_arm_exp} (involving $\arctan$) naturally comes out of the proof. It is essentially owed to the fact that the probability on the left-hand side of \eqref{eq:one_arm_exp} involves a deviation for the capacity, which eventually makes it appear, cf.~\eqref{eq:caplaw} and \eqref{eq:arctan_calculus}. The specific form of the upper bound in \eqref{eq:one_arm_exp} is remarkable owing to the lower bound from Section~\ref{sec:lower_bounds}, which has the same form; cf.~also Theorem~\ref{prop:arm_general}, or Theorem~\ref{thm:arm_asymp}, which both exhibit this phenomenon, albeit for restricted ranges of parameters.

\medskip

Before proving Theorem~\ref{prop:one_arm}, let us show how it implies the upper bound in \eqref{eq:critical_one_arm}. This relies on the following estimate.

\begin{lemma}\label{lem:arctan_killed_unkilled}
For any $h(\cdot)$ (see above \eqref{def:slab}) and all $N,R\geq 1$, we have that
\begin{equation} \label{eq:arctan_killed_unkilled}
    \arctan \left[
    \left(\left(1\vee\frac{\log R}{h_N}\right)\Fbox( R)\right)^{-\frac{1}{2}} 
    \right] \leq C \left( \left(1\vee\frac{\log N}{h_N}\right)\Fbox( R) \right)^{-\frac{1}{2}}.
\end{equation}
\end{lemma}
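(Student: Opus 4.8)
Looking at Lemma~\ref{lem:arctan_killed_unkilled}, I need to prove
\[
\arctan \left[
\left(\left(1\vee\tfrac{\log R}{h_N}\right)\Fbox( R)\right)^{-\frac{1}{2}}
\right] \leq C \left( \left(1\vee\tfrac{\log N}{h_N}\right)\Fbox( R) \right)^{-\frac{1}{2}}.
\]

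Let me denote $a = (1\vee\tfrac{\log R}{h_N})\Fbox(R)$ and $b = (1\vee\tfrac{\log N}{h_N})\Fbox(R)$. Since $R \leq \Cr{C:range}N$, so $\log R \leq \log N + \log\Cr{C:range}$, and both factors are $\geq 1$, we have $a \leq b \cdot (1 + \text{something})$... actually need to be careful. We have $1\vee\frac{\log R}{h_N} \leq 1\vee\frac{\log N + C}{h_N} \leq C'(1\vee\frac{\log N}{h_N})$ for $N$ large (or all $N\geq 2$; handle small $N$ separately). So $a \leq C' b$, hence $a^{-1/2} \geq (C')^{-1/2} b^{-1/2}$, which goes the wrong way for a direct bound on $\arctan$.

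The right approach: use $\arctan(x) \leq \pi/2$ always, and $\arctan(x) \leq x$. If $b \leq 1$ (equivalently $b^{-1/2}\geq 1$), the RHS $C b^{-1/2} \geq C \geq \pi/2 \geq \arctan(\text{anything})$ for $C$ large enough, done. If $b > 1$, then I want to show $\arctan(a^{-1/2}) \leq C b^{-1/2}$. Here the key point is that $a$ and $b$ are comparable: I claim $b \leq C a$. Indeed $\Fbox(R) \leq h_N/K_0(\tfrac{R\vee h_N}{N})$ and by Lemma~\ref{L:Bessel}, $K_0(t) \sim \log(1/t)$, so $K_0(\tfrac{R\vee h_N}{N}) \leq C\log(\tfrac{N}{R\vee h_N}) + C \leq C\log N + C$; meanwhile from $\Fbox(R)\leq R$ one also gets control. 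Actually the cleanest route: since $\Fbox(R)\le R$, we have $b = (1\vee\frac{\log N}{h_N})\Fbox(R) \le (1\vee\frac{\log N}{h_N})R$... hmm, this is getting complicated. Let me instead directly compare: I will show $\frac{\log N}{h_N}\Fbox(R) \le C a$ when $b>1$, using that $\Fbox(R) \le \frac{h_N}{K_0(\frac{R\vee h_N}{N})}$ gives $\frac{\log N}{h_N}\Fbox(R) \le \frac{\log N}{K_0(\frac{R\vee h_N}{N})} \le C(1 + \log R)$ by the Bessel asymptotics (since $\frac{\log N}{\log(N/(R\vee h_N))}\le 1 + \frac{\log(R\vee h_N)}{\log(N/(R\vee h_N))}$, and combined with $\Fbox(R)\le R$ one can absorb the bad case $R\vee h_N$ close to $N$), and this is $\le C\frac{\log R}{h_N}\Fbox(R) + C\Fbox(R)/h_N\cdot h_N \le C a$ after checking cases. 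Then $a \asymp b$ when $b>1$, so $\arctan(a^{-1/2})\le a^{-1/2}\le Cb^{-1/2}$.

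So the plan is: \textbf{Step 1}, dispose of the regime $b\le 1$ using $\arctan\le\pi/2$ and choosing $C\ge\pi/2$. \textbf{Step 2}, in the regime $b>1$, prove $a\asymp b$; the $a\le Cb$ direction is immediate from $\log R\le \log N + C$, and the $b\le Ca$ direction is the crux — it amounts to showing $\frac{\log N}{h_N}\Fbox(R)\lesssim \frac{\log R}{h_N}\Fbox(R)\vee \Fbox(R)$, which via $\Fbox(R) \le h_N/K_0(\tfrac{R\vee h_N}{N})$ and Lemma~\ref{L:Bessel} reduces to the elementary inequality $\log N \lesssim \log R \cdot \big(1\vee \tfrac{h_N}{\log R}\cdot\tfrac{K_0(\cdot)}{h_N}\cdot\Fbox(R)^{-1}\Fbox(R)\big)$... and here I use $\Fbox(R)\le R$ to handle $R\vee h_N$ near $N$. \textbf{Step 3}, conclude via $\arctan(x)\le x$. \emph{The main obstacle} is Step 2: carefully handling the case where $R\vee h_N$ is comparable to $N$ (so $K_0(\tfrac{R\vee h_N}{N})$ is order $1$ rather than logarithmically large) — there one must fall back on $\Fbox(R)\le R$ and $\log N \le C\log R$ won't hold if $R$ is much smaller than $N$; but in that sub-case $\Fbox(R) = R\wedge \frac{h_N}{K_0(\cdot)}$ with $K_0(\cdot)\asymp 1$ forces $\Fbox(R)\asymp R\wedge h_N$, and the trivial bound $\frac{\log N}{h_N}\Fbox(R)\le \frac{\log N}{h_N}\cdot h_N = \log N$, while $a \ge \Fbox(R)\ge c(R\wedge h_N)$; one needs $R\wedge h_N \gtrsim \log N$ in this sub-case, which indeed follows because $R\vee h_N\asymp N$ forces the \emph{larger} of $R,h_N$ to be $\asymp N$, but not the smaller one — so actually this naive bound can fail, and instead I should only ever claim $a\asymp b$ and split into: (i) $\frac{\log N}{h_N}\Fbox(R)\le 1$, where $b\le 2\Fbox(R)\le 2a$; (ii) $\frac{\log N}{h_N}\Fbox(R)> 1$, where one genuinely uses the Bessel asymptotics. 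This case analysis is the delicate part but is elementary once organized.

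For reference, recall $\Fbox(R) = R\wedge\frac{h_N}{K_0(\frac{R\vee h_N}{N})}$ and that Lemma~\ref{L:Bessel} (invoked earlier, near \eqref{eq:green_estimate0intro}) gives $K_0(t)\sim\log(1/t)$ as $t\downarrow0$ and $K_0$ is decreasing, so that $c(1\vee\log(N/(R\vee h_N)))\le K_0(\tfrac{R\vee h_N}{N})\vee 1\le C(1\vee\log(N/(R\vee h_N)))$ for $R\vee h_N\le N$; these are the only facts about $K_0$ needed.
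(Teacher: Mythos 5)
Your two ingredients---the trivial bound $\arctan\le\pi/2$ when the right-hand side of \eqref{eq:arctan_killed_unkilled} is of order one, and $\arctan(x)\le\frac{\pi}{2}x$ together with a comparison of the two prefactors otherwise---are exactly those of the paper's proof, which splits directly on the parameters ($R\le\sqrt N$ and $h_N<\log N$, where $b\coloneqq(1\vee\frac{\log N}{h_N})\Fbox(R)\le C$; versus the complement, where $\log N\le 2\log R$ or $\frac{\log N}{h_N}\le1$ gives $b\le 2a$ with no Bessel input at all). However, the organizing claim of your Step~2---that $a\asymp b$ on $\{b>1\}$---is false, and your sub-case (ii) cannot deliver it. Take $R=1$ and $h(\cdot)\equiv h_0$ for a sufficiently large constant $h_0$, and let $N\to\infty$. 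Then eventually $\Fbox(1)=h_0/K_0(h_0/N)$, so $a=\Fbox(1)\asymp h_0/\log N\to0$, while $b=\frac{\log N}{K_0(h_0/N)}\ge\frac{\log N}{\log N-\log h_0+C}>1$ for all large $N$ once $\log h_0>C$ (the constant from \eqref{eq:bessel_ub}). Thus $b>1$ but $b/a\asymp\frac{\log N}{h_0}\to\infty$; your sub-case (ii) applies here (indeed $\frac{\log N}{h_N}\Fbox(R)=b>1$) yet its intended conclusion fails. The lemma survives in this regime only through the trivial $\arctan\le\pi/2$ bound, which your plan has already spent in Step~1 on the disjoint event $\{b\le1\}$.

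The repair is small, and your earlier manipulation with $\log N=\log\frac{N}{R\vee h_N}+\log(R\vee h_N)$ essentially contains it: the correct intermediate statement is $b\le 1+a$, not $b\le Ca$. Indeed, when $\frac{\log N}{h_N}>1$,
\[
\frac{\log N}{h_N}\Fbox(R)\;\le\;\frac{K_0\big(\frac{R\vee h_N}{N}\big)}{h_N}\Fbox(R)+\frac{(\log R)\vee h_N}{h_N}\Fbox(R)\;\le\;1+a,
\]
using $\log\frac{N}{R\vee h_N}\le K_0(\frac{R\vee h_N}{N})$, $\log(R\vee h_N)\le(\log R)\vee h_N$ and $\Fbox(R)\le h_N/K_0(\frac{R\vee h_N}{N})$; and $b=\Fbox(R)\le a$ when $\frac{\log N}{h_N}\le1$. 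From $b\le1+a$ one concludes: if $a\ge1$ then $b\le2a$ and $\arctan(a^{-1/2})\le a^{-1/2}\le\sqrt2\,b^{-1/2}$, while if $a<1$ then $b<2$ and $\arctan(\cdot)<\frac{\pi}{2}\le\frac{\pi}{\sqrt 2}b^{-1/2}$. So the proof closes, but only after replacing the unprovable target ``$a\asymp b$ on $\{b>1\}$'' by the dichotomy ``either $b\le C_0$ or $b\le Ca$,'' which is what both the paper's case split and the bound $b\le 1+a$ achieve.
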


\begin{proof}[Proof of Lemma \ref{lem:arctan_killed_unkilled}]
When $R\leq \sqrt{N}$ and $\frac{\log N}{h_N}> 1$, we have in particular that $\frac{R\vee h_N }{N} \leq N^{-1/2}$ and hence by \eqref{eq:def_fbox} we can bound the expression in brackets on the r.h.s~of \eqref{eq:arctan_killed_unkilled} as
\begin{equation*}
    \left(1\vee\frac{\log N}{h_N}\right)\Fbox(R)  \leq C \left(\frac{h_N}{\frac{1}{2}\log N} \vee  \frac{\log N}{\frac{1}{2}\log N}\right) \leq  C'
\end{equation*}
so the r.h.s.~of \eqref{eq:arctan_killed_unkilled} is $ \geq c$
which concludes the proof in this case since $|\arctan(x)|<\frac{\pi}{2}$ for all $x>0$. 
Now assuming $R>\sqrt{N}$ or $\frac{\log N}{h_N} \leq 1$. By \eqref{eq:arctan_bound} we have that the l.h.s.~of \eqref{eq:arctan_killed_unkilled} is bounded by $ \frac{\pi}{2} \big(\big(1\vee\frac{\log R}{h_N}\big)\Fbox( R)\big)^{-{1}/{2}}$. Hence it suffices to show that $\big(1\vee\frac{\log N}{h_N}\big)^{1/2}\big(1\vee\frac{\log R}{h_N}\big)^{-1/2}\leq C$.
Note that this is trivially true if $\frac{\log N}{h_N}\leq 1$. Now assuming $R>\sqrt{N}$ and $\frac{\log N}{h_N}> 1$, one has that in this case
\begin{equation*}
   \left(1\vee\frac{\log N}{h_N}\right)^{1/2}\left(1\vee\frac{\log R}{h_N}\right)^{-1/2}\leq  \left(\log N/\log(R)\right)^{1/2}\leq \sqrt{2}. 
\end{equation*}
\end{proof}

\begin{proof}[Proof of the upper bound in \eqref{eq:critical_one_arm}]
Fix $s>0$ such that Theorem \ref{prop:one_arm} holds. One has
\begin{equation} \label{eq:ub_two_terms}
    \theta_N^h(R) \leq 
    \P_N\left(0 \leftrightarrow\partial B_R, \capacity_N(\mathcal{C}\cap B_R)<s \Fbox(R)\right) +
    \P_N\left(s \Fbox(R) \leq \capacity_N(\mathcal{C})\leq \infty \right).
\end{equation}
By \eqref{eq:caplaw}, \eqref{eq:arctan_calculus}, \eqref{eq:one_arm_minus} and \eqref{eq:arctan_bound} the second term on the r.h.s.~of \eqref{eq:ub_two_terms} is bounded by $C(g_N(0)\Fbox(R))^{-1/2}$. By \eqref{eq:one_arm_exp} and \eqref{eq:arctan_killed_unkilled}, so is the first term, as \eqref{eq:green_estimate} implies that $ g_N(0) \leq C(1\vee \tfrac{\log N}{h_N}) $.
\end{proof}

We now prepare the ground for the proof of Theorem~\ref{prop:one_arm}. We first introduce a way to approximate $\mathbb{S}_N$ at a given scale $L\geq 1$.
 Let $\pi:\Z/h_N\Z \to \{0,1,\dots,h_N-1\}$ be the canonical map that picks the unique representative of each congruence class in $\{0,1,\dots,h_N-1\}$ .
    For all $L\geq 1$, let $m=\max(1,\lfloor h_N/L \rfloor)$, $r=h_N-mL$ the remainder of $h_N$ modulo $L$, and for $j\in\{0,\dots,m-1\}$, let $s_j=\sum^j_{i= 0 }(L+\1\{i\leq r\})-1$. These choices imply that $s_j \in \{0,1,\dots,h_N-1\} $ with spacing in $\{L,L+1\}$ and $s_{m-1}=h_{N}-1$. Now consider
    \begin{equation} \label{eq:Lambda_def}
    \Lambda_{\circ}(L)=\{z\in (\Z/h_N\Z): \pi(z) \in\{s_0,\dots,s_{m-1}\}\} \quad \text{and}\quad \Lambda(L)=L\Z^2\times   \Lambda_{\circ}(L).
    \end{equation}

\begin{lemma} \label{lem:renorm_la}
       For all $N,R \geq 1$, $L> 3$,
    \begin{align} &\label{eq:renorm_coverage}
       \textstyle \bigcup_{x \in \Lambda(L)} B(x,L) = \mathbb{S}_N,  \\ 
     &\label{eq:renorm_separation}
        B(x, L/2)\cap B(x^\prime, L/2)=\emptyset , \text{ for all } x\neq x^\prime \in \Lambda(L),\\
    &\label{eq:renorm_count}
        |\Lambda(L)\cap B(x,LR)| \leq CR^2 \times \textstyle \left(\frac{h_N\wedge (LR)}{L}\vee 1\right),  \text{ for all } x \in \Lambda(L).
    \end{align}
\end{lemma}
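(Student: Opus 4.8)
The plan is to verify the three claims in Lemma~\ref{lem:renorm_la} directly from the definitions in \eqref{eq:Lambda_def}, treating the horizontal $L\Z^2$ factor and the vertical $\Lambda_\circ(L)$ factor separately and then combining via the product structure of $\|\cdot\|$, recalling that $\|x\| = |(y,\hat z)|$ with $\hat z$ the representative of $z$ in $(-h_N/2, h_N/2]$.

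\textbf{Covering \eqref{eq:renorm_coverage}.} First I would record that the spacings $s_{j+1}-s_j$ all lie in $\{L, L+1\}$ by construction (the $+\1\{i\le r\}$ distributes the remainder $r$ among the first few gaps), and that $s_0 = L + \1\{0\le r\} - 1 \in \{L-1, L\}$ and $s_{m-1}=h_N-1$; wrapping around the torus, the ``gap'' from $s_{m-1}$ back to $s_0$ is $s_0 + 1 \le L+1$ as well. Hence any $z\in \Z/h_N\Z$ is within vertical ($d_{(\Z/h_N\Z)}$-)distance $\le (L+1)/2 < L$ of some $s_j$ (here one uses $L>3$ only mildly, to be safe about the $m=1$ degenerate case where $\Lambda_\circ(L)=\{h_N-1\}$ and $h_N\le L$, so every $z$ is within $h_N/2 \le L$). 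Likewise any $y\in\Z^2$ is within Euclidean distance $< L$ (in fact $\le L/\sqrt2 \cdot \sqrt2$... more simply $\le L$) of a point of $L\Z^2$: each coordinate is within $L/2$. Combining, for $x=(y,z)$ there is $x'=(y',z')\in\Lambda(L)$ with $|y-y'|<L$ and $d_{(\Z/h_N\Z)}(z,z')<L$, whence $\|x-x'\| = \sqrt{|y-y'|^2 + |\widehat{z-z'}|^2} < \sqrt{L^2+L^2}$; to get the clean bound $< L$ claimed, I would instead argue coordinatewise with the honest constants, using that each of the three integer coordinates of $x-x'$ has absolute value $\le \max(L/2, (L+1)/2) $, so $\|x-x'\|^2 \le 3((L+1)/2)^2$. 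This is \emph{not} $< L^2$ in general, so the statement as literally written needs the closed ball $B(x,L)$ understood with a slightly generous constant, or $\Lambda(L)$ with $L\Z^2$ replaced by a finer grid; I expect the intended reading is that $B(x,L)$ denotes the $\ell^\infty$-type box or that a harmless constant is absorbed, and I would state the coordinatewise bound and conclude $\mathbb S_N = \bigcup_{x\in\Lambda(L)} B(x, cL)$ which is what is used downstream. (If one insists on the exact constant, subdivide: use $\lfloor L/2\rfloor\Z^2$, which only changes \eqref{eq:renorm_count} by a constant.)

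\textbf{Separation \eqref{eq:renorm_separation}.} For distinct $x,x'\in\Lambda(L)$, either $y\ne y'$ in $L\Z^2$, in which case $|y-y'|\ge L$, or $z\ne z'$ in $\Lambda_\circ(L)$, in which case $d_{(\Z/h_N\Z)}(z,z')\ge L$ since consecutive elements of $\{s_0,\dots,s_{m-1}\}$ (cyclically) are at distance $\ge L$ — here one must check the wrap-around gap $s_0+1\ge L$, which holds as $s_0\in\{L-1,L\}$ gives $s_0+1\ge L$. In either case $\|x-x'\|\ge L$ (one coordinate alone contributes $\ge L$), so $B(x,L/2)\cap B(x',L/2)=\emptyset$ by the triangle inequality for $\|\cdot\|$: any common point $w$ would give $L\le\|x-x'\|\le\|x-w\|+\|w-x'\|<L/2+L/2$, a contradiction. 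This step is clean.

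\textbf{Counting \eqref{eq:renorm_count}.} Fix $x\in\Lambda(L)$ (by translation invariance of the grid we may take $x=0$). A point $x'=(y',z')\in\Lambda(L)\cap B(0,LR)$ has $|y'|<LR$ and $d_{(\Z/h_N\Z)}(0,z')<LR$. The number of admissible $y'\in L\Z^2$ is at most $C(R)^2 = CR^2$ (lattice points of $L\Z^2$ in a Euclidean ball of radius $LR$, with $R\ge 1$; the $+O(1)$ is absorbed into the constant). For the vertical count: $\Lambda_\circ(L)$ has exactly $m=\max(1,\lfloor h_N/L\rfloor)$ elements in total, and those within $d_{(\Z/h_N\Z)}$-distance $LR$ of $0$ number at most $\min(m,\ 2LR/L + 1) \le C(\,(h_N\wedge LR)/L \vee 1\,)$: if $LR\ge h_N$ all $m\le C(h_N/L\vee 1)$ of them qualify, while if $LR< h_N$ the qualifying ones lie in an arc of length $2LR$ and are $L$-separated, giving $\le 2R+1\le C$ of them, consistent with $(h_N\wedge LR)/L = R$. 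Multiplying the horizontal and vertical counts yields \eqref{eq:renorm_count}. I do not anticipate obstacles here beyond bookkeeping of the $\max(1,\cdot)$ and the ceiling/floor in small-$h_N$ edge cases; the main (only) genuinely delicate point in the whole lemma is the constant in the covering statement \eqref{eq:renorm_coverage}, which I would handle by the coordinatewise estimate and, if necessary, a harmless refinement of the grid.
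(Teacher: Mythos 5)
Your arguments for \eqref{eq:renorm_separation} and \eqref{eq:renorm_count} are correct and follow the same route as the paper: separation comes from the fact that distinct grid points differ by at least $L$ in either the horizontal or the vertical component, and the count is the product of a horizontal count $\leq CR^2$ with a vertical count $\leq C\bigl(\tfrac{h_N\wedge(LR)}{L}\vee 1\bigr)$.

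The one genuine gap is in your treatment of the covering statement \eqref{eq:renorm_coverage}. Your coordinatewise bound $\|x-x'\|^2\leq 3\bigl(\tfrac{L+1}{2}\bigr)^2$ is needlessly lossy, and it leads you to the incorrect conclusion that the lemma as written fails and must be weakened (to radius $cL$) or the grid refined. In fact the statement holds exactly as written, and the hypothesis $L>3$ is precisely what makes the constant work. The point is to bound the horizontal and vertical contributions with their own (different) constants rather than taking the worst of the three coordinates: for the horizontal part, the nearest anchor $y\in L\Z^2$ to a given $y'\in\Z^2$ satisfies $|y-y'|^2\leq 2(L/2)^2=L^2/2$ (each of the two coordinates is within $L/2$), while for the vertical part the gaps between consecutive elements of $\Lambda_\circ(L)$ (cyclically) lie in $\{L,L+1\}$, so $|\hat z-\hat z'|^2\leq (L+1)^2/4$. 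Hence
\begin{equation*}
\min_{x\in\Lambda(L)}\|x-x'\|^2\ \leq\ \frac{L^2}{2}+\frac{(L+1)^2}{4}\ <\ L^2
\quad\Longleftrightarrow\quad (L+1)^2<2L^2\quad\Longleftrightarrow\quad L>1+\sqrt{2},
\end{equation*}
which is guaranteed by $L>3$. This is the computation the paper performs, and it renders your proposed modifications (generous radius, finer grid) unnecessary; in particular no downstream constants need to be revisited.
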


\begin{proof}
    For $\Z^2$, consider each square $y +( [\tfrac L2, \tfrac L2)\cap\Z)^2 $ anchored at $ y\in L\Z^2$. The distance from a lattice point $y^\prime$ in any such square to the closest anchor $y$ is at most $|y-y^\prime|^2\leq L^2/2$. For the vertical direction, it's easy to see that for any $z^\prime\in (\Z/h_N\Z)$, $\min_{z\in \Lambda_{\circ}(L)} |\hat z-\hat z^\prime|^2 \leq (L+1)^2/4$. Hence combining the previous two observations and the fact that $L>3$ we have that for any $x^\prime \in \mathbb{S}_N$, $\min_{x\in \Lambda(L)}\|x-x^\prime\|\leq \min_{x\in \Lambda(L)}\sqrt{ |y-y^\prime|^2 +|\hat z-\hat z^\prime|^2}<L$, which concludes the proof of \eqref{eq:renorm_coverage}. 
    
    Since $|y-y^\prime|\geq L$ for all $y\neq y^\prime \in L\Z^2$ and $d_{\Z/h_N\Z}(z,z^\prime)\geq L$  for all $z\neq z^\prime\in \Lambda_{\circ}(L)$,   \eqref{eq:renorm_separation} follows directly from the definition of $\Lambda(L)$  in \eqref{eq:Lambda_def}. Finally, for \eqref{eq:renorm_count}, we use the observation that $|\Lambda(L)\cap B(x^\prime,LR)|\leq|L\Z^2\cap B^{2}(y^\prime,LR)|\times |\Lambda_{\circ}(L)\cap B_{(\Z/h_N\Z)}(z^\prime,LR)|$. It's easy to see that there exists $C<\infty$ such that $|L\Z^2\cap B^{2}(y^\prime,LR)|\leq CR^2$. For the vertical direction, if $L\geq h_N$ then $|\Lambda_{\circ}(L)\cap B_{(\Z/h_N\Z)}(z^\prime,LR)|=1$; if $L< h_N$, then $|\Lambda_{\circ}(L)\cap B_{(\Z/h_N\Z)}(z^\prime,LR)|\leq 2\frac{h_N\wedge(LR)}{L}+1$.
\end{proof}

The following result, tailored to our purposes, follows from the Harnack inequality proved in Corollary~\ref{C:harnack} and a chaining argument using $\Lambda(\cdot)$ above. Recall $g_N^K$ from \eqref{eq:gNK}.

\begin{lemma}
    For all $N\geq 1$, $C \leq R\leq \Cr{C:range}N$, $x\in B_R^{\text{c}}$, $1>\bfa >\frac{C'}{R}$ and $K\subset B_{(1-\bfa)R}$,
    \begin{equation} \label{eq:harnack}
        \sup_{x^\prime \in\partial B(x,\bfa R/2)} g_N^K(x,x^\prime)\leq C\inf_{x^\prime \in\partial B(x,\bfa R/2)} g_N^K(x,x^\prime).
    \end{equation}

\end{lemma}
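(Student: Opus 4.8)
The plan is to deduce \eqref{eq:harnack} from the elliptic Harnack inequality of Corollary~\ref{C:harnack} by exhibiting, for each fixed $x'\in\partial B(x,\bfa R/2)$, that the function $w\mapsto g_N^K(x,w)$ is nonnegative on a suitable ball around $x'$ and harmonic on the interior of that ball (away from the diagonal point $x$), and then chaining the pointwise comparison around the sphere $\partial B(x,\bfa R/2)$ using a covering by balls of comparable radius. The key structural input is that $K\subset B_{(1-\bfa)R}$ while $x\in B_R^{\mathrm c}$, so that $\|x\|\geq R$ and thus the distance from $x$ (and from the whole sphere $\partial B(x,\bfa R/2)$) to $K$ is at least of order $\bfa R$; this guarantees that around any point of $\partial B(x,\bfa R/2)$ there is a ball of radius $\asymp \bfa R$ that does not meet $K$, on which $g_N^K(x,\cdot)$ is harmonic (it is harmonic at every vertex outside $K\cup\{x\}$ since $g_N^K(x,\cdot)$ solves $L g_N^K(x,\cdot)=-\delta_x$ off $K$, and nonnegative everywhere).

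Concretely, first I would fix $x'_0\in\partial B(x,\bfa R/2)$ and set $\rho = c\,\bfa R$ for a small enough absolute constant $c$, chosen so that (i) $B(x'_0,2\rho)\cap K=\emptyset$ and (ii) $x\notin B(x'_0,2\rho)$; both are arranged using $\|x-x'_0\|=\bfa R/2$, $\mathrm{dist}(x'_0,K)\geq \bfa R/2$ (which follows from $K\subset B_{(1-\bfa)R}$ and $\|x\|\ge R$ via the triangle inequality in $\|\cdot\|$, up to the $O(1)$ coarseness of $\|\cdot\|$, using $\bfa R\ge C'$), and the condition $\bfa R\ge C'$ to ensure $\rho\ge 1$ so that the ball is nontrivial. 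On $B(x'_0,2\rho)$ the function $u:=g_N^K(x,\cdot)$ is nonnegative on $\bar B(x'_0,2\rho)$ and harmonic on $B(x'_0,2\rho)$, so Corollary~\ref{C:harnack} (applied with $2R\rightsquigarrow 2\rho$ and $t$ a fixed small constant, e.g. $t=1/4$) gives $u(w)\leq \Cr{harnack}\,u(w')$ for all $w,w'\in B(x'_0,\rho/4)$. This is a local comparison in a neighbourhood of $x'_0$ of radius $\asymp\bfa R$, and hence compares $g_N^K(x,\cdot)$ at any two points of $\partial B(x,\bfa R/2)$ that are within distance $\rho/4$ of each other.

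To upgrade this to a comparison over the entire sphere $\partial B(x,\bfa R/2)$, I would cover $\partial B(x,\bfa R/2)$ by finitely many such neighbourhoods: any two points $x'_1,x'_2\in\partial B(x,\bfa R/2)$ can be joined by a chain $x'_1=w_0,w_1,\dots,w_n=x'_2$ of points on (or near) the sphere with consecutive ones within distance $\rho/8$, and with $n$ bounded by an absolute constant (the sphere $\partial B(x,\bfa R/2)$ has diameter $\asymp\bfa R\asymp\rho$, so a bounded number of steps suffices — this is where one uses that the radius $\bfa R/2$ of the sphere and the radius $\rho$ of each Harnack neighbourhood are of the same order, a feature specific to the fact that we compare on $\partial B(x,\bfa R/2)$ rather than on a much larger or much smaller sphere). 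Applying the local comparison along the chain yields $g_N^K(x,x'_1)\leq \Cr{harnack}^{\,n} g_N^K(x,x'_2)$ with $n$ bounded by a constant, giving \eqref{eq:harnack} with $C=\Cr{harnack}^{\,n}$.

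The main obstacle is a bookkeeping one rather than a conceptual one: verifying that the constants line up so that the Harnack ball of radius $\asymp\bfa R$ genuinely avoids $K$ and the singularity at $x$, and that the number of chaining steps stays bounded, all \emph{uniformly} in $N$, $R$, $\bfa$ and $h_N$. In particular one must be careful that $\bfa R$ can be as small as $O(1)$ (only $\bfa R>C'$ is assumed), so the absolute constants $c,C',C$ must be chosen compatibly; the hypothesis $\bfa R\ge C'$ is exactly what lets $\rho=c\,\bfa R\ge 1$ and makes the discrete balls nondegenerate, and the hypothesis $K\subset B_{(1-\bfa)R}$ together with $x\in B_R^{\mathrm c}$ is exactly what creates the buffer of width $\asymp\bfa R$ between the sphere and $K$. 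Some minor care is also needed because $\|\cdot\|$ is only comparable to a genuine metric up to additive $O(1)$ (the vertical coordinate is wrapped), but this is harmless since all relevant scales exceed a fixed constant.
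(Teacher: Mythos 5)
Your proposal is correct and follows essentially the same route as the paper: apply the elliptic Harnack inequality of Corollary~\ref{C:harnack} on balls of radius $\asymp \bfa R$ centred on $\partial B(x,\bfa R/2)$ (which avoid both $K$ and the pole $x$ thanks to $K\subset B_{(1-\bfa)R}$ and $x\in B_R^{\mathrm c}$), then chain a bounded number of overlapping such balls around the sphere. The paper makes the chain explicit via the net $\Lambda(\bfa R/4)$ from Lemma~\ref{lem:renorm_la}, but this is only a concrete implementation of the covering/chaining step you describe.
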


\begin{proof}
    Since for all $x\in B^{\text{c}}_R$ and $u\in 
    \partial B(x,\bfa R/2)$, $g^K(x,\cdot)$ is nonnegative and harmonic in $B(u,\bfa R/4)$, it follows from \eqref{eq:harnack_general} that 
    \begin{equation} \label{eq:harnack_ball}
        \sup_{v\in B(u,\bfa R/4)}g_N^K(x,v)\leq C'\inf_{v\in B(u,\bfa R/4)}g_N^K(x,v).
    \end{equation}
    We now proceed with a chaining argument and let $\{u_1,u_2,\dots ,u_n\}\subset \Lambda(\bfa R/4)$ be a collection of sites such that $n\leq  C$, $\partial B(x,\bfa R/2) \subset \bigcup_{i=1}^n B(u_i,\bfa R/4)$,  $B(u_1,\bfa R/4)\cap B(u_n,\bfa R/4)\neq \emptyset$, and $B(u_i,\bfa R/4)\cap B(u_{i+1},\bfa R/4)\neq \emptyset$ for all $i\in\{1,\dots,n-1\}$. Note the existence of such collection of sites follows from \eqref{eq:renorm_coverage} and \eqref{eq:renorm_count}. Hence for all $x_1,x_2 \in \partial B(x,\bfa R/2)$, there exists $i,j\in\{1,\dots,n\}$ such that $x_1\in B(u_i,\bfa R/4)$ and $x_2 \in B(u_j,\bfa R/4)$, and we may assume $i\leq j$ without loss of generality. Combining this and \eqref{eq:harnack_ball}, we have that
    \begin{equation*}
        \frac{g_N^K(x,x_1)}{g_N^K(x,x_2)}\leq 
        \frac{\sup_{v\in B(u_i,\bfa R/4)}g_N^K(x,v)}{\inf_{v^\prime \in B(u_j,\bfa R/4)}g_N^K(x,v^\prime)}
        \leq \prod_{m=i}^{j-1}\frac{\sup_{v\in B(u_m,\bfa R/4)}g_N^K(x,v)}{\inf_{v^\prime \in B(u_{m+1},\bfa R/4)}g_N^K(x,v^\prime)}\leq C.
    \end{equation*}

\end{proof}

We now borrow a notion of ``good obstacle'' $\mathcal{O}$ from \cite{drewitz_arm_2023}; see also \cite{RI-III, GRS24.1} for related concepts in the context of random interlacements. One point requiring slight care
is that the capacity lower bound inherent to the definition of a good obstacle set involves killing, which cannot be dispensed with (unlike in \cite{drewitz_arm_2023}) owing to its effect within our setup (cf.~for instance Proposition~\ref{cor:killed_green} and Theorem~\ref{thm:green_main}).



We say that $\pi=(x_i)_{1\leq i\leq M}$ is a
path in $\Lambda(L)$ from $0$ to $K\subset \mathbb{S}_N$ if $x_1=0$, $x_M\in K$, $x_i\in\Lambda(L)$ for all $1\leq i\leq M$, and for each $1\leq i\leq M-1$, there exist $x\in B(x_i,L)$ and $y\in B(x_{i+1},L)$ such that $x$ and $y$ are neighbours in $\mathbb{S}_N$. We call a set $\lO \subset \mathbb{S}_N$ a $(L,R,n,\kappa)$-good obstacle if for all paths $\pi$ in $\Lambda(L)$ from $0$ to $B_R^\text{c}$, there exists a set $A\subset \text{range}(\pi\cap B_R)$ with $|A|\geq n$ such that $\capacity_N^{B(x^\prime,2L)^{\text{c}}}(\lO\cap B(x^\prime,L))\geq \kappa$ for all $x^\prime \in A$.

Recall that $P_{x}^{\lO}$ denotes the law of the random walk with killing on $\lO$; see Section~\ref{eq:sec:kill} for notation. As we now breifly explain, by adapting the argument of \cite[Lemma 2.1]{drewitz_arm_2023}, one finds that $\Cl{large_ball}<\infty$ such that, for all $N,R, L\geq 1$ with $ \Cr{C:range}N \geq R \geq C$, $R\geq  L $, all $\frac{1}{2}>\bfa >\frac{C}{R}$, $\kappa >0$, integer $n\geq 1$, $x\in B_{\Cr{large_ball}R}^{\text{c}}$ and for any $(L,R,n,\kappa)$-good obstacle set $\lO\subset  B_{(1-\bfa)R}$,
    \begin{equation} \label{eq:no_hit_Obis}
        P_0^{\lO}\left(H_x<\infty\right) \leq  e^{-c\frac{\kappa n}{L\wedge h_N}}  \sup_{x^\prime \in \partial B(x,\bfa R/2)}  P_{x^\prime}^{\lO} ( H_x< \infty)
        ;
    \end{equation}
indeed \eqref{eq:no_hit_Obis} arises by looking at subsequent boxes centered in $\Lambda(L)$ visited by $X$ on its way to $x$, constituting $\pi$. Whenever $X$ enters a box $B=B(y,L)$ with center $y$ that lies in $A$, the good obstacle $\mathcal{O}$ has a significant presence, i.e.~one gets that $P_{y^\prime}(H_{\lO} < H_{B(y,2L)^\text{c}}) \geq    \frac{c\kappa }{L\wedge h_N}
$ for all $y^{\prime} \in B$ by combining the capacity lower bound inherent to $\mathcal{O}$ and \eqref{cor:killed_green}. Applying the Markov property multiple times and using that $|A| \geq n$ thus produces the desired exponential pre-factor, and \eqref{eq:no_hit_Obis} quickly follows.

The bound \eqref{eq:no_hit_Obis} quantifies that a good obstacle set $\mathcal{O}$ is hard to avoid for the random walk. In previous works, e.g.~\cite{drewitz_arm_2023, drewitz_critical_2024}, which assumed polynomial decay of the Green's function with some exponent $\nu$, the term $P_{x^\prime}^{\lO} ( H_x< \infty)$ on the right-hand side of \eqref{eq:no_hit_Obis} was never problematic (and simply bounded by $CR^{-\nu}$), essentially because the killing by $\mathcal{O}$ is no longer felt. Within the present setup (which permits recurrent behaviour at scale $R$), this  is more subtle. The estimates that we will need to deal with this issue are the content of the following lemma.

\begin{lemma} \label{lem:nohit_obstacle} Under the assumptions above \eqref{eq:no_hit_Obis}, and if $\frac{\kappa n}{L\wedge h_N}\geq \Cl{avoid_cost}$, with $B= B(x,\bfa R/2)$,
    \begin{equation} \label{eq:no_hit_O}
     \sup_{x^\prime \in \partial B}  P_{x^\prime}^{\lO} ( H_x< \infty) \leq C
        \inf_{x^\prime\in B} \frac{g^{\lO\cup\{0\}}_N(x^\prime,x)}{g^{\lO}_N(x,x)}.
            \end{equation}
      Moreover, for some $\Cl[c]{scale_rl}\in(0,\frac{1}{48})$ and for any set $\lO\subset  B_{(1-\bfa)R}$, if $x\in \partial B_{\Cr{large_ball}R}$ then
    \begin{equation} \label{eq:Ade_out}
       \inf_{x^\prime} P_{x^\prime}^{\lO}\left(H_x<\infty\right) \geq
        ce^{-C\alpha^h_N} \times
        \left(1\wedge \frac{h_N}{\log(\bfa R)}\right)\inf_{u\in B^x} g_N^{\lO}(u,x) ,
    \end{equation}
    where $\alpha^h_N := (\frac{1}{\bfa})^2\times(\frac{R\wedge h_N}{\bfa R})$, $B^x\coloneqq B({x,\Cr{scale_rl}\bfa R})$ and the infimum is over $x^\prime \in  B_{R}\setminus B_{(1-\frac{\bfa}{4})R}$.
\end{lemma}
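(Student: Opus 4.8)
The plan is to establish the two bounds in Lemma~\ref{lem:nohit_obstacle} separately, both resting on the last-exit decomposition~\eqref{eq:last_exit} (in its killed form, with $g_N^{\lO}$ and $e^{\lO}_{\{x\},N}$ replacing $g_N$ and $e_{\cdot,N}$) applied to the singleton $\{x\}$, together with the Harnack-type estimate~\eqref{eq:harnack} and the killed Green's function bounds of Proposition~\ref{cor:killed_green}. For~\eqref{eq:no_hit_O}: writing $P^{\lO}_{x'}(H_x<\infty) = g^{\lO}_N(x',x)\,e^{\lO}_{\{x\},N}(x) = g^{\lO}_N(x',x)/g^{\lO}_N(x,x)$ (using that $e^{\lO}_{\{x\},N}(x) = \lambda_x P^{\lO}_x(\tilde H_x=\infty)$ and $g^{\lO}_N(x,x) = \lambda_x^{-1}E^{\lO}_x[\text{local time at }x] = P^{\lO}_x(\tilde H_x=\infty)^{-1}$ by the standard renewal identity), I would compare $g^{\lO}_N(x',x)$ for $x'\in\partial B$ with $g^{\lO\cup\{0\}}_N(x',x)$: one inclusion is free by monotonicity of the killed Green's function; for the reverse, restarting the walk at its last visit to $0$ before hitting $x$ and using that under the hypothesis $\frac{\kappa n}{L\wedge h_N}\geq \Cr{avoid_cost}$ the return probability from $0$ to $x$ avoiding $\lO$ is at most a constant $<1$ (this is precisely~\eqref{eq:no_hit_Obis} applied at scale $R$ with $x'=0$, noting $0\in B_{(1-\bfa)R}$, so a geometric-series bound absorbs the difference). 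Then~\eqref{eq:harnack} (applicable since $\lO\subset B_{(1-\bfa)R}$ and $x\in B_R^{\rm c}$) lets me replace the supremum over $\partial B$ by an infimum up to a constant, and an infimum over $\partial B$ dominates the infimum over the larger set $B$ up to the maximum principle for the harmonic function $g^{\lO\cup\{0\}}_N(\cdot,x)$ on $B\setminus\{x\}$.

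For the lower bound~\eqref{eq:Ade_out}, the strategy is a direct first-moment / second-moment comparison on $g_N^{\lO}$. Starting from $x'\in B_R\setminus B_{(1-\frac{\bfa}{4})R}$, I would lower-bound $P^{\lO}_{x'}(H_x<\infty)$ by forcing the walk to first reach $\partial B^x = \partial B(x,\Cr{scale_rl}\bfa R)$ without hitting $\lO$ — which costs at most $e^{-C\alpha^h_N}$ by a standard hitting estimate for the walk on the slab at scale $\bfa R$, using the killed-capacity control of Corollary~\ref{cor:killed_cap} with $r\asymp\bfa R$, $r'\asymp R$ to see that $\capacity_N^{\cdot}$ of a ball of radius $\Cr{scale_rl}\bfa R$ is of order $\bfa R\wedge h_N/\log(\cdot)$, whence the exponent $\alpha^h_N = \bfa^{-2}(R\wedge h_N)/(\bfa R)$ — and then from $\partial B^x$ reaching the point $x$. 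The latter probability, via~\eqref{eq:last_exit} for $\{x\}$ relative to $\mathbb S_N\setminus \lO$, equals $g^{\lO}_N(u,x)/g^{\lO}_N(x,x)$ for $u\in\partial B^x$, and $g^{\lO}_N(x,x)$ is bounded above by $C(1\vee \frac{\log(\bfa R)}{h_N})$ via Proposition~\ref{cor:killed_green} with $r\asymp\bfa R$ (the relevant killing set contains $B(x,c\bfa R)^{\rm c}$ since $\lO\subset B_{(1-\bfa)R}$ stays at distance $\gtrsim\bfa R$ from $x\in\partial B_{\Cr{large_ball}R}$ for $\Cr{large_ball}$ large and $\Cr{scale_rl}$ small). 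Combining gives $P^{\lO}_{x'}(H_x<\infty)\geq c\,e^{-C\alpha^h_N}\big(1\wedge\frac{h_N}{\log(\bfa R)}\big)\inf_{u\in B^x} g^{\lO}_N(u,x)$ as claimed; taking the infimum over $x'$ is harmless since the bound is uniform in the starting point once it lies in the prescribed annulus.

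The main obstacle I anticipate is the bookkeeping around the \emph{killing set} $\lO$ in two places. First, in~\eqref{eq:no_hit_O}, getting the comparison $g^{\lO}_N(x',x)\leq C\,g^{\lO\cup\{0\}}_N(x',x)$ to hold uniformly requires that escaping $0$ back to $x$ while avoiding $\lO$ be genuinely costly, and this costliness is exactly what the good-obstacle hypothesis buys — but one must be careful that the path from $0$ realizing the excursion back to $x$ lies in $B_R$ (so that the obstacle is actually encountered), which is why the hypothesis is phrased via paths in $\Lambda(L)$ from $0$ to $B_R^{\rm c}$; threading~\eqref{eq:no_hit_Obis} correctly here (with roles of $0$ and $x$ essentially swapped by reversibility) needs attention. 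Second, in~\eqref{eq:Ade_out}, one must verify that $\lO\subset B_{(1-\bfa)R}$ together with $x\in\partial B_{\Cr{large_ball}R}$ and the constraint $\Cr{scale_rl}<\tfrac1{48}$ really do keep $\lO$ at macroscopic distance $\gtrsim\bfa R$ from the ball $B^x$, so that Proposition~\ref{cor:killed_green} applies with killing set containing $B(x,c\bfa R)^{\rm c}$; here the hypothesis $\bfa > C/R$ ensures $\bfa R\gtrsim 1$ so the $K_0$/logarithm estimates are in their valid range. Once these geometric separations are pinned down, both inequalities follow by routine application of the results already established in Sections~\ref{sec:cap-slab} and~\ref{eq:sec:kill}.
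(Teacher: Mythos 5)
Your treatment of \eqref{eq:no_hit_O} follows the paper's route (last-exit decomposition, comparison of $g_N^{\lO}$ with $g_N^{\lO\cup\{0\}}$, Harnack, maximum principle), but the reduction of the key step to ``the return probability from $0$ to $x$ avoiding $\lO$ is at most a constant $<1$'' does not close the argument. The correction term is $g_N^{\lO}(0,x^\prime)\,P_0(H_x<H_{\lO})$, and $g_N^{\lO}(0,x^\prime)$ is not a priori dominated by the target quantity $g_N^{\lO}(x^\prime,x)$ (the latter can carry a factor $\tfrac{1}{\bfa R}$ while the former carries logarithmic, near-on-diagonal contributions in the recurrent regime). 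What is needed is the \emph{relative} bound $P_0(H_x<H_{\lO})\leq \tfrac12 P_{x^\prime}^{\lO}(H_x<\infty)$, which one gets only by keeping the factor $\sup_{\partial B}P_{\cdot}^{\lO}(H_x<\infty)$ on the right of \eqref{eq:no_hit_Obis} and then invoking \eqref{eq:harnack}; an absolute bound $<1$ plus a geometric series is insufficient. The ingredients you cite do repair this, but the step as written would fail.

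The genuine gap is in \eqref{eq:Ade_out}. You extract the factor $1\wedge\frac{h_N}{\log(\bfa R)}$ by asserting $g_N^{\lO}(x,x)\leq C\big(1\vee\frac{\log(\bfa R)}{h_N}\big)$ because ``the relevant killing set contains $B(x,c\bfa R)^{\text{c}}$.'' This is backwards: since $\lO\subset B_{(1-\bfa)R}$ is \emph{disjoint} from a neighbourhood of $x$, one has $\lO\subset B(x,c\bfa R)^{\text{c}}$, not $\lO\supset B(x,c\bfa R)^{\text{c}}$, hence $g_N^{\lO}(x,x)\geq g_N^{B(x,c\bfa R)^{\text{c}}}(x,x)$ --- the wrong direction for your upper bound. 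The claim is simply false for $\lO=\emptyset$ (which is allowed here), where $g_N^{\emptyset}(x,x)=g_N(0)\asymp 1\vee\frac{\log N}{h_N}\gg 1\vee\frac{\log(\bfa R)}{h_N}$ whenever $\bfa R\ll N$; dividing by $g_N^{\lO}(x,x)$ then yields only the weaker factor $1\wedge\frac{h_N}{\log N}$, which does not cancel against $g_N^{\lO\cup\{0\}}(x,x)\geq c(1\vee\frac{\log R}{h_N})$ in the subsequent application (Lemma~\ref{lem:hit_annulus}), so the loss is not cosmetic. The correct move is to \emph{insert} the localising killing yourself: bound $P_u^{\lO}(H_x<\infty)$ from below by the probability of hitting $x$ before exiting a ball of radius $\asymp\bfa R$ around $x$; as $\lO$ is disjoint from that ball this equals $g_N^{K}(u,x)\,\capacity_N^{K}(\{x\})$ with $K$ the complement of the small ball, and Corollary~\ref{cor:killed_cap} then legitimately gives $\capacity_N^{K}(\{x\})\geq c\big(1\wedge\frac{h_N}{\log(\bfa R)}\big)$. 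Relatedly, the prefactor $e^{-C\alpha^h_N}$ is not a one-shot capacity estimate at scale $\bfa R$ but the product of $\leq C\alpha^h_N$ elementary ball-to-ball steps, each killed on the complement of a ball of radius $\tfrac18\bfa R$; the chain must be routed through $B_R\setminus B_{(1-\frac{\bfa}{4})R}$ so that these killed balls never meet $\lO$, which is exactly why the infimum in \eqref{eq:Ade_out} is restricted to that region --- a point your sketch does not address.
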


\begin{proof} One knows that $ P_{x^\prime}^{\lO} ( H_x< \infty) =  {g^{\lO}_N(x^\prime,x)}/{g^{\lO}_N(x,x)}  $ as a direct consequence of \eqref{eq:last_exit} with $A=\{ x\}$ (more precisely, its analogue for the process $P_{x^\prime}^{\lO}$, which involves $g^{\lO}$). We show that
    \begin{equation} \label{eq:no_zero}
        g^{\lO}_N(x^\prime,x) \leq Cg^{\lO\cup \{0\}}_N(x^\prime,x), \quad \forall x^\prime \in \partial B .
    \end{equation}
    If \eqref{eq:no_zero} holds, then \eqref{eq:harnack} yields that $g^{\lO\cup \{0\}}_N(x^\prime,x)
        \leq C\inf_{z \in \partial  B}g^{\lO\cup \{0\}}_N(z,x)$ and the infimum can be taken over $z \in B$ instead  by application of the maximum principle (see, e.g.~\cite[Theorem 1.37]{barlow_random_2017}) since $g^{\lO\cup \{0\}}_N(\cdot,x)$ is harmonic on $B(x,\bfa R/2)\setminus\{x\}$ and bounded (by $ g^{\lO\cup \{0\}}_N(x,x)$). This gives \eqref{eq:no_hit_O}.
        
        To obtain \eqref{eq:no_zero}, one writes $g^{\lO\cup \{0\}}_N(x^\prime,x) =g^{\lO}_N(x^\prime,x) - g_N^{\lO}(0,x^\prime )P_0(H_x<H_{\lO}).$
    Since one has that $g^{\lO}_N(x^\prime,x)/g^{\lO}_N(x^\prime,x^\prime)=P_{x^\prime}^{\lO}(H_x<\infty)$ and $g_N^{\lO}(0,x^\prime )/g^{\lO}_N(x^\prime,x^\prime)=P_{0}^{\lO}(H_{x^\prime}<\infty)$, it suffices to show $P_0(H_x<H_{\lO}) \leq \frac{1}{2}P_{x^\prime}^{\lO}(H_x<\infty)$. By applying the strong Markov property at time $H_{B}$ and using \eqref{eq:harnack} for the function $P_{\cdot}^{\lO}(H_x<\infty)$ on $\partial B$ we get that
    \begin{equation*}
        P_0(H_x<H_{\lO}) \leq \sup_{u \in \partial B}P_{u}^{\lO}(H_x<\infty)P_{0}^{\lO}(H_{B}<\infty)
        \leq CP_{x^\prime}^{\lO}(H_x<\infty) e^{-c\frac{\kappa n}{L\wedge h_N}},
    \end{equation*}
    where we used the same argument as described below \eqref{eq:no_hit_Obis} to get the exponential term in the last inequality. Hence by choosing $\frac{\kappa n}{L\wedge h_N}\geq \Cr{avoid_cost}$ with $\Cr{avoid_cost}$ large enough we obtain \eqref{eq:no_zero}.

As to \eqref{eq:Ade_out}, assume $\Cr{scale_rl}<\frac{1}{12}$ and let $B_v= B(v,\tfrac13\Cr{scale_rl} \bfa R)$, $K_v=B(v,\frac{1}{8} \bfa R)^{\text{c}}$ and note that by Proposition \ref{cor:killed_green} and Corollary \ref{cor:killed_cap}, we have that for all $u,v,\in \mathbb{S}_N$ such that $u\neq v$ and $\|u-v\|\leq \Cr{scale_rl}\bfa R$,
    \begin{equation} \label{eq:step_nohit_inner_annulus}
        P_u\left(H_{B_v} < H_{K_v}\right) \geq \capacity^{K_v}_N(B_v) \times \inf_{v^\prime \in B_v }g_N^{K_v}(u,v^\prime) \geq c.
    \end{equation}
   We utilise (\ref{eq:step_nohit_inner_annulus}) to make a connection from $x^\prime$ to $x$. 
    Let $x^\prime \in  B_{R}\setminus B_{(1-\frac{\bfa}{4})R}$. Then there exists, for some integer $p^\prime\geq1$, a nearest-neighbour path  $\pi=(x_i)_{1\leq i \leq p^\prime} \subset \mathbb{S}_N$ such that $x_1=x^\prime, x_{p^\prime}=\bar{x}$ where $\bar{x}$ is a vertex in $B_x$ and  $x_i \notin B_{(1-\frac{\bfa}{4})R}$ for all $1\leq i\leq p^\prime$.   
    Hence, $\dist(K_{x_i}^{ \text{c}} , B_{(1-\bfa)R})\geq R(\bfa-\frac{\bfa}{4})-\bfa R/8\geq \frac{5}{8}\bfa R$ for all $1\leq i\leq p^\prime$. In particular,  $K_{x_i}^{ \text{c}}\cap B_{(1-\bfa)R}=\emptyset$ for all $i$.
    Now let $v_1=x^\prime$ and for each $k\geq 1$, define recursively $v_{k+1}$ as the first vertex in $\Lambda(\lfloor \tfrac13\Cr{scale_rl}\bfa R \rfloor)$ such that $B_{v_{k+1}}$ is visited by $\pi$ after exiting $B_{v_{k}}$. We denote by $p$ the integer such that $x\in B(v_p,\tfrac23\Cr{scale_rl}\bfa R)$.  
    In view of \eqref{eq:renorm_count}, we can assume that $p\leq C\alpha^h_N$, hence by combining \eqref{eq:step_nohit_inner_annulus} and the Markov property we have that
   $ P_{x^\prime}^{\lO}\left(H_x<\infty\right) $ is bounded from below by $\exp\{-C\alpha^h_N\}
        \inf_{u} P_u^{\lO}(H_x<\infty)$, where the infimum ranges over $u\in B_{v_p}$.  We may now conclude since by Corollary \ref{cor:killed_cap} applied with $r=1$, $r'=c \bfa R$, and abbreviating $D\coloneqq  B({x,\Cr{scale_rl}\bfa R})$,
    \begin{align*}
    \begin{split}
       \inf_{u\in B_{v_p}} P_u^{\lO}(H_x<\infty) 
       &\geq \capacity_N^{K_{v_p}}(\{x\})\inf_{u\in D}  
       g_N^{\lO}(u,x) 
       \geq c \left(1\wedge \frac{h_N}{\log(\bfa R)}\right)\inf_{u\in D} g_N^{\lO}(u,x)  .     
    \end{split}
    \end{align*}
\end{proof}

The remainder of this section is concerned with the proof of Theorem~\ref{prop:one_arm}. The argument is modelled after the proof of a similar result from \cite{drewitz_critical_2024}, and we refer to it whenever possible; see also \cite{cai_one-arm_2024, werner2025switchingidentitycablegraphloop} for possible alternative routes. The results of \cite{drewitz_critical_2024} apply to a class ``low-dimensional'' transient graphs with polynomial volume growth, and polynomial decay of the Green's function. These conditions are too restrictive for our purposes. The following lemma will eventually follow by application of \cite[Proposition 2.1]{drewitz_critical_2024}, itself an extension of \cite[Proposition 5.2]{lupu_loop_2016}. Recall that $\mathcal{C}$ denotes the cluster of $0$ in $\{ \varphi \geq 0\}$.


\begin{lemma} \label{lem:hit_annulus}
    For all $N,R\geq 1$ such that $R\leq \Cr{C:range}N$,
    $\frac{2}{3}>\bfb>2\bfa >\frac{C}{R}$
    , $\frac{\bfa}{4} \geq \bfe> 2\bfd$, $s>0$ and $\lO\subset \Ann^{\bfa,\bfb}_R \coloneqq B_{(1-\bfa)R}\setminus B_{(1-\bfb)R}$, one has    \begin{multline} \label{eq:hit_annulus}
       \P_N^{\lO}  \big(\capacity_N(\mathcal{C}\cap \Ann^{\bfd,\bfe}_R)\geq s \Fbox((\bfe-\bfd)R) \big)  
       \leq
     \arctan\left[C \left(s\big(1\vee\tfrac{\log R}{h_N}\big)\Fbox(\bfd R)\right)^{-\frac{1}{2}} \times \rho
        \right] ,
    \end{multline}
    where $\rho \coloneqq \inf_{x \in \partial \hat{B}} \frac{g^\lO_N(x,x)e^{C\alpha^h_N }P^\lO_0(H_x<\infty)}{\inf_{x^\prime \in B^x} g_N^{\lO\cup \{0\}}(x^\prime,x)}$ with $B^x=B({x,\Cr{scale_rl}\bfa R})$, $\hat{B}= B_{\Cr{large_ball}R}$. 
\end{lemma}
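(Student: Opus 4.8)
The plan is to adapt the strategy of the analogous result in \cite{drewitz_critical_2024}, using the isomorphism/switching identity via \cite[Proposition 2.1]{drewitz_critical_2024} (building on \cite[Proposition 5.2]{lupu_loop_2016}), together with the killed random walk estimates developed in Section~\ref{eq:sec:kill}. First I would reformulate the event $\{\capacity_N(\mathcal{C}\cap \Ann^{\bfd,\bfe}_R)\geq s\Fbox((\bfe-\bfd)R)\}$ under $\P_N^{\lO}$. The key point, standard in this circle of ideas, is that on this event the cluster of $0$ in $\{\varphi\geq0\}$ inside $\Ann^{\bfd,\bfe}_R$ carries enough capacity that a random walk started from $0$ and killed on $\lO$ hits it with probability quantified by $\capacity^{\cdot}$; conditionally on the field on (a neighbourhood of) this cluster, one can use the Gaussian conditional law and a first/second moment or a direct switching computation to bound the probability that $\varphi\geq0$ on a connecting path. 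The $\arctan$ functional form is inherited, as in \eqref{eq:caplaw}--\eqref{eq:arctan_calculus}, from the tail of the capacity of a cluster of a point; concretely, after conditioning one is left to estimate $\P(\capacity_N^{\lO}(\mathcal{C}')> y)$-type quantities which produce $\frac1\pi\arctan((gy-1)^{-1/2})$, and \eqref{eq:arctan_bound} turns this into the stated bound.

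The second ingredient is to convert the hitting probability of the annular cluster into the quantity $\rho$. Here I would use that for $x$ a typical point of $\mathcal{C}\cap\Ann^{\bfd,\bfe}_R$, the walk from $0$ killed on $\lO$ reaches $x$ with probability comparable to $g_N^{\lO}(0,x)/g_N^{\lO}(x,x) = P_0^{\lO}(H_x<\infty)$; combining this with the lower bound \eqref{eq:Ade_out} from Lemma~\ref{lem:nohit_obstacle} (which controls the return probability from the bulk of $B_R$ back to $x$, producing the factors $e^{-C\alpha_N^h}$ and $1\wedge\frac{h_N}{\log(\bfa R)}$), and with \eqref{eq:no_hit_O} to relate $P_{x'}^{\lO}(H_x<\infty)$ to $g_N^{\lO\cup\{0\}}(x',x)/g_N^{\lO}(x,x)$, one assembles exactly the ratio defining $\rho$. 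The capacity lower bounds for balls with killing, Corollary~\ref{cor:killed_cap}, and the killed Green's function bounds of Proposition~\ref{cor:killed_green} are what guarantee that $\Fbox(\bfd R)$ and the factor $1\vee\frac{\log R}{h_N}$ appear with the right exponents: on scale $\bfd R$ with killing effectively at scale $\bfa R$, the Green's function on diagonal is of order $1\vee\frac{\log R}{h_N}$ and $\capacity^{\cdot}(\mathcal{C}\cap\Ann^{\bfd,\bfe}_R)$ being at least $s\Fbox((\bfe-\bfd)R)\asymp s\Fbox(\bfd R)$ (using $\bfe\geq2\bfd$ and \eqref{eq:def_fbox}) feeds directly into the variational bound.

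The main obstacle, I expect, is twofold. First, unlike in \cite{drewitz_critical_2024}, the Green's function here is not polynomially decaying and the killing on $\lO$ genuinely matters (in the (near-)recurrent regime at scale $R$), so the term $\sup_{x'}P_{x'}^{\lO}(H_x<\infty)$ cannot be crudely bounded; one must carry the precise killed estimates \eqref{eq:killed_green_smallh}, \eqref{eq:no_hit_O}, \eqref{eq:Ade_out} throughout and check that all the geometric parameters $\bfa,\bfb,\bfd,\bfe$ (with the separation hypotheses $\bfb>2\bfa$, $\bfe\geq2\bfd$, $\bfe\leq\bfa/4$, $\bfd<\bfe/2$) are compatible with the annulus $\Ann^{\bfa,\bfb}_R$ lying strictly outside the region where the cluster and the connecting walk live. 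Second, applying \cite[Proposition 2.1]{drewitz_critical_2024} rigorously requires verifying its hypotheses (finiteness of the relevant capacities, condition~(Cap), which we already know holds by the argument following \eqref{eq:caplaw}) and correctly handling the conditioning on the field near $\mathcal{C}\cap\Ann^{\bfd,\bfe}_R$ so that the residual field is an independent GFF with a modified (killed) Green's function — this is where one must be careful that the ``obstacle'' $\lO$ and the set $\{0\}$ are treated consistently, which is exactly what \eqref{eq:no_zero} in Lemma~\ref{lem:nohit_obstacle} is designed for. Once these are in place, the bound \eqref{eq:hit_annulus} follows by chaining the above estimates and invoking \eqref{eq:arctan_bound} and \eqref{eq:one_arm_minus} to put the right-hand side in the stated $\arctan$ form.
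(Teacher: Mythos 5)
Your overall plan is the paper's: apply \cite[Proposition 2.1]{drewitz_critical_2024} on the metric graph $\tSlab\setminus\lO$ and feed in the killed-walk estimates of Section~\ref{eq:sec:kill}. But the step that actually produces the bound is missing, and your description of it is geometrically off. The observable to which \cite[Proposition 2.1]{drewitz_critical_2024} applies is the Green's function difference $g_N^{\lO\cup\{0\}}(x,x)-g_N^{\lO\cup\mathcal{C}}(x,x)$ evaluated at a point $x\in\partial\hat B=\partial B_{\Cr{large_ball}R}$, i.e.\ \emph{far outside} $B_R$ — this is why $\rho$ is an infimum over $x\in\partial\hat B$ and involves $P_0^{\lO}(H_x<\infty)$ for such $x$. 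Your second paragraph instead places $x$ at ``a typical point of $\mathcal{C}\cap\Ann^{\bfd,\bfe}_R$'', and your first paragraph replaces the mechanism by ``conditioning on the field near the cluster'' and a ``first/second moment'' computation, neither of which is how the $\arctan$ arises here: the $\arctan$ comes directly from the explicit law of the difference observable, not from re-deriving a capacity tail after conditioning.

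Concretely, the missing chain is: (i) for $x\in\partial\hat B$ and $K\subset\Ann^{\bfd,\bfe}_R$ with $\capacity_N(K)\geq s\Fbox((\bfe-\bfd)R)$, a last-exit decomposition together with \eqref{eq:Ade_out} and the diagonal lower bound $g_N^{\lO\cup\{0\}}(x,x)\geq c(1\vee\tfrac{\log R}{h_N})$ from Proposition~\ref{cor:killed_green} gives
$P_x^{\lO\cup\{0\}}(H_K<\infty)\geq c\,s\,\Fbox(\bfd R)\,e^{-c'\alpha^h_N}\inf_{u\in B^x}g_N^{\lO\cup\{0\}}(u,x)$;
(ii) writing $g_N^{\lO\cup\{0\}}(x,x)-g_N^{\lO\cup\mathcal{C}}(x,x)=E_x^{\lO\cup\{0\}}[g_N^{\lO\cup\{0\}}(X_{H_{\mathcal{C}}},x)\1\{H_{\mathcal{C}}<\infty\}]$ and applying \eqref{eq:Ade_out} once more yields the lower bound $c\,s\,\Fbox(\bfd R)\,e^{-C\alpha^h_N}\big(\inf_{u\in B^x}g_N^{\lO\cup\{0\}}(u,x)\big)^2$ on the event in question; (iii) the explicit $\arctan$ tail of this observable, together with $g_N^{\lO}(0,x)=g_N^{\lO}(x,x)P_0^{\lO}(H_x<\infty)$, then assembles exactly $\rho$. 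Without step (ii) — in particular the \emph{square} of $\inf_{u\in B^x}g_N^{\lO\cup\{0\}}(u,x)$, which is what makes the ratio in $\rho$ come out right after dividing by the square root of the deviation threshold — the stated form of \eqref{eq:hit_annulus} cannot be reached. Your invocation of \eqref{eq:no_hit_O} and \eqref{eq:no_zero} is also misplaced at this stage: those enter later (in Lemma~\ref{lem:pre_recursion}) to bound $\rho$ on the good-obstacle event, not in the proof of Lemma~\ref{lem:hit_annulus} itself, which holds for an arbitrary $\lO\subset\Ann^{\bfa,\bfb}_R$.
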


\begin{proof}[Proof of Lemma \ref{lem:hit_annulus}]
With a view towards applying \cite[Proposition 2.1]{drewitz_critical_2024}, which regards the tail of $  g_{N}^{\lO\cup \{0\}}(x)-g_{N}^{\lO\cup \mathcal{C}}(x)
$, we need to find a good (i.e.~as large as possible) lower bound on this random variable on the event in \eqref{eq:hit_annulus}. We start with the following random walk estimate. Let $x\in \partial \hat{B}= \partial B_{\Cr{large_ball}R}$ (w.l.o.g.~assume $\Cr{large_ball} \geq 2$), $\lO_0= \lO\cup \{0\}$ so Proposition \ref{cor:killed_green} yields 
    \begin{equation}\label{eq:green-lb-pf-final}
        g^{\lO_{0}}_N(x,x)\geq g^{(B_{ R})^\text{c}}_N(0,0) \geq c\left(1\vee({\log( R)}/{h_N})\right).
    \end{equation}
Now suppose $K \subset \Ann^{\bfd,\bfe}_R$ is such that $K$ is compact, has finitely many components and $\capacity_N(K)\geq s\Fbox((\bfe-\bfd)R)$.
    First note that for all $x^\prime \in K$, by \eqref{eq:green-lb-pf-final} and \eqref{eq:Ade_out}, we have that
    \begin{align*}
    \begin{split}
     g^{{\lO_{0}}}_N(x,x^\prime)
        =g^{{\lO_{0}}}_N(x,x)P_{x^\prime}^{{\lO_{0}}}(H_x<\infty )  
        &\geq c\exp\{-C\alpha^h_N\}\inf_{u\in B^x} g_N^{\lO_{0}}(u,x)  .   
    \end{split}
    \end{align*}
    Hence for all $x\in \partial B_{\Cr{large_ball}R}$, by a last exit decomposition and using the lower bound on $\capacity_N(K)$,    \begin{align} \label{eq:hit_outer_ann}
    \begin{split}
        P_x^{{\lO_{0}}} (H_{K}<\infty) 
        &
         \geq cs
        \Fbox(\bfd R)\exp\{-c^\prime\alpha^h_N\}\inf_{u\in B^x} g_N^{\lO_{0}}(u,x)  .
    \end{split}
    \end{align}  On the event $\{\capacity_N(\mathcal{C}\cap \Ann^{\bfd,\bfe}_R)\geq s \Fbox((\bfe-\bfd)R)\}$, one thus finds the following lower bound for all $x\in \partial \hat{B}$ by applying 
    firs the Markov property at time $H_{\mathcal{C}}$, abbreviating $T= H_{\mathcal{C}\cap \Ann^{\bfd,\bfe}_R}$
    \begin{multline*}
        g_{N}^{\lO_{0}}({ x,x})-g_{N}^{\lO\cup \mathcal{C}}(x,x)
        \!=\! E_x^{\lO_{0}}\left[g_N^{\lO_{0}}(X_{H_{\mathcal{C}}},x)\1\{H_{\mathcal{C}}<\infty\}\right] 
        \!\geq\! c E_x^{\lO_{0}}\left[ g_N^{\lO_{0}}(x,x)P_{X_{T}}^{\lO_{0}}(H_x<\infty) \1\{T<\infty\}\right] \\
        \geq 
        ce^{-C\alpha^h_N}\inf_{u\in B^x} g_N^{\lO_{0}}(u,x)
        \times P_x^{\lO_{0}}(T< \infty)
        \geq cs
        \Fbox(\bfd R)
        e^{-C^\prime\alpha^h_N}
        \left(\inf_{u\in B^x} g_N^{\lO_{0}}(u,x)\right)^2
        ,
    \end{multline*}
    where the first inequality follows by last-exit decomposition, the second inequality follows from \eqref{eq:green-lb-pf-final} and \eqref{eq:Ade_out} and the last inequality follows from \eqref{eq:hit_outer_ann}. Combining this with \cite[Proposition 2.1]{drewitz_critical_2024} applied to the metric graph $\tSlab\setminus \lO$, we obtain that $   \P_N^{\lO}  \big(\capacity_N(\mathcal{C}\cap \Ann^{\bfd,\bfe}_R)\geq s \Fbox((\bfe-\bfd)R) \big)  
$ is bounded by
    \begin{multline*}
         \P_N^{\lO}\left(
        g_{N}^{\lO_{0}}(x,x)-g_{N}^{\lO\cup \mathcal{C}}(x,x)\geq 
        cs
        \Fbox(\bfd R)e^{-C^\prime\alpha^h_N }\big( \textstyle\inf_{u\in B^x} g_N^{\lO_{0}}(u,x)\big)^{2}
        \right)\\
        =\frac{1}{\pi}\arctan\left(\left(g^{\lO}_N(0,0) cs
        \Fbox(\bfd R) \right)^{-\frac{1}{2}}\frac{g^{\lO}_N(0,x)e^{C\alpha^h_N }}{\textstyle\inf_{u\in B^x} g_N^{\lO_{0}}(u,x)} \right) 
    \end{multline*}
    We may conclude upon noting $g^{\lO}_N(0,x)= g^{\lO}_N(x,x)P^{\lO}_0(H_x<\infty)$
    and using \eqref{eq:green-lb-pf-final} and a similar estimate yielding $g^{\lO}_N(0,0) \geq c(1\vee({\log (R)}/{h_N}))$ .
\end{proof}

\begin{lemma} \label{lem:pre_recursion}
    For all $N\geq 1$, $C \leq R\leq \Cr{C:range}N$, $\frac{2}{3}>b>2\bfa $, $ 2\bfd<\bfe\leq \frac{\bfa}{4}$ and $s,t>0$,
    \begin{align} \label{eq:pre_recursion}
    \begin{split}
        &\P_N \left( \capacity_N(\mathcal{C}\cap \Ann^{\bfa,\bfb}_R)\leq t \Fbox((\bfb-\bfa)R),\capacity_N(\mathcal{C}\cap \Ann^{\bfd,\bfe}_R) \geq s \Fbox((\bfe-\bfd)R) \right) \\
        &\leq \arctan 
        \left[
        \psi_s e^{C^\prime\alpha^h_N-\Cl[c]{exp_const}t^{-1}}
        \right], \quad \psi_s \equiv C \left(s\left(1\vee\tfrac{\log R}{h_N}\right)\Fbox(\bfd R)\right)^{-\frac{1}{2}} .
    \end{split}
    \end{align}
\end{lemma}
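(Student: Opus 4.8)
The plan is to derive \eqref{eq:pre_recursion} by conditioning on the obstacle-like structure formed by the cluster in the outer annulus $\Ann^{\bfa,\bfb}_R$ and then applying Lemma~\ref{lem:hit_annulus} to the inner annulus $\Ann^{\bfd,\bfe}_R$. The point is that, on the event $\{\capacity_N(\mathcal{C}\cap \Ann^{\bfa,\bfb}_R)\leq t \Fbox((\bfb-\bfa)R)\}$, the cluster $\mathcal{C}\cap \Ann^{\bfa,\bfb}_R$ has \emph{small} capacity, which should force the complementary sign-set $\{\varphi < 0\}$ within that annulus to contain a ``good obstacle'' in the sense defined above \eqref{eq:no_hit_Obis}; indeed any nearest-neighbour crossing of $\Ann^{\bfa,\bfb}_R$ by $\mathcal{C}$ is ruled out when the capacity is small, so $\{\varphi<0\}$ percolates across the annulus and this percolating structure is hard for the walk to avoid. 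The obstacle set $\lO$ produced this way lies in $B_{(1-\bfa)R}\setminus B_{(1-\bfb)R}=\Ann^{\bfa,\bfb}_R$, exactly the hypothesis needed to invoke Lemma~\ref{lem:hit_annulus}.

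Concretely, first I would run a renormalisation argument at scale $L$ (chosen so that $L \wedge h_N$ matches the relevant capacity scale, e.g.~$L = c(\bfb-\bfa)R$ or a dyadic subscale thereof): using Lemma~\ref{lem:renorm_la} to tile $\Ann^{\bfa,\bfb}_R$ by boxes $B(x,L)$ with $x \in \Lambda(L)$, and using the capacity lower deviation bound of Proposition~\ref{lem:cap_walk_space_2} together with the Markov property, one shows that with overwhelming probability (conditionally on $\varphi$ restricted outside the annulus, or rather via a sprinkling/FKG-type argument as in \cite{drewitz_arm_2023, drewitz_critical_2024}) the set $\lO = \{\varphi < 0\} \cap \Ann^{\bfa,\bfb}_R$ is an $(L,R,n,\kappa)$-good obstacle with $\frac{\kappa n}{L\wedge h_N} \geq c\, t^{-1}$ — the inverse dependence on $t$ coming from the fact that a capacity budget of $t\Fbox((\bfb-\bfa)R)$ for the cluster translates, via the box-capacity estimate \eqref{eq:box-cap} and the additivity/monotonicity of capacity, into at most $\sim t^{-1}$ ``blocking layers'' that $\mathcal{C}$ can fail to penetrate, hence $\gtrsim t^{-1}$ layers where $\mathcal{O}$ is substantial. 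This is the step that ultimately produces the factor $e^{-\Cr{exp_const}t^{-1}}$ in \eqref{eq:pre_recursion}: it enters through the prefactor $e^{-c\kappa n/(L\wedge h_N)}$ in \eqref{eq:no_hit_Obis}, which then feeds into the quantity $\rho$ of Lemma~\ref{lem:hit_annulus} via $P_0^\lO(H_x < \infty) \leq e^{-c\kappa n/(L\wedge h_N)}\sup_{x'\in\partial B(x,\bfa R/2)} P_{x'}^\lO(H_x<\infty)$.

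With the good-obstacle property in hand, I would condition on $\lO$ and apply Lemma~\ref{lem:hit_annulus} to bound $\P_N^\lO(\capacity_N(\mathcal{C}\cap\Ann^{\bfd,\bfe}_R)\geq s\Fbox((\bfe-\bfd)R))$ by $\arctan[C(s(1\vee\tfrac{\log R}{h_N})\Fbox(\bfd R))^{-1/2}\rho]$, and then estimate $\rho$. Unpacking $\rho = \inf_{x\in\partial\hat B}\frac{g^\lO_N(x,x)e^{C\alpha^h_N}P^\lO_0(H_x<\infty)}{\inf_{x'\in B^x} g_N^{\lO\cup\{0\}}(x',x)}$: the numerator's hitting probability is controlled by \eqref{eq:no_hit_Obis} as $\leq e^{-c\kappa n/(L\wedge h_N)}\sup_{x'\in\partial B(x,\bfa R/2)}P_{x'}^\lO(H_x<\infty)$, then \eqref{eq:no_hit_O} bounds $\sup_{x'\in\partial B}P_{x'}^\lO(H_x<\infty)\leq C\inf_{x'\in B}g_N^{\lO\cup\{0\}}(x',x)/g_N^\lO(x,x)$; multiplying through, the factor $g^\lO_N(x,x)$ cancels, the $\inf_{x'}g_N^{\lO\cup\{0\}}(x',x)$ factors cancel against the denominator of $\rho$ (here one needs $B^x \supset B$ or at least compatible Harnack comparisons, which is where the constant $\Cr{scale_rl}$ and the Harnack inequality Corollary~\ref{C:harnack} are used), leaving $\rho \leq C e^{C\alpha^h_N} e^{-c\kappa n/(L\wedge h_N)} \leq C e^{C'\alpha^h_N - \Cr{exp_const} t^{-1}}$. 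Substituting this into the $\arctan$ bound from Lemma~\ref{lem:hit_annulus}, and using monotonicity of $\arctan$, yields precisely \eqref{eq:pre_recursion} with $\psi_s = C(s(1\vee\tfrac{\log R}{h_N})\Fbox(\bfd R))^{-1/2}$.

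The main obstacle I anticipate is establishing the good-obstacle property with the \emph{quantitatively correct} dependence $\frac{\kappa n}{L\wedge h_N}\gtrsim t^{-1}$ uniformly across all regimes of $h_N$ — in particular handling the passage from ``the cluster has capacity $\leq t\Fbox$'' to ``along every renormalised crossing path there are $\gtrsim t^{-1}$ boxes where $\{\varphi<0\}$ has capacity $\geq\kappa$''. This requires care because (i) the relevant capacity scale $\Fbox$ interpolates between $R$ and $h_N/K_0(\cdots)$, so $L$ must be chosen adaptively and the killed capacity estimates of Corollary~\ref{cor:killed_cap} invoked in the right regime; (ii) one must pass from an unconditional statement about $\capacity_N(\mathcal{C}\cap\Ann^{\bfa,\bfb}_R)$ to a statement that holds after conditioning on the obstacle, which typically requires an FKG/domination argument exploiting that $\{\varphi<0\}$ and $\{\varphi\geq 0\}$ are negatively/positively associated appropriately, or a two-step exploration as in \cite{drewitz_critical_2024}; and (iii) the factor $\alpha^h_N = (\tfrac1\bfa)^2\cdot\tfrac{R\wedge h_N}{\bfa R}$ must be kept track of so that, in the eventual recursion (not part of this lemma but the reason for its form), the sum of $\alpha$-terms over dyadic scales can be absorbed into the $t^{-1}$-gain by choosing $\bfa$ appropriately.
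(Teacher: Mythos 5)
Your second and third paragraphs (invoking Lemma~\ref{lem:hit_annulus}, then collapsing $\rho$ via \eqref{eq:no_hit_Obis} and \eqref{eq:no_hit_O} to get $\rho\leq e^{C\alpha^h_N-ct^{-1}}$) match the paper's endgame. The gap is in how you build the obstacle and why you are then entitled to the law $\P_N^{\lO}$. You take $\lO=\{\varphi<0\}\cap\Ann^{\bfa,\bfb}_R$, but the event in the lemma constrains only the capacity of the cluster \emph{of the origin} in the annulus, not the whole excursion set $\{\varphi\geq0\}$: the annulus could be largely covered by components of $\{\varphi\geq0\}$ disconnected from $0$, in which case $\{\varphi<0\}$ has no reason to be a good obstacle along every renormalised crossing, and no FKG or sprinkling argument will manufacture one. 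Worse, even granting a good obstacle measurable with respect to $\varphi$, conditioning on it does not produce the GFF killed on $\lO$ — $\P_N^{\lO}$ in Lemma~\ref{lem:hit_annulus} is the field with Green's function $g_N^{\lO}$, which is not the conditional law of $\varphi$ given $\{\varphi<0\}\supset\lO$ — so the application of Lemma~\ref{lem:hit_annulus} under your conditioning is not justified. You flag this as an issue to be handled "by FKG or a two-step exploration," but that is precisely the missing idea, and it is not a technicality.

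The paper resolves both points at once by passing to Lupu's loop-soup representation. The obstacle $\lO$ is the union of ranges of \emph{big loops}: loops contained in $\Ann^{\bfa,\bfb}_R$ whose range has capacity exceeding $t\Fbox((\bfb-\bfa)R)$. On the target event these loops are automatically disjoint from $\mathcal{C}$ (a single big loop touching $\mathcal{C}$ would push $\capacity_N(\mathcal{C}\cap\Ann^{\bfa,\bfb}_R)$ above the threshold), so by the restriction property of the Poisson loop soup and the isomorphism, $\mathcal{C}$ has exactly the law of the cluster under $\P_N^{\lO}$ — this is what legitimises Lemma~\ref{lem:hit_annulus}. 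The $t^{-1}$ gain then comes not from counting "blocking layers" for $\mathcal{C}$ but from the choice of renormalisation scale $L=t(\bfb-\bfa)R/\delta$, so that the annulus contains $\ell\asymp\delta/t$ disjoint layers, each of which contains a big loop with probability $1-e^{-c/\delta^2}$ (via the loop-measure lower bound, Proposition~\ref{lem:cap_walk_space_2} and Corollary~\ref{cor:killed_cap}); a union bound over the $C^{\ell}$ renormalised paths handles the bad event $\bfG^{\text{c}}$, contributing the same $e^{-c/t}$ factor. Without the loop-soup decomposition your argument cannot be completed as stated.
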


\begin{proof}
    We use the isomorphism \cite{lupu_loop_2016} with the loop soup $\cL$ at intensity $1/2$ on the metric graph $\tSlab$. Recall $\cL$ is a Poisson point process of Markovian loops on $\tSlab$ defined under an auxiliary probability $\Q_N$ with intensity measure $\frac{1}{2}\mu_N$. Let $\cC= \cC(\cL)$ be defined as the empty set with probability $1/2$ (under $\Q_N$), or otherwise denotes the cluster of $0$ in $\cL$. By the isomorphism $\cC$ has the same law under $\Q_N$ and $\P_N$.

    Let $\cL^{\text{big}}\subset \cL$ be obtained from $\cL=\sum_i \delta_{\gamma_i}$ by retaining only \emph{big} loops in the annulus $\Ann^{\bfa,\bfb}_R$, i.e. loops $\gamma_i$ which satisfy $\capacity_N(\text{range}(\gamma_i)\cap \mathbb{S}_N)>t\Fbox((\bfb-\bfa)R)$ and for which $\text{range}(\gamma_i)\cap \mathbb{S}_N\subset \Ann^{\bfa,\bfb}_R$. Then on the event $\capacity_N(\cC\cap\Ann^{\bfa,\bfb}_R)\leq t\Fbox((\bfb-\bfa)R)$, $\mathcal{C}$ has the same law under $\P_N$ as $\cC(\cL\setminus\cL^{\text{big}})$. Let $\lO$ be the intersection of $\mathbb{S}_N$ and of the range of all the loops comprising $\cL^{\text{big}}$. For $\delta>0$ to be fixed later, let
    \begin{equation}\label{eq:L-ell}
        L\stackrel{\text{def.}}{=} t(\bfb-\bfa)R/\delta, \text{ and } 
        \ell \stackrel{\text{def.}}{=} \floor{{(\bfb-\bfa)R}/{5L}}
        = \floor{{\delta}/{5t}}.
    \end{equation}
    Note that we can w.l.o.g.~assume that $t\leq c(\delta)$ and $\delta\leq c^\prime$ (and hence $\ell,L\geq 1$) since otherwise the statement is either trivial or the claim follows easily from \eqref{eq:arctan_calculus}, \eqref{eq:caplaw}, \eqref{eq:one_arm_minus} and $g_N(0)\geq c(1\vee\frac{\log R}{h_N})$. Let $\bfG$ be the event that $\lO$ is a $(L,R,\ell/2,\delta(L\wedge h_N))$-good obstacle set. Now by \eqref{eq:hit_annulus}, the restriction property for the loop soup (\cite[Theorem 6.1]{fitzsimmons_markovian_2014}) and the isomorphism on $\tSlab \setminus \lO$, we have that the intersection of $\bfG$ with the 
 event in the first line of \eqref{eq:pre_recursion} has $\Q_N$-probability bounded by  \begin{equation}\label{eq:final1}
      \E^{\Q_N}\left[
        \P_N^{\lO}
        \left( 
        \capacity_N(\mathcal{C} \cap \Ann^{\bfd,\bfe}_R) 
        \geq s \Fbox((\bfe-\bfd)R)
        \right)
        \1_{\bfG}\right] 
         \leq 
        \E^{\Q_N}\big[
       \arctan\left( \psi_s  \rho
        \right) 
        \1_{\bfG}\big],
    \end{equation}
   We now bound $\rho$, as defined in Lemma~\ref{lem:hit_annulus}, which is random and depends on $\lO$. Since $\lO$ is a $(L,R,\ell/2,\delta(L\wedge h_N))$-good obstacle set on the event $\bfG$, the bound \eqref{eq:no_hit_Obis} applies to the hitting probability $P^\lO_0(H_x<\infty)$ appearing as part of $\rho$. Combined with  \eqref{eq:no_hit_O}, and since $\frac{(\ell/2)\delta (L\wedge h_N) }{L\wedge h_N}\geq \frac{\delta^2}{5t}$ and we can pick $t\leq c(\delta)$ small enough such that $\frac{\delta^2}{5t}\geq C$, it yields that $\rho \leq e^{C\alpha^h_N-c\delta^2/t}$. Plugging this into the previous display gives the desired bound \eqref{eq:pre_recursion} on the event $\bfG$.

It remains to control the target event on $\bfG^{\text{c}}$, in the course of which we will also pick $\delta$.
    Let us denote by $\cP$ the set of tuples $\tau=(x_1,\dots,x_\ell)$ such that for all $i\in\{1,\dots,\ell\}$, $x_i\in \Lambda(L)$, $B(x_i,L)\subset \Ann^{a,b}_R$ $x_{(i+1)\wedge\ell}\in B(x_i,5L)$, and for all $i\neq  j\in\{1,\dots,\ell-1\}$, $B(x_i,L)\cap B(x_j,L)=\emptyset $. 
    From now on consider $\delta < \frac{1}{32}$. We fix an $i\in \{1,\dots,\ell\}$, and consider the set $\tilde{\Lambda}_i=B(x_i,L)\cap\Lambda(\ceil{32\delta L})$.
    Note that by definition of $\Lambda(\ceil{32\delta L}))$, see \eqref{eq:renorm_separation}, the boxes $B(x,16\delta L)$ are disjoint for $x\in \tilde{\Lambda}_i$. We then define the set $\Lambda_i\subset \tilde{\Lambda}_i$ by only keeping the vertices $x$ such that $B(x,16\delta L)\subset B(x_i,L)$.

    Given $i\in\{1,\dots,\ell\}$ and $x\in \Lambda_i$, a loop $\gamma$ is called $(i,x)$-good if  $\text{range}(\gamma)\subset B(x,16\delta L)$ and both,
    \begin{align}\label{eq:godd2cap}
        &\capacity_N(\text{range}(\gamma)\cap \mathbb{S}_N)>t\Fbox((\bfb-\bfa)R) , \quad
       \capacity_N^{B(x_i,2L)^{\text{c}}}(\text{range}( \gamma)\cap \mathbb{S}_N) \geq  \delta(L\wedge h_N).
    \end{align}
    We write $D_\tau$ for the set of $i\in\{1,\dots ,\ell\}$ such that there exists a loop in $\cL^\text{big}$ that is $(i,x)$-good for some $x\in \Lambda_i$. It is easy to see that any path $\pi$ in $\Lambda(L)$ from $0$ to $B_R^{\text{c}}$ contains a tuple $\tau\in\cP$. Hence on $\bfG^{\text{c}}$, there must exist a tuple $\tau\in\cP$ such that $|D_\tau|< \ell/2$. 
    Since $B(x,16\delta L)$ are disjoint by definition for all $x\in\Lambda_i$, the events $E_{i,x}=\{\exists \gamma \in \cL^{\text{big}}:\gamma \text{ is }(i,x)\text{-good}\}$ are i.i.d. Hence by letting $p(\delta)=\inf_i \Q_N( \bigcup_{ x\in \Lambda_i} E_{i,x})$ and noting that $|\Lambda_i|\geq c\frac{1}{\delta^2}\times (\frac{L\wedge h_N}{\delta L}\vee 1)\geq \frac{c}{\delta^2}$ (which follows from Lemma \ref{lem:renorm_la}), we have that,
        \begin{equation} \label{eq:binom_dom}
        1-p(\delta) 
        \leq \sup_i 
        \left(
        \sup_{x\in\Lambda_i}
        \Q_N 
        \left(
        \nexists \gamma\in\cL^{\text{big}}, \gamma \text{ is }(i,x)\text{-good}
        \right)
        \right)^{\frac{c}{\delta^2}}.
    \end{equation}
    We write $Z^\gamma$ for the discrete skeleton of the loop $\gamma$ (see around \cite[(3.5)]{drewitz_arm_2023} for the precise definition of $Z^\gamma$). Let $K=B(x,16\delta L)^\text{c}$. For $i\in\{1,\dots ,\ell\}$ and $x\in\Lambda_i$, owing to \eqref{eq:godd2cap} we have that,
    \begin{align*} 
    \begin{split}
       &\mu_N 
       \left(
       \gamma \text{ is } (i,x)\text{-good}
       \right) 
        \geq 
       \mu_N 
       \left(
       \tB(x,\delta L) \xleftrightarrow{\gamma} \tB(x,4\delta L),
       \gamma \text{ is }(i,x)\text{-good}
       \right) 
       \\ &\geq 
       \inf_{x^\prime \in\partial B(x, 4\delta L)}P_{x^\prime}^{K}
       \left(
        \begin{array}{l}
        H_{B(x,\delta L)}<\infty,\capacity_N\!\left(Z^\gamma_{[0,H_{B(x^\prime,2\delta L)^{\text{c}}}]}\right) 
        \geq t \Fbox((\bfb-\bfa)R), \\[6pt]
        \capacity_N^{B(x_i,2L)^{\text{c}}}\!\left(Z^\gamma_{[0,H_{B(x^\prime,2\delta L)^{\text{c}}}]}\right) 
        \geq \delta(L\wedge h_N)
        \end{array}
       \right) 
       \\ &\geq
       c\inf_{x^\prime \in\partial B(x, 4\delta L)}
       P_{x^\prime}^{K}
       \left(
        \begin{array}{l}
        \capacity_N\!\left(Z^\gamma_{[0,H_{B(x^\prime,2\delta L)^{\text{c}}}]}\right) 
        \geq t \Fbox((\bfb-\bfa)R), \\[6pt]
        \capacity_N^{B(x_i,2L)^{\text{c}}}\!\left(Z^\gamma_{[0,H_{B(x^\prime,2\delta L)^{\text{c}}}]}\right) 
        \geq \delta(L\wedge h_N)
        \end{array}
       \right)
      \inf_{u}
       P_{u}^{K}
       \left(
       H_{B(x,\delta L)}<\infty
       \right) ,
    \end{split}
    \end{align*}
with $u \in\partial B(x^\prime,2\delta L)$ in the last infimum.
    The first probability involving the two capacities the previous line is uniformly bounded away from $0$ by the fact that $t\leq c(\delta)$, $\delta<c^\prime$, a union bound, Proposition \ref{lem:cap_walk_space_2} and Corollary~\ref{cor:killed_cap}. For the second probability, it follows from Proposition \ref{cor:killed_green} and Corollary \ref{cor:killed_cap} that for all $x^\prime \in\partial B(x, 4\delta L),u \in\partial B(x^\prime,2\delta L)$,
    \begin{equation}\label{eq:loop_return}
        P_{u}^{K}
        \left(H_{B(x,\delta L)}<\infty \right)
        \geq \capacity_N^{K}(B(x,\delta L)) \inf_{
        u',        v}
        g^K_N(u',v)\geq c\frac{(\delta L)\wedge h_N}{(\delta L)\wedge h_N}\geq c,
    \end{equation}
    with the infimum ranging over $u' \in \partial B(x^\prime,2\delta L)$ and $ v \in B(x,\delta L)$.    Feeding (\ref{eq:loop_return}) into the previous display and combining with (\ref{eq:binom_dom}), it is clear that $1-p(\delta)\leq \exp\{-\frac{c}{\delta^2}\}$. Additionally, by \eqref{eq:renorm_count}, we have $|\cP|\leq \ell C^\ell \leq \tilde{C}^{\ell}$. Hence we can now deduce by taking $\delta=c$ for a small enough constant $c>0$ and a union bound that,
    \begin{equation*}
        \Q_N(\bfG^\text{c})\leq C^\ell \sup_{\tau\in\cP} \Q_N(|D_\tau|<\ell/2) \leq 2^\ell (1-p(\delta))^{\ell/2} \leq \exp\{-c\ell\}.
    \end{equation*}
    Therefore using \eqref{eq:arctan_calculus}, \eqref{eq:caplaw}, \eqref{eq:one_arm_minus} and the bound $g_N^{\lO}(0)\geq c(1\vee\frac{\log R}{h_N})$ we get
    \begin{equation}\label{eq:final2}
         \E^{\Q_N}\Big[
        \P_N^{\lO}
        \left( 
        \capacity_N(\mathcal{C} \cap \Ann^{\bfd,\bfe}_R) 
        \geq s
        \Fbox((\bfe-\bfd)R)
        \right)
        \1_{\bfG^{\text{c}}}\Big] 
        \leq \arctan 
        \left(\psi_s 
        \right)e^{-c^\prime \ell} 
        \leq \arctan 
        \left(\psi_s 
         e^{-c/t}\right)
    \end{equation}
    where we also used the inequality $\arctan(\eta x)\geq \eta\arctan(x)$ for all $x>0,\eta\in(0,1)$ and substituted \eqref{eq:L-ell} for $\ell$ in the last line. Together, the bound obtained below \eqref{eq:final1}  and \eqref{eq:final2} yield \eqref{eq:pre_recursion}.
\end{proof}

Now we are ready to complete the proof of Theorem \ref{prop:one_arm} using the following recursive relation. We will refer to \cite{drewitz_critical_2024} whenever adaptations to our setup only incur straightforward modifications. Let
\begin{multline} \label{eq:def_recursion}
    s^{\bfa,\bfb}_{R,\epsilon}
    \stackrel{\text{def.}}{=} \sup
    \Bigl\{
    s\geq  0: \text{for all } t\leq s, \
    \P_N
    \left(
    0 \leftrightarrow\partial B_R, \,
    \capacity_N(\mathcal{C} \cap \Ann^{\bfa,\bfb}_R) \leq t\Fbox((\bfb-\bfa)R)
    \right) \\
    \leq
\arctan 
        \Big[
        \left(\left(1\vee\tfrac{\log R}{h_N}\right)\Fbox( R)\right)^{-\frac{1}{2}} e^{-\frac{\epsilon}{t}}
        \Big]
    \Bigr\}  .
\end{multline}

\begin{proposition}
    There exist $\Cl[c]{recursion}$ such that with $s^{\cdot}_{R}=s^{\cdot}_{R,\Cr{recursion}}$, one has for all $N\geq 1$, all $R\geq C$ such that $R\leq \Cr{C:range}N$, $\frac{2}{3}>\bfb>2\bfa $, $2\bfd<\bfe\leq \frac{\bfa}{4}$, 
    \begin{equation} \label{eq:rec-s}
      s^{\bfa,\bfb}_{R}\geq c
      \Big(
      \bfa^3 \wedge 
      \log
      \big(\tfrac{1}{s^{\bfd,\bfe}_{R}\vee\bfd}
      \big)^{-1}
      \Big).
    \end{equation}
\end{proposition}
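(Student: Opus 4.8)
\textbf{Proof plan for \eqref{eq:rec-s}.} The plan is to run a single-step renormalization argument in the style of \cite{drewitz_critical_2024}, using Lemma~\ref{lem:pre_recursion} as the workhorse. First I would dispose of trivial ranges: if $s^{\bfd,\bfe}_R\vee\bfd$ is bounded below by a suitable constant, or $\bfa$ is tiny, the right-hand side of \eqref{eq:rec-s} is at most a constant and the bound follows simply because $\arctan(\cdot)\le\tfrac\pi2$ and, by \eqref{eq:def_recursion}, one only needs to verify the defining inequality for $t\le s$. So assume $\bfa,s^{\bfd,\bfe}_R$ are both small and write $s_0\coloneqq s^{\bfd,\bfe}_R$. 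The goal is to show that the inequality in \eqref{eq:def_recursion} (with $R$ there, the annulus $\Ann^{\bfa,\bfb}_R$, and a constant $\Cr{recursion}$ still to be fixed) holds for all $t$ up to the right-hand side of \eqref{eq:rec-s}; this will give exactly $s^{\bfa,\bfb}_R\ge c(\bfa^3\wedge\log(1/(s_0\vee\bfd))^{-1})$.

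The main step is to split the target event
$\{0\leftrightarrow\partial B_R,\ \capacity_N(\mathcal{C}\cap\Ann^{\bfa,\bfb}_R)\le t\Fbox((\bfb-\bfa)R)\}$
according to whether $\capacity_N(\mathcal{C}\cap\Ann^{\bfd,\bfe}_R)$ is large or small. On the ``small'' part, i.e.~$\capacity_N(\mathcal{C}\cap\Ann^{\bfd,\bfe}_R)\le s'\Fbox((\bfe-\bfd)R)$ for a suitable $s'$, the event $\{0\leftrightarrow\partial B_R\}$ forces a connection across the \emph{inner} annulus $\Ann^{\bfd,\bfe}_R$ with small capacity, so by definition \eqref{eq:def_recursion} applied at the inner scale (with the ball radius there $\bfe R$, say, or $\bfd R$ after adjusting constants) this probability is at most $\arctan[((1\vee\frac{\log R}{h_N})\Fbox(R))^{-1/2}e^{-\Cr{recursion}/s'}]$ provided $s'\le s_0$; here one uses monotonicity of $\Fbox$ and of $\arctan$ and absorbs the mismatch between the radii $\bfd R$, $\bfe R$, $R$ into constants (as in \cite{drewitz_critical_2024}), using $\bfe\le\bfa/4$ and $R\le\Cr{C:range}N$. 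On the ``large'' part one applies Lemma~\ref{lem:pre_recursion} directly: the joint event $\{\capacity_N(\mathcal{C}\cap\Ann^{\bfa,\bfb}_R)\le t\Fbox((\bfb-\bfa)R),\ \capacity_N(\mathcal{C}\cap\Ann^{\bfd,\bfe}_R)\ge s'\Fbox((\bfe-\bfd)R)\}$ has probability at most $\arctan[\psi_{s'}e^{C'\alpha_N^h-\Cr{exp_const}/t}]$ with $\psi_{s'}=C(s'(1\vee\frac{\log R}{h_N})\Fbox(\bfd R))^{-1/2}$. One then optimizes the choice of $s'$ against $t$: setting $s'\asymp\log(1/(s_0\vee\bfd))^{-1}\wedge\bfa^3$ makes $e^{-\Cr{recursion}/s'}$ comparable to $s_0\vee\bfd$, while the requirement $t\le c s'$ is exactly what makes $e^{-\Cr{exp_const}/t}$ dominate the polynomial factors $\psi_{s'}$ and $e^{C'\alpha_N^h}$; note $\alpha_N^h=(1/\bfa)^2(R\wedge h_N)/(\bfa R)\le\bfa^{-3}$, so $\bfa^3\ge$ (const)$/\log(\cdots)$ combined with $t\le cs'\le c\bfa^3$ controls this exponential. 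Summing the two contributions and using $\arctan(x)+\arctan(y)\le\arctan(C(x+y))$ for bounded arguments (or crudely that both terms are $\le\frac12\arctan[((1\vee\frac{\log R}{h_N})\Fbox(R))^{-1/2}e^{-\Cr{recursion}/t}]$ after choosing $\Cr{recursion}$ small relative to $\Cr{exp_const}$ and the implicit constants) yields the inequality in \eqref{eq:def_recursion}, hence \eqref{eq:rec-s}.

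The delicate points are bookkeeping rather than conceptual. First, the radii appearing in the three capacity functionals ($R$, $(\bfb-\bfa)R$, $(\bfe-\bfd)R$, $\bfd R$) must all be traded against each other using $\Fbox$-monotonicity and the constraints $\bfb>2\bfa$, $\bfe\le\bfa/4$, $2\bfd<\bfe$; I expect this to mirror \cite[proof of the recursion]{drewitz_critical_2024} closely, the only new feature being the factor $(1\vee\frac{\log R}{h_N})$ and the presence of $\Fbox$ rather than a pure power, but these enter identically on both sides of the recursion so they cancel. Second, and this is the step I expect to be the real obstacle, one must verify that the error exponential $e^{C'\alpha_N^h}$ from Lemma~\ref{lem:pre_recursion} (which blows up as $\bfa\downarrow0$) is genuinely beaten by $e^{-\Cr{exp_const}/t}$ for the relevant range $t\le cs'\le c\bfa^3$; this forces the $\bfa^3$ on the right-hand side of \eqref{eq:rec-s} and requires choosing $s'$ (and the constant in $t\le cs'$) carefully so that $\Cr{exp_const}/t\ge 2C'\alpha_N^h$ while still $e^{-\Cr{recursion}/s'}\gtrsim s_0\vee\bfd$. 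Once the constants $c,\Cr{recursion}$ are pinned down in this order — first $\Cr{exp_const}$ from Lemma~\ref{lem:pre_recursion}, then $s'$ as a function of $s_0,\bfd,\bfa$, then $t\le cs'$, then $\Cr{recursion}$ small — the recursion \eqref{eq:rec-s} drops out.
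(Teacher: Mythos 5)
Your overall architecture is the paper's: split the event according to whether $\capacity_N(\mathcal{C}\cap \Ann^{\bfd,\bfe}_R)$ exceeds $s\,\Fbox((\bfe-\bfd)R)$, control the small-capacity piece by the inner-scale definition \eqref{eq:def_recursion} and the large-capacity piece by Lemma~\ref{lem:pre_recursion}, then absorb $e^{C\alpha_N^h}$ via $\alpha^h_N\leq \bfa^{-3}$ and trade $\Fbox(\bfd R)$ against $\bfd\,\Fbox(R)$. The gap is in your choice of the inner threshold $s'$. The paper takes $s=e^{-\Cr{exp_const}/t}$, i.e.\ \emph{exponentially} small in $1/t$; you instead take $s'\asymp \log(1/(s^{\bfd,\bfe}_R\vee\bfd))^{-1}\wedge\bfa^3$ together with $t\leq c s'$, and this fails on two counts. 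First, the hypothesis $s'\leq s^{\bfd,\bfe}_R$ needed to invoke \eqref{eq:def_recursion} at the inner scale is generally violated: if $s^{\bfd,\bfe}_R=e^{-10}$ and $\bfa$ is of order one, your $s'$ is of order $1/10\gg e^{-10}$, and the event with the larger threshold $s'$ has \emph{larger} probability, so no bound is available. Second, even where $s'\leq s^{\bfd,\bfe}_R$ holds, the small-capacity piece is bounded by $\arctan[(\cdots)^{-1/2}e^{-\Cr{recursion}/s'}]$, and since your constraint forces $s'\geq t/c>t$, this bound \emph{exceeds} the target $\arctan[(\cdots)^{-1/2}e^{-\Cr{recursion}/t}]$; the exponent in the defining inequality is the same fixed $\Cr{recursion}$ at every scale, so you cannot repair this by shrinking the constant.

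Capping $s'$ additionally at $s^{\bfd,\bfe}_R$ does not rescue the argument: you would then only obtain $s^{\bfa,\bfb}_R\gtrsim s^{\bfd,\bfe}_R$, which decays geometrically under iteration instead of stabilizing. The exponential choice $s=e^{-\Cr{exp_const}/t}$ is precisely the point of the proof: it makes $s\ll t$ so the small-capacity term sits strictly below the target; it turns $\psi_s\propto s^{-1/2}$ into $e^{\Cr{exp_const}/(2t)}$, which consumes only half of the gain $e^{-\Cr{exp_const}/t}$ from \eqref{eq:pre_recursion}, leaving $e^{-\Cr{exp_const}/(2t)}$ to absorb $e^{C\bfa^{-3}}$ and $\bfd^{-1/2}$ (whence $t\lesssim \bfa^3\wedge\log(1/\bfd)^{-1}$); and it converts the admissibility condition $s\leq s^{\bfd,\bfe}_R$ into $t\leq \Cr{exp_const}\log(1/s^{\bfd,\bfe}_R)^{-1}$, which is the sole source of the logarithm on the right-hand side of \eqref{eq:rec-s}. (A minor further point: your ``trivial range'' reduction does not work as stated, since $\arctan\leq\pi/2$ does not by itself verify the defining inequality in \eqref{eq:def_recursion}, whose right-hand side can be small; the paper needs no such reduction.)
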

\begin{proof}
    Applying Lemma \ref{lem:pre_recursion} and  \eqref{eq:def_recursion}, one obtains upon distinguishing whether or not $ \capacity_N(\mathcal{C} \cap \Ann^{\bfd,\bfe}_R) \leq s\Fbox((\bfd-\bfe)R)$ below that for all $t>0,s\leq s^{\bfd,\bfe}_{R,\epsilon}$,
    \begin{multline*}
        \P_N
        \left(
        0 \leftrightarrow\partial B_R, \,
        \capacity_N(\mathcal{C} \cap \Ann^{\bfa,\bfb}_R) \leq t\Fbox((\bfb-\bfa)R)
        \right) \\
         \leq  \arctan 
        \Big[
        \left(\left(1\vee\tfrac{\log R}{h_N}\right)\Fbox( R)\right)^{-{1}/{2}} e^{-\frac{\epsilon}{t}}
        \Big]
        + 
       \arctan 
        \left[
         \psi_{ s} \, e^{C\alpha^h_N-\frac{\Cr{exp_const}}{t}}
        \right].    
    \end{multline*}
     One readily checks that for $\bfd\in(0,1)$, the inequality $\bfd\Fbox(R) 
     \leq C \Fbox(\bfd R)$ holds.
    Using this, the fact that $\psi_s$ is proportional to $s^{-1/2}$ (see \eqref{eq:pre_recursion}), that $\alpha^h_N \leq \bfa^{-3}$ (see below \eqref{eq:Ade_out}), and taking $s=\exp\{-\Cr{exp_const}t^{-1}\}$ and $\epsilon=\frac{\Cr{exp_const}}{4}$, the claim \eqref{eq:rec-s} readily follows, see the proof of \cite[Proposition 3.1]{drewitz_critical_2024} for a similar argument; in particular, the condition $s\leq s^{\bfd,\bfe}_{R,\epsilon}$ needed for the previous display to hold implies that $t \leq c
      \log
      \big({1}/{s^{\bfd,\bfe}_{R}}
      \big)^{-1}$.    
\end{proof}

\begin{proof}[Proof of Theorem \ref{prop:one_arm}]
    In view of \eqref{eq:def_recursion}, we will first show that for $\bfa_0=\frac{4}{\log R}$ and $\bfb_0=\frac{1}{\log(\log(R)\vee 1)}$,
    \begin{equation} \label{eq:recursion_step0}
        s^{\bfa_0,\bfb_0}_R \geq 
    \begin{cases*}
      c, & if $\frac{(\bfb_0-\bfa_0)R}{\log((\bfb_0-\bfa_0)R\vee 2)} \geq \frac{h_N}{K_0\left({[((\bfb_0-\bfa_0)R)\vee h_N]}/{N}\right)}\equiv \gamma_N$ \\
      R^{-1}, &  otherwise 
    \end{cases*}
    .
    \end{equation}
    When $\frac{(\bfb_0-\bfa_0)R}{\log((\bfb_0-\bfa_0)R\vee 2)} \geq \gamma_N$, it follows from \eqref{eq:line-cap} (say with $\varepsilon=\tfrac12$) and \eqref{eq:cluster_cap_comparison} that $\{0 \leftrightarrow\partial  B_R\}$ implies the event $\{\capacity_N(\mathcal{C} \cap \Ann^{\bfa_0,\bfb_0}_R) \geq \tfrac{\Cr{lb_line}}{2} \Fbox((\bfb_0-\bfa_0)R)\}$. It's therefore trivially true in view of \eqref{eq:def_recursion} that $s^{\bfa_0,\bfb_0}_R\geq \Cr{lb_line}/2$. For the latter case in \eqref{eq:recursion_step0}, since $R^{-1}\Fbox((\bfb_0-\bfa_0)R) \leq \bfb_0 \leq C \,\capacity_N(\{0\})$ for $R\geq C$ and $ \{0 \leftrightarrow\partial  B_R\}\subset \{\capacity_N(\mathcal{C} \cap \Ann^{\bfa_0,\bfb_0}_R) \geq \capacity_N(\{0\}) \}$, we conclude the proof of \eqref{eq:recursion_step0}.

    Note \eqref{eq:recursion_step0} already implies \eqref{eq:one_arm_exp} when $\frac{(\bfb_0-\bfa_0)R}{\log((\bfb_0-\bfa_0)R\vee 2)} \geq \gamma_N$. Otherwise one defines recursively $\log_0(R)=\log(R)$, $\log_{k+1}(R)=\log(\log_k(R))\vee 1$ and lets 
$a_k={4}/{\log_k(R)}$, $b_k={4}/{\log_{k+1}(R)}$ for all $k \geq 0$. The claim \eqref{eq:one_arm_exp} now follows from the same argument as in the proof of \cite[Theorem 1.2]{drewitz_critical_2024}.
    \end{proof}

We conclude with a few comments.
\begin{remark}\label{R:final}

\begin{enumerate}[label={\arabic*)}]
\item (Wedges). \label{R:final-w} The threshold $h_N=\log N$ appearing in \eqref{eq:variance-GFF}, \eqref{critical_one_arm-macro} (see also Fig.~\ref{F:plateau}), is closely related to that identified by Lyons in \cite[Section 6]{zbMATH03804598} as characterising the recurrence/transience threshold for sub-graphs of $\Z^3$ with vertex set
$$
\mathscr{W}^h
= \{ x=(y,z) \in \mathbb{Z}^{2+1}: |z| \leq h (|y|)\}
$$
(by \cite{zbMATH03804598} the graph $\mathscr{W}^h$ is transient if and only if $\sum_N \frac{1}{N h_N}<\infty$). It would be interesting to derive analogues of our results for these graphs, as well as similar results for other percolation or spin models of interest (with $2$ playing the role of the lower-critical dimension of the problem).
\item (Scaling). A natural question is to try to assess the possible types of scaling (and hyper-scaling) laws of the problem on $\mathbb{S}_N$. For instance, as a consequence of \eqref{eq:cluster_cap_comparison} and the analogue of \eqref{eq:caplaw} for $\{\varphi \geq a\}$, together with the results of Section~\ref{sec:cap-slab}, one  obtains a Gaussian decay (in $a$) for the probability to connect $0$ to $\partial B_N$ in $\{\varphi \geq a\}$, $a > 0$.
One may then seek to refine these bounds, in a manner similar to what has been done in \cite{goswami_radius_2022} for Gaussian free field level sets on $\Z^d$, $d \geq 3$, in \cite{drewitz_arm_2023} for the  corresponding problem on the cables, where more precise results can be obtained, in \cite{MR4749810} for a more general class of Gaussian fields, and in \cite{prevost_first_2024,GRS24.1,goswami2025sharp} for the vacant set of random interlacements. In the present case this will inevitably lead to scaling behaviour beyond these familiar regimes. We will return to this elsewhere \cite{rz25b}.
\end{enumerate}

\end{remark}

\noindent\textbf{Acknowledgements.} The research of WZ is supported by the CDT in Mathematics of Random Systems and by the  President's PhD Scholarship scheme at Imperial College, London. The research of PFR is supported by the European Research Council (ERC) under the European Union’s Horizon Europe research and innovation program (grant agreement No 101171046).
\appendix

\section{Heat kernel approximation}\label{A:HK}
In this appendix, we collect several results which allow us to approximate various functionals involving heat kernels of random walks by their
corresponding Brownian counterparts with sufficiently small error. Results in this section hold for general $d\geq2$. 
 For $d\geq 2$ and $r\in(0,1]$, we denote by $P^{d,r}_y$ the canonical law of the continuous-time simple random walk on $\Z^d$ with jump rate $r>0$ starting at $y\in\Z^d$  and write $Y=(Y_t)_{t \geq 0}$ for the corresponding canonical process. That is,~if $(\tilde{Y}_n)_{n \geq 0}$ is the corresponding discrete skeleton, which is the discrete-time simple symmetric random walk on $\Z^d$, then $Y_t=\tilde{Y}_{n_t}$  with $(n_t)_{t \geq 0}$ a rate $r$ Poisson process on $\R_+$, independent of $(\tilde{Y}_{t})_{t \geq 0}$. We let
\begin{equation}\label{eq:BM-kernel}
    p^{d,r}(y,t) \stackrel{\text{def.}}{=}\Big(\frac{d}{2\pi rt}\Big)^{d/2} \exp\Big\{-\frac{d|y|^2}{2rt}\Big\},
\end{equation}
the transition density of a $d$-dimensional Brownian motion with variance $r/d$ at time one. We write $c,C,c^\prime,C^\prime$ etc.~in the sequel for constants that may depend on $d$ and $r$. 
We start by isolating the following bound, which shows that it's unlikely for $Y$ to move a large distance when $t$ is small.

\begin{lemma}
    For all $y\in \Z^d$, $M\geq 1$ and $t>0$  such that $|y|\geq t/M$, we have that 
    \begin{equation} \label{eq:exit_time}
        P^{d,r}_0\big(Y_t=y\big) \leq   Ce^{-c {|y|}/{M}}. 
    \end{equation}
\end{lemma}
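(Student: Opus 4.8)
The plan is to prove \eqref{eq:exit_time} by a one-dimensional exponential Markov (Chernoff) inequality applied to the projection of $Y_t$ in the radial direction $u=y/|y|$, where the hypothesis $|y|\geq t/M$, rewritten as $t\leq M|y|$, is exactly what prevents the mean displacement from swamping $|y|$. Note first that $|y|\geq t/M>0$ forces $y\neq 0$, so $u$ is well defined.

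The first step is to record the exponential moment of $Y_t$ under $P^{d,r}_0$. Using the representation $Y_t=\sum_{j=1}^{n_t}\xi_j$, with $(n_t)_{t\geq0}$ a rate-$r$ Poisson process and $(\xi_j)_{j\geq1}$ i.i.d.\ uniform on $\{\pm e_1,\dots,\pm e_d\}$ independent of $(n_t)$, one conditions on $n_t$ and uses the Poisson probability generating function $E[z^{n_t}]=e^{rt(z-1)}$ together with $E[e^{\langle\lambda,\xi_1\rangle}]=\frac1d\sum_{i=1}^d\cosh\lambda_i$ to get, for every $\lambda\in\R^d$,
\begin{equation*}
  E^{d,r}_0\bigl[e^{\langle\lambda,Y_t\rangle}\bigr]=\exp\Bigl(\tfrac{rt}{d}\sum_{i=1}^d(\cosh\lambda_i-1)\Bigr).
\end{equation*}
Then, for any $s\in(0,1]$, since $\{Y_t=y\}\subset\{\langle u,Y_t\rangle\geq|y|\}$, Markov's inequality applied to $e^{s\langle u,Y_t\rangle}$ gives
\begin{equation*}
  P^{d,r}_0(Y_t=y)\leq e^{-s|y|}E^{d,r}_0\bigl[e^{s\langle u,Y_t\rangle}\bigr]=\exp\Bigl(-s|y|+\tfrac{rt}{d}\sum_{i=1}^d(\cosh(su_i)-1)\Bigr).
\end{equation*}
Because $|su_i|\leq s\leq1$ and $\sum_i u_i^2=1$, the elementary inequality $\cosh a-1\leq a^2$ for $|a|\leq1$ yields $\sum_i(\cosh(su_i)-1)\leq s^2$, whence, using $t\leq M|y|$, the exponent is at most $-s|y|+\tfrac{rM}{d}s^2|y|=|y|\,s\bigl(\tfrac{rM}{d}s-1\bigr)$.

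The last step is to optimise over $s$ with the choice $s=\min\{1,\tfrac{d}{2rM}\}$. If $\tfrac{d}{2rM}\leq1$ one gets exponent $\leq-\tfrac{d}{4rM}|y|$; if $\tfrac{d}{2rM}>1$ one takes $s=1$ and obtains exponent $\leq-\tfrac12|y|\leq-\tfrac1{2M}|y|$ using $M\geq1$. In either case \eqref{eq:exit_time} holds with $C=1$ and $c=\min\{\tfrac{d}{4r},\tfrac12\}$. No step is genuinely difficult; the only point needing care is that the bound $\cosh a-1\leq a^2$ requires $|a|\leq1$, so $s$ must be capped at $1$, and it is precisely this small-$M$ regime ($M\leq d/(2r)$) that is absorbed into the stated constant thanks to the hypothesis $M\geq1$.
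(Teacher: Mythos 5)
Your proof is correct, and it takes a genuinely different route from the paper's. The paper argues by splitting on the number of jumps $N_t$ of the Poisson clock up to time $t$: on $\{N_t\le CM\lfloor|y|\rfloor\}$ it invokes the discrete exit-time estimate \cite[(2.50)]{lawler_random_2010} for the skeleton walk (reaching distance $|y|$ in fewer than $CM|y|$ steps costs $e^{-c|y|/M}$), and the complementary event is controlled by a Poisson upper-tail bound using $t\le M|y|$. You instead give a self-contained Chernoff bound: the explicit exponential moment $E^{d,r}_0[e^{\langle\lambda,Y_t\rangle}]=\exp(\tfrac{rt}{d}\sum_i(\cosh\lambda_i-1))$ of the compound-Poisson representation, projected onto $u=y/|y|$, combined with $\cosh a-1\le a^2$ for $|a|\le 1$ and the optimisation $s=\min\{1,\tfrac{d}{2rM}\}$. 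All steps check out: the inclusion $\{Y_t=y\}\subset\{\langle u,Y_t\rangle\ge|y|\}$, the use of $t\le M|y|$ to turn the variance term into $\tfrac{rM}{d}s^2|y|$, the cap $s\le 1$ needed for the quadratic bound on $\cosh$, and the absorption of the small-$M$ case via $M\ge 1$; the resulting constants depend only on $d$ and $r$, as permitted in the appendix. Your argument is more elementary (no appeal to external exit-time estimates) and yields explicit constants, at the cost of being specific to the nearest-neighbour walk where the moment generating function factorises; the paper's route reuses a standard black-box estimate and would transfer more readily to walks whose exponential moments are less explicit.
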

\begin{proof}
    Let $(\bar Y_n)_{n\geq 0}$ be the discrete-time skeleton of $(Y_t)_{t\geq 0}$. Let $N_t$ be a Poisson random variable with parameter $t$ independent of $(\bar Y_n)_{n\geq 0}$ and let $\bar\tau_{s}={\min\{n\geq 0:|\bar Y_n|\geq s\}}$ for $s \geq 0$.  Applying  \cite[(2.50)]{lawler_random_2010} with the choice $r\coloneqq \frac{CM}{m}$, where $m \coloneqq \lfloor |y | \rfloor $, one obtains that for all $y \neq 0$,
$$
 P^{d,r}_0\big(Y_t=y, \, N_t \leq rm^2 \big) \leq  P(\bar\tau_m\leq r m^2) \leq Ce^{-c/r}, 
$$    
which contributes to the bound in \eqref{eq:exit_time}. Using the standard Poisson tail bound $P(N_t\geq t(1+u))\leq \exp\{-tu(\log(1+u)-1)\}$ for $u\geq 0$, 
one readily deduces for all $y$ such that $|y|\geq t/M$ upon choosing $C$ large enough that $P(N_t > rm^2) =P(N_t > CMm) \leq C'\exp\{-c^\prime Mm\} $. Overall, \eqref{eq:exit_time} follows. 
\end{proof}

The following results rely on the use of an appropriate local limit theorem.

\begin{proposition} \label{prop:lclt_approx_y}
For all $d\geq 2$, $r\in(0,1]$, $M\geq 0$ and $y \in \Z^d \setminus \{0\}$, 
\begin{equation} \label{eq:lclt_approx_y}
    \int_{M}^\infty \big\lvert P^{d,r}_0 (Y_t=y)
    -p^{d,r}(y,t) \big\rvert  \,dt
    \leq \frac{C}{( M\vee|y|)^d}.
\end{equation}
Moreover, \eqref{eq:lclt_approx_y} remains true for all $M \geq 1$ if $y=0$.
\end{proposition}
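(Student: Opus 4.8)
\textbf{Proof proposal for Proposition~\ref{prop:lclt_approx_y}.}

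The plan is to bound the integral separately over a ``diffusive'' range of times and a ``large-deviation'' range, using a local central limit theorem with explicit error in the former and the exponential estimate \eqref{eq:exit_time} in the latter. Fix $d \geq 2$, $r \in (0,1]$, and write $m = |y|$. Set $T_0 := m^2 \vee M$ (when $y = 0$ we take $T_0 := M \geq 1$). First I would split
\begin{equation*}
\int_M^\infty \bigl| P_0^{d,r}(Y_t = y) - p^{d,r}(y,t)\bigr|\,dt \leq \int_M^{T_0} (\cdots)\,dt + \int_{T_0}^\infty (\cdots)\,dt,
\end{equation*}
where the first integral is absent when $M \geq T_0$. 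For the tail range $t \geq T_0$, I would invoke the local CLT for continuous-time simple random walk in the form of the refined estimate $| P_0^{d,r}(Y_t = y) - p^{d,r}(y,t)| \leq C t^{-(d+2)/2}$, valid uniformly in $y$ for $t$ bounded below (see e.g.~\cite[Theorem~2.1.1, Theorem~2.3.6 and (2.5)]{lawler_intersections_2013}, transferred to the continuous-time chain by Poissonization, using that the Poisson clock has fluctuations of order $\sqrt t$ which is $\ll t$); integrating $t^{-(d+2)/2}$ from $T_0$ to $\infty$ gives $C T_0^{-d/2} = C (M \vee |y|)^{-d}$, as required. The only subtlety is that the LCLT error is usually stated for discrete time or for $t \geq 1$; for $t \in [T_0, \infty)$ with $T_0 \geq 1$ this is fine, and when $M \in [0,1)$ and $|y| \geq 1$ one has $T_0 = m^2 \geq 1$ anyway, while the case $y = 0$ is assumed to have $M \geq 1$.

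For the intermediate range $M \leq t \leq T_0$ (nonempty only when $M < m^2$, so in particular $m \geq 1$ and $y \neq 0$), the point is that both terms are pointwise small because $t$ is at the ``large-deviation'' scale relative to the displacement $|y|$. Indeed, for $t \leq m^2$ one has $|y| = m \geq \sqrt t \geq t/m \geq t/M'$ with $M' = m$, so \eqref{eq:exit_time} (applied with $M' = m \geq 1$) gives $P_0^{d,r}(Y_t = y) \leq C e^{-c|y|/m} \cdot$... — wait, that bound is too weak since $|y|/m = 1$. Instead I would apply \eqref{eq:exit_time} with the sharper choice exploiting $t \leq m^2$: since $|y| = m \geq t/\sqrt t = \sqrt t$, we may take the free parameter in \eqref{eq:exit_time} to be $M'' = \sqrt t$ (legitimate as $\sqrt t \geq 1$ when $t \geq 1$, and the range $t < 1$ is handled directly since then $P_0(Y_t = y) \leq P_0(\text{jump before }t) \cdot(\text{something}) \leq C t^{|y|}$ decays super-polynomially), obtaining $P_0^{d,r}(Y_t = y) \leq C e^{-c m/\sqrt t} = C e^{-c|y|^2/t \cdot (t/|y|)\cdot(1/\sqrt t)}$; more cleanly, $|y|/\sqrt t \geq |y|^2/(|y|\sqrt t)$ and since $t \leq |y|^2$ we get $|y|/\sqrt t \geq \sqrt t /\sqrt t \cdot$... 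Let me just say: \eqref{eq:exit_time} with parameter $M = \sqrt{t}\vee 1$ gives a bound $\leq C\exp\{-c|y|/(\sqrt t \vee 1)\}$, and the Gaussian kernel obeys $p^{d,r}(y,t) \leq C t^{-d/2} e^{-d|y|^2/(2rt)} \leq C' t^{-d/2} e^{-c'|y|^2/t}$. Using $t \leq |y|^2$, both are bounded by $C |y|^{-d} e^{-c|y|^2/t}$ up to adjusting constants (absorbing the polynomial prefactor $t^{-d/2} \leq$ ... here one uses $t \geq M$, but actually one just needs $\int_M^{m^2} e^{-c m^2/t}\,dt \leq C m^2 e^{-c}\cdot$, no — the integral $\int_0^{m^2} t^{-d/2} e^{-cm^2/t}\,dt$, substituting $s = m^2/t$, equals $m^{2-d}\int_1^\infty s^{d/2-2} e^{-cs}\,ds = C m^{2-d}$, which is $\leq C m^{-d} \cdot m^2$; this is NOT $\leq C m^{-d}$ unless $d \geq \cdots$).

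\textbf{The main obstacle} — and the step requiring the most care — is precisely this intermediate-range integral, where a naive bound on $p^{d,r}(y,t)$ loses a factor of $m^2$. The resolution is to note that one does not integrate down to $t = 0$ but only to $t = M$, and when $M$ is small the relevant competing scale is $|y|^2$, so the bound $(M \vee |y|)^{-d} = |y|^{-d}$ must come from the exponential decay at $t \sim |y|^2$: concretely, after the substitution $s = |y|^2/t$ the intermediate integral becomes $\int_{|y|^2/T_0}^{|y|^2/M} s^{d/2 - 2}e^{-cs}\,ds \cdot |y|^{2-d}$ and since $T_0 = |y|^2$ the lower limit is $1$, giving $|y|^{2-d}\int_1^{|y|^2/M} s^{d/2-2}e^{-cs}\,ds$. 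For $d \geq 2$ this integral is $O(1)$ when $d \leq 4$ but grows like $(|y|^2/M)^{d/2-1}$ when $d > 4$ — however in that case the extra growth is exactly $|y|^{d-2}/M^{d/2-1}$, and $|y|^{2-d}\cdot |y|^{d-2}/M^{d/2-1} = M^{1-d/2} \leq M^{-d/2}\cdot M \leq\cdots$. So one must track the $M$-dependence honestly: the correct statement is that the intermediate integral is $\leq C(M\vee|y|)^{-d} \cdot$(bounded factor), which one verifies by the case analysis $d \leq 4$ vs $d > 4$, and for $d>4$ using $M \geq 1$ or $M = 0$ separately (when $M=0$ one needs $y\neq 0$, and $\int_0^{|y|^2} t^{-d/2}e^{-c|y|^2/t}dt = |y|^{2-d}\int_1^\infty s^{d/2-2}e^{-cs}ds = C|y|^{2-d} \leq C|y|^{-d}$ fails for $d>2$... ). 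Given these delicacies I would actually structure the whole proof around the single substitution $s = c|y|^2/t$ (resp.\ $s = c M$-based scaling for the $y$-term contribution), handle $y = 0$ by a direct computation $\int_M^\infty |t^{-d/2}\cdot\text{const}|\,dt \asymp M^{1-d/2}$ against the LCLT tail, and reserve \eqref{eq:exit_time} only to control the genuinely sub-Gaussian regime $t \ll |y|$; the bulk of the estimate then reduces to elementary bounds on incomplete Gamma integrals, and the claimed exponent $-d$ in $(M\vee|y|)^{-d}$ emerges from balancing $|y|^{2-d}$ against the $e^{-c|y|^2/t}$ cutoff at $t \asymp |y|^2 \vee M$.
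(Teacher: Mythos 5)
Your write-up is not a proof: it contains several retracted steps and ends by conceding that the key estimate ``fails''. The genuine gap sits exactly where you locate it, in the window $|y|\le t\le|y|^2$, but you never close it. There you bound $|P^{d,r}_0(Y_t=y)-p^{d,r}(y,t)|$ by the two terms separately, each of order $t^{-d/2}e^{-c|y|^2/t}$, and you correctly find that $\int_0^{|y|^2}t^{-d/2}e^{-c|y|^2/t}\,dt\asymp|y|^{2-d}$ --- off by a factor $|y|^2$. No reorganisation of incomplete Gamma integrals can repair this: each of the two terms genuinely contributes $\asymp|y|^{2-d}$ over that window, and only their \emph{difference} is small there. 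Nor does the uniform error bound $Ct^{-(d+2)/2}$ help, since integrating it down to $t\asymp|y|$ only yields $|y|^{-d/2}$. What is needed --- and what the paper uses --- is a quantitative local CLT valid down to $t\asymp|y|$ whose error term simultaneously carries an extra power of $t^{-1}$ relative to the heat kernel \emph{and} retains the Gaussian factor: \cite[(2.9)]{lawler_random_2010} with $k=d+3$ gives $|P^{d,r}_0(Y_t=y)-p^{d,r}(y,t)|\le Ct^{-(d+2)/2}\big[(|y|^{d+3}t^{-(d+3)/2}+1)e^{-c|y|^2/t}+t^{-d/2}\big]$, and the substitution $s=|y|^2/t$ then produces $C|y|^{-d}$ instead of $|y|^{2-d}$. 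With that input the rest is routine, and the paper accordingly splits at $t=|y|$ (linear, not quadratic): for $t\le|y|$ both the walk probability and the heat kernel are $\le Ce^{-c|y|}$, the former by \eqref{eq:exit_time} applied simply with parameter $1$ (your detours through parameters $m$ and $\sqrt t$ are unnecessary), the latter by a direct substitution.

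Two further slips. Integrating the uniform bound $Ct^{-(d+2)/2}$ from $T_0=M\vee|y|^2$ gives $C(M\vee|y|^2)^{-d/2}$, which you mis-identify with $C(M\vee|y|)^{-d}$; the two coincide only when $M\le|y|$, and for larger $M$ your tail estimate is strictly weaker than claimed. (The case $M\le|y|\vee 1$ is the only one the paper ever invokes, but the identity as you state it is false.) Finally, the case $y=0$, $M\ge1$ cannot be handled by any comparison of the individual terms at small times; it must come entirely from the local CLT error in the tail, which is how the paper treats it.
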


\begin{proof}
Using \eqref{eq:BM-kernel} and the substitution $s=\frac{d|y|^2}{2rt}$, we have that
\begin{align} \label{eq:g3_hk_tsmall}
\begin{split}
    \int_0^{|y|} p^{d,r}(y,t)  \,dt 
    \leq \frac{C}{|y|^{d-2}}\int_{\frac{d|y|}{2r}}^\infty  s^{(d-4)/2} e^{-s} \,ds 
    \leq Ce^{-c|y|}.
\end{split}
\end{align}
Hence by combining (\ref{eq:exit_time}) with $M=1$ with (\ref{eq:g3_hk_tsmall}) and crudely bounding the difference below by a sum, we obtain that for all $y \in \Z^d$, 
\begin{equation} \label{eq:g3_t_small}
  \int_{0}^{|y|} \big\lvert P^{d,r}_0 (Y_t=y)
    -p^{d,r}(y,t) \big\rvert  \,dt  \leq Ce^{-c|y|}.
 \end{equation}
    Moreover, it follows from taking $k=d+3$ in \cite[(2.9)]{lawler_random_2010} that for $y \neq 0$ or $M \geq 1$,
  \begin{multline} \label{eq:g3_tbig}
 \int_{M \vee |y|}^\infty \big\lvert P^{d,r}_0 (Y_t=y)
    -p^{d,r}(y,t) \big\rvert  \,dt
  \leq \int_{M \vee |y|}^\infty
  \frac{C}{t^{\frac{d+2}2}} \left[ 
    \Big( \frac{|y|^{d+3}}{t^{(d+3)/2}}+1\Big) 
    e^{- \frac{c|y|^2}{ t }}
    +\frac{1}{t^{d/2}}\right]\,dt \\
 \leq \int_{M\vee|y|}^{(M\vee |y|)^2}
\frac{C}{t^{\frac{d+2}2}}       \Big( \frac{2 |y|}{\sqrt{t}} \Big)^{d+3} e^{- \frac{c|y|^2}{ t }} 
        \,dt    
      + C'\int_{(M\vee |y|)^2}^\infty
      \frac{dt }{t^{\frac{d+2}2}}
    + C \int_{(M\vee |y|)}^\infty
      \frac{dt}{t^{d+1}}. 
    \end{multline} 
    The second and third terms in the second line of \eqref{eq:g3_tbig} are readily seen to give a contribution bounded by $C(M\vee|y|)^{-d}$, and so is the first one, after substituting $t=(M \vee |y|)^2/s$ and replacing the (finite) upper integral end by $+\infty$, which yields a bound of the form $C(M \vee |y|)^{-d} \int_1^{\infty} s^{d + 1/2}e^{-cs} ds = C' (M \vee |y|)^{-d}$. Together with \eqref{eq:g3_t_small}, \eqref{eq:g3_claim} follows, and the addendum concerning $y=0$ follows from \eqref{eq:g3_tbig} alone.
\end{proof}

The following estimates are tailored to our purposes.

\begin{proposition} \label{prop:lclt_approx_z}
Let $(X_t)_{t\geq0}$ be the canonical process with law $P\equiv P_0^{d+1,1}$ and let $(Y_t)_{t\geq0}\subset \Z^{d}$ and $(Z_t)_{t\geq0}\subset \Z$ be the first {$d$} components of $(X_t)_{t\geq0}$ and last component of $(X_t)_{t\geq0}$ respectively. We have that for all $M>0$, $y\in \Z^d$ and $z\in \Z$ such that $|z|\leq h/2$,
  \begin{equation} \label{eq:prop_lclt_approx_z}
    \int_{M}^\infty P(Y_t=y) 
     \sum_{k \in \Z \setminus \{0\}} \Big\lvert p^{1,\frac{1}{d+1}}(z+ k h,t)- P(Z_t=z+ kh)\Big\rvert \,dt 
     \leq \frac{C}{h^2M^{\frac{d-1}{2}}}.
  \end{equation}
  If instead $(X_t)_{t\geq0}$ has law $Q\equiv P_0^{d+1,r}$ and $(Y_t, Z_t)_{t\geq0}\subset \Z^{d} \times \Z$ are as above, then for all $M>0$, $y\in \Z^d$ and $z\in \Z$ such that $|z|\leq h/2$,
\begin{align}  \label{eq:prop_lclt_main_hbig}
    \begin{split}
        &\quad \sum_{k \in \Z \setminus \{0\}} \int_{M}^{\infty} 
        \left\lvert Q \big((Y_t,Z_t)=(y,z + kh)\big) 
        - p^{d+1,r}\big((y,z+ kh),t\big)  \right\rvert \, dt 
    \leq  \frac{C  }{h^2M^{\frac{d-1}{2}}}.
    \end{split}
\end{align}
\end{proposition}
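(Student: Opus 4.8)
The plan is to prove Proposition~\ref{prop:lclt_approx_z} by isolating the one-dimensional (vertical) contribution in \eqref{eq:prop_lclt_approx_z} via a direct local central limit theorem applied to the walk $Z$, and then summing over winding numbers $k$. First I would observe that in \eqref{eq:prop_lclt_approx_z} the factor $P(Y_t = y)$ is bounded uniformly in $y$ by $C t^{-d/2}$ (this is the standard on-diagonal heat kernel bound for the $d$-dimensional walk, e.g.\ \cite[(2.9)]{lawler_random_2010} or \cite[(5.18)]{barlow_random_2017}), so it suffices to bound $\int_M^\infty t^{-d/2} \sum_{k \neq 0} |p^{1,1/(d+1)}(z+kh,t) - P(Z_t = z + kh)| \, dt$. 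For the one-dimensional walk $Z$ with jump rate $\tfrac1{d+1}$, the local CLT with error term — again \cite[(2.9)]{lawler_random_2010}, taking enough terms in the expansion — gives $|P(Z_t = w) - p^{1,1/(d+1)}(w,t)| \le C t^{-1/2}\big((|w|^3 t^{-3/2} + 1) e^{-c w^2/t} + t^{-1/2}\big)$ for the relevant range, and one then needs to sum this over $w = z+kh$, $k \in \Z \setminus \{0\}$, using $|z| \le h/2$ so that $|z+kh| \ge (|k| - \tfrac12) h \ge \tfrac{|k|h}{2}$.

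The key step is the summation over $k$. For fixed $t$, the sum $\sum_{k\ge 1} e^{-c k^2 h^2/t}$ behaves like $1 + \sqrt{t}/h$ (geometric-type tail when $h^2 \gtrsim t$, Gaussian-integral approximation when $h^2 \lesssim t$), and the polynomial prefactors $(|w|^3 t^{-3/2}+1)$ only contribute analogous sums with extra powers of $k$, all of which remain controlled by $C(1 + \sqrt t/h)$ up to constants depending on $d$. The genuinely new contribution, relative to Proposition~\ref{prop:lclt_approx_y}, is the $t^{-1/2}$-in-the-error term: $\sum_{k \ne 0} t^{-1/2}$ naively diverges, but here the local CLT error $t^{-1}$ (the $t^{-1/2}\cdot t^{-1/2}$ term) only dominates for $|w| \lesssim \sqrt t$, i.e.\ for $|k| \lesssim \sqrt t / h$, so the effective number of such $k$ is $O(1 + \sqrt t/h)$ and the contribution is $O(t^{-1}(1+\sqrt t/h))$. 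Putting these together, the integrand after the $k$-sum is bounded by $C t^{-(d+1)/2}(1 + \sqrt t/h)(1 + \text{lower order})$, and since $|z+kh|\ge h/2$ forces every term to additionally carry a factor decaying in $h$, one extracts an overall $h^{-2}$: concretely, the worst term is of order $t^{-(d+2)/2} h^{-1} \cdot (\text{something summable})$, and $\int_M^\infty$ of the relevant powers of $t$ produces $M^{-(d-1)/2}$. Carefully bookkeeping the $h$-dependence — using that the $k=\pm1$ terms already have $|w|\ge h/2$ hence $e^{-cw^2/t}\le e^{-ch^2/t}$, and more generally rescaling $t = h^2 \tau$ — yields precisely the claimed $C h^{-2} M^{-(d-1)/2}$.

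For the second estimate \eqref{eq:prop_lclt_main_hbig}, the structure is the same but now one keeps the full $(d+1)$-dimensional walk $(Y,Z)$ under $Q = P_0^{d+1,r}$ and applies the $(d+1)$-dimensional local CLT with error directly to $Q((Y_t,Z_t) = (y, z+kh))$ versus $p^{d+1,r}((y,z+kh),t)$. Here $|(y,z+kh)| \ge |z+kh| \ge \tfrac{|k|h}{2}$, so the same Gaussian-in-$k$ decay is available; the only difference is that the ambient dimension $d+1$ replaces $1$ in the prefactor $t^{-(d+1)/2}$ coming from the leading heat kernel, which is exactly what makes the bare sum $\sum_k \int_M^\infty$ converge with the right power of $M$, and one again uses $|z+kh|\ge h/2$ to pull out the $h^{-2}$. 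I would state the $(d+1)$-dimensional local CLT error bound once (the analogue of \cite[(2.9)]{lawler_random_2010}), then run the identical $k$-summation lemma. The main obstacle I anticipate is not conceptual but the careful tracking of the three separate error contributions in the local CLT (the $|w|^3 t^{-3/2} e^{-cw^2/t}$ term, the $e^{-cw^2/t}$ term, and the $t^{-1/2}$ remainder term) through the $k$-sum and then the $t$-integral, making sure that in every case the combination of the Gaussian tail in $k$ and the constraint $|w| \ge h/2$ produces exactly a factor $h^{-2}$ and exactly $M^{-(d-1)/2}$ — in particular confirming that the remainder term, which is the one genuinely absent from the winding-number-free estimate \eqref{eq:lclt_approx_y}, does not degrade the power of $M$. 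I would relegate this bookkeeping to a short computational lemma in the appendix.
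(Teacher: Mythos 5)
Your overall strategy (on‑diagonal bound for $Y$, local CLT for $Z$, then sum over winding numbers $k$) is the right shape, but there is a genuine gap in the key step, namely the treatment of the additive remainder in the local CLT. The estimate you invoke (\cite[(2.9)]{lawler_random_2010}) bounds the error by a \emph{sum} of a Gaussian term and a $k$-independent remainder of the form $Ct^{-\alpha}$; since the remainder does not decay in $|w|=|z+kh|$, the sum $\sum_{k\neq 0}$ of this bound diverges, full stop. Your proposed resolution --- that the remainder ``only dominates for $|w|\lesssim \sqrt t$'', so only $O(1+\sqrt t/h)$ values of $k$ need be counted --- is not a valid reading of that estimate: for $|w|\gg\sqrt t$ the Gaussian term is negligible but the remainder is still the only upper bound (2.9) provides, so you cannot discard it there. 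It is true that the actual error is tiny for $|w|\gg\sqrt t$, but establishing this requires a separate large-$|x|$ estimate that your argument never supplies. The missing ingredient is precisely the ``difference estimate'' $\lvert P(Z_t=w)-p^{1,\frac1{d+1}}(w,t)\rvert\leq C t^{-1/2}|w|^{-2}$ (\cite[(2.4)]{lawler_random_2010}, valid in continuous time), which carries built-in decay in $|w|$.

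Once you use that estimate, the whole proof collapses to three lines, which is what the paper does: with $|z|\le h/2$ one has $|z+kh|\geq (|k|-\tfrac12)h$, so the error is at most $Ct^{-1/2}(k-\tfrac12)^{-2}h^{-2}$, the sum over $k$ converges to $Ct^{-1/2}h^{-2}$, and multiplying by $P(Y_t=y)\le Ct^{-d/2}$ and integrating $t^{-(d+1)/2}$ over $[M,\infty)$ gives exactly $Ch^{-2}M^{-(d-1)/2}$; the $(d+1)$-dimensional statement \eqref{eq:prop_lclt_main_hbig} is identical with the $(d+1)$-dimensional version of (2.4). I would also caution that even the Gaussian part of your bookkeeping is more delicate than you suggest: extracting $h^{-2}$ from $e^{-ch^2/t}$ by writing $e^{-u}\le Cu^{-1}$ costs a factor of $t$ that ruins the convergence of $\int_M^\infty t^{-(d+1)/2}\,dt$ for small $d$, so one is forced into a case analysis on $M\lessgtr h^2$ to recover $h^{-2}M^{-(d-1)/2}$ uniformly in $M>0$ and $h\geq 1$. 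All of this is avoided by switching to the difference estimate.
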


\begin{proof}
First note that $Y$ has law $P_0^{d,\frac{d}{d+1}}$ and $Z$ has law $P_0^{1,\frac{1}{d+1}}$ under $P$. Using $|z|\leq h/2$ and \cite[(2.4)]{lawler_random_2010} (which also holds in continuous time, see the end of \cite[page 24]{lawler_random_2010}), we have that there exists $C$ such that for all $t>0$ and $k \in \Z\setminus \{0\}$,
  \begin{equation*}
    \big\lvert P_0^{1,\frac{1}{d+1}}(Z_t=z+ kh)
    - p^{1,\frac{1}{d+1}}(z+ k h,t) \big\rvert 
    \leq {C}t^{-1/2}(k-\tfrac{1}{2})^{-2}h^{-2} .
  \end{equation*}
Hence, the left-hand side of (\ref{eq:prop_lclt_approx_z}) is bounded above by
$    \frac{C}{h^2}\int_{M}^{\infty} \frac{1}{t^{1/2}} P (Y_t=y) \,dt 
    \leq \frac{C}{h^2}\int_{M}^{\infty} \frac{1}{t^{(d+1)/2}} \,dt \leq {C}{h^{-2} M^{\frac{1-d}{2}}}$, where the last inequality follows from \cite[(5.18)]{barlow_random_2017}. This yields \eqref{eq:prop_lclt_approx_z}. As to \eqref{eq:prop_lclt_main_hbig}, since $|z|\leq h/2$ and by \cite[(2.4)]{lawler_random_2010} (which also works in continuous time, see the end of \cite[Page 24]{lawler_random_2010}), 
  \begin{equation*}
    \big\lvert Q \big((Y_t,Z_t)=(y,z +\sigma kh)\big) 
        - p^{d+1,r}\big((y,z+\sigma kh),t\big) 
    \big\rvert 
    \leq {C}{t^{-(d+1)/2}(k-1/2)^{-2}h^{-2}}, \quad t >0.
  \end{equation*}
The conclusion follows similarly as with \eqref{eq:prop_lclt_approx_z}.
\end{proof}

\section{Properties of $K_0$}\label{A:K_0}

Recall that $K_0(\cdot)$ denotes the zeroth-order modified Bessel function of the second kind, defined in \eqref{eq:K_0}. The following estimate is frequently used. 

\begin{lemma} \label{L:Bessel}For all $t>0$,
   \begin{equation} \label{eq:bessel_ub}
        \log(1/t) \leq K_0(t)\leq (\log(1/t)\vee 0)+C.
    \end{equation}
    In particular, $K_0(t)\sim\log({1}/{t})$ as $t\to 0^+$.
\end{lemma}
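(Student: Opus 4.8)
\textbf{Proof plan for Lemma~\ref{L:Bessel}.}

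The plan is to work directly from the integral representation \eqref{eq:K_0}, namely $K_0(t)=\int_0^\infty e^{-t\cosh r}\,dr$, and to extract both bounds by carefully splitting the range of integration at the point where $\cosh r \asymp 1/t$. First I would record the elementary facts $\cosh r \geq 1$ and $\cosh r \geq \tfrac12 e^r$ for all $r\geq 0$, as well as $\cosh r \leq e^r$.

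For the lower bound, I would use $\cosh r \leq e^r$ to get $K_0(t)\geq \int_0^\infty e^{-t e^r}\,dr$, and then substitute $u=te^r$ (so $du = te^r\,dr = u\,dr$), which turns the integral into $\int_t^\infty \frac{e^{-u}}{u}\,du$. For $0<t\leq 1$ this is at least $\int_t^1 \frac{e^{-u}}{u}\,du \geq e^{-1}\int_t^1 \frac{du}{u} = e^{-1}\log(1/t)$; to obtain the sharper constant $1$ claimed in \eqref{eq:bessel_ub} I would instead bound $e^{-u}\geq 1-u$ on $[0,1]$, giving $\int_t^1\frac{1-u}{u}\,du = \log(1/t) - (1-t) \geq \log(1/t) - 1$ — so strictly one gets $\log(1/t)$ only up to an additive constant this way, and I would instead argue that $K_0$ is decreasing with $K_0(t)\to\infty$ and use the known asymptotic $K_0(t) = \log(1/t) + (\log 2 - \gamma) + o(1)$ together with monotonicity of $t\mapsto K_0(t)-\log(1/t)$; alternatively, and more self-containedly, split $K_0(t)=\int_0^{\log(1/t)} e^{-t\cosh r}\,dr + \int_{\log(1/t)}^\infty(\cdots)$ and on the first piece bound $\cosh r \leq e^r \leq 1/t$ crudely by... \emph{this is the step that needs the most care}: getting the clean constant $1$ (rather than some $c<1$) in the lower bound requires either invoking a classical sharp estimate or a slightly delicate splitting argument. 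For $t>1$ the lower bound $\log(1/t)\leq K_0(t)$ is trivial since the right side is negative and $K_0>0$.

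For the upper bound, split $K_0(t)=\int_0^{1}e^{-t\cosh r}\,dr + \int_1^\infty e^{-t\cosh r}\,dr$. On $[0,1]$ bound the integrand by $1$, contributing $\leq 1$. On $[1,\infty)$ use $\cosh r\geq \tfrac12 e^r$ and substitute $u=\tfrac{t}{2}e^r$ to get $\int_{r=1}^\infty e^{-te^r/2}\,dr = \int_{te/2}^\infty \frac{e^{-u}}{u}\,du \leq \frac{2}{te}\int_{te/2}^\infty e^{-u}\,du \leq \frac{2}{te}$ when $t\geq 1$, hence bounded by a constant; when $t<1$ split this tail further at $r=\log(1/t)$: on $[1,\log(1/t)]$ bound the integrand by $1$ to get $\leq \log(1/t)$, and on $[\log(1/t),\infty)$ substitute as before to get $\int_{te^{\log(1/t)}/2}^\infty\frac{e^{-u}}{u}\,du = \int_{1/2}^\infty\frac{e^{-u}}u\,du\leq C$. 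Collecting the pieces gives $K_0(t)\leq (\log(1/t)\vee 0)+C$ for all $t>0$. The final sentence of the lemma, $K_0(t)\sim\log(1/t)$ as $t\to 0^+$, then follows immediately by dividing \eqref{eq:bessel_ub} through by $\log(1/t)$ and letting $t\downarrow 0$.
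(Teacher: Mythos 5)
Your upper bound and the final asymptotic statement are fine, and your route there (splitting the integral representation \eqref{eq:K_0} at $r=1$ and $r=\log(1/t)$ and substituting $u=\tfrac t2 e^r$ on the tail) is a legitimate, more elementary alternative to what the paper does. The problem is the lower bound, and you have in fact diagnosed it yourself: every direct manipulation of the integral that you propose ($\cosh r\le e^r$ followed by $u=te^r$, then either $e^{-u}\ge e^{-1}$ or $e^{-u}\ge 1-u$ on $[t,1]$) yields $K_0(t)\ge c\log(1/t)$ or $K_0(t)\ge\log(1/t)-C$, not the clean $K_0(t)\ge\log(1/t)$ that the lemma asserts. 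This constant matters: the lemma is later used in the form $\log(N/R)\le K_0(R/N)$ to pass from logarithms to $K_0$ in the $(1\pm\epsilon)$-precise estimates (e.g.\ in \eqref{eq:K_0_asymp} and in the proof of Lemma~\ref{lem:tube_2d}), so a lower bound with a constant $c<1$ or an additive loss would not suffice. Your fallback — monotonicity of $t\mapsto K_0(t)+\log t$ together with the limit $\log 2-\gamma>0$ at $t=0^+$ — can be made rigorous (it amounts to the inequality $tK_1(t)<1$, which follows from $\frac{d}{dt}[tK_1(t)]=-tK_0(t)<0$), but you do not carry it out, and as written the step is an acknowledged placeholder rather than a proof.

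The paper closes this gap by a different device: the series representation $K_0(t)=\log(2/t)I_0(t)+\sum_{n\ge0}\frac{(t/2)^{2n}}{(n!)^2}\psi(n+1)$ with $I_0(t)=\sum_{n\ge0}\frac{(t/2)^{2n}}{(n!)^2}$ and $\psi(n+1)=-\gamma+\sum_{j=1}^n\frac1j$. Singling out the $n=0$ terms gives
\begin{equation*}
K_0(t)=\log(1/t)+\Big(\log 2-\gamma+(\log(2/t)-\gamma)\sum_{n\ge1}\tfrac{(t/2)^{2n}}{(n!)^2}+\sum_{n\ge1}\tfrac{(t/2)^{2n}}{(n!)^2}\sum_{j=1}^n\tfrac1j\Big),
\end{equation*}
and for $t\in(0,1)$ every term in the bracket is nonnegative because $\log 2>\gamma$; for $t\ge1$ the lower bound is trivial. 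The same expansion also delivers the upper bound for $t\le1$ (via $I_0(t)\le 1+Ct^2$), so the paper never needs your splitting argument. If you want to keep your integral-based upper bound, that is fine, but you must either import the series identity (or the classical expansion $K_0(t)=-\log(t/2)-\gamma+O(t^2\log(1/t))$ plus the monotonicity argument, fully executed) to obtain the exact constant $1$ in the lower bound.
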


\begin{proof} 
    By \cite[(8.447.3), (8.447.1) and (8.365.4)]{gradshteyn_table_2014}, we have that for all $t>0$,
    \begin{equation} \label{eq:bessel_series}
        K_0(t)=\log(2/t)I_0(t)+\sum_{n=0}^\infty\frac{(t/2)^{2n}}{(n!)^2}\psi(n+1),
    \end{equation}
    where $I_0(t)=\sum_{n=0}^\infty\frac{(t/2)^{2n}}{(n!)^2}$ and $\psi(n+1)=-\gamma+\sum_{j=1}^n\frac{1}{j}$, with $\gamma$ denoting Euler's constant. 
    The lower bound in \eqref{eq:bessel_ub} is trivially true when $t\geq 1$ since $\log(1/t)\leq 0$ and $K_0(t)\geq 0$ in view of \eqref{eq:K_0}. When $t\in(0,1)$, singling out the $n=0$ term in the series for $I_0(t)$ and \eqref{eq:bessel_series}, we get
    \begin{equation} \label{eq:bessel_rearrange}
        K_0(t)=\log(1/t) +
        \left(\log(2)-\gamma + 
        (\log(2/t)-\gamma)\sum_{n=1}^\infty\frac{(t/2)^{2n}}{(n!)^2}
        +\sum_{n=1}^\infty\frac{(t/2)^{2n}}{(n!)^2} \sum_{j=1}^n\frac{1}{j}
        \right).    
    \end{equation}
    This concludes the proof of the lower bound in \eqref{eq:bessel_ub} since $\log(2/t)-\gamma\geq \log(2)-\gamma >0$ (the expression $\log$ refers to the natural logarithm), hence the expression inside the bracket in \eqref{eq:bessel_rearrange} is positive.

    For the upper bound in \eqref{eq:bessel_ub}, 
    when $t\in(0,1]$, using \eqref{eq:bessel_series}, $I_0(t)\leq I_0(1)\leq 2$, and that the second term in \eqref{eq:bessel_series} is bounded, one obtains $K_0(t)\leq \log(1/t)+C$. When $t \geq 1$, by \eqref{eq:K_0} and the inequality $\cosh(t)\geq t$ we have that $K_0(t)\leq \int_0^\infty e^{-s}\,ds=1$. \end{proof}

 \begin{proof}[Proof of Lemma~\ref{prop:bessel_approx}]
First note that (\ref{eq:bessel_approx2}) is plainly true when $h\geq 2^k$ since all the argument of $K_0(\cdot)$ are $\tfrac{h}{N}$ in this case. Now assume $h < 2^k$. By (\ref{eq:bessel_ub}), we have for $h < 2^k$ that
    \begin{multline*}
     \sum_{j=1}^{\floor{\log_2(h)}}  (2^j)^2  K_0\left({h}/{N}\right) \leq  C h^2  K_0\left({h}/{N}\right)  \\
    \leq C h^2  + C(2^k)^2 \bigg(\Big(\log\big({N}/{2^k}\big)
      + \big({h}/{2^k}\big)^2\log\big({2^k}/{h}\big)\Big)\vee 0\bigg) 
     \leq  C \,(2^k)^2 \left(K_0\big({2^k}/{N}\big)\vee 1\right).
    \end{multline*} For the rest of the series, let $k^\prime$ denote the largest integer $j$ in $[ \floor{\log_2(h)}+1, k]$  such that $\log(N/2^{j})>0$ when it exists, and set $k^\prime = \floor{\log_2(h)}$ otherwise.
    We then have, for all $j \leq k'$, 
    \begin{equation*}
        \log(N/2^{j})=\log(N/2^{k^\prime}) +(k^\prime -j)\log(2)
        \leq C2^{k^\prime-j}\log(N/2^{k^\prime}).
    \end{equation*}
    It then follows from the above inequality and \eqref{eq:bessel_ub} that
    \begin{align*}
    \begin{split}
        \sum_{j=\floor{\log_2(h)}+1}^{k}  (2^j)^2  K_0\big({2^j}/{N}\big)  
        &\leq C\, (2^k)^2  + C\sum_{j=\ceil{\log_2(h)}+1}^{k^\prime}  (2^j)^2  \times2^{k^\prime-j}\log(N/2^{k^\prime}) \\
        &\leq C (2^k)^2 + C(2^{k^\prime})^2 \log(N/2^{k^\prime}),
    \end{split}
    \end{align*}
    which concludes of proof of \eqref{eq:bessel_approx2} if $k^\prime =k$. If not, then $k' < k$ and by the definition of $k^\prime$ we know that $\log(N/2^{k^\prime +1})<0$, hence
    \begin{equation*}
        (2^{k^\prime})^2 \log(N/2^{k^\prime})=
        (2^{k})^2 \times\big( 4^{k^\prime -k}\log(N/2^{k^\prime+1 }) +4^{k^\prime -k}\log(2)\big)\leq C(2^{k})^2.
    \end{equation*}
    Overall \eqref{eq:bessel_approx2} follows.
    
We now move on to showing \eqref{eq:K_0_bound} and \eqref{eq:K_0_asymp}. Stirling's estimate gives, for every $R\geq 1$, $\log(R!)\geq R\log(R)-R$. Hence for $R<N$, we have that, with the sum over $k$ ranging from $1$ to $R$ in the sequel,
\begin{equation} \label{eq:sum_log_ub}
    \sum_{ k }\log(N/k) =R\log(N)-\log(R!) \leq R\log(N/R)+R.
\end{equation}
Together, \eqref{eq:bessel_ub} and \eqref{eq:sum_log_ub} yield that
\begin{align*}
\begin{split}
        \sum_{k} K_0\left({k}/{N}\right)
        &\leq CR+   \sum_{k}\left(\log\left({N}/{k}\right)\vee 0\right)
        \leq CR+ (R\wedge N)\log\left(N/(R\wedge N)\right)+ (R\wedge N),
\end{split}
\end{align*}
which concludes of proof of \eqref{eq:K_0_bound} upon applying \eqref{eq:bessel_ub} once more. For \eqref{eq:K_0_asymp}, note that for all $\epsilon\in(0,1)$, there exists $\Cr{N_big}(\epsilon)$ large enough such that whenever $\frac{R}{N}\leq \frac{1}{\Cr{N_big}}$, one has $R\wedge N=R$ and $C'R\leq \epsilon R\log(N/R)$. Combining this with the last display and \eqref{eq:bessel_ub} we thus deduce that 
\begin{equation*}
      \sum_{k}K_0\left({k}/{N}\right)
    \leq (1+\epsilon)R\log(N/R)\leq (1+\epsilon)RK_0(R/N).
\end{equation*}
\end{proof}

\bibliography{bibliography}
\bibliographystyle{abbrv}

\end{document}